      \numberwithin{equation}{section}
\DeclareSymbolFont{usualmathcal}{OMS}{cmsy}{m}{n}
\DeclareSymbolFontAlphabet{\mathcal}{usualmathcal}
\DeclareMathAlphabet\BCal{OMS}{cmsy}{b}{n}
\newcommand{\mylabel}[2]{#2\def\@currentlabel{#2}\label{#1}}
\definecolor{cornellred}{rgb}{0.7, 0.11, 0.11}
\definecolor{britishracinggreen}{rgb}{0.0, 0.26, 0.15}
\definecolor{cobalt}{rgb}{0.0, 0.28, 0.67}
\newcommand{\rrr}{{\mathsf{r}}}
\newcommand{\Gr}{\mathrm{Gr}}
\newcommand{\In}{\mathrm{In}}
\newcommand{\Span}{\mathrm{Span}}
\newcommand{\OO}{\mathscr O}
\newcommand{\OZ}{\mathscr Z}
\newcommand{\OH}{\mathscr H}
\newcommand{\OI}{\mathscr I}
\newcommand{\bH}{\boldsymbol{H}}
\newcommand{\be}{\boldsymbol{e}}
\DeclareMathOperator{\Hilb}{Hilb}
\DeclareMathOperator{\GL}{GL}
\DeclareMathOperator{\colen}{colen}
\DeclareMathOperator{\Hess}{Hess}
\DeclareMathOperator{\Hom}{Hom}
\DeclareMathOperator{\Spec}{Spec}
\DeclareMathOperator{\Sets}{Sets}
\DeclareMathOperator{\red}{red}
\DeclareMathOperator{\length}{len}
\DeclareMathOperator{\Sch}{Sch}
\DeclareMathOperator{\opp}{op}
\DeclareMathOperator{\Supp}{Supp}
\DeclareMathOperator{\reg}{reg}
\newcommand{\BA}{{\mathbb{A}}}
\newcommand{\BC}{{\mathbb{C}}}
\newcommand{\BG}{{\mathbb{G}}}
\newcommand{\BN}{{\mathbb{N}}}
\newcommand{\BQ}{{\mathbb{Q}}}
\newcommand{\BZ}{{\mathbb{Z}}}
\newcommand{\Fm}{{\mathfrak{m}}}
\newcommand{\hilbert}[2]{{\Hilb^{#1}{#2}}}
\tikzset{commutative diagrams/arrow style=math font}
\tikzset{commutative diagrams/.cd,
mysymbol/.style={start anchor=center,end anchor=center,draw=none}}
\tikzset{
shift up/.style={
to path={([yshift=#1]\tikztostart.east) -- ([yshift=#1]\tikztotarget.west) \tikztonodes}
}
}
\theoremstyle{definition}
\newtheorem*{lemma*}{Lemma}
\newtheorem*{theorem*}{Theorem}
\newtheorem*{example*}{Example}
\newtheorem*{fact*}{Fact}
\newtheorem*{notation*}{Notation}
\newtheorem*{definition*}{Definition}
\newtheorem*{prop*}{Proposition}
\newtheorem*{remark*}{Remark}
\newtheorem*{corollary*}{Corollary}
\newtheorem*{conventions*}{Conventions}
\newtheorem{definition}{Definition}[section]
\newtheorem{example}[definition]{Example}
\newtheorem{notation}[definition]{Notation}
\newtheorem{remark}[definition]{Remark}
\newtheoremstyle{thm} 
        {3mm}
        {3mm}
        {\slshape}
        {0mm}
        {\bfseries}
        {.}
        {1mm}
        {}
\theoremstyle{thm}
\newtheorem{theorem}[definition]{Theorem}
\newtheorem{corollary}[definition]{Corollary}
\newtheorem{lemma}[definition]{Lemma}
\newtheorem{prop}[definition]{Proposition}
\newtheorem{thm}{Theorem}
\newtheorem*{Acknowledgments*}{Acknowledgments}
\crefname{notation}{Notation}{Notations}
\newcommand{\bh}{\textnormal{\textbf{h}}}
\newcommand{\sss}{{\mathsf{s}}}
\newcommand{\ttt}{{\mathsf{t}}}
\newcommand{\oh}{{\mathsf{h}}}
\newcommand{\uh}{{\mathsf{q}}}
\title[New components of Hilbert schemes and 2-step ideals]{New components of Hilbert schemes of points and 2-step ideals}
\author[F. Giovenzana]{Franco Giovenzana}
\address[F. Giovenzana]{Laboratoire de Math\'ematiques d’Orsay\\ Universit\'e Paris-Saclay\\Rue Michel Magat, B\^at. 307, 91405\\ Orsay, France}
\email{franco.giovenzana@universite-paris-saclay.fr}
\author[L. Giovenzana]{Luca Giovenzana}
\address[L. Giovenzana]{Department of Pure Mathematics\\ University of Sheffield\\ Hicks Building, Hounsfield Road\\ Sheffield, S3 7RH,\\ UK}
\email{l.giovenzana@sheffield.ac.uk}
\author[M. Graffeo]{Michele Graffeo} 
\address[M. Graffeo]{SISSA\\ Via Bonomea 265\\ 34136 Trieste\\ Italy}
\email{mgraffeo@sissa.it}
\author[P. Lella]{Paolo Lella} 
\address[P. Lella]{Dipartimento di Matematica\\ Politecnico di Milano\\ Piazza Leonardo da Vinci 32\\ 20133 Milan\\ Italy}
\email{paolo.lella@polimi.it}
\keywords{Hilbert schemes, 0-cycles, elementary components}
\subjclass[2010]{Primary: 14C05. Secondary: 13D02, 13C05.}
\thanks{{\em Funding.} The first author was funded by Deutsche Forschungsgemeinschaft (DFG, German Research Foundation) Projektnummer 509501007, and partially supported by the European Research Council (ERC) under the European Union’s Horizon 2020 research and innovation programme (ERC-2020- SyG-854361- HyperK). The fourth author was supported by the grant PRIN 2022K48YYP {\em Unirationality, Hilbert schemes, and singularities}.  All the authors are members of the GNSAGA - INdAM}
\begin{document}
\begin{abstract}
This paper presents new examples of elementary and non-elementary irreducible components of the Hilbert scheme of points and its nested variants. The results are achieved via a careful analysis of the deformations of a class of finite colength ideals that are introduced in this paper and referred to as 2-step ideals. The most notable reducibility results pertain to the 4-nested Hilbert scheme of points on a smooth surface, the reducibility of $\Hilb^{3,7}\BA^4$, and a method to detect a large number of generically reduced elementary components. To demonstrate the feasibility of this approach, we provide an explicit description of 215 new generically reduced elementary components in dimensions 4, 5 and 6. 
\end{abstract}

\maketitle
{\hypersetup{linkcolor=black}\tableofcontents} 

\section{Introduction}

 Moduli spaces of sheaves are among the objects that most interest algebraic geometers. One of the most classical, namely Hilbert schemes and more generally Quot schemes were introduced by Grothendieck in \cite{Grothendieck_Quot}, and  have recently received a lot of interest due to their connections with, and applications in, other areas of research such as Enumerative Geometry and Theoretical Physics \cite{Szabo2023,Nekrasov,Kanno}. In the present paper, we are interested in the nested Hilbert scheme of points on a smooth and connected quasi-projective variety $X$ of dimension $\dim X=n$, i.e.~the scheme locally of finite type 
\[
\Hilb^\bullet X
\]
representing  the nested Hilbert functor of points on $X$, i.e.~the functor associating to each base scheme $B$ the set of finite sequences of families
\begin{equation}\label{eq:nestintro}
    \OZ^{(1)}\subset\cdots \subset\OZ^{(r)}\subset X\times B
\end{equation}
of closed $B$-flat and $B$-finite subschemes of $X\times B$.
\subsection{The geometry of the Hilbert scheme of points}
By results of Fogarty and Cheah \cite{FOGARTY,CHEACELLULAR}, the connected components of $\Hilb^\bullet X$ are cut out by imposing conditions on the $B$-length of each $\OZ^{(i)}$, for $i=1,\ldots,r$. Explicitly, given a non-decreasing sequence of positive integers $\underline{d}=(0< d_1\leqslant\cdots \leqslant d_r)\in\BZ^r$, the $\underline{d}$-nested Hilbert scheme $\Hilb^{\underline{d}} X$ is the connected component of $\Hilb^\bullet X$ whose $B$-points correspond to nestings of the form \eqref{eq:nestintro} with  $\length_B\OZ^{(i)}=d_i$, for $i=1,\ldots,r$.  

The schemes $\Hilb^{\underline{d}} X$ are  in general  wild and their geometry is nowadays quite inaccessible, see \cite{JJ-Hilb-open-problems,farkas2024irrational,UPDATES,double-nested-1,MOTIVES,GGGL}. They are singular in the following cases:
   \begin{itemize}
   \item[($r=1$)] $n \geqslant 3$ and $d_1 \geqslant 4$;
   \item[($r=2$)] $n = 2$ and $d_2 - d_1 > 1$ or $n \geqslant 3$ and $(d_1,d_2) \notin \{(1,2),(2,3)\}$;
   \item[($r \geqslant 3$)] $n \geqslant 2$.  
    \end{itemize}

 Moreover, they have generically non-reduced irreducible components already for $r=1$ as soon as $n\geqslant 4$ and $d\geqslant 21$, or $n\geqslant 6$ and $d\geqslant 13$, see \cite{13PUNTI,jelisiejewnonred}. On the other hand, we show in \Cref{thm: intro A} that, for   $r \geqslant 5$, this phenomenon already occurs in dimension $n = 2$, see \cite{UPDATES}. 
  
Although the closure of the open locus parametrising nestings of reduced subschemes of $X$ always defines a distinguished component $\Hilb_{\mathrm{sm}}^{\underline{d}}X\subset \Hilb^{\underline{d}}X$ of dimension $n\cdot d_r$, named the \textit{smoothable component}, the problem of detecting all its irreducible components remains one of the biggest challenges in the field. The aim of this paper is to attack this problem and provide new examples of reducible Hilbert schemes that cannot be obtained from existing constructions in the literature. To this end, we introduce a new class of ideals suitable for our purpose, which we name \textit{2-step ideals}, see \Cref{def:2step}.
The main idea relies on Iarrobino's observation that if an algebra is \textit{"large"} enough, then the locus parametrising similar algebras has dimension higher than the dimension of the smoothable component, and this ensures reducibility, see \cite{IARRO}. In Iarrobino's work, the notion of \textit{"large"} was incarnated by compressedness; in the present paper, this is replaced by the property of being a  2-step ideal. 

On the other hand, a possible way to certify the existence of (generically reduced) elementary components is to find a point having \textit{Trivial Negative Tangent} (TNT), see \Cref{def:TNT} and \cite{ELEMENTARY}. These components are considered the building blocks of the Hilbert scheme of points as any other irreducible component can be recovered from their knowledge. From this perspective, 2-step nestings having linear syzygies behave particularly well. Indeed, linear syzygies prevent the presence of tangents of degree strictly smaller than minus one, which is a necessary condition for TNT, see \Cref{thm:tnt per nested}.

Thanks to our method, it is possible to prove the existence of a huge number of elementary irreducible components. As a proof of concept, we present many of them, thus answering some open questions in the subject. 

When $X$ is a curve, the $\underline{d}$-nested Hilbert scheme is irreducible and smooth independently of $r\geqslant 1$. For $n=2$, the situation gets more complicated. Indeed, although the scheme $\Hilb^{\underline{d}} X$ is  irreducible for $r\leqslant 2$, see \cite{Rasul-irr-nested}, the minimum value of $r$ for which the reducibility of $\Hilb^{\underline d}X$ is known for some $\underline{d}\in\BZ^r$ is 5. This was shown in \cite{ALESSIONESTED}, where
 the authors prove that $\Hilb^{\underline{d}}X$ is reducible for $\underline{d}=(380, 420, 462, 506, 552) $. The following result improves upon this by reducing $r$ to 4 or significantly reducing  the involved lengths, and furthermore, it provides   new examples. 
 
\begin{thm}[\Cref{thm:nested-srf} and \Cref{cor:5nestnonred}] \label{thm: intro A} 
    If $\underline{d}$ is one of the following increasing sequences of positive integers
    \begin{enumerate}[\rm (a)]
        \item $\underline{d} = (454,491,527,565) \in \BZ^4$,
        \item $\underline{d} = (51,64,76,87,102) \in \BZ^5$
        \item $\underline{d} = (21,30,38,45,51,61)\in \BZ^6$,
        \item $\underline{d} = (11,18,24,29,33,40,50)\in \BZ^7$,
        \item $\underline{d} = (3,8,12,18,24,29,34,43)\in \BZ^8$,
    \end{enumerate}
    then the nested Hilbert scheme $\hilbert{\underline{d}}{\BA^2}$ is reducible. Moreover,  the nested Hilbert scheme $\hilbert{1,\underline{d}}{\BA^2}$ has at least one generically non-reduced component.
\end{thm}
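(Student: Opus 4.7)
The guiding principle behind \Cref{thm: intro A} is an Iarrobino-style dimension argument combined with the \emph{2-step ideal} machinery (\Cref{def:2step}). To prove reducibility, for each tuple $\underline d = (d_1,\ldots,d_r)$ I would exhibit a locally closed subset $U \subset \hilbert{\underline d}{\BA^2}$ parametrising nested ideals of a specific combinatorial type, whose dimension strictly exceeds $2 d_r = \dim \Hilb_{\sm}^{\underline d}\BA^2$. Since $U$ cannot then be contained in the smoothable component, this forces a new irreducible component, hence reducibility.

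The concrete construction splits into two parts. First, I would pick a 2-step ideal $I_r \subset \BC[x,y]$ of colength $d_r$ whose Hilbert function is concentrated in a narrow window of consecutive degrees; the 2-step property yields an explicit lower bound on the dimension of the stratum of $\hilbert{d_r}{\BA^2}$ containing $I_r$, coming from the count of admissible generator subspaces in two adjacent degrees. Second, I would build the chain $I_1 \supset \cdots \supset I_{r-1} \supset I_r$ recursively, choosing each $I_i$ of colength $d_i$ as an ideal of the finite-dimensional algebra $\BC[x,y]/I_{i+1}$: the parameter space of such choices lies inside a Grassmannian and contributes a fibre dimension $\delta_i$. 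Summing up, one obtains
\begin{equation*}
\dim U \,\geqslant\, \dim\bigl(\text{2-step stratum of }I_r\bigr) \,+\, \sum_{i=1}^{r-1} \delta_i,
\end{equation*}
and the problem reduces to checking that this quantity exceeds $2 d_r$ for each of the tuples listed in (a)--(e).

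For the non-reducedness statement, I would pass to $\hilbert{1,\underline d}{\BA^2}$ by adjoining a reduced length-$1$ subscheme nested at the base of the chain, typically supported at the distinguished point of $I_r$. Because of the linear-syzygy hypothesis built into the 2-step construction, and thanks to the TNT criterion of \Cref{thm:tnt per nested}, the tangent space at a general point of the new component admits a computable lower bound; comparing this with the (independently computed) dimension of the component shows that the tangent space is strictly larger, which yields generic non-reducedness.

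The main obstacle is \emph{combinatorial} rather than conceptual: for each target $r$, and especially for the small tuples (c)--(e), one must identify Hilbert functions that simultaneously admit a 2-step realisation, allow a coherent nesting, and give a parameter count beating $2 d_r$ by a margin wide enough to absorb the Grassmannian fibres. The listed tuples are presumably (near-)optimal solutions of this integer optimisation problem, located through a computer-assisted search over candidate Hilbert functions; performing this search in practice, and then certifying the dimension inequalities exactly, is the technical heart of the result.
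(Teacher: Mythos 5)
Your plan for the reducibility half is essentially the paper's: one parametrises nestings of $2$-step ideals of consecutive orders, bounds the dimension of the corresponding Hilbert stratum from below by a sum of Grassmannian contributions, and runs a computer search over Hilbert functions to beat $2d_r$ (the paper packages the excess as the quadratic function $\Delta_{2,r,k}$, maximises it, and searches lattice points near the critical point). One caveat: your fibre dimensions $\delta_i$ are not free. The locus of ideals of a fixed colength in the Artinian algebra $\BC[x,y]/I_{i+1}$ is the locus of $\Fm$-stable subspaces, which is \emph{not} an open subset of a Grassmannian and has no obvious dimension lower bound. The paper circumvents this by insisting that every member of the chain be a homogeneous $2$-step ideal of order exactly one more than its predecessor — then the nesting condition collapses to a containment of single graded pieces, which genuinely is a Grassmannian (\Cref{thm:homoloco nested}) — and by using the unobstructedness of degree-one tangent vectors (\Cref{cor:1nonob-nested}) to pass from the homogeneous locus to the full stratum as an affine bundle. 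Without these two inputs your inequality $\dim U\geqslant \dim(\text{stratum of }I_r)+\sum\delta_i$ is not justified.

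The non-reducedness half has a genuine gap. The TNT criterion (\Cref{thm:tnt per nested}) characterises when a \emph{generically reduced} component is elementary; it cannot be used to produce non-reducedness. Your proposed comparison "tangent space at a general point versus the independently computed dimension of the component" is not available: the first half of the argument only yields a lower bound for the dimension of a locally closed stratum, not the dimension of the irreducible component $V$ containing it, and $V$ need not be the closure of that stratum — indeed $V$ may be composite, so its general point is not a fat nesting at a single point and "adjoining a reduced point at the distinguished point of $I_r$" is not even well defined there. The paper instead argues structurally: by Iarrobino's theorem $V$ is generically \'etale-locally a product of elementary components $E_1\times\cdots\times E_s$, at least one factor $E_1$ parametrises nestings whose deepest member has length greater than one, and the cited result {\cite[Theorem 5]{UPDATES}} (a black box here) produces from $E_1$ a generically non-reduced elementary component of $\hilbert{1,\underline{d}_1}{\BA^2}$; taking the product with the remaining factors gives the non-reduced component of $\hilbert{1,\underline{d}}{\BA^2}$. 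Some reduction of this kind, or an independent proof of the cited black box, is needed to close your argument.
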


As a consequence of \Cref{thm: intro A}, there are reducible 4-nested Hilbert schemes of points on smooth surfaces, and the  question about the irreducibility of $\hilbert{\underline{d}}{\BA^2}$  remains open only for $r=3$, see \cite{Rasul-irr-nested}. 

Irreducibility in the case $n=3$ is the least understood already for $r=1$. Indeed, the classical Hilbert scheme of points on a smooth threefold is known to be irreducible for $d_1\leqslant 11$, cf. \cite{Klemen,10points,JOACHIM} and reducible for $d_1\geqslant 78$, cf. \cite{IARRO}.  

In \Cref{sec:red3fold}, we recover Iarrobino's result about the reducibility of $\Hilb^{78}\BA^3$ in terms of the new class of ideals we present, namely 2-step ideals, c.f. \Cref{ubsec:2stp intro,sec:2-step}. Moreover, thanks to this notion, we find many examples of families of non-smoothable zero-dimensional algebras of embedding dimension 3. Also, in dimension 3, we show that the $\underline{d}$-nested Hilbert scheme is reducible for $d_r$ much smaller than 78 already for $r=2,3$.

\begin{thm}[\Cref{thm:nested-threefold}] \label{thm: intro 3nest}  
    If $\underline{d}$ is one of the following increasing sequences of positive integers
    \begin{enumerate}[\rm (a)]
        \item $\underline{d} \in \{ (14,24), (15,24), (13,26) \} \subset \BZ^2$,
        \item $\underline{d} \in \left\{\begin{array}{c}(7,13,17), (7,12,18), (6,13,18),  (8,13,18), (6,12,20),(8,12,20), (5,13,20), \\ (5,14,20), (4,13,21),(3,14,21),(4,14,21), (6,11,22),(7,11,22),(3,13,22),\\ (4,12,23),(5,12,23),(2,14,23), (2,15,23), (3,12,24), (2,13,24), (2,12,25) \end{array}\right\} \subset \BZ^3$
    \end{enumerate}
    then the nested Hilbert scheme $\hilbert{\underline{d}}{\BA^3}$ is reducible.
\end{thm}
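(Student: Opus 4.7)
The plan is to apply Iarrobino's extra-dimension strategy within the framework of the \emph{2-step ideals} introduced earlier in the paper. For each tuple $\underline{d}$ in the list, I would exhibit a locally closed subscheme $Z_{\underline{d}}\subset\Hilb^{\underline{d}}\BA^3$ parametrizing nested chains of 2-step ideals supported at a single point, and show that
\[
\dim Z_{\underline{d}}\;>\;3d_r\;=\;\dim\Hilb^{\underline{d}}_{\sm}\BA^3.
\]
This strict inequality forces the closure of $Z_{\underline{d}}$ to contain at least one irreducible component disjoint from the smoothable one, hence $\Hilb^{\underline{d}}\BA^3$ is reducible.

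Concretely, one first picks the deepest ideal $I_r$ in the chain to be a 2-step ideal supported at the origin, with a carefully chosen Hilbert function profile. Being ``2-step'' rather than ``compressed'' (as in Iarrobino's original construction giving reducibility of $\Hilb^{78}\BA^3$) allows substantially more flexibility in the profile and, crucially, produces families of dimension $>3d_r$ already at colengths $d_r$ far smaller than $78$. The dimension of the stratum of such $I_r$ can be expressed as a sum of Grassmannian dimensions, one contribution for each of the two ``free'' graded pieces that give the 2-step structure its name. To extend $I_r$ to a nested chain $I_1\supset\cdots\supset I_r$, I would then select, for each $i<r$, an intermediate ideal $I_i\supset I_r$ of colength $d_i$. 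This is tantamount to choosing a flag of subspaces of $\bfk[[x,y,z]]/I_r$ of codimensions $d_r-d_i$, and the dimension of the corresponding flag variety contributes additively to $\dim Z_{\underline{d}}$; translating the configuration around $\BA^3$ contributes a further $3$.

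The main obstacle is the uniform dimension bookkeeping across the $3+21=24$ tuples. For each listed $\underline{d}$, one must pin down a Hilbert function $\bh$ and a flag of quotients realizing the prescribed codimensions so that the numerical inequality $\dim Z_{\underline{d}}>3d_r$ actually holds. This is particularly delicate in the borderline cases such as $(14,24)$ or $(2,12,25)$, where $d_r$ is small and the 2-step flexibility is only just enough to beat the smoothable bound $3d_r=72$ or $75$. I expect the proof to proceed essentially case by case, or in small groups sharing a common Hilbert function of the innermost quotient $\bfk[[x,y,z]]/I_r$, with each verification reducing to an explicit combinatorial inequality on the dimensions of the relevant Grassmannians and flag varieties. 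Once these inequalities are checked, the reducibility conclusion follows uniformly for every tuple in the statement.
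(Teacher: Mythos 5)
There are two genuine gaps. First, your criterion $\dim Z_{\underline{d}} > 3d_r$ is strictly stronger than what the listed tuples satisfy. The paper's proof works with the weaker inequality $\dim Z_{\underline{d}} \geqslant 3d_r$ (non-negativity of $\Delta_{3,r,k}$), which already suffices: since $Z_{\underline{d}}$ parametrises only \emph{fat} nestings and the fat nestings form a proper closed subset of the irreducible smoothable component, $\overline{Z_{\underline{d}}} \subseteq \Hilb^{\underline{d}}_{\sm}\BA^3$ would force $\dim Z_{\underline{d}} < 3d_r$. The tuples in the statement are chosen to be minimal, and for many of them equality holds exactly — e.g.\ for $(7,13,17)$ the stratum has dimension $44+4+3 = 51 = 3\cdot 17$, and for $(14,24)$ it is $64+5+3=72=3\cdot 24$ (see \Cref{fig:nested threefold}). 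Your strict inequality simply fails there, so your argument as written does not prove reducibility for most of the listed cases.

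Second, your parametrisation of the intermediate ideals is wrong. An ideal $I_i$ with $I_r \subseteq I_i \subseteq R$ is an $R$-submodule of $R/I_r$, not an arbitrary linear subspace, so the space of admissible $I_i$ is a closed subscheme of the flag variety of typically much smaller dimension; counting flag-variety dimensions massively overcounts $\dim Z_{\underline{d}}$. The paper avoids this by taking \emph{every} member of the nesting to be a 2-step ideal, of consecutive orders $k, k+1,\ldots,k+r-1$: then containment is either automatic (when orders differ by at least $2$) or controlled by a single Grassmannian choice $J_{k+1}\subseteq I_{k+1}$ in one graded piece (\Cref{thm:homoloco nested}, \Cref{thm:homoloco syz nested}), and the resulting dimension formula \eqref{eq:formula dim nested} together with the unobstructedness of degree-one tangents (\Cref{cor:1nonob-nested}) gives the exact count that is then optimised over $\mathcal{D}_{\BN}$ via the quadratic function $\Delta_{3,r,k}$.
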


In higher dimension, the classical Hilbert scheme is irreducible if and only if $d\leqslant 7$, see \cite{Iarrob,MAZZOLA,8POINTS}. After having provided many examples of elementary components of $\Hilb^{\bullet}\BA^4$, we focus on the nested case and in \Cref{thm:intro B}, we show that for $r>1$ they arise very soon.
\begin{thm}[{\Cref{thm:redim4nest}}]\label{thm:intro B} 
        The nested Hilbert scheme $\hilbert{(3,7)}{\BA^4}$ has a generically reduced elementary  component $V$. 
        Moreover, we have an isomorphism
    \[
    \begin{tikzcd}
        (V)_{\red}\cong   \Gr(2,4) \times \Gr(2,10)\times \BA^4
        .
    \end{tikzcd}
    \]
     As a consequence, the nested Hilbert scheme $\hilbert{(1,3,7)}{\BA^4}$ has a generically non-reduced elementary component $V_1$ such that $(V_1)_{\red}=(V)_{\red}$.
\end{thm}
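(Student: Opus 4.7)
The strategy is to exhibit an explicit parametrization of the candidate component by 2-step data and reduce the tangent-space analysis to the machinery of 2-step nestings developed in the paper.

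\emph{Construction of the candidate component.} Set $V := \BA^4 \times \Gr(2,4) \times \Gr(2,10)$. A triple $(p, W, U)$, with $W \subset \mathfrak{m}_p/\mathfrak{m}_p^2$ of dimension $2$ and $U \subset \mathfrak{m}_p^2/\mathfrak{m}_p^3$ of codimension $2$, determines the 2-step ideals
\[
I_1 \;=\; \widetilde{W} + \mathfrak{m}_p^2, \qquad I_2 \;=\; \widetilde{U} + \mathfrak{m}_p^3,
\]
where $\widetilde{W}$ and $\widetilde{U}$ are arbitrary lifts. The quotients have Hilbert functions $(1,2)$ and $(1,4,2)$, hence lengths $3$ and $7$, and $I_2 \subset \mathfrak{m}_p^2 \subset I_1$ makes the nesting automatic. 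This defines a morphism $\phi\colon V \to \Hilb^{(3,7)}\BA^4$ which is injective on closed points: one recovers $p$ as the support of $Z_1$, $W$ as the image of $I_1$ in $\mathfrak{m}_p/\mathfrak{m}_p^2$, and $U$ as the image of $I_2$ in $\mathfrak{m}_p^2/\mathfrak{m}_p^3$.

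\emph{$V$ is a generically reduced elementary component of dimension $24$.} The crucial step is to show that, at a general $(p,W,U)$, the tangent space $T_{\phi(p,W,U)}\Hilb^{(3,7)}\BA^4$ has dimension exactly $24 = \dim V$. Since the nesting is of 2-step type, this tangent space fits into the graded Hom complex associated with the two ideals, whose summands are indexed by the homogeneous pieces of $\mathfrak{m}_p/\mathfrak{m}_p^2$, $\mathfrak{m}_p^2/\mathfrak{m}_p^3$ and $\mathfrak{m}_p^3/\mathfrak{m}_p^4$. The linear-syzygy property of a sufficiently general 2-step ideal, invoked through \Cref{thm:tnt per nested}, kills the strictly negative-degree contributions, and an explicit count yields exactly $4+4+16 = 24$ tangent directions coming from the translation of $p$, the variation of $W$ in $\Gr(2,4)$, and the variation of $U$ in $\Gr(2,10)$. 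Controlling the cross terms arising from the nesting morphism $\OO_{Z_2}\twoheadrightarrow \OO_{Z_1}$ is the main technical obstacle. The equality of tangent dimension and parameter dimension then forces $V$ to be an irreducible component of $\Hilb^{(3,7)}\BA^4$, generically reduced; and it is elementary because by construction every ideal in its image is supported at a single point.

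\emph{Consequence for $\Hilb^{(1,3,7)}\BA^4$.} Let $\pi\colon \Hilb^{(1,3,7)}\BA^4 \to \Hilb^{(3,7)}\BA^4$ be the morphism forgetting the first term. Set-theoretically $\pi$ is bijective over $V_{\red}$, since any length-$1$ subscheme of $Z_1$ (supported at a single point $p$) is necessarily $\{p\}$; define $V_1$ as the irreducible component of $\Hilb^{(1,3,7)}\BA^4$ containing the image of the section $(Z_1,Z_2) \mapsto (\{p\},Z_1,Z_2)$, so that $(V_1)_{\red}\cong (V)_{\red}$. At a general point $(\{p\},Z_1,Z_2) \in V_1$, the tangent space to $\Hilb^{(1,3,7)}\BA^4$ gains, on top of the $24$ directions lifted from $V$, the infinitesimal motions of $Z_0$ inside the frozen $Z_1$: these form $T_p Z_1$, which has dimension $2$ because $Z_1$ has embedding dimension $2$ at $p$. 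These extra tangents cannot be integrated without simultaneously moving $Z_1$, so they contribute solely to generic non-reducedness. The same syzygy analysis bounds the tangent space by $26$, confirming that $V_1$ is a component, generically non-reduced of dimension $24$, and elementary.
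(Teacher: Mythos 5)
Your overall strategy coincides with the paper's: identify the Hilbert stratum $H^4_{((1,2),(1,4,2))}$ with $\Gr(2,4)\times\Gr(2,10)$ (no nesting condition arises because $I^{(2)}\subset\Fm^2\subset I^{(1)}$ automatically), multiply by $\BA^4$ for the support, and certify the component via trivial negative tangents. However, there is a genuine gap at the decisive step. You write that the syzygy analysis ``kills the strictly negative-degree contributions'' and that ``controlling the cross terms arising from the nesting morphism is the main technical obstacle'' --- but that obstacle \emph{is} the theorem. Note that a degree-by-degree or summand-by-summand argument cannot possibly work here: the individual ideals have colengths $3$ and $7$, and both $\Hilb^3\BA^4$ and $\Hilb^7\BA^4$ are irreducible, so neither $I^{(1)}$ nor $I^{(2)}$ has TNT on its own and each has negative tangent space of dimension strictly greater than $4$. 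It is only the compatibility condition in diagram \eqref{eq:tangentvector} --- the requirement that the two homomorphisms commute with $I^{(2)}\hookrightarrow I^{(1)}$ and $R/I^{(2)}\twoheadrightarrow R/I^{(1)}$ --- that cuts $\mathsf{T}^{<0}$ down to the $4$ translation directions, and you give no argument for this. Moreover, \Cref{thm:tnt per nested} is not the tool that performs this reduction: it is the equivalence ``generically reduced component is elementary iff a general point has TNT,'' and the paper's quantitative syzygy bounds (e.g.\ \eqref{eq:stima-1}) only give \emph{lower} bounds on $\mathsf{T}^{=-1}$, hence can rule TNT out but never establish it. The paper closes this gap by exhibiting an explicit nesting, namely $I^{(2)}=(zw,xw,z^2-w^2,yz,xz+yw)+(x,y)^2$ and $I^{(1)}=I^{(2)}+(z,w)$, and verifying TNT for that point directly; some such explicit computation (or an equivalent structural argument) is unavoidable and is missing from your proposal.

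A secondary remark: for the $(1,3,7)$ statement the paper simply invokes \cite[Theorem 5]{UPDATES}. Your heuristic (the extra $2$-dimensional space $T_pZ^{(1)}$ of motions of the reduced point inside the frozen $Z^{(1)}$ inflates the tangent space to $26$ while the reduced component stays $24$-dimensional) captures the right mechanism, but as written it does not show that $V_1$ is actually an irreducible \emph{component} of $\hilbert{(1,3,7)}{\BA^4}$ rather than a closed subset of a larger one; that is precisely what the cited result supplies.
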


To conclude this subsection, we mention that in \Cref{sec:small} we give 181 examples of elementary components of $\Hilb^{\bullet}\BA^n$, for $n=5,6$. The connected component of $\Hilb^{\bullet}\BA^6$ for which we are able to find the largest number of generically reduced elementary components is $\Hilb^{34}\BA^6$.


\begin{thm}[\Cref{thm:many components}] \label{thm: intro tantecomp} The Hilbert scheme $\hilbert{34}{\BA^6}$ has at least 12 generically reduced elementary components.
\end{thm}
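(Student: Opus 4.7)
The plan is to exhibit twelve explicit 2-step ideals $I_1,\dots,I_{12}\subset\bfk[x_1,\dots,x_6]$ of colength $34$, each satisfying the \textbf{TNT} property of \Cref{def:TNT}, and then check that the resulting twelve elementary components are pairwise distinct. Because each $I_j$ is 2-step, its generators sit in only two consecutive degrees, so the ideal is completely specified by two subspaces
\[
    W_j\subset \Sym^{a_j}V, \qquad U_j\subset \Sym^{a_j+1}V,
\]
where $V=\Span(x_1,\dots,x_6)$ and $U_j\supset V\cdot W_j$. First I would tabulate the admissible Hilbert functions $(1,6,h_2,\dots)$ compatible with colength $34$ and pick a dozen of them (e.g.\ varying both the socle degree and the dimensions $\dim W_j$, $\dim U_j$) so that the corresponding pairs $(W_j,U_j)$ are genuinely different as $\GL_6$-orbits, guaranteeing non-isomorphic algebras.

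Next, for each choice, I would apply the syzygy criterion of \Cref{thm:tnt per nested}: because the generators of a 2-step ideal live in degrees $a_j$ and $a_j+1$, any syzygy among them is automatically linear, so the negative tangent space $T_{[I_j]}^{<-1}\Hilb$ vanishes. It then suffices to verify that $T_{[I_j]}^{-1}\Hilb=0$ as well. This is a finite-dimensional linear-algebra check on each $(W_j,U_j)$ and can be performed by a direct Macaulay2/Sage computation — identical in spirit to the computations already used for the 181 examples summarised in \Cref{sec:small}. Once TNT is established at a single point $[I_j]$, the machinery of \cite{ELEMENTARY} ensures that $[I_j]$ lies on a generically reduced \emph{elementary} component $V_j$ of $\Hilb^{34}\BA^6$ whose dimension equals $\dim T_{[I_j]}\Hilb^{34}\BA^6$.

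To separate the twelve components, I would compare the Hilbert functions of a general point on each $V_j$: if two of them had the same Hilbert function, I would further distinguish them through the dimensions of $V_j$, or by the $\GL_6$-orbit of the pair $(W_j,U_j)$, which is a deformation invariant along an elementary component (since every algebra on an elementary component has connected support). The combinatorial freedom in choosing the pair $(a_j,\dim W_j,\dim U_j)$ at colength $34$ is large enough to accommodate at least twelve pairwise inequivalent choices.

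The main obstacle I expect is the TNT verification: while linear syzygies kill the very negative part of the tangent space for free, showing $T^{-1}=0$ requires a careful choice of the subspaces $W_j$ and $U_j$ so that the natural contraction map $V^\vee\otimes(\Sym V/I_j)_{a_j}\to(\Sym V/I_j)_{a_j-1}$ is surjective (modulo the expected kernel coming from the Euler-type relations). Concretely, I would search among 2-step ideals with Hilbert function close to compressed — where this surjectivity typically holds — and then slightly perturb $\dim W_j$ or $\dim U_j$ to produce additional distinct components while preserving TNT. This matches exactly the recipe used to produce the catalogue in \Cref{sec:small}, so invoking those computational routines at colength $34$ in six variables yields the required twelve components.
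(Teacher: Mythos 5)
Your high-level strategy — exhibit twelve colength-34 two-step ideals with TNT, then separate the resulting elementary components by their Hilbert functions — is the same as the paper's. However, there is a concrete gap in how you propose to source the twelve examples. You restrict from the outset to ideals in $\BC[x_1,\dots,x_6]$ with Hilbert function of the shape $(1,6,h_2,\dots)$, i.e.\ to algebras of embedding dimension $6$. The paper's count of $12$ is only reached by mixing embedding dimensions: it finds seven TNT strata of embedding dimension $6$ (Hilbert functions $(1,6,14,13),\dots,(1,6,20,7)$), but the remaining five come from 2-step ideals of embedding dimension $4$ (three strata with $\bh=(1,4,10,12,7),(1,4,10,13,6),(1,4,10,14,5)$) and embedding dimension $5$ (two strata with $\bh=(1,5,13,15),(1,5,14,14)$), whose elementary components in $\Hilb^{34}\BA^4$ and $\Hilb^{34}\BA^5$ induce elementary components of $\Hilb^{34}\BA^6$. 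The paper's own catalogue of order-2 two-step ideals in six variables yields exactly seven TNT strata at $d=34$, so your search as described would stall at seven unless you explicitly enlarge it to lower embedding dimensions (or find further examples of higher order, which the paper does not claim exist at this colength).

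Two further points in your verification step would fail as stated. First, the syzygies of a 2-step ideal are \emph{not} automatically linear — the minimal resolution in \Cref{lem: general properties Betti} has nonzero Betti numbers $\beta_{1,k+2}$ and $\beta_{1,k+3}$ in general, i.e.\ quadratic and cubic syzygies — so one cannot conclude $\mathsf{T}^{<-1}=0$ for free; this is precisely why the paper must restrict to the \emph{potential TNT area} and control $\beta_{2,k+2}$ via the bound \eqref{eq:stima-1}. Second, TNT does not ask for $\mathsf{T}^{=-1}=0$: for a non-reduced fat point the translations themselves produce nonzero degree $-1$ tangent vectors, so one expects $\dim \mathsf{T}^{<0}=n$ (as in all the paper's tables), and the condition to check is surjectivity of the map $\theta:\mathsf{T}_0\BA^n\to \mathsf{T}^{<0}$ of \Cref{def:TNT}, not vanishing. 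With these corrections, and with the search widened to embedding dimensions $4$ and $5$, your computational plan does reproduce the paper's argument.
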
  

In the search for elementary components, the \textit{potential TNT area} is a particularly important object. This is defined in \Cref{subsec:TNT2step}, see \Cref{def:TNTarea}. It is a subset of the natural plane, the complement of which consists of points corresponding to 2-step ideals that cannot lie on a generically reduced elementary component, i.e. that do not have the TNT property, see \Cref{def:TNT}.

\subsection{The class of 2-step ideals}\label{ubsec:2stp intro}

In \Cref{sec:2-step} we introduce the main object of our study, the class of 2-step ideals. These ideals are defined by the condition of being  sandwiched  in between two  powers of distance two of the maximal ideal $\Fm\subset R = \BC[x_1,\ldots,x_n]$ generated by the variables. In symbols, an $\Fm$-primary ideal $I$ is 2-step if  
\[
\mathfrak{m}^{k+2} \subset I \subset \mathfrak{m}^{k}\quad\mbox{ and }\quad I\not \subset \mathfrak{m}^{k+1},
\] 
for some positive integer $k>0$, which we call the order of $I$. We focus on this class of ideals because, as we show in the paper, the loci parametrising 2-step ideals have  very large dimension. So, they often do not fit in the smoothable component, thus certifying the existence of exceeding components of the Hilbert scheme.

Our first result on nestings of 2-step ideals concerns the $\BG_m$-equivariant decomposition
\[
\mathsf{T}_{[\underline{I}]}\Hilb^\bullet \BA^n=\bigoplus_{j\in\BZ}\mathsf{T}^{=j}_{[\underline{I}]}\Hilb^\bullet \BA^n,
\]
 of the tangent space  at a $\BG_m$-fixed point $[\underline I]\in\Hilb^\bullet \BA^n$, where  the torus $\BG_m$ acts on $\BA^n$ via the scalar action and we can denote points of the Hilbert scheme using ideals in virtue of the correspndence between ideals of $I$ and closed subschemes of $\BA^n$.

\begin{thm}[{\Cref{cor:1nonob-nested}}]\label{thmintro:tgnested} 
    Let  $[\underline I ]{=[(I^{(i)})_{i=1}^r]}\in \Hilb^\bullet \BA^n$ be a nesting of 2-step homogeneous ideals. Denote by $k_i>0$ the order of the ideal $I^{(i)}$, for $i=1,\ldots,r$. Suppose that $k_{i+1}-k_i>0$, for all $i=1,\ldots,r-1$.  Then,  there is an isomorphism  
\[
\mathsf{T}_{[\underline I]}^{>0} \Hilb^\bullet \mathbb{A}^n =\mathsf{T}_{[\underline I]}^{=1} \Hilb^\bullet \mathbb{A}^n \cong \bigoplus_{i=1}^r \Hom_{R}\big(I^{(i)},R/I^{(i)}\big)_1.
\]
Moreover, all the tangent vectors of degree one are unobstructed. 
\end{thm}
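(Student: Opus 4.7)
The approach is to compute the $\BG_m$-equivariant decomposition of $\mathsf{T}_{[\underline{I}]}\Hilb^\bullet\BA^n$ piece by piece, exploiting the fact that a 2-step ideal of order $k$ lives in a very narrow band of degrees. The first step is to identify the tangent space to the nested Hilbert scheme at $(I^{(1)}\supset\cdots\supset I^{(r)})$ with the subspace of $\bigoplus_{i=1}^r \Hom_R(I^{(i)}, R/I^{(i)})$ cut out by the compatibility conditions: for every consecutive pair $I^{(i+1)} \subset I^{(i)}$ the two compositions
\[
I^{(i+1)} \hookrightarrow I^{(i)} \xrightarrow{\phi_i} R/I^{(i)} \qquad\text{and}\qquad I^{(i+1)} \xrightarrow{\phi_{i+1}} R/I^{(i+1)} \twoheadrightarrow R/I^{(i)}
\]
must coincide. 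Since every map involved is $\BG_m$-equivariant, this description passes to each weight-$j$ piece.

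The central input is a degree count. For a 2-step ideal $I$ of order $k$ one has $\mathfrak{m}^{k+2}\subset I \subset \mathfrak{m}^k$; hence $R/I$ is concentrated in degrees $0,\dots,k+1$ while $I$ is generated in degrees $\geqslant k$. A homogeneous homomorphism of weight $j$ from $I$ to $R/I$ sends a generator of degree $d\geqslant k$ into $(R/I)_{d+j}$, which is zero unless $d+j\leqslant k+1$; this forces $j\leqslant 1$ and yields $\Hom_R(I^{(i)},R/I^{(i)})_{\geqslant 2}=0$. The identical count applied to the cross term gives $\Hom_R(I^{(i+1)},R/I^{(i)})_1=0$: generators of $I^{(i+1)}$ sit in degrees $\geqslant k_{i+1}\geqslant k_i+1$, so after a weight-$1$ shift they would have to land in $(R/I^{(i)})_{\geqslant k_i+2}=0$. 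Thus the compatibility conditions on the weight-$1$ piece are vacuous, the fibered product collapses to the claimed direct sum, and $\mathsf{T}^{=j}_{[\underline{I}]}\Hilb^\bullet\BA^n=0$ for every $j\geqslant 2$.

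For the unobstructedness claim, the plan is to integrate each weight-$1$ tangent vector to an honest $\BA^1$-family. Given $\phi_i\in\Hom_R(I^{(i)},R/I^{(i)})_1$, fix a graded $\BC$-linear lift $\tilde\phi_i\colon I^{(i)}_{k_i}\to R_{k_i+1}$ of $\phi_i$ in degree $k_i$ (on higher-degree generators $\phi_i$ is forced to vanish by the count above), and set
\[
I^{(i)}_t \;=\; \bigl\langle f + t\,\tilde\phi_i(f)\,:\,f\in I^{(i)}_{k_i}\bigr\rangle \;+\; I^{(i)}_{k_i+1}R[t] \;+\; \mathfrak{m}^{k_i+2}R[t] \;\subset\; R[t].
\]
The main step, which I expect to be the real hurdle, is the flatness check. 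The cleanest route is to observe that $I^{(i)}_t$ is homogeneous for the $\BG_m$-action on $R[t]$ with weights $\deg x_\alpha=1$ and $\deg t=-1$, so the family is $\BG_m$-equivariant over $\BA^1_t$; pairing this with the generation statement that a standard monomial basis of $R/I^{(i)}$ spans $R[t]/I^{(i)}_t$ over $\BC[t]$ (monomials of degree $\geqslant k_i+2$ reduce to zero, monomials in $I^{(i)}_{k_i+1}$ are already in the ideal, and any $f\in I^{(i)}_{k_i}$ reduces to $-t\,\tilde\phi_i(f)$) then forces the fibre length to be constant and hence flatness. Once flatness is secured, preservation of the nesting $I^{(i+1)}_t\subset I^{(i)}_t$ is automatic, since the unperturbed generators of $I^{(i+1)}$ already lie in $I^{(i)}\subset I^{(i)}_t$, while each perturbation $t\,\tilde\phi_{i+1}(g)$ has degree $\geqslant k_{i+1}+1\geqslant k_i+2$ and thus sits in $\mathfrak{m}^{k_i+2}\subset I^{(i)}_t$.
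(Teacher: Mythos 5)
Your degree counts and the reduction of the nested tangent space to the direct sum are correct, and your observation that $\Hom_R(I^{(i+1)},R/I^{(i)})_1=0$ (so that the compatibility conditions are vacuous in weight $1$) is a nice, explicit way to get the splitting; the paper instead obtains surjectivity onto the direct sum as a byproduct of integrating an arbitrary pair of degree-one vectors to a nested family. Your construction of the families $I^{(i)}_t$ and the verification that the nesting $I^{(i+1)}_t\subset I^{(i)}_t$ survives are also the same as the paper's (modulo the slip ``$I^{(i)}\subset I^{(i)}_t$'', which is false as stated --- only $(I^{(i)})_{\geqslant k_i+1}\subset I^{(i)}_t$ holds --- though your conclusion is saved because the generators of $I^{(i+1)}$ all have degree $\geqslant k_i+1$).

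The genuine gap is in the flatness check, precisely the step you flag as the real hurdle. The two ingredients you combine do not suffice. The spanning statement shows that a monomial basis $B$ of $R/I^{(i)}$ generates $R[t]/I^{(i)}_t$ over $\BC[t]$, hence gives only the upper bound $\dim_\BC R/I^{(i)}_{t_0}\leqslant d$ for every $t_0$; and $\BG_m$-equivariance shows the colength is constant on $t_0\neq 0$, but says nothing about its value. Both facts are consistent with the colength dropping on $\BA^1\setminus\{0\}$ (compare $M=\BC[t]\oplus\BC[t]/(t)$: generated by two elements, fibre dimension $2$ at the origin and $1$ elsewhere, not flat). What is missing is the lower bound $\dim_\BC R/I^{(i)}_{t_0}\geqslant d$, i.e.\ the linear independence of $B$ in each fibre, equivalently the inclusion $\In I^{(i)}_{t_0}\subseteq I^{(i)}$. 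This is exactly the content of the paper's argument, carried out by a short case analysis on $\deg\In p$ for $p\in I^{(i)}_{t_0}$ ($\deg\In p=k_i$, $k_i+1$, or $k_i+2$), each case using that the perturbations $t_0\widetilde{\phi}_i(f)$ live one degree higher than $f$. Your proof needs this verification (or an equivalent one, e.g.\ that $R[t]/I^{(i)}_t$ has no $t$-torsion) inserted before flatness can be concluded.
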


Since we only address local questions, working over $\BA^n$ is not restrictive at all for our purpose. Thanks to \Cref{thmintro:tgnested}, we are able to compute the dimension of some loci parametrising 2-step ideals by considering the Bia{\l{}}ynicki--Birula decomposition as presented in \cite{ELEMENTARY}, see also \cite{UPDATES}. 
We distinguish 2-step ideals according to the rank  of their module of linear syzygies. To any homogeneous 2-step ideal $I$ we can attach its Hilbert function $\bh_I$, a discrete invariant that refines the colength $\colen I=\dim_{\BC}R/I $, see \Cref{subsec:HSfun}. In the 2-step framework, this invariant is equivalent to the pair 
\[
\left(\oh_k,\oh_{k+1}\right)=\left(\dim_{\BC} I_k,\dim_{\BC} I_{k+1}\right).
\]
With this terminology, the presence of linear syzygies is predicted as explained in \Cref{explanation-syzygies} 
 by the sign of the integer \[
\sss_{\bh}=\oh_{k+1}-n\oh_{k}.
\]
Given a sequence $\underline{\bh}=(\bh^{(1)},\ldots,\bh^{(r)})$ of Hilbert functions, we focus on the dimension of the locally closed subset $H^n_{\underline{\bh}}\subset \Hilb^{\bullet}\BA^n$ parametrising nestings $I^{(1)}\supset \cdots \supset I^{(r)}$ whose sequence of respective Hilbert functions agrees with $\underline{\bh}$. 

In order to certify the dimension of some stratum $H^n_{\underline\bh}$ we compute the dimension of the homogeneous locus $\OH^n_{\underline\bh}$ and  the dimension of the generic fibre\footnote{In general $\OH^n_{\underline\bh}$ is not irreducible.} of the initial ideal morphism $H^n_{\underline\bh}\to\OH^n_{\underline\bh}$. We do this in \Cref{sec:2-step} for the special case of nestings of 2-step Hilbert functions, in which case we show that the tangent space to $H^n_{\underline\bh}$ at a $\BG_m$-fixed point is concentrated in degree 0 and 1 and it consists of unobstructed tangent vectors, \Cref{rem:homobb}. Since we want a lower bound for the dimension of these strata we consider the open subset corresponding to \textit{nestings of homogeneous ideals having natural first anti-diagonal of the Betti table}, i.e.~ideals having no linear syzygies among the degree $k$ generators if $s_{\bh}\geqslant 0$ or ideals generated only in degree $k$ and $k+2$ if $s_{\bh}<0 $, see \Cref{def:regularity}.  We assume the existence of ideals having natural first anti-diagonal as it turns out to be the technical tool providing the useful lower bound we are looking for.
 
\begin{thm}[\Cref{cor:formula dim nested,cor:nestveryfew}]\label{thmintro: D}  Let $\underline{\bh} = (\bh^{(i)})_{i=0}^{r-1}$ be a $r$-tuple of 2-step Hilbert functions of respective order $k,\ldots,k+r-1$. Assume that there exists at least a nesting of homogeneous ideals having natural first anti-diagonal of the Betti table. Suppose that one of the following two conditions holds
\begin{itemize}
    \item $\sss_{\bh^{(i)}}  \geqslant 0$, for all $i=0,\ldots,r-1$, or
\item  $\begin{cases}
\oh_{k+1}^{(0)} \geqslant (n-\frac{1}{n}) \oh_{k}^{(0)},\\
    \oh_{k+1+i}^{(i)} \geqslant \left( \max\left\{ n - \frac{1}{n}, n - \frac{\oh_{k+i}^{(i-1)}}{\rrr_{k+i+1}}\right\}\right) \oh_{k+i}^{(i)},&\mbox{for all } i=1,\ldots,r-1.
\end{cases}$
\end{itemize}

 Then, we have
\[
\dim H_{\underline{\bh}}^n \geqslant 
 \oh_{k}^{(0)}\left(\rrr_{k}-\oh_{k}^{(0)}\right) + \sum_{i=1}^{r-1} \oh_{k+i}^{(i)}\left(\oh_{k+i}^{(i-1)}-\oh_{k+i}^{(i)}\right) + \sum_{i=0}^{r-1}\left(\oh_{k+i+1}^{(i)} - (n-1)\oh_{k+i}^{(i)}\right)\left(\rrr_{k+i+1}-\oh_{k+i+1}^{(i)}\right), 
\]
 where $ \rrr_k=\dim R_k$.
\end{thm}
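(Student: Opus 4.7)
The plan is to invoke the Białynicki--Birula decomposition associated to the scalar $\BG_m$-action on $\BA^n$, which preserves the Hilbert function and therefore restricts to an action on $H^n_{\underline\bh}$ with fixed locus the homogeneous locus $\OH^n_{\underline\bh}$. By \Cref{thmintro:tgnested}, at every homogeneous nesting of $2$-step ideals with strictly increasing orders each positive-weight tangent vector has weight exactly one and is unobstructed; consequently every Białynicki--Birula cell is smooth of dimension $\dim \mathsf{T}^{=1}_{[\underline I]}\Hilb^\bullet \BA^n$, and the initial-ideal morphism $H^n_{\underline\bh}\to \OH^n_{\underline\bh}$ is generically an affine bundle of this rank. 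The problem thus reduces to bounding from below the two summands $\dim \OH^n_{\underline\bh}$ and $\dim \mathsf{T}^{=1}_{[\underline I]}\Hilb^\bullet\BA^n$ at a generic point $[\underline I]$ in the natural-Betti open subset, which is non-empty by the standing hypothesis.

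The tangent summand is purely algebraic. \Cref{thmintro:tgnested} identifies $\mathsf{T}^{=1}_{[\underline I]}$ with $\bigoplus_{i=0}^{r-1}\Hom_R(I^{(i)},R/I^{(i)})_1$, and the inclusion $\Fm^{k+i+2}\subset I^{(i)}$ forces $(R/I^{(i)})_j=0$ for $j\geqslant k+i+2$. Hence every degree-one $R$-linear map vanishes on the degree-$(k+i+1)$ generators of $I^{(i)}$, is prescribed freely on a set of $\oh_{k+i}^{(i)}$ degree-$(k+i)$ generators (all the resulting syzygy conditions landing in a degree where $R/I^{(i)}$ vanishes), and therefore $\dim \Hom_R(I^{(i)},R/I^{(i)})_1=\oh_{k+i}^{(i)}(\rrr_{k+i+1}-\oh_{k+i+1}^{(i)})$. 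The homogeneous summand is estimated by parametrizing the natural-Betti locus as a tower of Grassmannians: for each $i$ one picks $I^{(i)}_{k+i}\subset I^{(i-1)}_{k+i}$ (with the convention $I^{(-1)}_k=R_k$), contributing $\oh_{k+i}^{(i)}(\oh_{k+i}^{(i-1)}-\oh_{k+i}^{(i)})$, and then $I^{(i)}_{k+i+1}\subset R_{k+i+1}$ containing $R_1\cdot I^{(i)}_{k+i}$, contributing $(\oh_{k+i+1}^{(i)}-n\oh_{k+i}^{(i)})(\rrr_{k+i+1}-\oh_{k+i+1}^{(i)})$; here the naturality of the first anti-diagonal is used to ensure $\dim R_1\cdot I^{(i)}_{k+i}=n\oh_{k+i}^{(i)}$. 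Adding the two summands and using the identity $(\oh_{k+i+1}^{(i)}-n\oh_{k+i}^{(i)})+\oh_{k+i}^{(i)}=\oh_{k+i+1}^{(i)}-(n-1)\oh_{k+i}^{(i)}$ reproduces the formula of the statement.

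The main obstacle will be to verify that the successive choices in the tower of flag varieties can actually be made in a non-empty open subset at every step, so that the dimensions truly add. This is where the two alternative numerical hypotheses enter. In the case $\sss_{\bh^{(i)}}\geqslant 0$ for all $i$, a generic $I^{(i)}_{k+i}$ makes the multiplication map $R_1\otimes I^{(i)}_{k+i}\to R_{k+i+1}$ injective and the natural-Betti extension $I^{(i)}_{k+i+1}$ exists unconditionally. In the alternative regime, the inequalities $\oh_{k+1}^{(0)}\geqslant (n-\tfrac1n)\oh_k^{(0)}$ and, for $i\geqslant 1$, $\oh_{k+i+1}^{(i)}\geqslant \max\{n-\tfrac1n,\,n-\oh_{k+i}^{(i-1)}/\rrr_{k+i+1}\}\oh_{k+i}^{(i)}$ are exactly the thresholds at which an inductive rank estimate will show that a generic $I^{(i)}_{k+i}$ still admits a natural-Betti extension to degree $k+i+1$ compatible with both the ambient $R_{k+i+1}$ and the nesting inside $I^{(i-1)}_{k+i+1}$. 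Once these inductive rank estimates are in place the dimension bound follows by the summation above.
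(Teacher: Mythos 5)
Your first two paragraphs reproduce the paper's own strategy faithfully: the reduction via the Bia{\l{}}ynicki--Birula decomposition and the unobstructedness of degree-one tangents (\Cref{cor:1nonob-nested}) to an affine bundle over the homogeneous locus, the computation $\dim\Hom_R(I^{(i)},R/I^{(i)})_1=\oh_{k+i}^{(i)}\,(\rrr_{k+i+1}-\oh_{k+i+1}^{(i)})$, and, in the case $\sss_{\bh^{(i)}}\geqslant 0$, the tower of Grassmannians $I^{(i)}_{k+i}\subset I^{(i-1)}_{k+i}$ followed by the choice of $I^{(i)}_{k+i+1}\supset R_1\cdot I^{(i)}_{k+i}$ — this is exactly \Cref{thm:homoloco}, \Cref{thm:homoloco nested} and \Cref{cor:formula dim nested}. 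The final bookkeeping identity is also the one the paper uses.

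The gap is in the second alternative. When $\sss_{\bh^{(i)}}<0$ the term $(\oh_{k+i+1}^{(i)}-n\oh_{k+i}^{(i)})(\rrr_{k+i+1}-\oh_{k+i+1}^{(i)})$ is negative, so it cannot be the dimension of a Grassmannian of extensions; and, more seriously, the tower collapses at the first step: a \emph{generic} subspace $I^{(i)}_{k+i}\subset I^{(i-1)}_{k+i}$ of the prescribed dimension has \emph{no} linear syzygies, hence $\dim R_1\cdot I^{(i)}_{k+i}=n\oh_{k+i}^{(i)}>\oh_{k+i+1}^{(i)}$ and no natural-Betti extension with the prescribed Hilbert function exists. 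The locus you actually need to parametrize is the proper subset of tuples of generators admitting exactly $-\sss_{\bh^{(i)}}$ independent linear syzygies, and its dimension is not obtained by "dimensions adding" in a flag variety. The paper handles this with a different device: the incidence correspondence $\mathscr{K}_{\bh}=\{(\varphi,\boldsymbol{p}):\boldsymbol{p}\in\ker\varphi\}$ between syzygy matrices and generator tuples (\Cref{thm: homo loco syz}, \Cref{thm:homoloco syz nested}), the normal form of \Cref{lem:pseudo canonical form} to guarantee surjectivity of the generic $\varphi$ and hence compute $\dim\ker\varphi$, and the faithful $\GL(-\sss_{\bh})\times\GL(\oh_{k})$ action to compute the fibre of $\mathscr{K}_{\bh}\dashrightarrow\mathscr{H}_{\bh}^n$ (\Cref{cor:veryfew}); the numerical hypotheses of the statement are exactly what makes the generic $\varphi$ surjective and the intersection $\ker\varphi\cap I_{k+i}^{(i-1)\,\oplus\oh_{k+i}^{(i)}}$ of the expected dimension. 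Your proposal defers all of this to "an inductive rank estimate will show", but no such estimate is supplied, and the framework you set up (free choices in a flag variety) is not the right one to produce it. As written, the second case is not proved.
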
 

It is worth mentioning that the requirements in \Cref{cor:nestveryfew} are slightly different and imply those given in the present introduction.  For the sake of readability, however, we present  a more concise statement here and defer to \Cref{subsec:2-stepfew,subsec:nest2step} for the technicalities.

Applying \Cref{thmintro: D}, we introduce functions $\Delta_{n,r,k}:\BN^{2r}\to \BQ$, indexed by the triple: \textit{"dimension, length of the nesting, and order"} that measure how much the dimension of a Hilbert stratum corresponding to a 2-step Hilbert function is expected to exceed the dimension of the smoothable component. Precisely, the  $r$-vectors $\underline{\bh}$ of 2-step Hilbert functions, of respective orders $k,\ldots,k+r-1$, for which $H^n_{\underline{\bh}}$ has dimension greater than or equal to that of the smoothable component can be identified by the sign of $\Delta_{n,r,k}$.

These functions are a key tool for most of the results in \Cref{sec:dim2,sec:red3fold}. Indeed, we are able to determine a large number of examples of \lq\lq big\rq\rq~locally closed subsets of the Hilbert scheme parametrising non-smoothable nestings of closed zero-dimensional subschemes of the affine space just studying the behaviour of a quadratic function.  To mention one result, we are able to recover the reducibility result of $\Hilb^{78}\BA^3$ proven  by Iarrobino in terms of 2-step ideals, cf.~\cite{IARRO}. Nevertheless, Iarrobino considers compressed ideals and, as we show in this paper, not all 2-step ideals are compressed. Therefore, most of the examples we present were not yet known in the literature.

We would like to emphasise that the irreducible components presented in this paper constitute only a small proportion of those that can be generated using our method. For this reason, the paper comes with the \textit{Macaulay2} package \href{www.paololella.it/software/TwoStepIdeals.m2}{\tt TwoStepIdeals.m2} and two ancillary files referenced in \Cref{sec:dim2,sec:red3fold,subsec:smallcomp,sec:small} that can be used to construct many more examples.

\subsection{Organisation of the content}
In \Cref{sec:preli}, we give the basic tools to develop our theory. Precisely, we recall the notion of Hilbert function and Betti table in \Cref{subsec:HSfun} and then we move to a more geometrical setting by introducing the main object of our study, namely the nested Hilbert scheme  of points in \Cref{subsec:prlinested} and the stratification of the punctual locus coming from the Bia{\l{}}ynicki--Birula decomposition in \Cref{subsec:BB,subsec:HSstrat}.

In \Cref{sec:2-step}, we define and study 2-step ideals. First, in \Cref{subsec:2-stepdef} we define them and we give bounds for the dimension of the $\mathbb G_m$-equivariant parts of the tangent space to the Hilbert scheme at points corresponding to homogeneous 2-step ideals. We also prove unobstructedness of positive tangent vectors. In \Cref{subsec:nest2stepbase}, we perform the same computations for nesting of 2-step ideals and we prove \Cref{thmintro:tgnested}. Then, in \Cref{subsec:2-stepnosyz,subsec:2-stepfew} we consider two sub-classes of 2-step ideals defined in terms of their first syzygy module and we prove \Cref{thmintro: D} and we define the functions $\Delta_{n,r,k}$, for $n \geqslant 2,r\geqslant 1,k\geqslant 1$, so computing the dimension of the loci parametrising them. Then, in \Cref{subsec:TNT2step} we define the potential TNT area and investigate the TNT property for 2-step ideals. Finally in \Cref{subsec:nest2step} we study the nested case.

In the remaining sections, we apply our results to show the existence of yet unknown irreducible components of the Hilbert scheme of points. In \Cref{sec:dim2} we present the surface case by proving \Cref{thm: intro A}. Then, in \Cref{sec:red3fold} we focus on smooth threefolds. First, we recover the result by Iarrobino about $\Hilb^{78}\BA^3$ in terms of 2-step ideals and we prove the existence of many non-smoothable 2-step ideals of embedding dimension 3 and order 6, 7 and 8. Then, in \Cref{subsec:3dimnest} we prove \Cref{thm: intro 3nest} concerning the nested setting. We treat dimension four in \cref{subsec:smallcomp}. In this setting, we are able to certificate the existence of  new generically reduced elementary components. \Cref{thm:intro B} is then proven in \Cref{subsec:dim4nest}.
\Cref{sec:small} is devoted to the presentation of the generically reduced elementary components of $\Hilb^{\bullet}\BA^n$, for $n=5,6$. In this section, we also highlight the existence of many loci parametrising non-smoothable 2-step ideals and we prove \Cref{thm: intro tantecomp}.

Finally Appendix \ref{legenda} consists of a legend of the notation adopted in the figures of the paper.
 
\subsection*{Acknowledgments} We thank Barbara Fantechi, Anthony Iarrobino, Roberto Notari, Ritvik Ramkumar, Matthew Satriano and Klemen \v{S}ivic for interesting conversations. We thank Sergej Monavari and Andrea Ricolfi for very useful discussions and for their precious suggestions regarding the presentation. We thank Joachim Jelisiejew and Alessio Sammartano for the support they gave to the project from the very first moments. Special thanks to the organisers of the conference \textit{“AGATES-Deformation theory workshop”} at IMPAN (Warsaw) where this project was born.  The third author thanks Enrico Arbarello for introducing him to the subject. 


\section{Preliminary material}\label{sec:preli}

\begin{notation}\label{notation:initial}
    Let $R = \BC[x_1,\ldots,x_n]$ be the polynomial ring in $n$ variables with complex coefficients and let $\mathfrak{m} = (x_1,\ldots,x_n)$ be the maximal ideal. Note that we omit the dependence on $n$ as we will take care to not create confusion later in the paper. We endow the polynomial ring $R $ with the standard grading, i.e.~$\deg(x_i)=1$, for $i=1,\ldots,n$. The $k$-th graded piece of $R$ will be denoted $R_k$. 
Similarly, given a  homogeneous ideal $I\subset R$, we denote by $I_k$ and $(R/I)_k$ the $k$-th graded piece of the ideal and the quotient respectively. Finally, we denote by $\rrr_{k}$ the dimension of the vector space $R_k$, i.e.
\[
\rrr_{k}=\dim_{\BC} R_k=\binom{k+n-1}{n-1}.
\]

Whenever not specified, a $\BC$-algebra $A$ will always be of finite type and an $A$-module $M$ will always be finitely generated.
\end{notation}
\subsection{The Hilbert function and the Betti table}\label{subsec:HSfun} In this subsection, we recall some basic invariants attached to a graded $R$-module of finite type.

\begin{definition}
Let $A=\bigoplus_{k\in\BZ} A_k$ be a graded $\BC$-algebra and let $M=\bigoplus_{t\in\BZ} M_t$ be a graded $A$-module. The \emph{Hilbert function} $\bh_M$ associated to $M$ is the function
\[
\begin{array}{rrcl}
\bh_M: & \mathbb{Z} & \to & \mathbb{N} \\ & t & \mapsto & \dim_{\BC} M_t.
\end{array}
\]
 Let $(A,\Fm_A)$ be a local Artinian $\BC$-algebra. The \emph{Hilbert function} $\bh_A$ of $A$ is defined to be the Hilbert function of its associated graded algebra  
\begin{equation}\label{eqn:grading-A-module}
\mathsf{gr}_{\Fm_A}(A) =\bigoplus_{t\geq 0}\, \mathfrak m_A^{t}/\mathfrak m_A^{t+1},
\end{equation}
where $\mathsf{gr}_{\Fm_A}(A)$ is seen as a graded module over itself.  
\end{definition}

Since some notational ambiguity is sometimes present in the literature, we recall now the definition of initial ideal.
\begin{definition}\label{def:initialideal}
    Consider  an element $f\in R$ and write it as a sum of homogeneous pieces $f=f_m+f_{m+1}+\cdots+f_{\deg(f)}$, where $f_i\in R_i$ and $f_m\not=0$. Then, the initial form of $f$ is the homogeneous polynomial $\In f= f_{m}$. Moreover, if $I\subset R $ is any ideal, its initial ideal is $
    \In I =\left(\Set{ \In f | f\in I}\right)$.
\end{definition}

\begin{remark}\label{rmk:gradini}
    When the   $\BC$-algebra $(R/I,\Fm/I)$ is local, there is an isomorphism of graded algebras
    $\mathsf{gr}_{\Fm_A}(A) \cong R/(\In I)$,
     see \cite[\S 5.4]{EISENBUD}.
\end{remark}

\begin{notation}\label{notation:HStutti}
Whenever no confusion is possible, given a homogeneous ideal $I\subset R$, we will write $\oh_{k}$ for the value of the Hilbert function $\bh_I(k)$ of the ideal $I$ and $\uh_{k}$ for the value of the Hilbert function $\bh_{R/I}(k)$ of the quotient $R/I$. To have a compact notation, sometimes we encode the Hilbert function of a graded module $M$ in the so-called Hilbert series $\bH_M(T)=\sum_{t\in \BZ}\bh_M(t)T^t$.
\end{notation}
 
We recall now the definition and the main properties of the graded Betti numbers, see \cite{SYZYGIES} for more details.  Recall that any finitely generated graded $R$-module $M$ admits a minimal graded free resolution, i.e.~an exact sequence  $\begin{tikzcd}
0& M \arrow[l]&  F_\bullet \arrow[l],    
\end{tikzcd}$
where
\[
\begin{tikzcd}
F_\bullet:&\cdots& F_{i-1} \arrow[l]&  F_{i}\arrow[l,"{\delta_i}"']&\cdots\arrow[l],    
\end{tikzcd}
\]
and each $F_i$ can be written as 
\begin{equation}\label{eq:bettinumber}
    F_i=\bigoplus_j R(-j)^{\oplus\beta_{i,j}(M)},
\end{equation}
and such that $\delta _i(F_i)\subset \Fm F_{i-1}$. Moreover, a resolution with these properties is unique up to canonical, \cite[Section 20.1]{EISENBUD}.
\begin{definition}\label{def:regularity}
The natural numbers $\beta_{i,j}(M)$ in \eqref{eq:bettinumber} are the graded Betti numbers of the module $M$. Usually, they are arranged in the so-called Betti table (see \Cref{fig:bettitable}). The regularity $\reg(I)$ of a homogeneous ideal $I\subset R$ is the integer $\reg(I) = \max\{ j-i\ \vert\ \beta_{ij}(I) \neq 0 \}$. For the sake of brevity, we will omit the dependence on $I$ in the notation of the graded Betti numbers taking care not to cause any possible confusion. Moreover, for a non-homogeneous ideal $I\subset R$, the integer $\beta_{i,j}(I)$ is defined to be the $(i,j)$-th graded Betti number of its initial ideal $\In I$.
\begin{figure}[ht]
    \centering
        \begin{tabular}{c|ccccc}
             &0&  $\cdots$ &$i$ &$\cdots$ & $n$\\ 
             \hline
             $\vdots$&  $\ddots$ &$\cdots$   &$\vdots$ &$\cdots$&$\cdots$\\
             $j$&  $\cdots$   &$\cdots$   &$\beta_{i,i+j}$ &$\cdots$&$\cdots$\\
             $\vdots$&  $\vdots$  &$\cdots$   &$\vdots$ &$\cdots$ &$\ddots$
        \end{tabular}
    \caption{The Betti table}
    \label{fig:bettitable}
\end{figure}
\end{definition}
By convention, all the non-displayed  entries correspond to zero Betti numbers.
The choice of indices in the Betti table and the number of displayed columns are motivated by the following propositions that we shall use implicitly later in the paper.
\begin{prop}[{\cite[Proposition 1.9]{SYZYGIES}}]\label{thmhilbertsyz} Let ${\beta_{i,j}}$, for $i,j\in\BZ$, be the graded Betti numbers of a graded $R$-module. If for a given $i$ there is an integer $d$ such that $\beta_{i,j} = 0$ for all $j < d$, then $\beta_{i+1,j+1} = 0$ for all $j < d$. 
\end{prop}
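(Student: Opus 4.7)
The plan is to work directly with the minimal graded free resolution $F_\bullet \twoheadrightarrow M$ used to define the Betti numbers, exploiting the two ingredients of minimality, namely that the entries of every differential lie in $\mathfrak{m}$ and that the complex is exact at $F_{i+1}$. I would argue by contradiction, fixing $j < d$ and assuming $\beta_{i+1,j+1} > 0$, so that there is a basis generator $e$ of $F_{i+1}$ of internal degree $j+1$.

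By minimality, $\delta_{i+1}(e) \in (\mathfrak{m} F_i)_{j+1}$. The next step is to identify this subspace explicitly: from $F_i = \bigoplus_{j'} R(-j')^{\oplus \beta_{i,j'}}$ one reads
\[
(\mathfrak{m} F_i)_{j+1} \;=\; \bigoplus_{j'} \mathfrak{m}_{j+1-j'}^{\oplus \beta_{i,j'}},
\]
which is nonzero only if there exists $j' \leq j$ with $\beta_{i,j'} \neq 0$. But the hypothesis forces $j' \geq d$, while $j' \leq j < d$, so no such index exists. Consequently $(\mathfrak{m} F_i)_{j+1} = 0$ and $\delta_{i+1}(e) = 0$.

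To conclude I would contradict minimality via the graded Nakayama lemma. Exactness at position $i+1$ forces $e \in \operatorname{image}(\delta_{i+2})$, say $e = \delta_{i+2}(v)$. Since $e$ is part of a free basis it is nonzero in $F_{i+1}/\mathfrak{m} F_{i+1}$, so the composition $F_{i+2} \xrightarrow{\delta_{i+2}} F_{i+1} \twoheadrightarrow F_{i+1}/\mathfrak{m} F_{i+1}$ is nontrivial, contradicting the defining minimality condition $\delta_{i+2}(F_{i+2}) \subset \mathfrak{m} F_{i+1}$.

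The only delicate step is the degree bookkeeping in the computation of $(\mathfrak{m} F_i)_{j+1}$; beyond this, minimality and exactness are both built into the definition of a minimal graded free resolution, and the graded Nakayama lemma applies since $R$ carries the standard grading with $R_0 = \mathbb{C}$.
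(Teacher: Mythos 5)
Your proof is correct. The paper gives no proof of its own here---the statement is quoted verbatim from Eisenbud's \emph{The Geometry of Syzygies}, Proposition 1.9---and your argument is essentially the standard one: minimality places $\operatorname{im}(\delta_{i+1})$ inside $\mathfrak{m} F_i$, which vanishes in all degrees $\leq d$ because $F_i$ is generated in degrees $\geq d$, and your contradiction via exactness at $F_{i+1}$ together with the graded Nakayama lemma is just the contrapositive packaging of the observation that the $(i+1)$-st syzygy module therefore has no minimal generators in degree $\leq d$.
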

\begin{prop}[{\cite[Hilbert syzygy theorem]{AproduNagel}}]  
  Any graded finitely generated $\mathbb{C}[x_1,  \ldots,x_n]$-module $M$ has a graded free resolution of length at most $n$.
\end{prop}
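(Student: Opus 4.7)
The plan is to deduce the bound from the Koszul resolution of the residue field $\BC = R/\mathfrak{m}$. First I would invoke the existence of a minimal graded free resolution $F_\bullet \to M$, which the paper has already introduced, constructed inductively by lifting a minimal set of homogeneous generators of $M$ and then, at each stage, of the kernel of the previous differential. Minimality is equivalent to the condition $\delta_i(F_i) \subset \mathfrak{m} F_{i-1}$ recalled in the excerpt, which forces every differential in the complex $F_\bullet \otimes_R \BC$ to vanish. Consequently one obtains the identification
\[
\operatorname{Tor}_i^R(M, \BC) \;\cong\; F_i / \mathfrak{m} F_i.
\]

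Next, I would compute the same $\operatorname{Tor}$ groups using a different resolution, namely the Koszul complex $K_\bullet$ on the variables, with $K_i = \bigwedge^i R^n(-i)$. Since $x_1, \ldots, x_n$ is a regular sequence, $K_\bullet$ is an acyclic complex of graded free $R$-modules resolving $\BC$, and crucially its length is exactly $n$. Tensoring $K_\bullet$ with $M$ and taking homology computes $\operatorname{Tor}_i^R(M, \BC)$, which therefore vanishes for $i > n$ simply because $K_i = 0$ in those homological degrees.

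Combining the two computations yields $F_i / \mathfrak{m} F_i = 0$ for all $i > n$. Since each $F_i$ is a finitely generated graded free module bounded below in internal degree, the graded Nakayama lemma forces $F_i = 0$ for every $i > n$, so the minimal graded free resolution has length at most $n$, which proves the theorem. The main obstacle, or rather the technical input that I would cite rather than reprove, consists of two standard facts of graded commutative algebra: the existence of a minimal graded free resolution of a finitely generated graded $R$-module, which relies on Noetherianity of $R$ to ensure each successive syzygy module is again finitely generated, and the exactness of the Koszul complex on the regular sequence $x_1, \ldots, x_n$.
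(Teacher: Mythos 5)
Your argument is correct: the identification $\operatorname{Tor}_i^R(M,\BC)\cong F_i/\mathfrak m F_i$ from minimality, the vanishing of $\operatorname{Tor}_i^R(M,\BC)$ for $i>n$ via the Koszul resolution of $\BC$, and the graded Nakayama step together give exactly the classical proof of the Hilbert syzygy theorem. The paper does not prove this statement at all — it is quoted from Aprodu--Nagel — and your proof is the standard one found there, so there is nothing to compare beyond noting that your write-up fills in the argument the paper delegates to its reference.
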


\subsection{The nested Hilbert scheme of points and its tangent space}\label{subsec:prlinested}
In this subsection, we recall some well-known facts about classical and nested Hilbert schemes of points and we settle the notation. Although the nested Hilbert scheme is  considered a generalisation of the classical Hilbert scheme defined by Grothendieck, we present here the theory in the nested setting as many of the applications of our results concern this generalisation. The classical case will then be recovered as a special instance of the nested one.
	
	Let $X$ be a quasi-projective variety, and let $Z\hookrightarrow X$ be a closed subscheme defined by the ideal sheaf $\OI_Z\subset \OO_X$. When $Z$ is a zero-dimensional subscheme, the ring $H^0(Z, \OO_Z)$ is a semilocal Artinian $\BC$-algebra and, as a consequence, it is a finite-dimensional vector space over the field of complex numbers. The complex dimension of $H^0(Z,\OO_Z)$ is called the \textit{length} of $Z$ or the \textit{colength} of $\OI_Z$. We denote it by $d_Z$ (or $d_{\OI_Z}$)
	\[
	d_Z=\length Z=\colen \OI_Z = \dim_\BC H^0(Z, \OO_Z).
	\]

\begin{notation}
    In order to ease the notation, for any vector $\underline{d}\in \BZ^r$ we  denote by $d_i$, for $i=1,\ldots,r$, its entries. Moreover, if $\underline{d}\in \BZ^r$ is a non-decreasing sequence of positive integers, a $\underline{d}$-nesting (or simply $r$-nesting) $\underline{Z}$ in $X$ is a sequence $\underline{Z}=(Z^{(1)},\ldots,Z^{(r)})$ of closed zero-dimensional subschemes $Z^{(1)}\subset\cdots \subset Z^{(r)}\subset X$ such that $\length Z^{(i)}=d_i$, for $i=1,\ldots,r$. Finally, the support of the nesting $   \underline Z$ is the set-theoretic support of the scheme $Z^{(r)}$, i.e.~$\Supp \underline{Z}= \Spec( \OO_X /\sqrt{\OI_{Z^{(r)}}})$.
\end{notation}

	When $Z$ is a zero-dimensional closed subscheme of $X$ and $H^0(Z,\OO_Z)$ is a local $\BC$-algebra, i.e.~the support of $Z$ consists of one point, we  say that $Z$ is a \textit{fat point}. Similarly, given a non-decreasing sequence of positive integers $\underline{d}\in\BZ^r$, a \textit{fat nesting} in $X$ is a nesting $\underline{Z}=(Z^{(i)})_{i=1}^r$ of fat points in $X$.

Let $X$ be a smooth quasi-projective variety and let $\underline{d} \in \BZ^r$ be a non-decreasing   sequence of positive integers. The $\underline{d}$-nested Hilbert functor  of $X$  is the contravariant functor $\underline{\Hilb}^{\underline{d}}X:\Sch^{\opp}_{\BC} \to \Sets$ defined as follows
\begin{equation}\label{eq:defHilb}
    \left(\underline{\Hilb}^{\underline{d}}X\right)(S)=\Set{\OZ^{(1)}\subset \cdots \subset \OZ^{(r)} \subset X\times S | \OZ^{(i)}  \mbox{ closed },\ S\mbox{-flat, $S$-finite, }  \length_S\OZ^{(i)}=d_i,\mbox{ for }i=1,\ldots,r   }, 
\end{equation}
where $\length_S$ denotes the $S$-relative length. The functor  $\underline{\Hilb}^{\underline{d}}X$ is representable  by a quasi-projective scheme, see \cite[Theorem 4.5.1]{sernesi} and \cite{Kleppe}. We call\footnote{This scheme is sometimes called flag Hilbert scheme.} it the \textit{$\underline{d}$-nested Hilbert scheme}  and we denote it by $\hilbert{\underline{d}}{X}$. Recall that the closed points of $\hilbert{\underline{d}}{X}$ are in bijection with the $\underline{d}$-nestings of closed subschemes of $X$. For this reason, we denote points of the nested Hilbert scheme by $[\underline{Z}]$. Notice that, for  $r=1$, one recovers the classical Hilbert functor, whose representability was proven by Grothendieck in \cite{Grothendieck_Quot}. 

We will often denote by $\Hilb^\bullet X$ the scheme locally of finite type 
\[
\Hilb^\bullet X= \coprod_{r \geqslant 1} \coprod_{\underline{d}\in\BZ^r} \Hilb^{\underline{d}} X.
\]
It is worth mentioning that the scheme $\Hilb^\bullet X$ represents a functor analogous to the one given in \Cref{eq:defHilb}. Precisely, the functor associating to a base scheme $S$, the set of nestings of $S$-families without restriction on the number of nestings and on the  relative lengths. Notice that the connected components of $\Hilb^\bullet X$ are precisely the $\underline{d}$-nested Hilbert schemes of points on $X$, see \cite{FOGARTY,CHEACELLULAR} and therein references.

\begin{remark}\label{rem:closedinprod}
    Notice that the nested condition identifies the nested Hilbert scheme $\hilbert{\underline{d} }{X}$ with the closed subscheme of the product ${\prod}_{i=1}^r \hilbert{d_i}{X}$ cut out by the nesting conditions, see \cite{sernesi}.
\end{remark}

\begin{remark}\label{rem:etale}
     Since the questions we address in this paper are local in nature and our results concern smooth quasi-projective varieties, it is fair to put $X\cong \BA^n$ and hence to work up to \'etale covers. Moreover, whenever not specified, a fat nesting $\underline{Z}=(Z^{(i)})_{i=1}^r$ will be implicitly assumed to be supported at the origin $0\in \BA^n$, i.e.~such that the defining ideal of the scheme $Z^{(r)}$ is $\Fm$-primary.  
 \end{remark}
 
 As there is a bijection between closed subschemes $Z\subset \BA^n$ and their defining ideals $I_Z\subset R$, we will denote points of the $\underline{d}$-nested Hilbert scheme $\hilbert{\underline{d}}{\BA^n}$ by $[Z^{(1)}\subset \cdots \subset Z^{(r)}]$ or $[I_{Z^{(1)}}\supset \cdots \supset I_{Z^{(r)}}]$ referring to both as $\underline{d}$-nestings (or simply $r$-nestings).
	
	Recall that the $\underline{d}$-nested Hilbert scheme has always  a distinguished component. Precisely, the \emph{smoothable component}. It is defined as the closure of the open subscheme $U\subset \hilbert{\underline{d}}{\BA^n}$ parametrising $\underline{d}$-nestings $\underline{Z}$ with $ Z^{(r)}$ reduced. We denote it by $\Hilb^{\underline{d}}_{\mathrm{sm}}{ \BA^n} $ and we refer to points in $\Hilb^{\underline{d}}_{\mathrm{sm}}{ \BA^n} $ as \emph{smoothable points}.
	
	\begin{definition}
		An irreducible component $V\subset   \hilbert{\bullet}{\BA^n}$ is \textit{elementary} if it parametrises just fat nestings, and \textit{composite} otherwise. 
        \end{definition}
 
In  \Cref{subsec:smallcomp}, we give new examples of  elementary components on $\Hilb{\BA^4}$ of dimension smaller or equal to the dimension of the smoothable one.  This information contributes to the knowledge of the existence of irreducible components on $\hilbert{\bullet}{\BA^4}$ as every irreducible component is generically étale-locally a product of elementary components, see \cite{Iarrocomponent}. 

	We conclude this subsection by reporting on the tangent space of $ \hilbert{ }{\BA^n}$. The following result characterises the tangent space to the classical Hilbert scheme at a given point $[I]\in\hilbert{d}{\BA^n}$.
	
	\begin{theorem}[{\cite[Corollary~6.4.10]{FGAexplained}}]
		Let $d > 0$ be a positive integer and let $[I]\in \hilbert{d}{\BA^n}$ be any point. Let $\mathsf{T}_{[I]}\hilbert{d}{\BA^n}$ denote the tangent space to $\hilbert{d}{\BA^n}$ at $[I]$. Then, there is a canonical isomorphism
		\begin{equation}\label{eq:isotg}
		    \mathsf{T}_{[I]}\hilbert{d}{\BA^n} \simeq \Hom_{R}(I, R/I)  .
		\end{equation}
	\end{theorem}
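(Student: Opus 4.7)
The plan is to compute the tangent space functorially, by classifying $\Spec\BC[\epsilon]/(\epsilon^2)$-valued points of $\Hilb^{d}\BA^{n}$ that restrict to $[I]$ at the closed point. By the representability of the Hilbert functor recalled in \Cref{subsec:prlinested}, these correspond bijectively to closed subschemes $\OZ\subset \BA^{n}\times \Spec \BC[\epsilon]/(\epsilon^2)$ flat over $\Spec \BC[\epsilon]/(\epsilon^2)$ whose special fibre is $\Spec R/I$. Equivalently, they correspond to ideals $\widetilde{I}\subset R[\epsilon]/(\epsilon^2)$ such that $\widetilde{I}$ reduces modulo $\epsilon$ to $I$ and the quotient $R[\epsilon]/(\epsilon^2)\big/\widetilde{I}$ is flat over $\BC[\epsilon]/(\epsilon^2)$. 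The standard criterion for flatness over the dual numbers says the latter is equivalent to the equality $\widetilde{I}\cap \epsilon R[\epsilon]/(\epsilon^2)=\epsilon I$ inside $R[\epsilon]/(\epsilon^2)$.

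Next, I would construct the canonical map $\widetilde{I}\mapsto \varphi_{\widetilde{I}}\in\Hom_{R}(I,R/I)$. Given such a first-order deformation, each $f\in I$ lifts to some element $f+\epsilon g\in\widetilde{I}$; flatness guarantees that if $f+\epsilon g$ and $f+\epsilon g'$ both lie in $\widetilde{I}$, then $\epsilon(g-g')\in\widetilde{I}\cap \epsilon R[\epsilon]/(\epsilon^2)=\epsilon I$, so $g-g'\in I$. Hence the rule $\varphi_{\widetilde{I}}(f):=g\bmod I$ is well-defined. The fact that $\widetilde{I}$ is an ideal of $R[\epsilon]/(\epsilon^2)$ translates directly into $R$-linearity: for $r\in R$ and $f\in I$, multiplying $f+\epsilon g\in\widetilde{I}$ by $r$ gives $rf+\epsilon rg\in\widetilde{I}$, so $\varphi_{\widetilde{I}}(rf)=rg\bmod I=r\varphi_{\widetilde{I}}(f)$.

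For the inverse, given $\varphi\in\Hom_{R}(I,R/I)$ I would choose any $\BC$-linear set-theoretic lift $\widetilde{\varphi}\colon I\to R$ of $\varphi$ and define
\[
\widetilde{I}_{\varphi} \;=\; \bigl\{\,f+\epsilon\,\widetilde{\varphi}(f)\;:\;f\in I\,\bigr\}\;+\;\epsilon\,I\cdot R[\epsilon]/(\epsilon^2).
\]
The $R$-linearity of $\varphi$ (modulo $I$) ensures that $\widetilde{I}_{\varphi}$ is closed under multiplication by $R[\epsilon]/(\epsilon^2)$, and by construction the flatness criterion $\widetilde{I}_{\varphi}\cap \epsilon R[\epsilon]/(\epsilon^2)=\epsilon I$ holds. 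A different choice of lift $\widetilde{\varphi}'$ differs from $\widetilde{\varphi}$ by a map $I\to I$, so the associated ideal is unchanged, showing the construction depends only on $\varphi$.

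Finally, I would check that the two constructions are mutually inverse and preserve the $\BC$-linear structure. The main (only) point requiring care is the bookkeeping in the second paragraph: one must check that the map $\varphi_{\widetilde{I}}$ is well-defined on all of $I$ and not merely on a generating set, which is exactly where the flatness identity $\widetilde{I}\cap\epsilon R[\epsilon]/(\epsilon^2)=\epsilon I$ is used. Apart from this, everything is formal and yields the canonical isomorphism \eqref{eq:isotg}.
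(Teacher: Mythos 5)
Your argument is correct: it is the standard identification of $\mathsf{T}_{[I]}\hilbert{d}{\BA^n}$ with first-order embedded deformations over $\Spec\BC[\epsilon]/(\epsilon^2)$, using the flatness criterion $\widetilde{I}\cap\epsilon R[\epsilon]/(\epsilon^2)=\epsilon I$ to get a well-defined $R$-linear map $I\to R/I$ and conversely. The paper does not prove this statement itself — it simply cites \cite[Corollary~6.4.10]{FGAexplained} — and your proof is essentially the one given there, so there is nothing to reconcile; the only cosmetic omission is the (automatic) remark that flatness over the Artinian local ring $\BC[\epsilon]/(\epsilon^2)$ forces the deformed family to have the same relative length $d$, so that $\widetilde{I}_\varphi$ really defines a point of $\hilbert{d}{\BA^n}$.
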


    Let us now fix some non-decreasing sequence $\underline{d}\in \BZ^r_{>0 }$  of positive integers  and a point $[\underline{I}] \in \hilbert{\underline{d}}{\BA^n}$. Recall that $\hilbert{\underline{d}}{\BA^n}$ naturally sits inside the product ${\Pi}_{i=1}^r \hilbert{d_i}{\BA^n}$ as a closed subscheme, see \Cref{rem:closedinprod}. This gives a natural identification of the tangent space $\mathsf T_{[\underline{I}]} \hilbert{\underline{d}}{\BA^n}$ with the vector subspace of the direct sum $ \bigoplus_{i=1}^r\mathsf T_{[I^{(i)}]}\hilbert{d_i}{\BA^n}$   consisting of $r$-tuples $(\varphi_i)_{i=1}^r$ making all the squares of the following diagram 
\begin{equation}\label{eq:tangentvector}
    \begin{tikzcd}[row sep=huge,column sep=huge]
     I^{(1)}\arrow[d,"\varphi_1"'] & I^{(2)}\arrow[l,hook']\arrow[d,"\varphi_2"]& I^{(3)}\arrow[l,hook']\arrow[d,"\varphi_3"]& I^{(r-1)}\arrow[d,"\varphi_{r-1}"]\arrow[l,dotted,hook']&I^{(r)}\arrow[l,hook']\arrow[d,"\varphi_r"]\\
     R/I^{(1)} & R/I^{(2)}\arrow[l,two heads]& R/I^{(3)}\arrow[l,two heads]& R/I^{(r-1)}\arrow[l,dotted,two heads]& R/I^{(r)}\arrow[l,two heads],
\end{tikzcd}
\end{equation}
commute {\cite[Section 4.5]{sernesi}}.

\begin{remark}
    Recall that, by the results in  \cite{FOGARTY,CHEACELLULAR}  the connected components of $\Hilb^\bullet \BA^n$ are precisely the $\underline{d}$-nested Hilbert schemes for $\underline{d}\in\BZ^r$ non-decreasing sequence of positive integers (with possibly $r=1$). Therefore, there is a canonical isomorphism
    \begin{equation}\label{eq:tgcc}
        \mathsf{T}_{[\underline{Z}]}\hilbert{\underline{d}}{\BA^n}\cong\mathsf{T}_{[\underline{Z}]}\hilbert{\bullet}{\BA^n}.
    \end{equation}
    In what follows we will intensively adopt the identification in \eqref{eq:tgcc} to ease the notation.
\end{remark}

\subsection{The Bia{\l{}}ynicki--Birula decomposition}\label{subsec:BB}

Let $Z\subset \BA^n$ be a fat point supported at the origin  $0\in\BA^n $ defined by an $\Fm$-primary ideal $I_Z \subset R$. Put 
\[ (I_Z)_{\geqslant k} = I_Z\cap \Fm^k\qquad\mbox{ and }\qquad (R/I_Z)_{\geqslant k}= (\Fm^k + I_Z)/I_Z \subset R/I_Z. \]

\begin{definition}\label{def:negativetangents}
Let $\underline Z = (Z^{(1)}\subset \cdots\subset Z^{(r)}$) be a fat nesting supported at the origin $0\in\BA^n$.
Then, the \textit{non-negative part of the tangent space} $\mathsf T_{ [\underline{Z}]}  \hilbert{\bullet}{\BA^n}$  is the following vector subspace
\[
\mathsf T_{ [\underline{Z}]}^{\geqslant0}  \hilbert{\bullet}{\BA^n} = \left\{\varphi\in\mathsf T_{ [\underline{Z}]} \hilbert{\bullet}{\BA^n}\ \middle\vert\ \varphi\big((I_{Z^{(i)}})_{\geqslant k}\big) \subset (R/I_{Z^{(i)}})_{\geqslant k}\mbox{ for all }k \in\BN\mbox{ and }i=1,\ldots,r\right\}. 
\]
While, the \textit{negative tangent space} at $[\underline{Z}]\in \hilbert{\bullet}{\BA^n}$ is the quotient vector space
\[
\mathsf T_{ [\underline{Z}]}^{< 0}  \hilbert{\bullet}{\BA^n}=\frac{ \mathsf T_{ [\underline{Z}]}  \hilbert{\bullet}{\BA^n}}{ \mathsf T_{ [\underline{Z}]}^{\geqslant0}  \hilbert{\bullet}{\BA^n}}.
\]
\end{definition}

    Note that non-negative tangent vectors can be understood as concatenations of commutative diagrams of the form \eqref{eq:tangentvector}, where $\varphi_i\in \mathsf T_{[Z^{(i)}]}^{\ge0} \hilbert{\bullet}{\BA^n}$, for all $i=1,\ldots,r$. 
The non-negative part of the tangent space can be interpreted as the tangent space to the so-called Bia{\l{}}ynicki--Birula decomposition, whose definition we recall now. Consider the diagonal action of the torus $\mathbb G_m=\Spec \BC[s,s^{-1}]$ on $ \hilbert{\underline{d}}{\BA^n} $ given by homotheties. Then, the Bia{\l{}}ynicki--Birula decomposition is the quasi-projective scheme $\hilbert{\underline{d},+}{\BA^n}$ representing the following functor
\[ 
   \left( \underline{\Hilb}^{\underline{d},+}\BA^n\right)(B) = \left\{\varphi\colon \overline{\mathbb G}_m \times B \rightarrow \hilbert{\underline{d}}{\BA^n} \ \middle\vert\   \varphi \mbox{ is $\mathbb G_m$-equivariant}\right\} 
\]
where, by convention $\overline{\mathbb G}_m= \Spec \BC[ s^{-1}]$. 
\begin{remark}\label{rem:BBdentroHilb}
    Set-theoretically, the Bia{\l{}}ynicki--Birula decomposition is the subset of the nested Hilbert scheme parametrising fat nestings supported at the origin $0\in\BA^n$. Notice that, under this association, every point $[\underline I]\in\Hilb^{\underline{d},+}\BA^n$ has an open neighbourhood that can be interpreted as a locally closed subscheme of $\Hilb^{\bullet}\BA^n$.   
\end{remark}

According to the above notation, we put
\[
\Hilb^{\bullet,+}\BA^n=\coprod_{r\geqslant 1} \coprod_{\underline d\in\BZ^r} \Hilb^{\underline d,+} \BA^n 
\] 

The following proposition from \cite{ELEMENTARY} expresses the tangent space   $\mathsf T_{ [\underline{Z}]}  \Hilb^{+} \BA^n$  in  terms of the non-negative tangent space at $[\underline{Z}]\in \hilbert{\bullet}{\BA^n}$. We adopt the identification on tangent spaces analogous to \Cref{eq:tgcc}.
\begin{prop}[{\cite[Theorem 4.11]{ELEMENTARY}}]\label{rem:nonneg-punctual}
Let $[\underline{Z}]\in \hilbert{\bullet,+}{\BA^n}$ be a fat nesting. Then, we have
\[
    \mathsf T_{ [\underline{Z}]}  \hilbert{\bullet,+}{\BA^n}= \mathsf T_{ [\underline{Z}]}^{\geqslant0}  \hilbert{\bullet}{\BA^n} .
\]    
\end{prop}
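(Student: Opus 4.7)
\emph{Plan.} I would work directly from the functorial definition of the Bia{\l{}}ynicki--Birula scheme. A tangent vector at $[\underline Z]$ to $\Hilb^{\bullet,+}\BA^n$ is a $\BG_m$-equivariant morphism $\Phi\colon \overline{\BG}_m \times T_\epsilon \to \Hilb^\bullet \BA^n$, with $T_\epsilon = \Spec\BC[\epsilon]/(\epsilon^2)$, restricting to the canonical equivariant orbit map $\sigma \mapsto \sigma \cdot [\underline Z]$ at $\epsilon = 0$. Since $\BG_m$ acts transitively on $\BG_m \subset \overline{\BG}_m$, the restriction of $\Phi$ to $\BG_m \times T_\epsilon$ is forced to be the $\BG_m$-orbit of its value at $\sigma = 1$, which is an ordinary tangent vector $\varphi = (\varphi_i)_{i=1}^r \in \mathsf T_{[\underline Z]}\Hilb^\bullet\BA^n$. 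So the proposition boils down to characterising which $\varphi$ extend $\BG_m$-equivariantly across the fixed point $\sigma = 0$.

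\emph{Key computation.} The first-order deformation of $I^{(i)}$ associated to $\varphi_i$ is
\[
I^{(i)}_\epsilon = \bigl\{ f + \epsilon\, g\ \bigl\vert\ f \in I^{(i)},\ g \in R,\ g + I^{(i)} = \varphi_i(f) \bigr\}.
\]
Applying the $\BG_m$-action $x_j \mapsto \sigma x_j$ to a generator $f + \epsilon g$, with $f$ of $\Fm$-adic order $k$ and $g$ of order $m$, and rescaling by $\sigma^{-k}$, produces
\[
\sum_{j \geqslant k} \sigma^{j-k} f_j \ +\ \epsilon \sum_{\ell \geqslant m} \sigma^{\ell - k} g_\ell,
\]
where $f = \sum_j f_j$ and $g = \sum_\ell g_\ell$ are the homogeneous decompositions. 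This expression extends polynomially across $\sigma = 0$ exactly when $m \geqslant k$. Since the representative $g$ of $\varphi_i(f) \in R/I^{(i)}$ can be freely modified by any element of $I^{(i)}$, a representative of order at least $k$ exists if and only if the class $\varphi_i(f)$ lies in $(\Fm^k + I^{(i)})/I^{(i)} = (R/I^{(i)})_{\geqslant k}$. Imposing this for every $f \in (I^{(i)})_{\geqslant k}$ and every $k$ is precisely the defining condition of $\mathsf T^{\geqslant 0}_{[\underline Z]}\Hilb^\bullet\BA^n$.

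\emph{Closing the argument and main obstacle.} The two inclusions follow by reading the computation in both directions: given $\Phi$, restriction at $\sigma = 1$ produces a $\varphi$ whose generators admit order-preserving representatives, so $\varphi \in \mathsf T^{\geqslant 0}$; given $\varphi \in \mathsf T^{\geqslant 0}$, order-preserving representatives assemble into an ideal $\widetilde I^{(i)}_\epsilon \subset R[\sigma, \epsilon]/(\epsilon^2)$ yielding a $\BG_m$-equivariant $\Phi$. The nested commutative squares of \eqref{eq:tangentvector} are preserved because the inclusions $I^{(i+1)} \subset I^{(i)}$ themselves respect the $\Fm$-adic filtration, so compatible representatives propagate along the nesting. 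The main technical point is checking that $\widetilde I^{(i)}_\epsilon$ is flat over $\BC[\sigma, \epsilon]/(\epsilon^2)$ with colength $d_i$ at $\sigma = 0$: this reduces, via the local criterion of flatness for the square-zero thickening $\BC[\epsilon]/(\epsilon^2) \to \BC$, to the classical flatness of the Rees-type degeneration of the $\Fm$-adic filtration on $I^{(i)}$, and is the step demanding the most care.
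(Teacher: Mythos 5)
The paper does not actually prove this proposition: it is imported as a direct citation of \cite[Theorem 4.11]{ELEMENTARY}, so there is no in-paper argument to compare against. Your reconstruction follows the strategy of that cited source — describe tangent vectors to $\hilbert{\bullet,+}{\BA^n}$ functorially as $\BG_m$-equivariant maps $\overline{\BG}_m\times\Spec\BC[\epsilon]/(\epsilon^2)\to\Hilb^{\bullet}\BA^n$, reduce by equivariance to the fibre at $\sigma=1$, and translate existence of the limit at $\sigma=0$ into the filtration condition $\varphi_i\big((I^{(i)})_{\geqslant k}\big)\subset(R/I^{(i)})_{\geqslant k}$ — and the generator-by-generator order computation is the correct mechanism, including the observation that the nested condition is harmless because the BB functor is compatible with the closed embedding into the product of Hilbert schemes.

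That said, the two load-bearing steps are asserted rather than proved. For the inclusion $\mathsf T_{[\underline Z]}\hilbert{\bullet,+}{\BA^n}\subseteq\mathsf T^{\geqslant 0}_{[\underline Z]}\hilbert{\bullet}{\BA^n}$, it is not enough to say that restriction at $\sigma=1$ ``produces a $\varphi$ whose generators admit order-preserving representatives'': you must rule out that $\varphi_i(f)\notin(\Fm^k+I^{(i)})/I^{(i)}$ for some $f$ of order $k$. The standard route is to note that $R[\sigma,\epsilon]/\mathcal I$ is $\sigma$-torsion-free by flatness, so $\sigma^{-j}\big(\sigma\cdot(f+\epsilon g)\big)$ lies in the total ideal $\mathcal I$ for $j\leqslant k$; evaluating at $\sigma=0$ and using flatness of the central fibre over $\BC[\epsilon]/(\epsilon^2)$ (i.e.\ $\{h:\epsilon h\in\mathcal I_0\}=\In I^{(i)}$) forces each homogeneous piece $g_j$ with $j<k$ into $(\In I^{(i)})_j$, after which one corrects $g$ by elements of $I^{(i)}$ to land in $\Fm^k$. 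For the converse, the base $\BC[\sigma,\epsilon]/(\epsilon^2)$ is non-reduced, so the ``constant colength over a reduced base'' shortcut used elsewhere in the paper (e.g.\ in the proof of \Cref{thm:1nonob}) is unavailable; you correctly name the local criterion of flatness for the square-zero extension, but that reduction is precisely where the content sits and should be written out. Neither point is a wrong turn — both are carried out in the cited source — but as submitted the proof defers exactly the steps that carry the weight.
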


\begin{remark}\label{rem:deftheta}
  As shown in \cite{ELEMENTARY}, when $[Z]\in \Hilb^{d,+} \BA^n$ is a fat point,  the tangent space to $\BA^n$ at its support $\{0\}=\Supp Z\subset\BA^n$ maps to the tangent space to $ \Hilb^{d} \BA^n$ at $[Z]$. Similarly, this happens for fat nestings and we give now some details. Let us identify the partial derivatives $\frac{\partial}{\partial x_j}$, for $j=1,\ldots,n$, with a basis of the tangent space $\mathsf T_0\BA^n$ and let us consider a fat nesting $[\underline{Z}]\in \hilbert{\bullet,+}{\BA^n}$. In this setting we have a natural map
  \[
    \begin{tikzcd}
         \mathsf T_{0} \BA^n \arrow[r,"\widetilde\theta"] & \mathsf T_{[\underline{Z}]}\Hilb^{\bullet} \BA^n,
    \end{tikzcd}
  \]
associating tangent vectors to $\BA^n$ at the origin to first order deformations consisting of translations. More precisely, the partial derivative $\frac{\partial}{\partial x_j}$, for $j=1,\ldots,n$, maps to an infinitesimal first order translation of all the schemes $Z^{(i)}$, for $i=1,\ldots,r$, along the $j$-th coordinate axis preserving the nesting conditions.

  We denote by  $\theta : \mathsf T_{0} \BA^n \to \mathsf T_{ [\underline{Z}]}^{< 0} \Hilb^{\bullet} \BA^n
  $ the map defined as  the composition of $\widetilde \theta$ with the canonical projection defining the negative tangent space, see \Cref{def:negativetangents}.
\end{remark}
\begin{definition}\label{def:TNT}
    Let $[\underline{Z}]\in \Hilb^{+} \BA^n$ be a fat nesting. Then, $[\underline{Z}]$ has TNT (Trivial Negative Tangents) if the   map 
\[
         \mathsf T_{0} \BA^n  \xrightarrow{\theta} \mathsf T_{ [\underline{Z}]}^{< 0}  \Hilb^\bullet \BA^n
\]
    is surjective.
\end{definition} 
\Cref{thm:tnt per nested} is a generalisation of {\cite[Theorem 4.9]{ELEMENTARY}} and it relates the existence of ideals having TNT and the existence of generically reduced elementary components. 
\begin{theorem}[{\cite[Theorem 4]{UPDATES}}] \label{thm:tnt per nested} 
Let $V\subset \Hilb^{\bullet} \BA^n$ be an irreducible component.  Suppose that $V$ is generically reduced. Then $V$ is elementary if and only if a general point of $V$ has trivial negative tangents.  
\end{theorem}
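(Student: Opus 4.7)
The plan is to reduce the theorem to a dimension count comparing $V$ with its punctual slice $V^+ := V \cap \Hilb^{\bullet,+}\BA^n$, exploiting the translation action of $\BA^n$ on $\Hilb^{\bullet}\BA^n$. Since $V$ is an irreducible component and $\BA^n$ is connected, $V$ is translation-invariant. For any fat nesting $[\underline Z] \in V^+$, the infinitesimal translation $\widetilde\theta : \mathsf T_0\BA^n \to \mathsf T_{[\underline Z]}\Hilb^{\bullet}\BA^n$ is injective (distinct translations produce distinct fat nestings), hence so is $\theta$; TNT is therefore equivalent to the single equality $\dim \mathsf T_{[\underline Z]}^{<0}V = n$.

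For the forward implication, assume $V$ is elementary. Every point of $V$ being a fat nesting, translation exhibits $V$ as an $\BA^n$-torsor over $V^+$, so $\dim V = \dim V^+ + n$. Choose $[\underline Z] \in V^+$ generic: by generic reducedness of $V$ together with the torsor structure, $[\underline Z]$ is smooth on both $V$ and $V^+$. By \Cref{rem:nonneg-punctual}, $\mathsf T_{[\underline Z]}^{\geqslant 0}V = \mathsf T_{[\underline Z]}V^+$ has dimension $\dim V^+$, so the short exact sequence defining $\mathsf T^{<0}$ yields $\dim \mathsf T_{[\underline Z]}^{<0}V = n$. Injectivity of $\theta$ then upgrades to an isomorphism, and TNT holds.

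For the converse, flow a general point of $V$ to a homogeneous limit $[\underline I] \in V^+$ via the $\BG_m$-action by homotheties; upper semicontinuity of tangent dimensions shows that TNT at a general point of $V$ forces $\dim \mathsf T_{[\underline I]}^{<0}V \leqslant n$ at the limit. At a homogeneous point, $\Hom_R(I,R/I)$ is naturally graded, $\mathsf T^{<0}$ decomposes into negative-weight pieces, and $\theta$ lands in weight $-1$. Suppose by contradiction that $V$ is composite, so a generic nesting of $V$ is supported on $m \geqslant 2$ distinct points; the first-order deformation that separates one portion of the scheme away from the origin produces an element of $\Hom_R(I,R/I)_{-1}$ tangent to $V$ but independent of the image of $\theta$. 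This forces $\dim \mathsf T_{[\underline I]}^{<0}V > n$, a contradiction, so $V$ must be elementary.

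The hard part will be the converse. The difficulty is two-fold: first, TNT is defined only at fat nestings supported at the origin, so one must migrate a general point of $V$ to the punctual locus via the $\BG_m$-degeneration and control how the negative tangent space behaves at the limit; second, one must explicitly exhibit the extra degree $-1$ tangent direction attached to the composite structure, which rests on a deformation-theoretic description of how a multi-supported nesting degenerates to a homogeneous one. This is the technical heart of the argument, carried out in detail in \cite{UPDATES} and generalising the non-nested case of \cite{ELEMENTARY}.
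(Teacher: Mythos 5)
First, a remark on the comparison itself: the paper does not prove this statement, it imports it verbatim as \cite[Theorem 4]{UPDATES} (generalising \cite[Theorem 4.9]{ELEMENTARY}), so there is no in-paper proof to measure you against. Judged on its own merits, your forward implication is essentially the standard argument and is sound: translation-invariance of $V$, the identification $V\cong V^{+}\times\BA^n$ for an elementary component, generic smoothness from generic reducedness, and $\mathsf T^{\geqslant 0}=\mathsf T\,\Hilb^{\bullet,+}$ (\Cref{rem:nonneg-punctual}) give $\dim\mathsf T^{<0}_{[\underline Z]}\Hilb^\bullet\BA^n=n$ at a general point, and injectivity of $\theta$ then upgrades to surjectivity. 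One caveat: injectivity of $\theta$, as opposed to that of $\widetilde\theta$, requires that no nonzero translation vector lies in $\mathsf T^{\geqslant 0}$; ``distinct translations produce distinct fat nestings'' only gives injectivity of $\widetilde\theta$. At a general (hence smooth) point this follows because the translation orbit meets the punctual locus only in the point itself, but the step deserves a sentence rather than a ``hence''.

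The converse as written has a genuine gap. The claim that $\dim\mathsf T^{<0}_{[\underline I]}\leqslant n$ at the homogeneous limit does not follow from semicontinuity: tangent-space dimensions jump \emph{up} under specialisation, and $\mathsf T^{<0}$ is the quotient of $\mathsf T$ by $\mathsf T^{\geqslant 0}$, both of which are only upper semicontinuous, so the dimension of the quotient is not semicontinuous in either direction; in practice one expects \emph{extra} negative tangents to appear at the degenerate homogeneous point, not fewer. Fortunately the detour through the $\BG_m$-limit is unnecessary. With \Cref{def:TNT} as stated, TNT is only defined at fat nestings, so the hypothesis that a general point of $V$ has TNT already forces a dense subset of $V$ to consist of fat nestings; since the locus of nestings supported at a single point is closed in each $\Hilb^{\underline d}\BA^n$ (the cardinality of the support is lower semicontinuous), all of $V$ parametrises fat nestings and $V$ is elementary, with no contradiction argument needed. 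If instead one extends the notion of negative tangents to multi-supported points, the correct argument is carried out at the general point itself, not at a degeneration: a nesting supported at $m\geqslant 2$ points carries $mn$ independent translation-type negative tangent vectors (one copy of $\mathsf T_0\BA^n$ per support point), so $\theta$, whose source is only $n$-dimensional, cannot be surjective. Either way, the degeneration-plus-contradiction scheme you propose should be replaced.
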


Elementary components are considered the building blocks of the Hilbert schemes of points as each irreducible component is proven, in \cite{Iarrocomponent}, to be generically \'etale locally a product of elementary components.  In \Cref{subsec:smallcomp}, we give new examples of elementary components on $\hilbert{\bullet}{\BA^4}$ and, in \Cref{sec:small} we present other examples in dimensions 5 and 6. There are few elementary components known in the literature, see  \cite{Iarrob,ELEMENTARY,Satrianostaal,GALOIS,SomeElementary,MoreElementary,ERMANVELASCO,Sha90,UPDATES}  and references therein for other examples of elementary components. We certify their existence by exhibiting  explicit points having TNT. This information contributes to the knowledge of the growth of the number of irreducible components on $\hilbert{d}{\BA^n}$ as $d$ tends to infinity whose asymptotics have been investigated in \cite{Iarrocomponent}.

\begin{remark}\label{rem:BBHOMOo}  The fixed locus of  the diagonal action of the torus $\mathbb G_m=\Spec \BC[s,s^{-1}]$ on $ \Hilb^{\bullet} \BA^n $ agrees with the locus parametrising nestings of homogeneous ideals. As a consequence, given a nesting $ \underline{I}=( I^{(1)}\supset\cdots\supset I^{(r)}) $ of homogeneous ideals, the  $\mathbb G_m$-action lifts to the tangent space $\mathsf T_{[\underline{I}]} \Hilb^{\bullet}\BA^n$ and it induces an eigenspaces decomposition  
    \[
    \mathsf T_{[\underline{I}]} \Hilb^{ \bullet}\BA^n=\bigoplus_{k\in \BZ } \mathsf T_{[\underline{I}]}^{=k} \Hilb^{\bullet}\BA^n.
    \]
    This direct sum decomposition is consistent with \Cref{def:negativetangents} meaning that
    \[
    \mathsf T_{[\underline{I}]}^{\geqslant 0}\Hilb^{\bullet}\BA^n=\bigoplus_{k\geqslant 0 } \mathsf T_{[\underline{I}]}^{=k} \Hilb^{\bullet}\BA^n\quad \mbox{ and }\quad   \mathsf T_{[\underline{I}]}^{<0} \Hilb^{\bullet}\BA^n=\bigoplus_{k<0 } \mathsf T_{[\underline{I}]}^{=k} \Hilb^{\bullet}\BA^n,
    \]
    see \cite[Section 2]{ELEMENTARY} and \Cref{rem:homobb} for more details.
\end{remark}

\subsection{Hilbert stratification}\label{subsec:HSstrat}

\begin{notation}
 For the ideal $I\subset R$ of a fat point supported at the origin, the function $\bh_{R/I}$ vanishes eventually. For this reason, we represent it as tuples of positive integers. 

    Given a nesting $\underline{I}= (I^{(i)})_{i=1}^r $, of $\Fm$-primary ideals of finite colength $d_i$, for $i=1,\ldots,r$, we denote by $\underline{\bh}_{\underline{I}},\underline{\bh}_{R/\underline{I}}$ the $r$-tuples of Hilbert functions 
    \[
    \underline{\bh}_{\underline{I}}=\left(\bh_{I^{(i)}}\right)_{i=1}^r\quad\mbox{ and }\quad    \underline{\bh}_{R/\underline{I}}= \left(\bh_{R/I^{(i)}}\right)_{i=1}^r.
    \]
\end{notation}
Moreover, we denote by $|\underline{\bh}_{R/\underline{I}}|$ the non-decreasing sequence of positive integers  
\[
\lvert\underline{\bh}_{R/\underline{I}}\rvert=\left(\lvert\bh_{R/I^{(i)}}\rvert\right)_{i=1}^r = (d_1,\ldots,d_r)\in\BZ^r.
\]
The map
\[
    \begin{tikzcd}[row sep=tiny]
         \Hilb ^{\bullet,+}\BA^n \arrow[r]& \BN^r\\
         {\left[\underline{I}\right]}\arrow[r,mapsto]& \underline{\bh}_{R/\underline{I}} ,
    \end{tikzcd}
    \]
is locally constant, see \cite[Prop.~3.1]{ELEMENTARY}. 
Since the locally closed subsets
\begin{equation}\label{eq:defHSStratum}
    H_{\underline{\bh}}^n=\left\{ \left[\underline{I}\right] \in \Hilb^{\bullet,+}  \BA^n \ \middle\vert\ \underline{\bh}_{R/\underline{I}}\equiv \underline{\bh} \right\}\subset \Hilb^\bullet \BA^n ,
\end{equation}
where $\underline{\bh}=(\bh^{(1)},\ldots,\bh^{(r)}):\BZ\to\BN^r$ is an $r$-tuple of Hilbert functions compatible with the conditions imposed by the nestings, agree with the connected components of the Bia{\l{}}ynicki--Birula decomposition, they inherit a canonical scheme structure, see \Cref{rem:BBdentroHilb} and \cite{Iarropunctual}.

\begin{definition}
Given a $r$-tuple  $\underline{\bh}=(\bh^{(1)},\ldots,\bh^{(r)}):\BZ\to\BN^r$ of   functions, the \emph{Hilbert stratum} $ H_{\underline{\bh}}^n\subset \Hilb^\bullet \BA^n $  is the (possibly empty) locally closed subset given in \eqref{eq:defHSStratum}, endowed with the schematic structure induced by the Bia{\l{}}ynicki--Birula decomposition.
\end{definition}

\begin{remark}
In order to have a non-empty Hilbert stratum $H_{\underline{\bh}}^n$, the $r$-tuple $\underline{\bh}=(\bh^{(1)},\ldots,\bh^{(r)})$ must have finite support and it must satisfy two conditions:
\begin{itemize}
    \item the vector $|\underline{\bh}|$ is non-decreasing, i.e.~$|\bh^{(i)}|\leqslant|\bh^{(j)}| $ for all $1\leqslant i< j\leqslant r$;
    \item the strata $H_{\bh^{(i)}}^n$ are non-empty, for $i=1,\ldots,r$.
 \end{itemize}

    Moreover, recall that the set of functions ${\bh}\colon\mathbb{Z} \to \mathbb{N}$ for which $H^n_{\bh}$ is non-empty is characterised by Macaulay's theorem \cite[Theorem 4.2.10]{CMRings}.
\end{remark}

\begin{remark} \label{rem:homobb}
Let $\underline d\in\BZ^r_{> 0}$ be a non-decreasing sequence of positive integers. Then, there is a surjective morphism of schemes locally of finite type
    \[
    \begin{tikzcd}[row sep=tiny]
         \Hilb ^{\bullet,+}\BA^n \arrow[r,"\pi"]& (\Hilb^{\bullet,+} \BA^n)^{\mathbb G_m},
    \end{tikzcd}
    \] 
        which set-theoretically associates to the point corresponding to a $\underline{d}$-nesting $\underline{I}=(I^{(i)})_{i=1}^r$ the point corresponding to its initial $\underline{d}$-nesting $    \In\underline{I}= (\In I^{(i)})_{i=1}^r$, 
see \Cref{rmk:gradini}.       Recall that the tangent space to the fibre of $\pi $ over ${[}\underline{I}{]}\in(\Hilb ^+\BA^n)^{\mathbb G_m}$ identifies, via \Cref{rem:nonneg-punctual}, with $\mathsf T_{ [\underline{I}]}^{>0}  \Hilb^\bullet  \BA^n$, while the tangent space to $(\Hilb ^+\BA^n)^{\mathbb G_m}$ identifies with $\mathsf{T}_{ [\underline{I}]}^{=0}  \Hilb^\bullet  \BA^n$, see \Cref{rem:BBHOMOo} and \cite{multigraded}. In formulas, we have
    \[
    \mathsf{T}_{[\underline{I}]} \pi^{-1}([\underline I]) \cong  \mathsf T_{ [\underline{I}]}^{>0}  \Hilb^\bullet  \BA^n\quad\mbox{ and }\quad\mathsf T_{ [\underline{I}]}  (\Hilb ^+\BA^n)^{\mathbb G_m} \cong \mathsf T_{ [\underline{I}]}^{=0}  \Hilb^\bullet  \BA^n.
    \]

\end{remark}

\section{A special class of ideals}\label{sec:2-step}
In this section, we introduce the notion of 2-step ideal. This class of ideals is suitable for our purpose of studying irreducibility of $\hilbert{\underline{d}}{\BA^n}$. Indeed, the loci parametrising homogeneous ideals of this kind happen to be very large with respect to the smoothable component. For instance, the compressed algebras of length 78 considered by Iarrobino in \cite{IARRO} are of the form $R/I$ for $I\subset R$ a 2-step ideal, see \Cref{exa:iarrobino}.

\subsection{Definition and general properties of 2-step ideals}\label{subsec:2-stepdef}
We start by giving the definition and the basic properties of 2-step ideals.
  
\begin{definition}  \label{def:2step}
An ideal $I \subset R$ is \emph{2-step} of order $k > 0$, if  
\[
\mathfrak{m}^{k+2} \subset I \subset \mathfrak{m}^{k}\quad\mbox{ and }\quad I\not \subset \mathfrak{m}^{k+1}.
\] 
In this context we say that the Hilbert function of $I$ (or of $R/I$) is 2-step of order $k$.
\end{definition} 

\begin{remark}
    In terms of Hilbert function, the requirements  in \Cref{def:2step} are equivalent to 
\[
{\bh}_{R/I} (t) :
     \  \begin{cases}
    = \mathsf  r_{t} &\text{for~}t<k,\\
    < \mathsf r_{k} &\text{for~}t=k,\\
      = 0&\text{for~}t\geqslant k+2,
\end{cases}
\] see \Cref{fig:2stpdis} for a pictorial description.
 \end{remark}
    
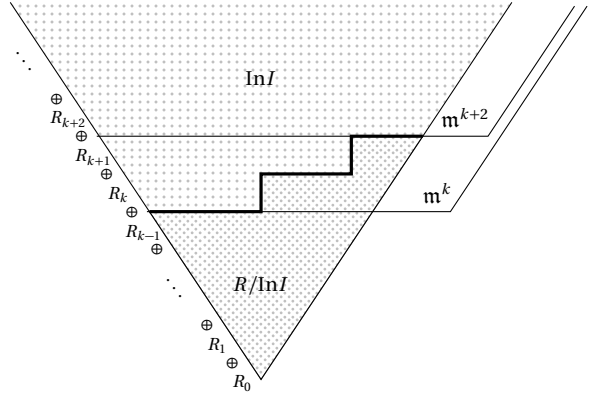
\begin{figure}[!ht]
\begin{center}
\begin{tikzpicture}[xscale = 0.33,yscale=0.5]
\draw [white,pattern=dots,opacity=0.5] (10,10)  -- (6.45,6.45) -- (3.6,6.45) -- (3.6,5.45) -- (0,5.45) -- (0,4.45) -- (-4.45,4.45)-- (-10,10);
\draw [,pattern=crosshatch dots,opacity =0.5](0,0) -- (6.45,6.45) -- (3.6,6.45) -- (3.6,5.45) -- (0,5.45) -- (0,4.45) -- (-4.45,4.45) -- cycle;

\draw[thin]  (10,10) -- (0,0) -- (-10,10);

\begin{scope}[shift={(0,-0.1)}]
\node at (0,0) [left]  {\scriptsize $R_0$};
\node at (-0.55,0.55) [left] {\scriptsize $\oplus$};
\node at (-1,1) [left] {\scriptsize $R_1$};

\node at (-1.55,1.55) [left]{\scriptsize $\oplus$};

\node at (-2.75,2.75)[left] {\scriptsize $\ddots$};

\node at (-3.55,3.55)[left] {\scriptsize $\oplus$};

\node at (-3.6,4) [left] {\scriptsize $R_{k-1}$};
\node at (-4.55,4.55) [left] {\scriptsize $\oplus$};

\node at (-4.9,5) [left]{\scriptsize $R_{k}$};
\node at (-5.55,5.55) [left] {\scriptsize $\oplus$};

\node at (-5.6,6)[left] {\scriptsize $R_{k+1}$};
\node at (-6.55,6.55)[left] {\scriptsize $\oplus$};

\node at (-6.6,7) [left]{\scriptsize $R_{k+2}$};
\node at (-7.55,7.55)[left] {\scriptsize $\oplus$};

\node at (-8.75,8.75) [left] {\scriptsize $\ddots$};

\draw [] (-4.55,4.55) -- (7.55,4.55) -- (13,10);
\node at (7.05,4.55) [above] {\small $\mathfrak{m}^k$};
\draw [] (-6.55,6.55) -- (9.05,6.55) -- (12.5,10);
\node at (8.15,6.55) [above] {\small $\mathfrak{m}^{k+2}$};

\end{scope}

\draw [very thick] (6.45,6.45) -- (3.6,6.45) -- (3.6,5.45) -- (0,5.45) -- (0,4.45) -- (-4.45,4.45);

\node at (0,2.5) [] {\footnotesize $R/\In I$};
\node at (0,8) [] {\footnotesize $\In I$};
\end{tikzpicture}
\caption{Pictorial representation of the initial ideal of a 2-step ideal.}
\label{fig:2stpdis}
\end{center}
\end{figure}

We exploit now the basic properties of 2-step Hilbert functions.  By definition of  2-step ideal of order $k$,  the Hilbert functions $\bh_I$ and $\bh_{R/I}$ of $I$  and of the corresponding quotient algebra are uniquely determined by the values  
\begin{equation}\label{eq:notation}
    \oh_{k} =   \rrr_{k} - \uh_{k}\quad\mbox{and}\quad\oh_{k+1}    = \rrr_{k+1} - \uh_{k+1},
\end{equation}
see  \Cref{notation:initial,notation:HStutti}. In fact, we have
\[
\bh_{I}(t) = \begin{cases}
    0,& 0 \leqslant t \leqslant k-1,\\
    \oh_{k} ,& t= k,\\ 
    \oh_{k+1}& t = k+1,\\
    \rrr_{t},& t \geqslant k+2,
\end{cases}
\quad\mbox{and}\quad
\bh_{R/I}(t) = \begin{cases}
   \rrr_{t},& 0 \leqslant t \leqslant k-1,\\
    \uh_{k} ,& t= k,\\ 
    \uh_{k+1},& t = k+1,\\
    0,& t \geqslant k+2,
\end{cases}
\]
and 
\[
\begin{split}
d_I = \dim_{\BC} (R/I) &{} = \sum_{q=0}^{k-1} \rrr_{q} + \uh_{k} + \uh_{k+1} = \binom{k+n-1}{n} + \uh_{k} + \uh_{k+1} = {}\\
& {} = \sum_{q=0}^{k+1} \rrr_{q} - \oh_{k} - \oh_{k+1} = \binom{k+n+1}{n} - \oh_{k} - \oh_{k+1}.    
\end{split}
\]

We stress that, given $ 0\leqslant \oh_{k} \leqslant \rrr_{k}$, the values of $\oh_{k+1}$ for which the Hilbert function is admissible are bounded from below by Macaulay's theorem, \cite[Theorem 4.2.10]{CMRings}.

In \Cref{lem: general properties Betti}, we list the basic properties of the minimal free resolutions of a homogeneous 2-step ideal.
\begin{lemma} \label{lem: general properties Betti}
Let $I\subset R$ be a homogeneous 2-step ideal of order $k>0$. Then,
\begin{enumerate}[\it (i)]
\item\label{it:bettionei} the regularity of $I$ satisfies $\reg(I)\leqslant k+2$,  
\item\label{it:Bettitwoii} the  Betti table of $I$ is
\begin{equation}\label{eq: betti table general}
\begin{array}{c|ccccc}
& 0& 1 & 2 & \ldots & n-1 \\
\hline
k& \beta_{0,k} & \beta_{1,k+1} & \beta_{2,k+2} & \cdots & \beta_{n-1,k+n-1}\\
k+1& \beta_{0,k+1} & \beta_{1,k+2} & \beta_{2,k+3}  & \cdots & \beta_{n-1,k+n}\\
k+2& \beta_{0,k+2} & \beta_{1,k+3} & \beta_{2,k+4}  & \cdots & \beta_{n-1,k+n+1}\\
\end{array}
\end{equation}
where
\begin{equation}\label{eq:bettitwostep}
\beta_{0,k} = \oh_{k}, \qquad
 \beta_{0,k+1}-\beta_{1,k+1} = \oh_{k+1}-n\oh_{k}, \qquad \beta_{0,k+2}-\beta_{1,k+2}+\beta_{2,k+2} = \rrr_{k+2} - n\oh_{k+1}+\tbinom{n}{2}\oh_{k}.
\end{equation}
\end{enumerate}
\end{lemma}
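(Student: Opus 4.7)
The plan is to deduce both statements from the sandwich condition $\mathfrak{m}^{k+2}\subset I\subset \mathfrak{m}^k$, which forces the generators of $I$ to live in degrees $k$, $k+1$, $k+2$ only, and then to read off (3.5) from the Euler characteristic of the minimal free resolution strand by strand.

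First I would pin down the support of the Betti table. Since $\mathfrak{m}^{k+2}\subset I$, every monomial of degree $\geqslant k+2$ belongs to $\mathfrak{m}\cdot I$, so no minimal generator of $I$ sits in degree $>k+2$; combined with $I\subset\mathfrak{m}^k$ and $I\not\subset\mathfrak{m}^{k+1}$, the nonzero entries of the $0$-th column are $\beta_{0,k}$, $\beta_{0,k+1}$, $\beta_{0,k+2}$. By \Cref{thmhilbertsyz}, the vanishing $\beta_{0,j}=0$ for $j<k$ propagates to $\beta_{i,j}=0$ for $j<k+i$; and the fact that $\mathfrak{m}^{k+2}\subset I$, which gives $I_{k+2}=R_{k+2}$, forces the truncation on the upper side too. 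Together with the Hilbert syzygy theorem, which cuts the length of the resolution at $n-1$ for the ideal (equivalently $n$ for $R/I$), these two truncations yield exactly the $3\times n$ shape in (3.4), which also gives \textit{(i)} since the maximal value of $j-i$ with $\beta_{i,i+j}\neq 0$ is attained in row $k+2$.

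For the three identities in (3.5), my plan is to exploit that, for a minimal free resolution $F_\bullet\to I\to 0$, the exact sequence restricted to each graded piece gives
\[
\dim_\BC I_t \;=\; \sum_{i\geqslant 0}(-1)^i\sum_j \beta_{i,j}\,\dim_\BC R_{t-j}.
\]
Applying this with $t=k$ kills every contribution except $\beta_{0,k}$, yielding $\beta_{0,k}=\oh_k$ immediately. For $t=k+1$, only $\beta_{0,k}$ (with $\dim R_1=n$), $\beta_{0,k+1}$, and $\beta_{1,k+1}$ contribute, so $\oh_{k+1}=n\oh_k+\beta_{0,k+1}-\beta_{1,k+1}$, which rearranges to the second identity. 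For $t=k+2$ the left-hand side equals $\rrr_{k+2}$ since $\mathfrak{m}^{k+2}\subset I$; collecting the six surviving terms gives
\[
\rrr_{k+2}\;=\;\tbinom{n+1}{2}\oh_k+n(\beta_{0,k+1}-\beta_{1,k+1})+(\beta_{0,k+2}-\beta_{1,k+2}+\beta_{2,k+2}).
\]
Substituting the second identity and using $n^2-\binom{n+1}{2}=\binom{n}{2}$ produces the third identity.

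There is no real obstacle here: the argument is a routine application of \Cref{thmhilbertsyz}, the Hilbert syzygy theorem, and Euler characteristic bookkeeping. The only thing to be slightly careful about is the indexing convention of the Betti table (entry $\beta_{i,i+j}$ sits in position $(i,j)$), and the verification that the sandwich condition really does truncate both the bottom and the top of the table, so that degrees outside $\{k,k+1,k+2\}$ in column $0$ propagate the correct vanishings to all later columns.
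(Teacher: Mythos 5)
Your treatment of the identities in \eqref{eq:bettitwostep} is correct and is essentially the paper's own argument: the degreewise Euler characteristic $\dim_\BC I_t=\sum_i(-1)^i\sum_j\beta_{i,j}\dim_\BC R_{t-j}$ is just the coefficientwise form of the Hilbert series identity $(1-T)^n\boldsymbol{H}_I(T)=\sum_{i}(-1)^i\sum_j\beta_{i,j}T^j$ that the paper evaluates in degrees $k$, $k+1$, $k+2$, and your bookkeeping (including $n^2-\binom{n+1}{2}=\binom{n}{2}$) checks out. Likewise, the absence of minimal generators outside degrees $k,k+1,k+2$ and the propagation of the lower vanishing via \Cref{thmhilbertsyz} are fine.

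The gap is in the upper truncation of the table, i.e.\ precisely in part \emph{(i)}. Knowing that $\beta_{0,j}=0$ for $j>k+2$ and that $\beta_{i,j}=0$ for $j<k+i$ does \emph{not} by itself exclude syzygies in degrees $j>k+2+i$ for $i\geqslant 1$; the sentence ``the fact that $\mathfrak{m}^{k+2}\subset I$, which gives $I_{k+2}=R_{k+2}$, forces the truncation on the upper side too'' asserts exactly the regularity bound $\reg(I)\leqslant k+2$ without proving it, and then you derive \emph{(i)} from the table shape, which is circular. The missing step is standard but must be supplied: either observe that $R/I$ is Artinian with $(R/I)_t=0$ for $t\geqslant k+2$, so $\reg(R/I)\leqslant k+1$ (the regularity of an Artinian graded module equals its top nonzero degree --- this is what the paper invokes via \cite[Corollary 4.4]{SYZYGIES}); or note that $I_{\geqslant k+2}=\mathfrak{m}^{k+2}$, which has a linear resolution, and use the short exact sequence $0\to\mathfrak{m}^{k+2}\to I\to I/\mathfrak{m}^{k+2}\to 0$ together with $\reg(I/\mathfrak{m}^{k+2})\leqslant k+1$ for a finite-length module concentrated in degrees $k,k+1$. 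With either of these inserted, the rest of your argument goes through.
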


\begin{proof}
The first part of the statement is a direct consequence of the definition of 2-step ideals and of \cite[Corollary 4.4]{SYZYGIES}. We move now to the proof of the equalities in \eqref{eq:bettitwostep}. Consider the minimal graded free resolution $\begin{tikzcd}
    0&I\arrow[l]&F_\bullet \arrow[l,"d_0"],
\end{tikzcd}$ of $I$, where
\begin{equation}\label{eq: resolution I}
\begin{tikzcd}[column sep=small]
      F_\bullet:&\begin{array}{c}R(-k)^{\oplus\beta_{0,k}}\\[-6pt]\oplus\\[-6pt] R(-k-1)^{\oplus\beta_{0,k+1}} \\[-6pt]\oplus\\[-6pt] R(-k-2)^{\oplus\beta_{0,k+2}} \end{array}  & \begin{array}{c}R(-k-1)^{\oplus\beta_{1,k+1}}\\[-6pt]\oplus\\[-6pt] R(-k-2)^{\oplus\beta_{1,k+2}} \\[-6pt]\oplus\\[-6pt] R(-k-3)^{\oplus\beta_{1,k+3}} \end{array} \arrow[l,"d_1"'] & \quad \cdots\quad   \arrow[l] &  \begin{array}{c}R(-k-n+1)^{\oplus\beta_{n-1,k+n-1}}\\[-6pt]\oplus\\[-6pt] R(-k-n)^{\oplus\beta_{n-1,k+n}} \\[-6pt]\oplus\\[-6pt] R(-k-n-1)^{\oplus\beta_{n-1,k+n+1}} \end{array} \arrow[l,"d_{n-1}"'] &0 .\arrow[l]
\end{tikzcd}
\end{equation}
Then, the Hilbert series of $I$ is
\[
\boldsymbol{H}_I(T) = \sum_{t\geqslant 0} \bh_I(t) T^t = \oh_{k} T^k + \oh_{k+1} T^{k+1} + \sum_{t\geqslant k+2} \rrr_{t} T^t ,
\]
and it can be expressed as
\[
\dfrac{\sum_{i=0}^{n-1} \left( (-1)^{i}\sum_{j=k}^{k+2}\beta_{i,i+j}T^{i+j}\right)}{(1-T)^n},
\]
by \cite[Theorem 1.11]{SYZYGIES}. The equality $(1-T)^n \boldsymbol{H}_I(T) = \sum_{i=0}^{n-1} \left( (-1)^{i}\sum_{j=k}^{k+2}\beta_{i,i+j}T^{i+j}\right)$ in degree $k,k+1,k+2$ leads to \Cref{eq:bettitwostep}.
\end{proof}

\begin{remark}  
We note that the definition of 2-step ideals includes also very compressed algebras, cf. \Cref{def:compressed}.

If we add the assumption $\mathfrak{m}^{k+1} \not\subset I$ we get the equality $\reg(I)=k+2$ at point {\it(\ref{it:bettionei})} of \Cref{lem: general properties Betti}.  This can be verified by considering the minimal graded free resolution of the quotient algebra $R/I$  
\[
\begin{tikzcd} 
    0 &R/I \arrow[l]&R \arrow[l,"\pi"']& F_\bullet\arrow[l,"j\circ d_0"'] 
\end{tikzcd}
\]  
where $F_\bullet$ is the same as \eqref{eq: resolution I}, $j:I\hookrightarrow R$ is the inclusion and $\pi: R \to R/I $ is the canonical projection. Indeed, we have the equality
\[
(1-T)^n \boldsymbol{H}_{R/I}(T) = (1-T)^n\left(\sum_{t=0}^{k-1}\rrr_{t} T^t + \uh_{k} T^k + \uh_{k+1}T^{k+1}\right) = 1 - \sum_{i=0}^{n-1} \left( (-1)^{i}\sum_{j=k}^{k+2}\beta_{i,i+j}T^{i+j}\right),
\]
that in degree $k+1+n$ reads as
\[
(-1)^n\uh_{k+1} = - (-1)^{n-1}\beta_{n-1,k+n+1}.
\]
Hence, the condition $\mathfrak{m}^{k+1}\not\subset I$ implies $\beta_{n-1,k+n+1} = \uh_{k+1} \neq 0$, that is $\reg(I) = k+2$.  
\end{remark}

\begin{remark}
    The minimal graded free resolution of a homogeneous ideal $I\subset R$ encodes several information about the tangent space at the point $[I]$ to the Hilbert scheme $ \Hilb^\bullet \mathbb{A}^n$. Indeed, by  applying the functor $\Hom_R(\_,R/I)$ to the resolution \eqref{eq: resolution I} of $I$, we obtain a sequence exact in the first two terms
\begin{equation}\label{eq:Hom res}
\begin{tikzcd}[column sep = 1em]
    0 \arrow[r]&\textnormal{Hom}_R(I,R/I) \arrow[r]& \textnormal{Hom}_R\left(\begin{array}{l}R(-k)^{\oplus\beta_{0,k}}\\[-6pt]\quad\oplus\\[-6pt] R(-k-1)^{\oplus\beta_{0,k+1}} \\[-6pt]\quad\oplus\\[-6pt] R(-k-2)^{\oplus\beta_{0,k+2}} \end{array}\hspace{-4pt},R/I\right) \arrow[r,"d_1^\vee"]& \textnormal{Hom}_R\left(\begin{array}{l}R(-k-1)^{\oplus\beta_{1,k+1}}\\[-6pt]\quad\oplus\\[-6pt] R(-k-2)^{\oplus\beta_{1,k+2}} \\[-6pt]\quad\oplus\\[-6pt] R(-k-3)^{\oplus\beta_{1,k+3}} \end{array}\hspace{-4pt},R/I\right) \arrow[r]&   \cdots,
\end{tikzcd}
\end{equation}
which implies, together with the isomorphism in \eqref{eq:isotg}, the identification
\[
\mathsf{T}_{[I]} \Hilb^\bullet \mathbb{A}^n \simeq \text{Hom}_R(I,R/I) = \ker d_1^\vee.
\]  
\end{remark}
Recall that the non-negative part $\textsf{T}_{[I]}^{\geqslant 0} \Hilb^\bullet \mathbb{A}^n$ of the tangent space of $ \Hilb^\bullet \mathbb{A}^n$ at $[I]$ can be understood as the tangent space to the Bia{\l{}}ynicki--Birula decomposition, see \Cref{rem:homobb}. In particular, when $\textsf{T}_{[I]}^{\geqslant 0} \Hilb^\bullet \mathbb{A}^n$ happens to be entirely unobstructed, its dimension agrees with the dimension of the unique irreducible component of the Bia{\l{}}ynicki--Birula decomposition containing the point $[I]$, see \Cref{rem:nonneg-punctual}. This observation highlights the role of the non-negative part of the tangent space in finding loci inside the Hilbert scheme of dimension as big as possible, and it motivates the following Lemma.

\begin{lemma}\label{lem: tangent space}
Let $I \subset R$ be a homogeneous 2-step ideal of order $k>0$. Then, we have
\[
\begin{split}
\dim_{\BC} \mathsf{T}^{=t}_{[I]} \Hilb^\bullet \mathbb{A}^n&{}= 0, \qquad\forall\ t \geqslant 2,\\
\dim _{\BC}\mathsf{T}^{=1}_{[I]} \Hilb^\bullet \mathbb{A}^n &{} = \oh_{k}\uh_{k+1},\\
\dim_{\BC} \mathsf{T}^{=0}_{[I]} \Hilb^\bullet \mathbb{A}^n &{} \geqslant \max\big\{0,\oh_{k}\uh_{k} + (\oh_{k+1}-n\oh_{k})\uh_{k+1}\big\}.
\end{split}
\]
\end{lemma}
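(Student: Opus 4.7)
My plan is to exploit the identification $\mathsf{T}_{[I]}\Hilb^\bullet\mathbb{A}^n \cong \Hom_R(I,R/I)$ together with the minimal free resolution \eqref{eq: resolution I}. Since $I$ is homogeneous, the Hom-module inherits a $\mathbb{Z}$-grading, and the piece of degree $t$ is the intersection in \eqref{eq:Hom res} of $\ker d_1^\vee$ with homomorphisms of degree $t$. Because $(R/I)_s = 0$ for all $s \geqslant k+2$, most graded pieces of $\Hom_R(F_0,R/I)$ vanish automatically, which makes the bookkeeping very direct.

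For $t \geqslant 2$, I observe that any degree-$t$ homomorphism $\widetilde\varphi: F_0 \to R/I$ lands on the generators of $F_0$ in $(R/I)_{k+t}$, $(R/I)_{k+1+t}$ and $(R/I)_{k+2+t}$, all of which are zero by \Cref{def:2step}. Hence $\Hom_R(F_0,R/I)_t = 0$, which forces $\mathsf{T}^{=t}_{[I]}\Hilb^\bullet\mathbb{A}^n = 0$, giving the first equality.

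For $t = 1$, the only generators of $F_0$ whose image is not forced to lie in a trivial graded component are those in degree $k$, which can map to any element of $(R/I)_{k+1}$; this yields $\dim_{\mathbb{C}}\Hom_R(F_0,R/I)_1 = \beta_{0,k}\,\uh_{k+1} = \oh_k\uh_{k+1}$. I then note that the syzygy constraint $\widetilde\varphi\circ d_1 = 0$ is automatic in degree $1$, because every generator of $F_1$ sits in degree $\geqslant k+1$, so $\widetilde\varphi\circ d_1$ takes values in $(R/I)_{s}$ with $s \geqslant k+2$. Thus $\mathsf{T}^{=1}_{[I]}\Hilb^\bullet\mathbb{A}^n = \Hom_R(F_0,R/I)_1$, giving the stated equality.

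For $t=0$, the same resolution-based computation gives
\[
\dim_{\mathbb{C}}\Hom_R(F_0,R/I)_0 = \oh_k\uh_k + \beta_{0,k+1}\uh_{k+1}, \qquad \dim_{\mathbb{C}}\Hom_R(F_1,R/I)_0 = \beta_{1,k+1}\uh_{k+1},
\]
since all higher-degree summands of $F_0$ and $F_1$ again hit trivial graded components of $R/I$. Because $\mathsf{T}^{=0}_{[I]}\Hilb^\bullet\mathbb{A}^n = \ker(d_1^\vee)_0$, its dimension is bounded below by $\dim\Hom_R(F_0,R/I)_0 - \dim\Hom_R(F_1,R/I)_0$, and combining with \eqref{eq:bettitwostep}, which identifies $\beta_{0,k+1}-\beta_{1,k+1}$ with $\oh_{k+1}-n\oh_k$, yields the claimed inequality $\dim\mathsf{T}^{=0}_{[I]} \geqslant \oh_k\uh_k + (\oh_{k+1}-n\oh_k)\uh_{k+1}$; the trivial bound by $0$ takes care of the $\max$. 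I expect no substantial obstacle: the main subtlety is keeping track of which graded pieces of $(R/I)$ vanish, and the slightly delicate point is noticing that for $t \geqslant 1$ the syzygy constraints in \eqref{eq:Hom res} degenerate because $F_1$ is concentrated in degrees $\geqslant k+1$, so that $d_1^\vee$ imposes no condition in strictly positive degree.
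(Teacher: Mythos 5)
Your proof is correct and follows essentially the same route as the paper: both apply $\Hom_R(\_\,,R/I)$ to the minimal free resolution \eqref{eq: resolution I}, identify $\mathsf{T}^{=t}_{[I]}\Hilb^\bullet\mathbb{A}^n$ with $(\ker d_1^\vee)_t$, exploit the vanishing of $(R/I)_s$ for $s\geqslant k+2$ to kill most graded summands, and invoke the Betti-number identity $\beta_{0,k+1}-\beta_{1,k+1}=\oh_{k+1}-n\oh_k$ from \Cref{lem: general properties Betti}. The only cosmetic difference is that you spell out degree by degree which summands survive, whereas the paper writes one uniform formula for the kernel bound and specialises it.
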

\begin{proof}
In degree $t$, the complex \eqref{eq:Hom res} reads as
\[
\begin{tikzcd}[column sep=small]
    0 \arrow[r]&\textnormal{Hom}_R(I,R/I)_t \arrow[r]& \begin{array}{l}\big((R/I)_{k+t}\big)^{\oplus\beta_{0,k}}\\[-6pt]\quad\oplus\\[-6pt] \big((R/I)_{k+1+t}\big)^{\oplus\beta_{0,k+1}} \\[-6pt]\quad\oplus\\[-6pt] \big((R/I)_{k+2+t}\big)^{\oplus\beta_{0,k+2}} \end{array} \arrow[r,"d_1^\vee"]& \begin{array}{l}\big((R/I)_{k+1+t}\big)^{\oplus\beta_{1,k+1}}\\[-6pt]\quad\oplus\\[-6pt] \big((R/I)_{k+2+t}\big)^{\oplus\beta_{1,k+2}} \\[-6pt]\quad\oplus\\[-6pt] \big((R/I)_{k+3+t}\big)^{\oplus\beta_{1,k+3}} \end{array} \arrow[r]&   \cdots,
\end{tikzcd}
\] 
and
\[
\begin{split}
\dim_{\BC} (\ker d_1^\vee)_t \geqslant{}& \sum_{j=0}^{2} \dim_{\BC} \big((R/I)_{k+t+j}\big)^{\oplus\beta_{0,k+j}} - \sum_{j=0}^{2} \dim_{\BC} \big((R/I)_{k+t+1+j}\big)^{\oplus\beta_{1,k+1+j}} = {}\\
& \beta_{0,k}\uh_{k+t} + (\beta_{0,k+1}-\beta_{1,k+1})\uh_{k+t+1} + (\beta_{0,k+2}-\beta_{1,k+2})\uh_{k+t+2} - \beta_{1,k+3}\uh_{k+t+3}.
\end{split}
\]
The statement is then a consequence of point \textit{(\ref{it:Bettitwoii})} in \cref{lem: general properties Betti}.
\end{proof}

The following theorem will be crucial in the rest of the paper, see \Cref{rem:1nonobstruct}.

\begin{theorem}\label{thm:1nonob}
Given a homogeneous 2-step ideal $I \subset R$ of order $k$, there is a canonical isomorphism  
\[
\textnormal{Hom}_R(I,R/I)_1\cong \textnormal{Hom}_{\BC}(I_k,(R/I)_{k+1}).
\] 
Moreover, all tangent vectors of degree 1 are unobstructed.   
\end{theorem}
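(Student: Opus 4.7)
The plan splits into two parts. First, for the isomorphism, I observe that any degree one homomorphism $\varphi\colon I \to R/I$ satisfies $\varphi(I_j) \subseteq (R/I)_{j+1}$, and the 2-step hypothesis $\mathfrak m^{k+2} \subseteq I$ forces $(R/I)_{j+1} = 0$ for $j \geqslant k+1$. Hence $\varphi$ automatically vanishes on $I_{\geqslant k+1}$ and is determined by its $\BC$-linear restriction $\varphi_0 := \varphi|_{I_k}$. Conversely, from any $\varphi_0 \in \Hom_{\BC}(I_k,(R/I)_{k+1})$ I extend by zero on $I_{\geqslant k+1}$, and $R$-linearity is automatic: for $r \in \mathfrak m$ and $f \in I_k$ both $\varphi(rf)$ and $r\varphi_0(f)$ land in $(R/I)_{\geqslant k+2}=0$, while constants act trivially. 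This produces the desired bijection, which is clearly canonical.

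Second, for unobstructedness, my plan is to exhibit an explicit algebraic curve through $[I]$ realising a given $\varphi$ as its tangent direction. Fix a $\BC$-basis $f_1,\dots,f_{\oh_k}$ of $I_k$ and lifts $g_i \in R_{k+1}$ of $\varphi_0(f_i) \in (R/I)_{k+1}$, and define
\[
I_t \ :=\ \langle f_i + t\, g_i \mid 1 \leqslant i \leqslant \oh_k\rangle \cdot R[t] \ +\ (I \cap \mathfrak m^{k+1}) \cdot R[t] \ \subseteq \ R[t].
\]
Truncating modulo $t^2$ recovers exactly the first-order deformation associated with $\varphi$, so I only need to verify that $R[t]/I_t$ is $\BC[t]$-flat. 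Note that $I_t$ is homogeneous with respect to the non-standard grading $\deg t = -1$, $\deg x_i = 1$, so the scalar $\BG_m$-action identifies all non-zero fibers $R/I_{t_0}$ up to isomorphism, and flatness reduces to the single check $\dim_{\BC} R/I_1 = \colen I$. Granted this equality, the finitely generated module $R[t]/I_t$ over the PID $\BC[t]$ has constant fiber rank and is therefore free, hence flat.

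The heart of the argument is the equality $\In(I_1) = I$, which yields the required length count. The inclusion $\supseteq$ is immediate from the generators. For $\subseteq$, any $f \in I_1$ can be written as $f = \sum_i r_i(f_i + g_i) + h$ with $h \in I \cap \mathfrak m^{k+1}$, and splitting $r_i = c_i + r_i'$ into constants and elements of $\mathfrak m$, the terms $r_i' f_i \in \mathfrak m \cdot I_k$, $r_i' g_i \in \mathfrak m^{k+2}$, and $h$ all already lie in $I$. This reduces the analysis to a dichotomy on the vector $(c_i)$: if some $c_i \neq 0$, the $\BC$-linear independence of the $f_i$ forces $\In(f) = \sum c_i f_i \in I_k \subseteq I$; if all $c_i$ vanish, $f$ itself lies in $I$, so $\In(f) \in I$ as well. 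I expect this bookkeeping of the $\mathfrak m$-adic contributions to be the main (though routine) step of the proof; the 2-step hypothesis ensures that only the two graded pieces $I_k$ and $I_{k+1}$ enter the analysis, which prevents the perturbation terms $t g_i$ from producing new generators escaping $I$.
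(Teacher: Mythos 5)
Your proposal is correct and follows essentially the same route as the paper: the first part rests on the same observation that $(R/I)_{k+1}$ is killed by $\Fm$ (it lies in the socle), so a degree-one homomorphism is freely determined by its restriction to $I_k$; the second part constructs the same one-parameter family $(f_i+tg_i)+I_{\geqslant k+1}$ and certifies flatness by showing each non-zero fibre has initial ideal $I$. The only differences are cosmetic — you reduce to the fibre $t=1$ via the $\BG_m$-action with $\deg t=-1$ and organise the initial-form computation by the constant coefficients $c_i$, where the paper checks every $t_0$ directly with a case analysis on $\deg\In p$.
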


\begin{proof}

By definition of 2-step ideal of order $k>0$, we have
\[
\textnormal{Hom}_R(I,R/I)_1=\Set{\varphi\in \textnormal{Hom}_R(I,R/I) | \varphi(I_k)\subseteq (R/I)_{k+1},\ \varphi(I_{k+1})=(0)=(R/I)_{k+2}}.
\]
Hence, we can identify via restriction the space $\textnormal{Hom}_R(I,R/I)_1$ with a complex vector subspace of $\textnormal{Hom}_{\BC}(I_k,(R/I)_{k+1})$. Notice also that, again by definition of 2-step ideal of order $k$, the vector space $(R/I)_{k+1}$ is entirely contained in the socle $(0_{R/I}:\Fm)$ of the local algebra $R/I$. As a consequence any $\BC$-linear homomorphism between $I_k$ and $(R/I)_{k+1}$ lifts to a unique $R$-linear homomorphism of degree $1$ between $I$ and $R/I$. This proves the first part.

We move now to the proof of the unobstructedness of ${\Hom}_R(I,R/I)_1$. Fix a basis $G = \Set{g_1,\ldots,g_{\oh_{k}}} \subset R$ of $I_k$ and consider some element $\varphi\in {\Hom}_R(I,R/I)_1$. Let us denote by $f_i=\varphi(g_i)\in (R/I)_{k+1}$, for $i=1,\ldots,{\oh_{k}}$, the images of the elements $g_i$ under the homomorphism $\varphi$. Note that, as a consequence of the first part of the statement, the homomorphism $\varphi$ is uniquely determined by the elements $f_i$'s. Fix also homogeneous lifts $\widetilde{f}_i\in R_{k+1}$,  for $i=1,\ldots,{\oh_{k}}$. By construction, the ideal $(g_i+\varepsilon \widetilde{f}_i \ |\  i=1,\ldots,{\oh_{k}}) + I_{\geqslant k+1}\subset R [\varepsilon]/\varepsilon^2$ defines a flat family over the spectrum of dual numbers. In order to conclude the proof we show that the ideal  
\[
I^+ = (g_i+t \widetilde{f}_i \ |\  i=1,\ldots,{\oh_{k}}) + I_{\geqslant k+1} \subset R [t]
\]
defines a flat family over the affine line $\BA^1$ with coordinate $t$. Since the scheme $\BA^1$ is reduced it is enough to show that, for any $t_0 \in \BC$ the ideal $I_{t_0}\subset R[t_0]$ obtained via base change has initial ideal $I$. This implies that the length of the fibres of the family defined by $I_t$ is constant along $\BA^1$. 

Clearly, by construction we have $\In I_{t_0}\supseteq I$. Therefore, we focus on the opposite inclusion. Let us fix some element $p\in I_{t_0}$. If $\deg\In p = k+2$, then $\In p\in \Fm^{k+2}=I_{k+2}$. On the other hand, if $\deg\In p = k$ we must have $p=\sum_{i=1}^{\oh_k}(\alpha_i + u_i )(g_i+t_0 \widetilde f_i) +q$, for some $\alpha_1,\ldots,\alpha_{\oh_k}\in\BC$, $u_1,\ldots,u_{\oh_k} \in \mathfrak{m}$ and $q\in I_{\geqslant k+1}$, which gives $\In p=\sum_{i=1}^{\oh_k}\alpha_ig_i\in I$. Finally,  if $\deg\In p = k+1$ we have   $p=\sum_{i=1}^{\oh_k}\ell_i(g_i+t_0 \widetilde f_i) + q + r$, where $\ell_1,\ldots,\ell_{\oh_k}\in R$ have order 1, $q\in I_{k+1}$ and $r\in I_{\geqslant k+2}$ which concludes the proof.  
 \end{proof}

\begin{figure}[!ht]
    \centering
    \begin{tikzpicture}[xscale = 0.33,yscale=0.5]
\draw[white,ultra thin,fill=red,opacity=0.5] (-4.5,4.5) -- (0,4.5) -- (0,5.5) -- (-5.5,5.5) -- cycle;
\draw[white,ultra thin,fill=blue,opacity=0.5] (3.5,5.5) -- (5.5,5.5) -- (6.5,6.5) -- (3.5,6.5) -- cycle;

\draw [white,pattern=dots,opacity=0.5] (9,9)  -- (6.5,6.5) -- (3.5,6.5) -- (3.5,5.5) -- (0,5.5) -- (0,4.5) -- (-4.5,4.5)-- (-9,9);
\draw [white,pattern=crosshatch dots,opacity =0.5]( 3,3)-- (6.5,6.5) -- (3.5,6.5) -- (3.5,5.5) -- (0,5.5) -- (0,4.5) -- (-4.5,4.5) -- (-3,3);

\draw[thin]  (-9,9) -- (-3,3);
\draw[thin]  (9,9) -- (3,3);

\draw [thick] (6.5,6.5) -- (3.5,6.5) -- (3.5,5.5) -- (0,5.5) -- (0,4.5) -- (-4.5,4.5);

\draw [-stealth,very thick] (-2.5,5) to[out=45,in=160]node[above]{\small $\varphi$} (4.75,6);

\begin{scope}[shift={(0.05,-0.05)}]
\node at (-5,5) [left] {\scriptsize $R_k$};
\node at (-5.5,5.5) [left] {\scriptsize $\oplus$};
\node at (-5.8,6) [left] {\scriptsize $R_{k+1}$};
\node at (-6.5,6.5) [left] {\scriptsize $\oplus$};
\node at (-6.8,7) [left] {\scriptsize $R_{k+2}$};
\node at (-4.5,4.5) [left] {\scriptsize $\ddots$};
\node at (-8,8) [left] {\scriptsize $\ddots$};
\end{scope}
\end{tikzpicture}
    \caption{Pictorial representation of a positive tangent vector in $\mathsf{T}_{[I]} \Hilb^\bullet \mathbb{A}^n$ for a homogeneous 2-step ideal. The red area corresponds to $I_k$, while the blue one corresponds to $(R/I)_{k+1}$.}
    \label{fig:positive tangent vectors}
\end{figure}
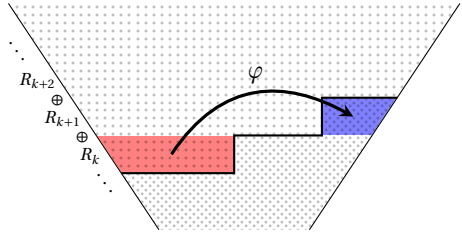

As a consequence of \Cref{thm:1nonob}, we get the following corollary.

\begin{corollary}\label{rem:1nonobstruct}
 Let $I\subset R$ be a homogeneous 2-step ideal. Then, the fibre $\pi_{\bh_I}^{-1}([I]) $ of the initial ideal morphism $H_{\bh_I}^n\xrightarrow{\pi_{\bh_I}}\OH_{\bh_I}^n$ is an affine space of dimension equal to  $\dim_{\BC}\mathsf{T}_{[I]}^{=1} \Hilb^\bullet \mathbb{A}^n$. 
\end{corollary}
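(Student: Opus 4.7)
The plan is to upgrade the pointwise unobstructedness established in \Cref{thm:1nonob} to a single flat family over the whole of $\mathsf{T}_{[I]}^{=1}\Hilb^\bullet\BA^n$, and then to exhibit an explicit inverse to the resulting morphism.

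As a starting point, \Cref{rem:homobb} identifies the tangent space to the fibre $\pi_{\bh_I}^{-1}([I])$ at $[I]$ with $\mathsf{T}_{[I]}^{>0}\Hilb^\bullet\BA^n$, which by \Cref{lem: tangent space} coincides with $\mathsf{T}_{[I]}^{=1}\Hilb^\bullet\BA^n$ for a homogeneous 2-step ideal. In particular, the fibre is smooth at $[I]$ of dimension $\oh_{k}\uh_{k+1}$. Next, I would fix a basis $\{g_1,\ldots,g_{\oh_{k}}\}$ of $I_k$ and a vector space complement $W \subset R_{k+1}$ to $I_{k+1}$, so that by \Cref{thm:1nonob} each element of $V \coloneqq \mathsf{T}_{[I]}^{=1}\Hilb^\bullet\BA^n \cong \Hom_{\BC}(I_k,(R/I)_{k+1})$ is uniquely represented by a tuple $(\tilde f_1,\ldots,\tilde f_{\oh_{k}}) \in W^{\oplus \oh_{k}}$. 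Over $V$, the ideal
\[
\mathcal J = \bigl(g_i + \tilde f_i \,\big\vert\, i=1,\ldots,\oh_{k}\bigr) + I_{\geqslant k+1} \subset R \otimes_{\BC} \mathcal O_V
\]
makes sense, and running the argument at the end of the proof of \Cref{thm:1nonob} parametrically shows that $\mathcal J$ is $\mathcal O_V$-flat with initial ideal equal to $I$ at every closed point. By the universal property of $\Hilb^\bullet\BA^n$, this family yields a morphism $\Phi \colon V \to \pi_{\bh_I}^{-1}([I])$ sending $0$ to $[I]$.

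It remains to prove that $\Phi$ is an isomorphism. For set-theoretic surjectivity, fix $[J] \in \pi_{\bh_I}^{-1}([I])$: the vanishing $\bh_{R/J}(t)=0$ for $t\geqslant k+2$ gives $\Fm^{k+2}\subset \Fm^{k+3}+J$ in the Artinian local ring $R/J$, which iterated forces $\Fm^{k+2}\subset J$. Since $\In J = I$ is generated in degree $k$ by the $g_i$, one may choose a set of generators for $J$ of the form $g_i + \tilde f_i$ with $\tilde f_i \in W$ after absorbing tails into $I_{\geqslant k+1}$; the required inclusion $I_{k+1}\subset J$ follows from comparing $\dim_{\BC}(J\cap\Fm^{k+1}+\Fm^{k+2})/\Fm^{k+2}$ with $\oh_{k+1}=\dim_{\BC} I_{k+1}$. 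Injectivity is immediate from $W \cap I_{k+1} = 0$. Finally, $\Phi$ is $\mathbb G_m$-equivariant for the weight-one linear action on $V$, and a direct computation of its differential at $0$ recovers the identity of $V$; $\mathbb G_m$-equivariance then propagates étaleness to every closed point, and a bijective étale morphism from an affine space is an isomorphism.

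The main obstacle I anticipate is in the surjectivity step, namely justifying cleanly that $I_{k+1}\subset J$: one must argue that the degree-$k+1$ part of $\In J$ is accounted for either by $\Fm\cdot\{g_i + \tilde f_i\}$ or by genuine new generators already sitting in $J$, and then identify these new generators with lifts of a complement to $\Fm\cdot I_k$ inside $I_{k+1}$. Everything else is a direct extension of \Cref{thm:1nonob}.
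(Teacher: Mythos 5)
Your proposal is correct and follows essentially the same route as the paper: the paper's proof simply exhibits the universal family $\bigl( g_i + \sum_j \alpha_{i,j}\widetilde b_j \,\big\vert\, i=1,\ldots,\oh_k\bigr) + I_{\geqslant k+1}$ over the affine space $\BA^{\oh_k\uh_{k+1}}$ with coordinates the coefficients $\alpha_{i,j}$ of lifts of a basis of $(R/I)_{k+1}$, which is exactly your family $\mathcal J$ over $W^{\oplus\oh_k}$. The only difference is that you spell out the surjectivity, injectivity and \'etaleness checks that the paper leaves implicit in the phrase ``the fibre is described by''.
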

\begin{proof} The statement is a direct consequence of \Cref{thm:1nonob}. Using the notation introduced in the proof of \Cref{thm:1nonob}, let $G = \{g_1,\ldots,g_{\oh_k}\}$ be a set of generators of $I$ of degree $k$, let $\{b_1,\ldots,b_{\uh_{k+1}}\}$ be a basis of $(R/I)_{k+1}$ and $\tilde{b}_j$ a homogeneous lifting of $b_j$ to $R$. The fibre $\pi_{\bh_I}^{-1}([I]) $ is described by the ideal
\[
 \left( g_i + \sum_{j=1}^{\uh_{k+1}} \alpha_{i,j} \widetilde b_j\ \middle\vert\ i=1,\ldots, \oh_{k}\right) + I_{\geqslant k+1}\subset \overline R.
\]
in the polynomial ring
\[ 
\overline{R}=\BC[x_1,\ldots,x_n]\otimes_{\BC}\BC[\alpha_{i,j}\ |\ i=1,\ldots, \oh_{k}\mbox{ and }j=1,\ldots,\uh_{k+1}]
\] 
 where coordinates $\alpha_{i,j}$, for $i=1,\ldots, \oh_{k}$ and $j=1,\ldots,\uh_{k+1}$ are the coordinates of the affine space $\mathbb{A}^{\oh_k \uh_{k+1}}$ that parametrises the fibre.
\end{proof}

\subsection{Nested configurations of 2-step ideals} \label{subsec:nest2stepbase}

Consider a pair of 2-step homogeneous ideals $I,J \subseteq R$ of respective orders $k_I$ and $k_J$,   and assume $k_I \leqslant k_J$. 

If $ k_J \geqslant k_I+2$, then $J$ is automatically contained in $I$. This gives the following isomorphism 
\[
\mathsf{T}_{[I,J]}^{\geqslant 0}\Hilb^{\bullet}\BA^n\quad \cong\quad
\mathsf{T}^{\geqslant 0}_{[I]}\Hilb^{\bullet}\BA^n\oplus\mathsf{T}^{\geqslant 0}_{[J]}\Hilb^{\bullet}\BA^n.
\] 

On the other hand, if $k_J = k_I+1$, then in general $J$ will not be contained in $I$ and the deformations of a nested pair $(I\supset J)$ are given by pairs of deformations of $I$ and $J$ preserving the inclusion. However, the inclusion is guaranteed if we consider deformations corresponding to positive tangent vectors.

\begin{theorem}\label{thm:1nonob-nested}
    Consider a nesting of 2-step homogeneous ideals $J\subset I \subset R$ of order $k_J$ and $k_I$ respectively such that $k_J-k_I>0$. Then,  there is an isomorphism  
\[
\mathsf{T}_{[I,J]}^{>0} \Hilb^\bullet \mathbb{A}^n \quad 
=\quad
\mathsf{T}_{[I,J]}^{=1} \Hilb^\bullet \mathbb{A}^n\quad \cong\quad
\Hom_{R}\big(I,R/I\big)_1 \oplus {\Hom}_{R}\big(J,R/J\big)_1.
\]
Moreover, all the tangent vectors of degree one are unobstructed. 
\end{theorem}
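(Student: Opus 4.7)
The plan is to combine the local analysis of \Cref{lem: tangent space} and \Cref{thm:1nonob} (applied to $I$ and $J$ individually) with a compatibility check that becomes automatic under the hypothesis $k_J > k_I$.

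First, I would use the identification of $\mathsf{T}_{[I,J]} \Hilb^\bullet \BA^n$ as the subspace of $\mathsf{T}_{[I]} \Hilb^\bullet \BA^n \oplus \mathsf{T}_{[J]} \Hilb^\bullet \BA^n$ cut out by the commutative square \eqref{eq:tangentvector}. Taking $\BG_m$-weight-$t$ components, $\mathsf{T}^{=t}_{[I,J]} \Hilb^\bullet \BA^n$ embeds in $\mathsf{T}^{=t}_{[I]} \Hilb^\bullet \BA^n \oplus \mathsf{T}^{=t}_{[J]} \Hilb^\bullet \BA^n$, which vanishes for $t \geqslant 2$ by \Cref{lem: tangent space}. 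This already yields $\mathsf{T}^{>0}_{[I,J]} \Hilb^\bullet \BA^n = \mathsf{T}^{=1}_{[I,J]} \Hilb^\bullet \BA^n$.

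Next, for the identification in degree $1$: any $\varphi_1 \in \Hom_R(I, R/I)_1$ is determined by its restriction $I_{k_I} \to (R/I)_{k_I+1}$, since $\varphi_1(I_t) \subseteq (R/I)_{t+1}$ vanishes for $t \geqslant k_I + 1$ by the 2-step property of $I$; the same holds for $\varphi_2 \in \Hom_R(J, R/J)_1$. The commutativity of the compatibility square reduces to $\varphi_1|_{J_t} = \pi \circ \varphi_2|_{J_t}$ for $t \geqslant k_J$, where $\pi \colon R/J \twoheadrightarrow R/I$ is the natural projection. But both sides land in $(R/I)_{t+1}$, which is zero because $t + 1 \geqslant k_J + 1 \geqslant k_I + 2$ and $I$ is 2-step of order $k_I$. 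Hence every pair defines a valid tangent vector, giving the claimed isomorphism.

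For unobstructedness, I would mimic the explicit construction from \Cref{thm:1nonob} in a compatible way. Given a pair $(\varphi_1, \varphi_2)$ of degree 1 tangent vectors, choose bases $\{g_i\}$ of $I_{k_I}$ and $\{h_j\}$ of $J_{k_J}$ together with homogeneous lifts $\widetilde{f}_i \in R_{k_I + 1}$ and $\widetilde{p}_j \in R_{k_J + 1}$ of $\varphi_1(g_i)$ and $\varphi_2(h_j)$, and define
\[
I^+ = \left(g_i + t \widetilde{f}_i \ \middle\vert\ i\right) + I_{\geqslant k_I + 1}, \qquad J^+ = \left(h_j + t \widetilde{p}_j \ \middle\vert\ j\right) + J_{\geqslant k_J + 1}
\]
in $R[t]$. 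Each defines a flat family over $\BA^1$ by \Cref{thm:1nonob}. The key remaining verification is the inclusion $J^+ \subseteq I^+$: each generator $h_j$ lies in $J_{k_J} \subseteq I_{k_J} \subseteq I_{\geqslant k_I + 1}$ (since $k_J \geqslant k_I + 1$), and each lift $\widetilde{p}_j$ lies in $R_{k_J + 1} \subseteq I_{k_J + 1}$ (using $k_J + 1 \geqslant k_I + 2$ together with $\Fm^{k_I + 2} \subseteq I$), so $h_j + t \widetilde{p}_j \in I_{\geqslant k_I + 1}[t] \subseteq I^+$; the tail $J_{\geqslant k_J + 1}$ is handled analogously, since $J_{\geqslant k_J + 1} \subseteq I_{\geqslant k_J + 1} \subseteq I_{\geqslant k_I + 1}$. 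The main technical point is exactly this bit of degree bookkeeping, which slots together cleanly thanks to the strict inequality $k_J > k_I$; every other ingredient is already in place from the single-ideal case.
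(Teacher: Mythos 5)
Your proof is correct and follows essentially the same route as the paper's: reduce to degree-one tangent vectors via the weight decomposition and \Cref{lem: tangent space}, then lift the pair to the explicit flat families $I^+$ and $J^+$ from \Cref{thm:1nonob} and check $J^+\subseteq I^+$ by the same degree bookkeeping ($h_j\in I_{\geqslant k_I+1}$ and $\widetilde p_j\in\Fm^{k_I+2}\subseteq I$). Your explicit verification that the compatibility square is automatic in degree one (because $(R/I)_{t+1}=0$ for $t\geqslant k_J$) is a detail the paper leaves implicit, but it does not change the argument.
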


\begin{proof}
The case $k_J-k_I>1$ has been discussed at the beginning of the subsection. Thus, we put $k=k_I=k_J-1$. 

The first equality is a consequence of the identification of the positive tangent space $\mathsf{T}_{[I,J]}\Hilb^{(d_I,d_J)} \mathbb{A}^n$ with a linear subspace of the direct sum $\mathsf{T}_{[I]}^{>0}\Hilb^{d_I} \mathbb{A}^n\oplus \mathsf{T}_{[J]}^{>0} \Hilb^{ d_J } \mathbb{A}^n$ together with \Cref{thm:1nonob}. 

Given a basis $G=\{g_1,\ldots,g_{\oh_k^{(I)}}\}$ of $I_k$ and a basis $G' = \{g'_1,\ldots,g'_{\oh_{k+1}^{(J)}}\}$ of $J_{k+1}$, we associate to every tangent vector $\varphi \in \mathsf{T}_{[I,J]}^{=1} \Hilb^\bullet{\mathbb{A}^n}$ a pair of ideals in $R [\varepsilon]/\varepsilon^2$
\[
 (g_i+\varepsilon \widetilde{f}_i \ |\  i=1,\ldots,{\oh^{(I)}_{k}})+I_{\geqslant k+1},\qquad  (g'_i+\varepsilon \widetilde{f}'_i \ |\  i=1,\ldots,{\oh^{(J)}_{k+1}}) + J_{\geqslant k+2}
\]
where $\widetilde{f}_i \in R$ (resp.~$\widetilde{f}'_i \in R$) is a homogeneous lift of $f_i = \varphi(g_i)$ (resp.~$f'_i = \varphi(g'_i)$). By \Cref{thm:1nonob}, these two tangent vectors are unobstructed and lead to two flat families over $\mathbb{A}^1$ defined by the ideals in $R[t]$
\[
 I^+ = (g_i+t \widetilde{f}_i \ |\  i=1,\ldots,{\oh^{(I)}_{k}}) + I_{\geqslant k+1},\qquad J^+ = (g'_i+t \widetilde{f}'_i \ |\  i=1,\ldots,{\oh^{(J)}_{k+1}}) + J_{\geqslant k+2}.
\]

To prove that the tangent vector in $\varphi \in \mathsf{T}_{[I,J]}^{=1} \Hilb^\bullet{\mathbb{A}^n}$ is unobstructed, we show that the ideal $J^+$ is still contained in $I^+$, that is the nested pair $(I^+,J^+)$ describes a deformation of the nested point $(I,J)$. In fact,
 every $\tilde{f}'_j$ has degree $k+2$, so it is contained in $\mathfrak{m}^{k+2}  \subset I^+$. Moreover, by hypothesis $g'_j$ is contained in $I$ and it has degree $k+1$. Hence, $g_j' \in  I_{\geqslant k+1} \subset I^{+}$.
\end{proof}

    Although \Cref{thm:1nonob-nested} involves only nestings of two 2-step ideals, following the same logic one can show that the analogous statement holds for longer nestings so proving \Cref{thmintro:tgnested} from the introduction.

    \begin{corollary}\label{cor:1nonob-nested}
    Let  $[\underline I ]\in \Hilb^\bullet \BA^n$ be a nesting of 2-step homogeneous ideals. Denote by $k_i>0$ the order of the ideal $I^{(i)}$, for $i=1,\ldots,r$. Suppose that $k_{i+1}-k_i>0$, for all $i=1,\ldots,r-1$.  Then,  there is an isomorphism  
\[
\mathsf{T}_{[\underline I]}^{>0} \Hilb^\bullet \mathbb{A}^n =\mathsf{T}_{[\underline I]}^{=1} \Hilb^\bullet \mathbb{A}^n \cong \bigoplus_{i=1}^r \Hom_{R}\big(I^{(i)},R/I^{(i)}\big)_1.
\]
Moreover, all the tangent vectors of degree one are unobstructed. In particular, the initial ideal morphism is an affine bundle with fibres of dimension $\dim \mathsf{T}_{[\underline I]}^{>0} \Hilb^\bullet \mathbb{A}^n$.
    \end{corollary}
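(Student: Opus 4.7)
The strategy is to iterate the pairwise argument of \Cref{thm:1nonob-nested} along the nesting chain: the hypothesis $k_{i+1}-k_i > 0$ ensures that exactly the situation treated there holds between each pair of consecutive levels, and the only extra work is to verify that the pairwise compatibilities assemble globally without interfering with one another.

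For the tangent space identification, I would first invoke \Cref{lem: tangent space} to conclude that $\mathsf{T}^{=t}_{[I^{(i)}]}\Hilb^\bullet\BA^n = 0$ for every $t \geqslant 2$ and every $i$. Under the embedding of $\mathsf{T}_{[\underline I]}\Hilb^\bullet\BA^n$ into $\bigoplus_i \mathsf{T}_{[I^{(i)}]}\Hilb^\bullet\BA^n$ induced by diagram \eqref{eq:tangentvector}, the positive part therefore consists of tuples $(\varphi_i)$ of degree-$1$ maps, and it suffices to show that the compatibility squares between consecutive levels are automatic. Setting $k = k_i$ and $k' = k_{i+1} \geqslant k+1$, the inclusion $I^{(i+1)} \subset \Fm^{k'} \subset \Fm^{k+1}$ forces $\varphi_i|_{I^{(i+1)}}$ to land in $(R/I^{(i)})_{\geqslant k'+1}$, which vanishes because $k'+1 \geqslant k+2$; likewise $\varphi_{i+1}(I^{(i+1)}) \subset (R/I^{(i+1)})_{k'+1}$ projects to zero in $R/I^{(i)}$ for the same reason. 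Hence both paths of each square are zero, yielding
\[
\mathsf{T}_{[\underline I]}^{>0} \Hilb^\bullet \BA^n \;=\; \mathsf{T}_{[\underline I]}^{=1} \Hilb^\bullet \BA^n \;\cong\; \bigoplus_{i=1}^r \Hom_R\bigl(I^{(i)}, R/I^{(i)}\bigr)_1.
\]

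For the unobstructedness and the affine bundle statement, the plan is to use independent deformation parameters $t_1,\ldots,t_r$ and, given a tuple $(\varphi_i)$, to form in $R[t_1,\ldots,t_r]$ the ideals $I^{(i),+} = \bigl(g^{(i)}_j + t_i\, \widetilde{f}^{(i)}_j\bigr) + I^{(i)}_{\geqslant k_i+1}$ provided by \Cref{thm:1nonob}. Each $I^{(i),+}$ is flat over $\BA^1_{t_i}$ by that theorem, so the only remaining point is the nesting $I^{(i+1),+} \subset I^{(i),+}$; this proceeds exactly as in the $r=2$ case of \Cref{thm:1nonob-nested}: the generators $g'_j \in I^{(i+1)}_{k'}$ have degree $k' \geqslant k+1$ and lie in $I^{(i)}_{\geqslant k+1} \subset I^{(i),+}$, their lifts $\widetilde{f}'_j$ have degree $\geqslant k+2$ and hence lie in $\Fm^{k+2} \subset I^{(i),+}$, and the remaining generators of higher degree also belong to $\Fm^{k+2}$. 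The affine bundle statement then follows level by level from \Cref{rem:1nonobstruct}: the explicit parametrisation realises the fibre of the initial ideal morphism over $[\underline I]$ as $\prod_i \BA^{\oh_{k_i}^{(i)} \uh_{k_i+1}^{(i)}}$, and the automatic commutativity established above guarantees that this fills the entire positive tangent space, as measured by \Cref{lem: tangent space}. The only real obstacle is the bookkeeping required to track the nesting across an arbitrarily long chain; no genuinely new idea beyond the $r=2$ case is needed.
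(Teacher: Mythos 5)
Your argument is correct and follows the same route as the paper, which for this corollary simply says that the two-ideal argument of \Cref{thm:1nonob-nested} iterates along the chain; you have supplied the details (vanishing of both paths in each compatibility square in degree one via \Cref{lem: tangent space}, and the degree count $k_{i+1}+1\geqslant k_i+2$ forcing the nesting of the deformed ideals) exactly as intended. The only cosmetic difference is your use of independent parameters $t_1,\ldots,t_r$ instead of a single $t$, which changes nothing.
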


In the rest of the paper, we provide parametrisations of some irreducible components of Hilbert strata $H_{\bh}^n$ whose closed points correspond to 2-step ideals. By \Cref{thm:1nonob} and \Cref{thm:1nonob-nested}, we only need to give a parametrisation of some components of the homogeneous locus $\OH_{\bh}^n$. Moreover, since we are mainly interested in finding a lower bound for the dimension of $H_{\bh}^n$, it is sufficient to parametrise open subsets of $\OH_{\bh}^n$.  For this reason, we look for families of 2-step ideals with \textit{natural first anti-diagonal of the Betti table}, that is, ideals having  at most one non-zero graded Betti number in the first anti-diagonal of the Betti table. This is indeed an open condition on a family of modules having constant Hilbert function, cf.~\cite[Corollary 1.31]{AproduNagel}. Precisely, we focus on the graded Betti numbers $\beta_{0,k+1}$ and $\beta_{1,k+1}$ which are related by the equality
\[
\beta_{0,k+1} - \beta_{1,k+1} =  \oh_{k+1} - n\oh_{k}.
\] 
\begin{notation}\label{explanation-syzygies}
    For the sake of readability from now on, we denote by $\sss_{\bh}$ the quantity
    \[
   \sss_{\bh}= \oh_{k+1} - n\oh_{k}.
    \]
\end{notation}
Hence, if we consider a 2-step Hilbert function $\bh$ of order $k>0$ such that $\sss_{\bh} \geqslant 0$, then we expect the generic ideal in $\mathscr{H}^n_{\bh}$ to admit a minimal generating set consisting of $\beta_{0,k}=\oh_{k}$ elements of degree $k$ having no linear syzygies ($\beta_{1,k+1} = 0$) and $\beta_{0,k+1} = \sss_{\bh}$ generators in degree $k+1$. 
On the other hand, if $\sss_{\bh}< 0$, then the generic homogeneous ideals in $\mathscr{H}^n_{\bh}$ are expected to have $\beta_{0,k}=\oh_{k}$ generators of degree $k$ with $\beta_{1,k+1} =-\sss_{\bh}$ linear syzygies and no generators in degree $k+1$ since $\beta_{0,k+1} = 0$.

\subsection{The class of 2-step ideals without linear syzygies}\label{subsec:2-stepnosyz}
In this subsection, we study the loci in $\Hilb^{\bullet}\BA^n$ corresponding to 2-step ideals $I\subset R$ with $\sss_{\bh}\geqslant 0$, for some 2-step function $\bh\colon\BZ\to\BN$ having natural first anti-diagonal of the Betti table. In this setting, having at most one non-zero entry of the first anti-diagonal is equivalent to not having linear syzygies. We compute the dimension of these loci in the case they are not empty. We leave questions about the non-emptyness and the irreducibility of $\OH^n_{\bh}$ for further research. Let us consider  Hilbert functions 
\[
\bh = \left(1,n,\rrr_{2},\ldots,\rrr_{k-1},\uh_{k},\uh_{k+1}\right)
\]
such that $\sss_{\bh} \geqslant 0$.

\begin{theorem}\label{thm:homoloco}
Let $\bh = (1,n, \rrr_{2}, \ldots,\rrr_{k-1}, \uh_{k}, \uh_{k+1})$ be a 2-step Hilbert function of order $k>0$ such that $\sss_{\bh} \geqslant 0$.
Assume that there exists an ideal $[I]\in \mathscr{H}_{\bh}^n$ with $\beta_{1,k+1}(I)=0$. 
Then, there is a surjective morphism 
\[
\begin{tikzcd}[row sep=tiny]
 \OH_{\bh}^n\arrow[r,"\varphi"]&\Gr \left(\oh_{k},R_k\right)\\
 {[I]}\arrow[r,mapsto] &{[I_k]},
\end{tikzcd}
\]
whose generic fibre is isomorphic to $\Gr \left(\sss_{\bh},\rrr_{k+1}-n\oh_{k}\right)$.
\end{theorem}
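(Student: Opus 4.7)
The plan is to realise $\OH_{\bh}^n$ as a closed subscheme of a product of two Grassmannians and to identify $\varphi$ with one of the projections. Since $\bh$ is 2-step of order $k$, every homogeneous ideal $I$ with Hilbert function $\bh$ is completely determined by the pair $(I_k, I_{k+1})$: the graded pieces in degree $<k$ vanish, while in degree $\geqslant k+2$ they fill up $R_t$ because $\mathfrak m^{k+2}\subset I$. I would therefore embed
\[
\OH_{\bh}^n \hookrightarrow \Gr(\oh_k, R_k)\times \Gr(\oh_{k+1}, R_{k+1})
\]
as the closed incidence subscheme cut out by the universal multiplication condition $R_1\cdot \mathcal W \subset \mathcal V$, where $\mathcal W, \mathcal V$ denote the tautological subbundles. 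The map $\varphi$ is then the restriction of the first projection, hence a morphism.

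For surjectivity, I would exhibit a preimage of each $[W]\in\Gr(\oh_k, R_k)$. The multiplication map $R_1\otimes W \to R_{k+1}$ has image of dimension at most $n\oh_k$, and the hypothesis $\sss_{\bh}\geqslant 0$ forces $n\oh_k\leqslant \oh_{k+1}$. Hence there exists $V\subset R_{k+1}$ of dimension $\oh_{k+1}$ with $V\supset R_1\cdot W$, and the graded vector subspace
\[
I := W \oplus V \oplus \bigoplus_{t\geqslant k+2} R_t
\]
is an ideal of $R$ with Hilbert function $\bh$ and $I_k = W$.

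For the fibre description, the incidence condition makes $\varphi^{-1}([W])$ the Schubert-type subscheme
\[
\bigl\{\,V\in \Gr(\oh_{k+1}, R_{k+1}) : V\supset R_1\cdot W\,\bigr\} \;\cong\; \Gr\bigl(\oh_{k+1}-\dim R_1\cdot W,\; R_{k+1}/(R_1\cdot W)\bigr).
\]
The function $[W]\mapsto \dim R_1\cdot W$ is lower semi-continuous on $\Gr(\oh_k, R_k)$ and bounded above by $n\oh_k$, with equality precisely on the open locus where the multiplication $R_1\otimes W\to R_{k+1}$ is injective. The hypothesis supplies an ideal $[I]\in\OH_{\bh}^n$ with $\beta_{1,k+1}(I)=0$, i.e.\ no linear syzygies among a minimal set of degree-$k$ generators, whose image $W = I_k$ therefore lies in this maximal-rank open locus. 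By irreducibility of $\Gr(\oh_k, R_k)$ this locus is dense, and over it the fibre is isomorphic to $\Gr(\sss_{\bh},\rrr_{k+1}-n\oh_k)$ as claimed.

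The main technical point I expect to address is the scheme-theoretic identification of the generic fibre: a priori the incidence subscheme could acquire non-reduced structure along loci where $\dim R_1\cdot W$ drops, but on the maximal-rank open subset $R_1\cdot \mathcal W$ descends to a subbundle of the trivial bundle with fibre $R_{k+1}$, so the incidence condition becomes a smooth sub-Grassmannian bundle and no pathology occurs over the generic point.
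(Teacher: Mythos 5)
Your proposal is correct and follows the same core strategy as the paper: project an ideal to its degree-$k$ piece, observe that the locus where the multiplication $R_1\otimes W\to R_{k+1}$ has maximal rank $n\oh_k$ is open and is hit by the hypothesised ideal with $\beta_{1,k+1}(I)=0$, and identify the fibre there with $\Gr\left(\sss_{\bh},\rrr_{k+1}-n\oh_{k}\right)$. Two points differ in execution. First, the paper obtains $\varphi$ from the pushforward of the universal family on the multigraded Hilbert scheme and the universal property of the Grassmannian, whereas you realise $\OH_{\bh}^n$ as the incidence subscheme of $\Gr(\oh_k,R_k)\times\Gr(\oh_{k+1},R_{k+1})$ cut out by $R_1\cdot\mathcal W\subset\mathcal V$; this identification is legitimate (it is exactly the Haiman--Sturmfels description for a 2-step Hilbert function, since an ideal with this Hilbert function is determined by $(I_k,I_{k+1})$ and closure under $R_1$ is the only constraint), but it is an input you should cite rather than assert. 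Second, your surjectivity argument is more direct than the paper's: you produce an explicit preimage $W\oplus V\oplus\bigoplus_{t\geqslant k+2}R_t$ over \emph{every} $[W]$, using only $\sss_{\bh}\geqslant 0$, whereas the paper shows the image contains the dense open locus $U'$ and implicitly relies on the image being closed. Your version is cleaner and makes transparent that the hypothesis $\beta_{1,k+1}(I)=0$ is needed only for the fibre computation, not for surjectivity.
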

\begin{proof}

    Consider the diagram
    \[
    \begin{tikzcd}
        \OZ_{\bh}^n\arrow[r,hook]\arrow[dr]&\OH_{\bh}^n\times \BA^n\arrow[d,"\pi"]\\&\OH_{\bh}^n.
    \end{tikzcd}
    \]
    where $\OZ_{\bh}^n\subset \OH_{\bh}^n\times \BA^n$ is the universal family of multigraded Hilbert scheme, see \cite{multigraded}.  We  now focus on the degree $k$ part of the push-forward of the universal sequence,
    \[
    \begin{tikzcd}
        0\arrow[r]&(\pi_*\OI_{\OZ_{\bh}^n})_k \arrow[r]& \OO_{\mathscr{H}_{\bh}^n}\otimes R_k\arrow[r]&(\pi_*\OO_{\OZ_{\bh}^n})_k\arrow[r] &0.
    \end{tikzcd}
    \]
This defines a family of  $\uh_{k}$-dimensional quotients of the free sheaf of rank $\rrr_{k}$. Thus, it provides a unique  morphism ${\varphi}\colon \OH_{\bh}^n\to \Gr(\oh_{k},R_{k} )$. 

Let $U\subset \mathscr{H}_{\bh}^n$ be the subset corresponding to ideals $I \subset R$ such that $\beta_{1,k+1}(I) =0$. It is open by the semicontinuity of the Betti numbers and it is not empty by the assumption. In order to show that the map $\varphi $ is surjective, it is enough to show that $\varphi(U)\subset \Gr(\oh_k,\rrr_k)$ is an open non-empty subset. Given a point $[V] \in \Gr(\oh_k,R_k)$, consider the homogeneous ideal $(V) \subset R$ generated by the vector subspace $V\subset R_k$. We have that $\dim_\BC (V)_{k+1} \leqslant \min\{\rrr_{k+1}, n (\dim_\BC (V)_{k} )\} = n\oh_k$. Let $U' \subset \Gr(\oh_k,R_k)$ be the open subset corresponding to points $[V]$  such that $\dim_\BC (V)_{k+1} = n \oh_k$. It is not empty, since $\varphi(U) \subset U'$. Now, given $[V] \in U'$, we have $\dim_\BC (V)_{k+1} = n \oh_k \leqslant \oh_{k+1}$, i.e.~$\dim_{\BC} (R/(V))_{k+1} =\rrr_{k+1}-n\oh_k$. Therefore, the fibre over each $[V]\in U'$ is 
\[
\Gr( \sss_\bh, (R/(V))_{k+1})\cong \Gr(\sss_\bh,\rrr_{k+1}-n\oh_k).\qedhere
\]
\end{proof} 

\begin{remark}\label{rem:disticomp}
Under the assumptions of \Cref{thm:homoloco}, there is an irreducible component $\mathscr{V}_{\bh}^n\subset\OH_{\bh}^n$  birational to a product of Grassmanians such that
\[
\dim \mathscr{V}_{{\bh}}^n=\oh_{k}\left({\mathsf{r}_{k}}-\oh_{k}\right) + \sss_{\bh}\big(\rrr_{k+1}-n\oh_{k} - (\oh_{k+1}-n\oh_{k})\big) = \oh_{k} \uh_{k} + \sss_{\bh}\uh_{k+1}.
\]  
This number agrees with the expected dimension of the tangent space of degree 0 to the Hilbert scheme $\Hilb^\bullet \mathbb{A}^n$, see \Cref{lem: tangent space}. Notice that the Theorem \ref{thm:homoloco} does not exclude the possible existence of  \textit{"exceeding vertical components"} of $\OH_{\bh}^n$, i.e.~components not dominating on the Grassmannian $\Gr \left(\oh_{k},R_k\right)$ via $\varphi$.
\end{remark}

\begin{corollary}\label{cor:full no syz}
Fix a 2-step Hilbert function $\bh$ of order $k$ such that $\sss_{\bh} \geqslant 0$. 
Assume there exists an ideal $[I]\in \mathscr{H}_{\bh}^n$ with $\beta_{1,k+1}(I)=0$. Then, there is a surjective morphism
\[
\begin{tikzcd}
    {H}_{\bh}^n\arrow[r]&\Gr \left(\oh_{k},R_k\right) 
\end{tikzcd}
\]
whose generic fibre is an  $\mathbb{A}^{\oh_{k} \uh_{k+1}}$-bundle over $\Gr \left(\sss_{\bh},\rrr_{k+1}-n\oh_{k}\right)$. 
\end{corollary}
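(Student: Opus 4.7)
The plan is to obtain the desired morphism as the composition
\[
H_\bh^n \xrightarrow{\pi_\bh} \mathscr{H}_\bh^n \xrightarrow{\varphi} \Gr(\oh_k, R_k),
\]
where $\pi_\bh$ is the initial ideal morphism studied in Corollary \ref{rem:1nonobstruct} and $\varphi$ is the surjection from Theorem \ref{thm:homoloco}. Surjectivity of the composition is automatic: $\pi_\bh$ is surjective because every homogeneous 2-step ideal is its own initial ideal, and $\varphi$ is surjective by Theorem \ref{thm:homoloco}.

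For the generic fibre, I would fix $[V]$ in the open locus $U' \subset \Gr(\oh_k, R_k)$ identified in the proof of Theorem \ref{thm:homoloco}, so that $\varphi^{-1}([V]) \cong \Gr(\sss_\bh, \rrr_{k+1} - n\oh_k)$. Then
\[
(\varphi \circ \pi_\bh)^{-1}([V]) = \pi_\bh^{-1}\bigl(\varphi^{-1}([V])\bigr),
\]
and Corollary \ref{rem:1nonobstruct} gives that the restriction of $\pi_\bh$ to this preimage has all its fibres isomorphic to $\mathbb{A}^{\oh_k \uh_{k+1}}$, of constant dimension.

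The main remaining point---really the only non-formal step---is to promote this constant-fibre statement to a Zariski locally trivial $\mathbb{A}^{\oh_k \uh_{k+1}}$-bundle structure over $\varphi^{-1}([V])$. For this I would use the explicit parametrisation spelled out in the proof of Corollary \ref{rem:1nonobstruct}. On a sufficiently small open $W \subset \varphi^{-1}([V])$ one can simultaneously trivialise the tautological rank-$\oh_k$ subbundle of $R_k$ via a local frame $g_1, \ldots, g_{\oh_k}$, and choose homogeneous lifts $\widetilde b_1, \ldots, \widetilde b_{\uh_{k+1}}$ of a basis of the universal quotient $\bigl(R/(\cdot)\bigr)_{k+1}$ in degree $k+1$; this is possible by local freeness of the tautological bundles on the relevant Grassmannians. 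The family of ideals
\[
\left(g_i + \sum_{j=1}^{\uh_{k+1}} \alpha_{i,j}\, \widetilde b_j \ \middle\vert\ i = 1, \ldots, \oh_k\right) + I_{\geqslant k+1}
\]
then exhibits the restriction $\pi_\bh^{-1}(W) \to W$ as a trivial $\mathbb{A}^{\oh_k \uh_{k+1}}$-bundle with fibre coordinates $(\alpha_{i,j})$. Gluing these trivialisations over an open cover of $\varphi^{-1}([V]) \cong \Gr(\sss_\bh, \rrr_{k+1} - n\oh_k)$ yields the required bundle structure and concludes the argument.
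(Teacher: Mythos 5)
Your proposal is correct and follows essentially the same route as the paper, whose proof is simply the instruction to combine Corollary \ref{rem:1nonobstruct} with Theorem \ref{thm:1nonob} (implicitly composed with the Grassmannian fibration of Theorem \ref{thm:homoloco}), i.e.\ exactly the composition $H_\bh^n \xrightarrow{\pi_\bh} \mathscr{H}_\bh^n \xrightarrow{\varphi} \Gr(\oh_k,R_k)$ you describe. The only difference is that you spell out the local triviality of the affine bundle via the explicit parametrisation with the coordinates $\alpha_{i,j}$, a detail the paper leaves implicit.
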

\begin{proof}
    Combine \Cref{rem:1nonobstruct} and \Cref{thm:1nonob}.
\end{proof}
\subsection{The class of 2-step ideals with few linear syzygies}\label{subsec:2-stepfew}

Now, we focus on Hilbert functions 
\[
\bh = \left(1,n,\rrr_{2},\ldots,\rrr_{k-1},\uh_{k},\uh_{k+1}\right)
\]
such that $\sss_{\bh} < 0$. Suppose to have a homogeneous ideal $I\subset R$ with natural first anti-diagonal of the Betti table, i.e.~such that the ideal $I$ is generated in degree $k$ and possibly $k+2$. Fix a pair  $(\varphi,\boldsymbol{p})$,  where $\boldsymbol{p} = \{p_1,\ldots,p_{\oh_{k}}\}$ is a basis of $I_k$ and $\varphi : R(-k-1)^{\oplus-\sss_{\bh}} \to R(-k)^{\oplus\oh_{k}}$ is the syzygy matrix describing the linear syzygies among the generators $p_1, p_2,\ldots, p_{\oh_{k}}$. Let us interpret the map $\varphi$ as a $\BC$-linear homomorphism, and let us consider the restriction
\[
\begin{tikzcd}[column sep = 2cm]
R_k^{\oplus\oh_{k}}\arrow[r,"\varphi^T|_{R_k^{\oplus\oh_{k}}}"]&R_{k+1}^{\oplus-\sss_{\bh}}.
\end{tikzcd}
\]
We abuse of notation and we  denote it with the same symbol $\varphi^T=\varphi^T|_{R_k^{\oplus\oh_{k}}}\in \Hom_{\BC}(R^{\oplus\oh_{k}}_k,R^{\oplus-\sss_{\bh}}_{k+1})$. Note that  the element $\boldsymbol{p} \in R_k^{\oplus\oh_{k}}$ is contained in the kernel of $\varphi^T $ by construction. We want to emulate this argument in order to achieve a lower bound for the dimension of the locus parametrising 2-step ideals whose Hilbert function satisfies $\sss_{\bh}<0$. Recall that there is a natural inclusion $\Hom_R(R(k)^{\oplus\oh_{k}},R (k+1)^{\oplus-\sss_{\bh}})_0\subset \Hom_{\BC}(R^{\oplus\oh_{k}}_k,R^{\oplus-\sss_{\bh}}_{k+1})$ given by forgetting $R$-linearity. Then, we denote by $\mathscr{L}_{\bh} \subseteq \Hom_R(R(k)^{\oplus\oh_{k}},R (k+1)^{\oplus-\sss_{\bh}})_0$ the open subset corresponding to $\BC$-linear homomorphisms with maximal rank.  
Since we want the generic  element $\varphi \in \mathscr{L}_{\bh}$ to have non-trivial kernel, we assume
\begin{equation}\label{eq:fls}
    \oh_{k} > -\sss_{\bh} = n\oh_{k} - \oh_{k+1}\qquad\Leftrightarrow\qquad \oh_{k+1} > (n-1)\oh_{k},
\end{equation}
and we refer to 2-step ideals satisfying \eqref{eq:fls} as 2-step homogeneous ideals with \emph{few linear syzygies}. Note that, under the assumption $\oh_{k} > -\sss_{\bh}$ we have
\[
\dim R_k^{\oplus \oh_k}-\dim R_{k+1}^{\oplus-\sss_{\bh}} = ( \oh_k + \sss_{\bh})\binom{k+n-1}{n-1} - \sss_{\bh}\binom{k+n-1}{n-2}>0.
\] 
This framework is clearly more suitable for our purpose.

In order to understand the kernel $\ker \varphi$, for $\varphi\in\mathscr{L}_\bh$ generic, we act on $\varphi$ with row and column operations to obtain a sort of normal form representing the generic $\varphi\in\mathscr{L}_{\bh}$ as a matrix.

\begin{lemma}\label{lem:pseudo canonical form}
Consider a 2-step Hilbert function $\bh$ of order $k>0$, with $0 < -\sss_{\bh} < \oh_{k}$.  
    \begin{enumerate}[\it (i)]
    \item\label{it:1i} If $n(-\sss_{\bh}) \leqslant \oh_{k}$, then the generic  $\varphi \in \mathscr{L}_{\bh}$ can be reduced via row and column operations to
    \begin{center}\label{eq: normal form syzygy}
        \begin{tikzpicture}[xscale=0.4,yscale=0.4,decoration=brace]
        \node at (0,0) [] {\footnotesize $x_1$};
        \node at (1,0) [] {\footnotesize $x_2$};
        \node at (2,0) [] {\footnotesize $\cdots$};
        \node at (3,0) [] {\footnotesize $x_n$};
        \node at (4,0) [] {\footnotesize $0$};
        \node at (5.5,0) [] {\footnotesize $\cdots$};
        \node at (7,0) [] {\footnotesize $0$};

        \node at (9.5,0) [] {\footnotesize $\cdots$};

        \node at (12,0) [] {\footnotesize $0$};
        \node at (13.5,0) [] {\footnotesize $\cdots$};
        \node at (15,0) [] {\footnotesize $0$};

                \node at (16,0) [] {\footnotesize $0$};
        \node at (18,0) [] {\footnotesize $\cdots$};
        \node at (20,0) [] {\footnotesize $0$};

        \node at (0,-1) [] {\footnotesize $0$};
        \node at (1.5,-1) [] {\footnotesize $\cdots$};
        \node at (3,-1) [] {\footnotesize $0$};
        
        \node at (4,-1) [] {\footnotesize $x_1$};
        \node at (5,-1) [] {\footnotesize $x_2$};
        \node at (6,-1) [] {\footnotesize $\cdots$};
        \node at (7,-1) [] {\footnotesize $x_n$};

        \node at (9.5,-1) [] {\footnotesize $\cdots$};

        \node at (12,-1) [] {\footnotesize $0$};
        \node at (13.5,-1) [] {\footnotesize $\cdots$};
        \node at (15,-1) [] {\footnotesize $0$};

                \node at (16,-1) [] {\footnotesize $0$};
        \node at (18,-1) [] {\footnotesize $\cdots$};
        \node at (20,-1) [] {\footnotesize $0$};

        \node at (9.4,-1.8) [rotate=20] {\footnotesize $\ddots$};

        \node at (16,-1.8) [] {\footnotesize $\vdots$};
        \node at (20,-1.8) [] {\footnotesize $\vdots$};

        \node at (0,-3) [] {\footnotesize $0$};
        \node at (1.5,-3) [] {\footnotesize $\cdots$};
        \node at (3,-3) [] {\footnotesize $0$};
        
        \node at (4,-3) [] {\footnotesize $0$};
        \node at (5.5,-3) [] {\footnotesize $\cdots$};
        \node at (7,-3) [] {\footnotesize $0$};

        \node at (9.5,-3) [] {\footnotesize $\cdots$};

        \node at (12,-3) [] {\footnotesize $x_1$};
        \node at (13,-3) [] {\footnotesize $x_2$};
        \node at (14,-3) [] {\footnotesize $\cdots$};
        \node at (15,-3) [] {\footnotesize $x_n$};

        \node at (16,-3) [] {\footnotesize $0$};
        \node at (18,-3) [] {\footnotesize $\cdots$};
        \node at (20,-3) [] {\footnotesize $0$};

        \draw [thin] (15.5,0.5) -- (15.5,-3.5);

        \draw [thick,xshift=-0.2cm] (-0.3,0.5) -- (-0.5,0.5) -- (-0.5,-3.5) -- (-0.3,-3.5);

        \draw [thick,xshift=0.2cm] (20.3,0.5) -- (20.5,0.5) -- (20.5,-3.5) -- (20.3,-3.5);

        \draw[decorate] (-0.25,0.75) -- node[above]{\tiny $n(n\oh_{k}-\oh_{k+1})$ columns} (15.25,0.75);

        \draw[decorate] (15.75,0.75) -- node[above]{\tiny $\oh_{k} - n(n\oh_{k}-\oh_{k+1})$ columns} (20.25,0.75);

        \draw[decorate] (-1,-3.5) -- node[above,rotate=90]{\tiny $n\oh_{k}-\oh_{k+1}$ rows} (-1,0.5);
        
        \end{tikzpicture}.
    \end{center}
    \item Set $\oh_{k} = n q + r,\ 0 \leqslant r < n$. If $n(-\sss_{\bh}) > q$, then the generic $\varphi \in \mathscr{L}_{\bh}$ can be reduced via row and column operations to
    \begin{center}
    \begin{tikzpicture}[xscale=0.4,yscale=0.4,decoration=brace]
        \node at (0,0) [] {\footnotesize $x_1$};
        \node at (1,0) [] {\footnotesize $x_2$};
        \node at (2,0) [] {\footnotesize $\cdots$};
        \node at (3,0) [] {\footnotesize $x_n$};
        \node at (4,0) [] {\footnotesize $0$};
        \node at (5.5,0) [] {\footnotesize $\cdots$};
        \node at (7,0) [] {\footnotesize $0$};

        \node at (9.5,0) [] {\footnotesize $\cdots$};

        \node at (12,0) [] {\footnotesize $0$};
        \node at (13.5,0) [] {\footnotesize $\cdots$};
        \node at (15,0) [] {\footnotesize $0$};

                \node at (16,0) [] {\footnotesize $0$};
        \node at (18,0) [] {\footnotesize $\cdots$};
        \node at (20,0) [] {\footnotesize $0$};

        \node at (0,-1) [] {\footnotesize $0$};
        \node at (1.5,-1) [] {\footnotesize $\cdots$};
        \node at (3,-1) [] {\footnotesize $0$};
        
        \node at (4,-1) [] {\footnotesize $x_1$};
        \node at (5,-1) [] {\footnotesize $x_2$};
        \node at (6,-1) [] {\footnotesize $\cdots$};
        \node at (7,-1) [] {\footnotesize $x_n$};

        \node at (9.5,-1) [] {\footnotesize $\cdots$};

        \node at (12,-1) [] {\footnotesize $0$};
        \node at (13.5,-1) [] {\footnotesize $\cdots$};
        \node at (15,-1) [] {\footnotesize $0$};

                \node at (16,-1) [] {\footnotesize $0$};
        \node at (18,-1) [] {\footnotesize $\cdots$};
        \node at (20,-1) [] {\footnotesize $0$};

        \node at (9.4,-1.8) [rotate=20] {\footnotesize $\ddots$};

        \node at (16,-1.8) [] {\footnotesize $\vdots$};
        \node at (20,-1.8) [] {\footnotesize $\vdots$};

        \node at (0,-3) [] {\footnotesize $0$};
        \node at (1.5,-3) [] {\footnotesize $\cdots$};
        \node at (3,-3) [] {\footnotesize $0$};
        
        \node at (4,-3) [] {\footnotesize $0$};
        \node at (5.5,-3) [] {\footnotesize $\cdots$};
        \node at (7,-3) [] {\footnotesize $0$};

        \node at (9.5,-3) [] {\footnotesize $\cdots$};

        \node at (12,-3) [] {\footnotesize $x_1$};
        \node at (13,-3) [] {\footnotesize $x_2$};
        \node at (14,-3) [] {\footnotesize $\cdots$};
        \node at (15,-3) [] {\footnotesize $x_n$};

        \node at (16,-3) [] {\footnotesize $0$};
        \node at (18,-3) [] {\footnotesize $\cdots$};
        \node at (20,-3) [] {\footnotesize $0$};

        \node at (10,-5) [] {\footnotesize $\ell_{i,j}$};

        \node at (7,-5) [] {\footnotesize $\cdots$};
\node at (13,-5) [] {\footnotesize $\cdots$};

        \node at (10,-5.8) [] {\footnotesize $\vdots$};
        \node at (10,-3.8) [] {\footnotesize $\vdots$};

        \draw [thin] (15.5,0.5) -- (15.5,-6.5);
        \draw [thin] (-0.25,-3.5) -- (20.25,-3.5);

        \draw [thick,xshift=-0.2cm] (-0.3,0.5) -- (-0.5,0.5) -- (-0.5,-6.5) -- (-0.3,-6.5);

        \draw [thick,xshift=0.2cm] (20.3,0.5) -- (20.5,0.5) -- (20.5,-6.5) -- (20.3,-6.5);

        \draw[decorate] (-0.25,0.75) -- node[above]{\tiny $qn$ columns} (15.25,0.75);

        \draw[decorate] (15.75,0.75) -- node[above]{\tiny $\oh_{k}-qn$ columns} (20.25,0.75);

        \draw[decorate] (-1,-3.25) -- node[above,rotate=90]{\tiny $q$ rows} (-1,0.25);

        \draw[decorate] (-1,-6.25) -- node[above,rotate=90]{\tiny \parbox{1.5cm}{\centering $-\sss_{\bh}-q$\\ rows}} (-1,-3.75);

        \end{tikzpicture},
    \end{center}
    where, for all $i=1,\ldots,-\sss_{\bh}-q$ and $j=1,\ldots , qn$, the elements $\ell_{i,j}\in R_1$ are linear forms. 
    \end{enumerate} 
\end{lemma}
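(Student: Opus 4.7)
The plan is to view $\varphi$ as a $(-\sss_{\bh}) \times \oh_k$ matrix $\Phi$ with entries in $R_1$, or equivalently as a $\BC$-linear map $F\colon \BC^{\oh_k} \to \BC^{-\sss_{\bh}} \otimes_{\BC} R_1$. Under this identification, column operations correspond to the $\GL_{\oh_k}(\BC)$-action on the source, and row operations correspond to the $\GL_{-\sss_{\bh}}(\BC)$-action on the first tensor factor of the target, leaving $R_1$ untouched. The hypothesis $\varphi \in \mathscr{L}_{\bh}$ is the open condition that $F$ (equivalently, the restriction $\varphi^T\vert_{R_k^{\oplus \oh_k}}$) attains maximal rank.

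For case (\textit{i}), where $n(-\sss_{\bh}) \leqslant \oh_k$, the map $F$ is generically surjective. Using column operations, reduce $\BC^{\oh_k}$ to the direct sum of a complement of $\ker F$ and $\ker F$ itself, aligning the latter with the last $\oh_k - n(-\sss_{\bh})$ coordinate axes; this yields zero columns in those positions. The restriction of $F$ to the complement is an isomorphism $\BC^{n(-\sss_{\bh})} \xrightarrow{\sim} \BC^{-\sss_{\bh}} \otimes R_1$, and a further column change of basis pulls back the standard basis $\{e_i \otimes x_j\}$ of the target to the source, producing exactly the block-diagonal form stated in the lemma.

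Case (\textit{ii}) is handled by a row-by-row Gaussian-elimination-style induction. For row $1$, generically $n$ of its entries form a basis of $R_1$; a column permutation and a $\GL_n(\BC)$-change of basis on those columns turn them into $x_1,\ldots,x_n$, and the remaining entries of row $1$ are then zeroed out via column operations. The crucial observation enabling iteration is that column operations adding multiples of columns $n+1,\ldots,\oh_k$ to columns $1,\ldots,n$ preserve row $1$ (it vanishes on the latter block), so the same normalisation can be applied to row $2$ restricted to the sub-matrix on columns $n+1,\ldots,\oh_k$, after which its first $n$ entries are cleaned using such ``safe'' column operations. Iterating, the process terminates after exactly $q$ rows because $\oh_k = nq + r$ with $r < n$ leaves too few columns for a $(q{+}1)$-th complete block, and the remaining $-\sss_{\bh} - q$ rows retain general linear-form entries $\ell_{i,j}$ in the first $qn$ columns.

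The main technical obstacle is verifying that the induction in case (\textit{ii}) can be carried out on a dense open subset of $\mathscr{L}_{\bh}$. At stage $i$, one needs the current row to have $n$ linearly independent entries among the ``available'' columns $(i-1)n+1,\ldots,\oh_k$; this is an open condition, and the simultaneous validity across $i=1,\ldots,q$ is precisely what the maximal-rank hypothesis defining $\mathscr{L}_{\bh}$ guarantees, together with the invertibility of the triangular column operations used to propagate genericity through the stages.
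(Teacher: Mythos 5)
The paper's own proof of this lemma is the single sentence ``straightforward, by applying row and column operations'', so your write-up supplies details the authors omit, and it follows exactly the route they intend. Case \textit{(i)} is correct: identifying $\varphi$ with a linear map $\BC^{\oh_{k}}\to\BC^{-\sss_{\bh}}\otimes_{\BC}R_1$ and choosing, via column operations alone, a basis of the source consisting of preimages of the $e_i\otimes x_j$ together with a basis of the kernel gives precisely the stated block form; generic surjectivity of this map is an open condition satisfied by the normal form itself, hence holds on a dense open subset of the (irreducible) ambient affine space and therefore on a dense open subset of $\mathscr{L}_{\bh}$.

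In case \textit{(ii)} the row-by-row elimination and the observation that adding multiples of already-cleared columns is ``safe'' are both correct, but two points need attention. The minor one: the solvability of stage $i$ is not ``precisely what the maximal-rank hypothesis guarantees''; the correct justification is the one you gesture at in the same sentence, namely that success at each stage is an open condition (the stage-$(i-1)$ column operation is invertible and depends only on the earlier rows, so the transformed row $i$ is still a generic tuple of $\oh_{k}-(i-1)n\geqslant n$ linear forms) and is non-empty because the normal form itself passes every stage. The substantive one: your algorithm says nothing about the entries of the last $-\sss_{\bh}-q$ rows in the last $r=\oh_{k}-qn$ columns. The statement places the $\ell_{i,j}$ only in columns $j\leqslant qn$ and leaves that bottom-right block blank; if it is meant to be zero, no sequence of row and column operations can achieve this for generic $\varphi$ when $r>0$. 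Indeed, the set of matrices reducible to such a form is the image of $\GL(-\sss_{\bh})\times\mathcal{N}\times\GL(\oh_{k})$, where $\mathcal{N}$ is the $n\cdot qn\cdot(-\sss_{\bh}-q)$-dimensional family of candidate normal forms; with, say, $n=6$, $\oh_{k}=11$, $-\sss_{\bh}=9$ (so $q=1$, $r=5$, realizable for instance with $k=3$ and $\oh_{4}=57$) this image has dimension at most $81+121+288=490<594=n(-\sss_{\bh})\oh_{k}$, so it misses the generic matrix. You should therefore either state explicitly that the bottom rows keep general linear forms in all $\oh_{k}$ columns --- which is what your elimination actually produces, and suggests the index range in the lemma should read $j=1,\ldots,\oh_{k}$ --- or flag the discrepancy with the displayed normal form; as written, the proof is silent on a block that cannot be normalized away.
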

\begin{proof}
    Straightforward, by applying row and column operations to the matrix representing $\varphi$ with respect to the canonical basis of the free $R$-module $R^{\oplus\oh_k}$.
\end{proof}

    Let $\bh$ be a 2-step Hilbert function such that $0<-\sss_{\bh}<\oh_k$.   Consider then the incidence correspondence 
\[
\mathscr{K}_{\bh} = \Set{ (\varphi,\boldsymbol{p}) \in \mathscr{L}_{\bh} \times R_k^{\oplus\oh_{k}} | \boldsymbol{p} \in \ker \varphi } \subset \mathscr{L}_{\bh} \times R_k^{\oplus\oh_{k}}.
\]     
Note that  $\mathscr{K}_\bh$ is  integral. Indeed, irreducibility follows from the irreducibility of $\mathscr{L}_{\bh}$ and from the fact that the fibres of the restriction to $\mathscr{K}_{\bh}$ of the projection onto $\mathscr{L}_\bh$ are vector spaces of the same dimension by definition of $\mathscr{L}_{\bh}$. Reducedness is a consequence of the fact that $\mathscr{K}_{\bh}$ is cut out, in $\mathscr{L}_{\bh} \times R_k^{\oplus\oh_{k}}$, by  equations linear  in the coordinates of $R_k^{\oplus\oh_{k}}$.

\begin{theorem}\label{thm: homo loco syz} 
Let $\bh$ be a 2-step Hilbert function such that $0 < -\sss_{\bh} < \oh_{k}$. Assume that the generic morphism $\varphi\in\mathscr{L}_\bh$ is not injective. Then, there exists  a  rational map $\psi_{\bh}:\mathscr{K}_{\bh}\dashrightarrow \mathscr{H}_{\bh}^n$ which, on closed points, associates the generic pair $(\varphi,\boldsymbol{p})$ to the ideal   $I_{\boldsymbol{p}} = \left(\boldsymbol{p}\right) + \Fm^{k+2}$.
\end{theorem}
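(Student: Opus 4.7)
The plan is to define $\psi_{\bh}$ set-theoretically on all of $\mathscr{K}_{\bh}$ by the rule $(\varphi,\boldsymbol{p}) \mapsto [I_{\boldsymbol{p}}]$ with $I_{\boldsymbol{p}} = (p_{1},\ldots,p_{\oh_{k}}) + \Fm^{k+2}$, and then to exhibit an open subscheme on which this assignment lands in $\mathscr{H}_{\bh}^{n}$. Since $\Fm^{k+2} \subseteq I_{\boldsymbol{p}} \subseteq \Fm^{k}$ holds automatically, for the Hilbert function of $I_{\boldsymbol{p}}$ to equal $\bh$ it suffices that
\begin{enumerate}[\rm (a)]
\item $p_{1},\ldots,p_{\oh_{k}}$ are $\BC$-linearly independent in $R_{k}$, and
\item the multiplication map $R_{1} \otimes_{\BC} \Span(p_{1},\ldots,p_{\oh_{k}}) \to R_{k+1}$ has rank $\oh_{k+1}$, i.e.~$\boldsymbol{p}$ admits exactly $-\sss_{\bh}$ independent linear syzygies.
\end{enumerate}
Both are maximal-rank conditions on matrices depending algebraically on $(\varphi,\boldsymbol{p})$, so they cut out an open subscheme $U \subseteq \mathscr{K}_{\bh}$. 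On $U$, the ideals $I_{\boldsymbol{p}}$ form a flat family of constant Hilbert function $\bh$, and representability of the Hilbert functor yields a morphism $U \to \mathscr{H}_{\bh}^{n}$. Because $\mathscr{K}_{\bh}$ is integral, it is enough to exhibit one pair in $U$ in order to conclude that $\psi_{\bh}$ is a well-defined rational map with the asserted description on the generic closed point.

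To produce such a pair I take $\varphi_{0} \in \mathscr{L}_{\bh}$ in the canonical form supplied by \Cref{lem:pseudo canonical form}. In case \textit{(i)}, the relation $\varphi_{0}^{T}(\boldsymbol{p}_{0}) = 0$ decouples into $-\sss_{\bh}$ independent blocks of Koszul equations $\sum_{j=1}^{n} x_{j} p_{(i-1)n+j} = 0$. Since $x_{1},\ldots,x_{n}$ is a regular sequence, exactness of the Koszul complex parametrises the solutions by arbitrary $q_{jl}^{(i)} \in R_{k-1}$ through $p_{(i-1)n+j} = \sum_{l>j} x_{l} q_{jl}^{(i)} - \sum_{l<j} x_{l} q_{lj}^{(i)}$, while the remaining $\oh_{k} - n(-\sss_{\bh})$ entries of $\boldsymbol{p}_{0}$ stay unconstrained in $R_{k}$. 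A Zariski-generic choice of these free data secures condition \textit{(a)}. Case \textit{(ii)} of \Cref{lem:pseudo canonical form} is handled by the same strategy, perturbing the Koszul ansatz so as to absorb the additional block of linear forms $\ell_{i,j}$.

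The main obstacle is verifying condition \textit{(b)} for the $\boldsymbol{p}_{0}$ just constructed: one must rule out accidental linear syzygies beyond the $-\sss_{\bh}$ already encoded by the columns of $\varphi_{0}$, since extra syzygies would force $\dim_{\BC}(I_{\boldsymbol{p}_{0}})_{k+1} < \oh_{k+1}$. I would address this by combining upper semicontinuity of the dimension of the linear-syzygy module along the parameter space of admissible $\boldsymbol{p}_{0}$'s with a direct dimension count leveraging the standing inequality $\oh_{k+1} > (n-1)\oh_{k}$, which is the hypothesis $-\sss_{\bh} < \oh_{k}$ rewritten. This inequality guarantees enough room in $R_{k+1}$ so that a generic choice of the free parameters does not accidentally lower the rank of the multiplication map. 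Once \textit{(b)} is secured, openness of $U$ provides the morphism $U \to \mathscr{H}_{\bh}^{n}$, and the proof is complete.
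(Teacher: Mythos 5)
Your proposal follows essentially the same route as the paper's proof: one identifies the open locus of $\mathscr{K}_{\bh}$ where $\boldsymbol{p}$ is linearly independent and carries exactly $-\sss_{\bh}$ independent linear syzygies (so that $\dim_{\BC}(I_{\boldsymbol{p}})_{k+1}= n\oh_{k}+\sss_{\bh}=\oh_{k+1}$ and the Hilbert function is $\bh$), and then uses the integrality of $\mathscr{K}_{\bh}$ to conclude that the resulting family is flat over a non-empty open subset, yielding the rational map to $\mathscr{H}_{\bh}^n$. The step you single out as "the main obstacle" — exhibiting one pair with no accidental extra linear syzygies — is precisely the point the paper also leaves as a bare assertion ("in a non-empty open subset $U\subset\mathscr{K}_{\bh}$ the polynomials ... satisfy $-\sss_{\bh}$ independent linear syzygies"), so your treatment is no less complete than the published one; your explicit Koszul parametrisation of $\ker\varphi_0$ in case \textit{(i)} of \Cref{lem:pseudo canonical form} in fact goes somewhat further, although the reduction of case \textit{(ii)} by "perturbing the Koszul ansatz" and the final rank verification remain sketches rather than arguments.
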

\begin{proof}
    In a non-empty open subset $U \subset \mathscr{K}_{\bh}$, the polynomials $\boldsymbol{p}=(p_1,\ldots,p_{\oh_{k}})$ are linearly independent and satisfy $-\sss_{\bh}$ independent linear syzygies. Thus
    \[
    \dim_\BC \left(I_{\boldsymbol{p}}\right)_k = \oh_{k} \quad \text{and}\quad \dim_\BC \left(I_{\boldsymbol{p}}\right)_{k+1} = n\oh_{k} -(- \sss_{\bh}) = \oh_{k+1}. 
    \]
     Thus, this is a flat (recall that $\mathscr{K}_{\bh}$ is reduced) family of 2-step ideals with base $U$, so providing the rational map $\psi_{\bh}$.  
\end{proof}

\begin{remark}\label{rem:veryfwok} We remark that the condition \textit{(\ref{it:1i})} in \Cref{lem:pseudo canonical form} ensures that the generic morphism $\varphi\in\mathscr{L}_\bh$ is surjective  and non-injective. In this case, we can compute the dimension of $\mathscr{K}_\bh$. We have 
\[
\dim \mathscr{K}_{\bh}\ =\ \dim_\BC \mathscr{L}_{\bh} + \dim_{\BC}\ker \varphi\ =\ n(-\sss_{\bh})\oh_{k}  + \dim_{\BC}\ker \varphi,
\]
where $\varphi$ is a generic morphism in $\mathscr{L}_{\bh}$. From the short exact sequence
\[
\begin{tikzcd}
    0 \arrow[r]& \ker \varphi\arrow[r]& R_k^{\oplus\oh_{k}} \arrow[r,"\varphi"]& R_{k+1}^{\oplus-\sss_{\bh}} \arrow[r]& 0
\end{tikzcd}
\]
one has
\[
\dim_{\BC} (\ker \varphi) = \oh_{k}\rrr_{k} + \sss_{\bh}\rrr_{k+1} ,
\]
and hence
\begin{equation}
    \label{eq:dimK}\dim\mathscr{K}_{\bh}=  n(-\sss_{\bh})\oh_{k} +\oh_{k}\rrr_{k} + \sss_{\bh}\rrr_{k+1}. 
\end{equation}

We refer to homogeneous 2-step ideals satisfying \textit{(\ref{it:1i})} as ideals with \emph{very few linear syzygies}. The name is motivated by the fact that
\[
0 < -\sss_{\bh} \leqslant \frac{1}{n}\oh_{k}\quad\Leftrightarrow\quad n\oh_k>\oh_{k+1} \geqslant \dfrac{n^2-1}{n}\oh_{k} .
\] 
Moreover, we underline that in this setting a minimal set of generators of a generic ideal in $\psi_\bh(\mathscr{K}_{\bh})\subset\mathscr{H}_{\bh}^n$ can be obtained as the union of a subset of cardinality $-\sss_{\bh}$ of the kernel of the following morphism 
    \[
        R_k^{\oplus n} \xrightarrow{\ [ x_1\ \cdots\ x_n ]\ } R_{k+1}
    \]
    with $\oh_{k} - n(-\sss_{\bh})$ further independent polynomials of degree $k$.
\end{remark} 

Now, we want to bound from below the dimension of the homogeneous locus $\mathscr{H}^n_{\bh}$ of the Hilbert stratum by   comparing $\dim\mathscr{K}_{\bh}$ and the dimension of the fibre $\psi_{\bh}^{-1}([I])$ at a generic point  $[I]\in \psi_\bh(\mathscr{K}_\bh)$. We do this in the case of very few linear syzygies, where we are able to compute the dimension of the generic fibre of $\psi_{\bh}$ as explained in \Cref{rem:veryfwok}.

\begin{corollary}\label{cor:veryfew}
    Let $\bh$ be a 2-step Hilbert function with very few linear syzygies. Then, the following inequality holds
    \[
    \dim \mathscr{H}^n_{\bh} \geqslant \oh_{k}(\rrr_{k}-\oh_{k}) + \sss_{\bh}( \rrr_{k+1} - \oh_{k+1}).
    \]
\end{corollary}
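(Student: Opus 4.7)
The plan is to apply the fibre dimension theorem to the rational map $\psi_{\bh}\colon \mathscr{K}_{\bh}\dashrightarrow \mathscr{H}^n_{\bh}$ constructed in \Cref{thm: homo loco syz}. Since $\mathscr{K}_{\bh}$ is irreducible and its dimension is explicitly given by \eqref{eq:dimK} in \Cref{rem:veryfwok}, the task reduces to bounding the dimension of a generic fibre of $\psi_{\bh}$ from above by $\oh_{k}^2+\sss_{\bh}^2$. Once this is established, a short arithmetic manipulation using $\sss_{\bh}=\oh_{k+1}-n\oh_{k}$ recovers the stated inequality.

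First, I would describe the generic fibre of $\psi_{\bh}$. Let $U\subset \mathscr{K}_{\bh}$ be the non-empty open set on which $\psi_{\bh}$ is defined and let $[I]\in \psi_{\bh}(U)$ be a generic point. A pair $(\varphi,\boldsymbol{p})\in U$ lies in the fibre over $[I]$ precisely when $\boldsymbol{p}$ is a basis of $I_k$ and the entries of $\varphi$ encode $-\sss_{\bh}$ linearly independent linear syzygies among the elements of $\boldsymbol{p}$. The choice of basis $\boldsymbol{p}$ of the fixed $\oh_{k}$-dimensional space $I_k$ is a $\GL_{\oh_{k}}$-torsor, and thus contributes a factor of dimension $\oh_{k}^2$.

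Second, for a fixed basis $\boldsymbol{p}$, I would compute the dimension of the locus of matrices $\varphi\in\mathscr{L}_{\bh}$ with $\varphi^T(\boldsymbol{p})=0$. The space of linear syzygies of $\boldsymbol{p}$ inside $R_1^{\oplus\oh_{k}}$ has dimension equal to $\beta_{1,k+1}(I)=-\sss_{\bh}$: this is where the hypothesis of natural first anti-diagonal of the Betti table enters, which for a generic $[I]\in\psi_{\bh}(U)$ is guaranteed by upper semicontinuity of the graded Betti numbers (\cite[Corollary 1.31]{AproduNagel}) together with the existence of a single such ideal, supplied by the very few linear syzygies framework of \Cref{lem:pseudo canonical form}\textit{(\ref{it:1i})}. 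A choice of $\varphi\in\mathscr{L}_{\bh}$ amounts to ordering $-\sss_{\bh}$ linearly independent elements of this syzygy space, giving a dense open locus of dimension $(-\sss_{\bh})^2=\sss_{\bh}^2$. The generic fibre therefore has dimension $\oh_{k}^2+\sss_{\bh}^2$.

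Applying the fibre dimension theorem to $\psi_{\bh}|_{U}$ yields
\[
\dim \mathscr{H}^n_{\bh}\geqslant \dim\overline{\psi_{\bh}(U)}\geqslant \dim \mathscr{K}_{\bh}-\oh_{k}^2-\sss_{\bh}^2 = n(-\sss_{\bh})\oh_{k}+\oh_{k}\rrr_{k}+\sss_{\bh}\rrr_{k+1}-\oh_{k}^2-\sss_{\bh}^2,
\]
and substituting $\sss_{\bh}=\oh_{k+1}-n\oh_{k}$ in the last two terms collapses the expression to $\oh_{k}(\rrr_{k}-\oh_{k})+\sss_{\bh}(\rrr_{k+1}-\oh_{k+1})$, as required. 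The main obstacle is the justification that a generic ideal in $\psi_{\bh}(U)$ has exactly $-\sss_{\bh}$ independent linear syzygies and no more; this is the role of the openness of the natural first anti-diagonal condition, and it is precisely this property that is ensured in the very few linear syzygies regime.
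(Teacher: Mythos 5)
Your proposal is correct and follows essentially the same route as the paper: both arguments bound $\dim\mathscr{H}^n_{\bh}$ from below by $\dim\mathscr{K}_{\bh}$ (as computed in \Cref{rem:veryfwok}) minus the dimension of a generic fibre of $\psi_{\bh}$, and both identify that fibre dimension as $\oh_k^2+\sss_{\bh}^2$ before performing the same substitution $\sss_{\bh}=\oh_{k+1}-n\oh_k$. The only cosmetic difference is that you parametrise the fibre directly as (ordered bases of $I_k$) $\times$ (ordered bases of the $(-\sss_{\bh})$-dimensional linear syzygy space), whereas the paper realises it as a generically faithful orbit of $\GL(-\sss_{\bh})\times\GL(\oh_k)$ acting by row and column operations, using the normal form of \Cref{lem:pseudo canonical form} to certify faithfulness — two descriptions of the same count.
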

\begin{proof}    
We start by computing the dimension of the generic fibre of $\psi_{\bh}$. Given $[I]\in\psi_\bh(\mathscr{K}_\bh)\subset \mathscr{H}^n_{\bh}$,  we can act with the linear group $\GL(\oh_{k})$ to change the basis of $I_k$. Explicitly, for every $M \in \GL(\oh_{k})$, the pair $(\varphi,\boldsymbol{p})$ is in the fibre over  $[I]$ if and only if $(\varphi M^{-1},M\boldsymbol{p})$ is. In fact
\[
0 = \varphi\boldsymbol{p} = (\varphi M^{-1}) (M\boldsymbol{p}) \quad \Rightarrow \quad M\boldsymbol{p} \in \ker (\varphi M^{-1}).
\]
In this way, the general linear group acts on $\mathscr{L}_{\bh}$ via column operations of the matrix representing $\varphi$. Nevertheless, we can also act on $\mathscr{L}_{\bh}$ via row operations. Summarising, a pair $(\varphi,\boldsymbol{p})$ belongs to the fibre $\psi_{\bh}^{-1}([I])$  over $[I]$ if and only if, for every pair $(M_1,M_2) \in  \GL(-\sss_{\bh})\times \GL(\oh_{k})$, we also have $(M_1 \varphi M_2^{-1},M_2 \boldsymbol{p})\in\psi_{\bh}^{-1}([I])$, because
\[
0 = \varphi \boldsymbol{p} = M_1 \varphi \boldsymbol{p} = (M_1\varphi M_2^{-1}) (M_2 \boldsymbol{p}).
\]
By the normal form in \Cref{lem:pseudo canonical form}\emph{(\ref{eq: normal form syzygy})} the action of $\GL(-\sss_{\bh})\times \GL(\oh_{k})$ is faithful on some open, hence this gives, for a generic fibre $F\subset \mathscr{K}_{\bh}$ of $\psi_{\bh}$, the equality 
\[
\dim F = \dim  \GL(-\sss_{\bh})+\dim \GL(\oh_{k})=\sss_{\bh}^2  +  \oh_{k}^2,
\]
and we deduce from \eqref{eq:dimK} that 
\begin{equation}
\begin{split}
    \dim \mathscr{H}^n_{\bh} &{} \geqslant   \dim \psi_{\bh}(\mathscr{K}_{\bh})= \dim \mathscr{K}_{\bh} - \dim F = {}\\
    &{}=  n(-\sss_{\bh})\oh_{k} +\oh_{k}\rrr_{k} + \sss_{\bh}\rrr_{k+1} - (\sss_{\bh}^2 +  \oh_{k}^2) ={}\\ 
    &{}= \oh_{k}(\rrr_{k}-\oh_{k}) + \sss_{\bh}( \rrr_{k+1} - \oh_{k+1}). \qedhere
\end{split}    
\end{equation}
\end{proof}

\begin{remark} 
We conclude this subsection by noticing that the bound in \Cref{cor:veryfew} agrees with the lower bound for the dimension of the degree 0 part of the tangent space given in \Cref{lem: tangent space}. This in turn, provides the expected dimension of the homogeneous locus of the Hilbert stratum, see \cite{multigraded}. 

In the case of few linear syzygies, one can still look for a lower bound to the dimension of the corresponding Hilbert stratum. If the generic morphism $\varphi\in\mathscr{L}_{\bh}$ is not surjective, then the dimension of  $\ker\varphi$ cannot be deduced theoretically and it must be computed explicitly. We do this in \Cref{sec:red3fold} to find examples in dimension 3.  
\end{remark}

\subsection{Estimate of the number of \texorpdfstring{$(-1)$}{}-tangent vectors}\label{subsec:TNT2step}
From the proof of \Cref{lem: tangent space} we get, for a homogeneous 2-step ideal $I \subset R$, the inequality
\begin{equation}\label{eq:stima-1}
    \dim_{\BC} \mathsf{T}^{=-1}_{[I]}\Hilb^\bullet \mathbb{A}^n \geqslant \max\left\{0, \beta_{0,k}\rrr_{k-1} + (\beta_{0,k+1}-\beta_{1,k+1})(\rrr_{k}-\oh_{k}) + (\beta_{0,k+2}-\beta_{1,k+2})(\rrr_{k+1}-\oh_{k+1}) \right\},
\end{equation}
where
\[
\beta_{0,k} = \oh_{k}\quad\text{and}\quad \beta_{0,k+1}-\beta_{1,k+1} = \sss_{\bh}.
\]
The third summand involves the difference $\beta_{0,k+2}-\beta_{1,k+2}$ that is only a part of the coefficient in degree $k+2$ of the Hilbert series of the ideal $I$ deduced from the resolution \eqref{eq: resolution I}, see \Cref{lem: general properties Betti}. Let us put
\[
\ttt_{\bh} = \rrr_{k+2} - n \oh_{k+1} + \tbinom{n}{2} \oh_{k}.
\]
Then, we have $\beta_{0,k+2}-\beta_{1,k+2} = \ttt_{\bh} - \beta_{2,k+2}$ and we can rewrite the lower bound in \eqref{eq:stima-1} as
\[
\dim_{\BC} \mathsf{T}^{=-1}_{[I]}\Hilb^\bullet \mathbb{A}^n \geqslant \max\left\{0, \rrr_{k-1}\oh_{k} + \sss_{\bh}(\rrr_{k}-\oh_{k}) + (\ttt_{\bh}-\beta_{2,k+2})(\rrr_{k+1}-\oh_{k+1}) \right\}.
\]

We make the following observations: 
\begin{enumerate}[1.]
    \item for a given pair $(\oh_k,\oh_{k+1})$, the greater the number $\beta_{2,k+2}$ of second-order linear syzygies is, the lower the bound of the dimension of the space of $-1$-tangent vectors is; 
    \item the cases $\oh_{k+1} \geqslant \frac{n^2-1}{2}\oh_{k}$, which are covered by \Cref{thm:homoloco} and \Cref{thm: homo loco syz} (no or very few linear syzygies). In this setting the graded Betti number $\beta_{2,k+2}$ is zero for a generic ideal. In fact, in the no linear syzygies case, the vanishing $\beta_{1,k+1}=0$ implies $\beta_{2,k+2} = 0$. On the other hand, having very few linear syzygies provides injectivity of the morphism differential $ R(-k-1)^{\oplus\beta_{1,k+1}} \to R(-k)^{\oplus\beta_{0,k}}$ in \eqref{eq: resolution I}.
\end{enumerate}
  We draw in \Cref{fig:geography 2-step ideals}  the subdivision of 2-step Hilbert functions of order $k$ according to the number of linear syzygies and to the Betti numbers in degree $k$, $k+1$ and $k+2$.

\begin{figure}[!ht]
    \centering

    \begin{tikzpicture}
    
    \begin{scope}[scale=0.3]
        \fill [fill=black!20,] (0,0) -- (0,36) -- (12,36) -- cycle;
\fill [fill=black!60,] (0,0) -- (12,36) -- (27/2,36) -- cycle;
\fill [fill=black!40,] (0,0) -- (18,36) -- (27/2,36) -- cycle;
\fill [pattern={Lines[angle=45,distance=3pt,line width=0.05pt]},opacity=0.5,pattern color=white] (0,15) -- (21,36) -- (0,36) -- cycle;
\fill [pattern={Lines[angle=135,distance=3pt,line width=0.05pt]},opacity=0.5,pattern color=white] (0,15) -- (21,36) -- (28,0) -- (28,36) -- (0,0) -- cycle;

\fill [fill=white] (0,0) -- (1,3) -- (2,5) -- (3,6) -- (4,8) -- (5,9) -- (6,10) -- (7,12) -- (8,13) -- (9,14) -- (10,15) -- (11,17) -- (12,18) -- (13,19) -- (14,20) -- (15,21) -- (16,23) -- (17,24) -- (18,25) -- (19,26) -- (20,27) -- (21,28) -- (22,30) -- (23,31) -- (24,32) -- (25,33) -- (26,34) -- (27,35) -- (28,36) -- (28,0) -- cycle;

\draw [very thick,decorate,decoration={saw,amplitude=1pt,segment length=8pt},densely dotted] (0,0) -- (1,3) -- (2,5) -- (3,6) -- (4,8) -- (5,9) -- (6,10) -- (7,12) -- (8,13) -- (9,14) -- (10,15) -- (11,17) -- (12,18) -- (13,19) -- (14,20) -- (15,21) -- (16,23) -- (17,24) -- (18,25) -- (19,26) -- (20,27) -- (21,28) -- (22,30) -- (23,31) -- (24,32) -- (25,33) -- (26,34) -- (27,35) -- (28,36);
\draw [thick] (0,0) --node[left]{$\oh_{k+1}$} (0,36) --node[above,xshift=12pt]{$\oh_{k}$} (28,36);
       
\draw (-1/3,-1) -- (12+2/3,38) node[above,rotate=71,xshift=-4pt]{\small $\sss_{\bh} = 0$};

\draw (-1,14) -- (23,38) node[above,rotate=45,xshift=-12pt]{\small $\ttt_{\bh} = 0$};

\node at (0,0) [left] {\footnotesize $0$};
\node at (0,36) [left] {\footnotesize $\rrr_{k+1}$};
\node at (0,36) [above] {\footnotesize $0$};
\node at (28,36) [above] {\footnotesize $\rrr_{k}$};
\end{scope}

\begin{scope}[xscale=1.,yscale=0.5,shift={(5,7)}]
        \node at (1.5,2) [] {\parbox{3.cm}{\centering\small No linear syzygies\\ \footnotesize  $\sss_{\bh} \geqslant 0$}};
        \node at (4.5,2) [] {\parbox{3.cm}{\centering\small Very few linear\\[-4pt] syzygies\\ \footnotesize  $0 < -\sss_{\bh} \leqslant \frac{1}{n}\oh_{k} $}}; 

        \node at (7.5,2) [] {\parbox{3.cm}{\centering \footnotesize  $ -\sss_{\bh} > \frac{1}{n}\oh_{k}$ \\[4pt] \scriptsize $\beta_{0,k+2} - \beta_{1,k+2} = \ttt_{\bh}-\beta_{2,k+2}$}};

        \fill [fill=black!20,] (0,0.5) rectangle (3,-8.5);
        \fill [fill=black!60,line width=0pt] (3,0.5) rectangle (6,-8.5);
        \fill[pattern={Lines[angle=45,distance=3pt,line width=0.05pt]},opacity=0.5,pattern color=white] (0,0.5) rectangle (9,-4);
        \fill[pattern={Lines[angle=135,distance=3pt,line width=0.05pt]},opacity=0.5,pattern color=white] (0,-4) rectangle (9,-8.5);
        
        \draw (0,3.5) -- (0,-8.5);
        \draw (3,3.5) -- (3,-8.5);
        \draw (6,3.5) -- (6,-8.5);
        \draw (9,3.5) -- (9,-8.5);
        
        \draw (-2,0.5) -- (9,0.5);
        \draw (-2,-4) -- (9,-4);
        \draw (-2,-8.5) -- (9,-8.5);

        \draw (-2,0.5) -- (-2,-8.5);
        
        \draw (0,3.5) -- (9,3.5);

        \node at (-1,-1.75) [] {\footnotesize $\ttt_{\bh} - \beta_{2,k+2} < 0$};
        \node at (-1,-6.25) [] {\footnotesize $\ttt_{\bh} - \beta_{2,k+2} \geqslant 0$};

        \draw [ultra thin,xshift=-0.45cm,yshift=0.75cm] (3.2,-0.7) -- (0.7,-0.7) -- (0.7,-4.3) -- (1.6,-4.3);
        \draw [fill=white,xshift=-0.45cm,yshift=0.75cm] (0.8,-0.9) rectangle (1.5,-1.9);
        \node [xshift=-0.45cm+1.5pt,yshift=0.375cm-0.5pt] at (1.15,-1.4) [] {\scriptsize $\oh_{k}$};
        \draw [fill=white,xshift=-0.45cm,yshift=0.75cm] (0.8,-2) rectangle (1.5,-3);
        \node [xshift=-0.45cm+1.5pt,yshift=0.375cm-0.5pt] at (1.15,-2.5) [] {\scriptsize $\sss_{\bh}$};
        \draw [fill=black,xshift=-0.45cm,yshift=0.75cm] (1.125,-3.55) rectangle (1.175,-3.65);
        
        \draw [xshift=0.35cm,fill=black,yshift=0.75cm] (1.125,-1.35) rectangle (1.175,-1.45);
        \draw [xshift=0.35cm,fill=white,yshift=0.75cm] (0.8,-2) rectangle (1.5,-3);
        \node [xshift=0.35cm+1pt,yshift=0.375cm-0.5pt] at (1.15,-2.5) [] {\scriptsize ${-}\ttt_{\bh}$};
        
        \draw [xshift=1.15cm,fill=black,yshift=0.75cm] (1.125,-1.35) rectangle (1.175,-1.45);

        \draw [ultra thin,xshift=-0.45cm,yshift=-3.75cm] (3.2,-0.7) -- (0.7,-0.7) -- (0.7,-4.3) -- (1.6,-4.3);
        \draw [fill=white,xshift=-0.45cm,yshift=-3.75cm] (0.8,-0.9) rectangle (1.5,-1.9);
        \node [xshift=-0.45cm+1.5pt,yshift=-2.25cm+0.375cm-0.5pt] at (1.15,-1.4) [] {\scriptsize $\oh_{k}$};
        \draw [fill=white,xshift=-0.45cm,yshift=-3.75cm] (0.8,-2) rectangle (1.5,-3);
        \node [xshift=-0.45cm+1.5pt,yshift=-2.25cm+0.375cm-0.5pt] at (1.15,-2.5) [] {\scriptsize $\sss_{\bh}$};
        \draw [xshift=-0.45cm,yshift=-3.75cm,fill=white] (0.8,-3.1) rectangle (1.5,-4.1);
        \node[xshift=-0.45cm,yshift=-2.25cm+0.375cm-0.5pt] at (1.15,-3.6) [] {\scriptsize $\ttt_{\bh}$};
        
        \draw [xshift=0.35cm,fill=black,yshift=-3.75cm] (1.125,-1.35) rectangle (1.175,-1.45);
        \draw [xshift=0.35cm,fill=black,yshift=-3.75cm] (1.125,-2.45) rectangle (1.175,-2.55);
        
        \draw [xshift=1.15cm,fill=black,yshift=-3.75cm] (1.125,-1.35) rectangle (1.175,-1.45);

        \draw [ultra thin,xshift=-0.45cm+3cm,yshift=0.75cm] (3.2,-0.7) -- (0.7,-0.7) -- (0.7,-4.3) -- (1.6,-4.3);
        \draw [fill=white,xshift=-0.45cm+3cm,yshift=0.75cm] (0.8,-0.9) rectangle (1.5,-1.9);
        \node [xshift=-0.45cm+3cm+1.5pt,yshift=0.375cm-0.5pt] at (1.15,-1.4) [] {\scriptsize $\oh_{k}$};
        \draw [fill=black,xshift=-0.45cm+3cm,yshift=0.75cm] (1.125,-2.45) rectangle (1.175,-2.55);
        \draw [fill=black,xshift=-0.45cm+3cm,yshift=0.75cm] (1.125,-3.55) rectangle (1.175,-3.65);
        
        \draw [xshift=0.35cm+3cm,fill=white,yshift=0.75cm] (0.8,-0.9) rectangle (1.5,-1.9);
        \node [xshift=0.35cm+3cm+1pt,yshift=0.375cm-0.5pt] at (1.15,-1.4) [] {\scriptsize ${-}\sss_{\bh}$};
        \draw [xshift=0.35cm+3cm,fill=white,yshift=0.75cm] (0.8,-2) rectangle (1.5,-3);
        \node [xshift=0.35cm+3cm+1pt,yshift=0.375cm-0.5pt] at (1.15,-2.5) [] {\scriptsize ${-}\ttt_{\bh}$};
        
        \draw [xshift=1.15cm+3cm,fill=black,yshift=0.75cm] (1.125,-1.35) rectangle (1.175,-1.45);

        \draw [ultra thin,xshift=-0.45cm+3cm,yshift=-3.75cm] (3.2,-0.7) -- (0.7,-0.7) -- (0.7,-4.3) -- (1.6,-4.3);
        \draw [fill=white,xshift=-0.45cm+3cm,yshift=-3.75cm] (0.8,-0.9) rectangle (1.5,-1.9);
        
        \node [xshift=-0.45cm+3cm+1.5pt,yshift=-2.25cm+0.375cm-0.5pt] at (1.15,-1.4) [] {\scriptsize $\oh_{k}$};
        \draw [fill=black,xshift=-0.45cm+3cm,yshift=-3.75cm] (1.125,-2.45) rectangle (1.175,-2.55);
        \draw [xshift=-0.45cm+3cm,yshift=-3.75cm,fill=white] (0.8,-3.1) rectangle (1.5,-4.1);
        \node[xshift=-0.45cm+3cm,yshift=-2.25cm+0.375cm-0.5pt] at (1.15,-3.6) [] {\scriptsize $\ttt_{\bh}$};
        
        \draw [xshift=0.35cm+3cm,fill=white,yshift=-3.75cm] (0.8,-0.9) rectangle (1.5,-1.9);
        \node [xshift=0.35cm+3cm+1pt,yshift=-2.25cm+0.375cm-0.5pt] at (1.15,-1.4) [] {\scriptsize ${-}\sss_{\bh}$};
        \draw [xshift=0.35cm+3cm,fill=black,yshift=-3.75cm] (1.125,-2.45) rectangle (1.175,-2.55);
        
        \draw [xshift=1.15cm+3cm,fill=black,yshift=-3.75cm] (1.125,-1.35) rectangle (1.175,-1.45);

        \draw [ultra thin,xshift=-0.45cm+6cm,yshift=0.75cm] (3.2,-0.7) -- (0.7,-0.7) -- (0.7,-4.3) -- (1.6,-4.3);
        \draw [fill=white,xshift=-0.45cm+6cm,yshift=0.75cm] (0.8,-0.9) rectangle (1.5,-1.9);
        \node [xshift=-0.45cm+6cm+1.5pt,yshift=0.375cm-0.5pt] at (1.15,-1.4) [] {\scriptsize $\oh_{k}$};
        \draw [fill=black,xshift=-0.45cm+6cm,yshift=0.75cm] (1.125,-2.45) rectangle (1.175,-2.55);
        \draw [fill=black,xshift=-0.45cm+6cm,yshift=0.75cm] (1.125,-3.55) rectangle (1.175,-3.65);
        
        \draw [xshift=0.35cm+6cm,fill=white,yshift=0.75cm] (0.8,-0.9) rectangle (1.5,-1.9);
        \node [xshift=0.35cm+6cm+1pt,yshift=0.375cm-0.5pt] at (1.15,-1.4) [] {\scriptsize ${-}\sss_{\bh}$};
        \draw [xshift=0.35cm+6cm,fill=white,yshift=0.75cm] (0.8,-2) rectangle (1.5,-3);
        \node [xshift=0.35cm+6cm,yshift=0.375cm-0.5pt] at (1.15,-2.5) [] {\scriptsize $\beta_{1,k+2}$};
        
        \draw [xshift=1.15cm+6cm,fill=white,yshift=0.75cm] (0.8,-0.9) rectangle (1.5,-1.9);
        \node [xshift=1.15cm+6cm,yshift=0.375cm-0.5pt] at (1.15,-1.4) [] {\scriptsize $\beta_{2,k+2}$};

        \draw [ultra thin,xshift=-0.45cm+6cm,yshift=-3.75cm] (3.2,-0.7) -- (0.7,-0.7) -- (0.7,-4.3) -- (1.6,-4.3);
        \draw [fill=white,xshift=-0.45cm+6cm,yshift=-3.75cm] (0.8,-0.9) rectangle (1.5,-1.9);
        
        \node [xshift=-0.45cm+6cm+1.5pt,yshift=-2.25cm+0.375cm-0.5pt] at (1.15,-1.4) [] {\scriptsize $\oh_{k}$};
        \draw [fill=black,xshift=-0.45cm+6cm,yshift=-3.75cm] (1.125,-2.45) rectangle (1.175,-2.55);
        \draw [xshift=-0.45cm+6cm,yshift=-3.75cm,fill=white] (0.8,-3.1) rectangle (1.5,-4.1);
        \node[xshift=-0.45cm+6cm,yshift=-2.25cm+0.375cm-0.5pt] at (1.15,-3.6) [] {\scriptsize $\beta_{0,k+2}$};
        
        \draw [xshift=0.35cm+6cm,fill=white,yshift=-3.75cm] (0.8,-0.9) rectangle (1.5,-1.9);
        \node [xshift=0.35cm+6cm+1pt,yshift=-2.25cm+0.375cm-0.5pt] at (1.15,-1.4) [] {\scriptsize ${-}\sss_{\bh}$};
        \draw [xshift=0.35cm+6cm,fill=black,yshift=-3.75cm] (1.125,-2.45) rectangle (1.175,-2.55);
        
        \draw [xshift=1.15cm+6cm,fill=white,yshift=-3.75cm] (0.8,-0.9) rectangle (1.5,-1.9);
        \node [xshift=1.15cm+6cm,yshift=-2.25cm+0.375cm-0.5pt] at (1.15,-1.4) [] {\scriptsize $\beta_{2,k+2}$};
        
\end{scope}

    \end{tikzpicture}
    \caption{Initial part of Betti tables of homogeneous 2-step ideals of our interest. The symbol\  \begin{tikzpicture} \fill[fill=white] (0,-0.05) rectangle (0.2,0.1);  \fill[fill=black] (0,0) rectangle (0.1,0.1);  
    \end{tikzpicture}\   stands for 0.}
    \label{fig:geography 2-step ideals}
\end{figure}

\smallskip

The Betti number $\beta_{2,k+2}$ (like all Betti numbers) is bounded in $\mathscr{H}_{\bh}^n$ and it is useful to be able to explicitly determine its maximum value for any given pair $(\oh_k,\oh_{k+1})$. To do this, we recall the definition of the lexicographic ideal associated to a Hilbert function.

For any degree $k\in\BZ_{\ge0}$ and any integer $0\leqslant \oh_k \leqslant \rrr_k$, consider the set $L(k,\oh_k)$ of the $\oh_k$ greatest monomials of degree $k$ with respect to the lexicographic order induced by $x_1 > x_2 > \cdots > x_n$. We denote by $\oh_k^{\langle k+1\rangle}$ the dimension of the homogeneous piece of degree $k+1$ of the ideal generated by $L(k,\oh_k)$, i.e.
\[
\oh_k^{\langle k+1\rangle} =\dim_\BC \big(L(k,\oh_k)\big)_{k+1}.
\]
Among all sets of $\oh_k$ linearly independent homogeneous polynomials of degree $k$, the set $L(k,\oh_k)$ generates an ideal whose degree $k+1$ component has the smallest possible dimension.

\textit{Francis Macaulay}  proved in \cite{macaulay} that given an infinite sequence \(\bh=(\oh_i)_{i\in \mathbb{N}}\), the condition $\oh_{i+1} \geqslant \oh_{i}^{\langle i+1 \rangle}$ for all $i$ is necessary and sufficient for $\bh$ to be the Hilbert function of an ideal in $R$, see also \cite{hilbert-function-bellaterra,gin-bellaterra}. In particular, the sequence $\bh=(\oh_i)_{i\in \mathbb{N}}$, with $\oh_{i+1} \geqslant \oh_{i}^{\langle i+1 \rangle}$, for all $i\in\BN$, is the Hilbert function of the lexicographic ideal associated to $\bh$
\begin{equation}
    L_\bh = \bigoplus_{i\in\mathbb{N}} \Span_{\BC} L(i,\oh_i).
\end{equation}

The lexicographic ideal $L_\bh$ is the ideal with the highest number of generators and syzygies among all homogeneous ideals in $\OH_{\bh}^n$ as stated in the following theorem.

\begin{theorem}[\cite{Bigatti-Betti-bounds,Hulett-Betti-bounds}]\label{thm:betti bounds}
 Let $I\subset R$ be a homogeneous ideal with Hilbert function $\bh$. Then, for all $\ i,j\in\BZ$, we have
\[
\beta_{i,j} (I) \leqslant \beta_{i,j} (L_{\bh}),
\]
where $L_{\bh}\subset R$ is the lexicographic ideal with Hilbert function $\bh$.
\end{theorem}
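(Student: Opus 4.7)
The plan is to reduce the general inequality to a purely combinatorial statement about strongly stable monomial ideals and then to invoke the Eliahou--Kervaire formula. First I would pass from an arbitrary homogeneous ideal $I \subset R$ to its generic initial ideal $\mathrm{gin}(I)$ with respect to the reverse lexicographic order. Since we work over $\BC$, the ideal $\mathrm{gin}(I)$ is Borel-fixed and hence strongly stable, it has the same Hilbert function as $I$, and the semicontinuity of graded Betti numbers along the Gr\"obner degeneration gives $\beta_{i,j}(I) \leqslant \beta_{i,j}(\mathrm{gin}(I))$ for all $i,j$. Thus it is enough to establish the inequality for strongly stable monomial ideals with Hilbert function $\bh$.

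Second, I would exploit the Eliahou--Kervaire formula, which for a strongly stable monomial ideal $J$ computes
\[
\beta_{i,i+j}(J) = \sum_{u \in G(J)_j} \binom{\max(u)-1}{i},
\]
where $G(J)_j$ is the set of minimal monomial generators of $J$ of degree $j$ and $\max(u)$ denotes the largest index of a variable dividing $u$. Via this formula the comparison $\beta_{i,j}(J) \leqslant \beta_{i,j}(L_\bh)$ becomes a combinatorial statement about the distribution of $\max(u)$ over the minimal generators, in each degree separately.

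Third, I would show by induction on the degree $j$ that for every strongly stable monomial ideal $J$ with Hilbert function $\bh$ and every threshold $p \in \{1,\ldots,n\}$, the inequality
\[
\#\{u \in G(J)_j : \max(u) \leqslant p\} \leqslant \#\{u \in G(L_\bh)_j : \max(u) \leqslant p\}
\]
holds. This translates, via the combinatorial identity $\binom{\max(u)-1}{i} = \sum_{p} \big(\binom{p-1}{i}-\binom{p-2}{i}\big)[\max(u)\geqslant p]$, into the desired bound on each $\beta_{i,j}$. The inductive step rests on the fact that once the Hilbert function is fixed, replacing the degree-$j$ piece of $J$ by the initial lex segment with the same cardinality can only shift generators to larger values of $\max$, precisely because the lex segment puts as much weight as possible on monomials not divisible by the smallest variables.

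The technical heart of the argument, and the main obstacle, is this last combinatorial comparison. The delicate point is to reconcile the strongly stable hypothesis with the lex condition while keeping \emph{both} the Hilbert function and the Borel closure under control in every degree; Bigatti and Hulett handle this via explicit swapping procedures on the monomial generating set, and any proof proceeds by a careful accounting of the interplay between the Borel order and the lexicographic order on monomials.
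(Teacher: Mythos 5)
The paper does not prove this statement; it is quoted from \cite{Bigatti-Betti-bounds,Hulett-Betti-bounds} and used as a black box, so there is no internal proof to compare against. Your overall architecture (pass to $\mathrm{gin}$ in revlex, use semicontinuity of graded Betti numbers under Gr\"obner degeneration, reduce to strongly stable ideals, apply Eliahou--Kervaire, and finish with a degree-by-degree combinatorial comparison against the lex ideal) is exactly the strategy of Bigatti's proof, and steps one and two are correct as stated.

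However, the combinatorial inequality you isolate as ``the technical heart'' is stated in the wrong direction, and as written it is false. The binomial identity you quote expresses $\binom{\max(u)-1}{i}$ as a non-negative combination of the indicators $[\max(u)\geqslant p]$, so what the Eliahou--Kervaire formula actually requires is
\[
\#\{u \in G(J)_j : \max(u) \geqslant p\} \;\leqslant\; \#\{u \in G(L_{\bh})_j : \max(u) \geqslant p\}
\qquad \text{for all } p,
\]
i.e.\ domination of the \emph{upper} tails, not of the sets $\{\max(u)\leqslant p\}$. The two families of inequalities are not equivalent because the total numbers of minimal generators differ (the lex ideal has the most). Concretely, in $R=\BC[x_1,x_2,x_3]$ take $J=(x_1^2,x_1x_2,x_2^2)$ and let $L_{\bh}=(x_1^2,x_1x_2,x_1x_3,x_2^3)$ be the lex ideal with the same Hilbert function. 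In degree $2$ one has $\#\{u\in G(J)_2:\max(u)\leqslant 2\}=3$ while $\#\{u\in G(L_{\bh})_2:\max(u)\leqslant 2\}=2$, so your inequality fails, whereas the upper-tail inequality holds ($0\leqslant 1$ for $p=3$, $2\leqslant 2$ for $p=2$) and correctly yields $\beta_{1,3}(J)=2\leqslant 3=\beta_{1,3}(L_{\bh})$ and $\beta_{2,4}(J)=0\leqslant 1=\beta_{2,4}(L_{\bh})$. The fix is to prove Bigatti's actual lemma, namely that for every strongly stable $J$ with Hilbert function $\bh$ and every $p$ one has $m_{\leqslant p}(J_j)\geqslant m_{\leqslant p}((L_{\bh})_j)$ for the full degree-$j$ monomial pieces, and then to deduce the upper-tail domination for the generating sets using $G(J)_j=J_j\setminus(\mathfrak{m}J_{j-1})_j$ together with the explicit formula for $m_{\leqslant p}(\mathfrak{m}V)$ when $V$ is stable. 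With that correction the argument closes; without it, the quoted identity and your inductive claim do not combine to give the bound on $\beta_{i,j}$.
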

\Cref{thm:betti bounds} can be extended to the total Betti numbers of every ideal $I$ in the Hilbert stratum $H^n_{\bh}$ in the following obvious way
\[
\beta_i(I) \leqslant \beta_i(L_{\bh}) = \sum_{j\in\BZ} \beta_{i,j}(L_{\bh}).
\]

 \medskip
 
 Now, we think of the lower bound on the dimension of $\mathsf{T}^{=-1}_{[I]}\Hilb^\bullet{\mathbb{A}^n}$ as a quadratic function of the variables $(\oh_k,\oh_{k+1})$ depending on a discrete parameter $b$ that can assume finitely many non-negative values.

\begin{definition}\label{def:TNTarea}
    Given two integers $n \geqslant 2$ and $k\geqslant 1$, we call \emph{potential TNT area} $\mathscr{T}_{k}^n\subset \BN^2$ the set of pairs $(\oh_{k},\oh_{k+1})\in\BN^2$ such that
    \begin{itemize}
        \item $\bh = (1,\ldots,\rrr_{k-1},\rrr_{k}-\oh_k,\rrr_{k+1}-\oh_{k+1})$ is a Hilbert function, i.e.~$0 \leqslant \oh_k \leqslant \rrr_k$ and  $\oh_{k}^{\langle k+1 \rangle} \leqslant \oh_{k+1} \leqslant \rrr_{k+1}$;
        \item there exists $0 \leqslant b \leqslant \beta_{2,k+2}(L_{\bh})$ such that
        \begin{equation}\label{eq:firteta}
           \oh_{k} \rrr_{k-1} + \sss_{\bh}(\rrr_{k}-\oh_{k}) + (\ttt_{\bh}-b)(\rrr_{k+1}-\oh_{k+1}) \leqslant n.
        \end{equation}
    \end{itemize}
\end{definition}

The choice of the name \emph{potential TNT area} is due to the observation that Hilbert strata corresponding to 2-step Hilbert functions with $(\oh_{k},\oh_{k+1})$ outside this region cannot produce examples of ideals with trivial negative tangents, see \Cref{def:TNT} and the bound in \Cref{eq:stima-1}.

Let us now focus on the inequality \eqref{eq:firteta}. The function
\[
\begin{split}
\Theta_{n,k,b} (\oh_k,\oh_{k+1}) &{}= \oh_{k}\rrr_{k-1} + \sss_{\bh}(\rrr_{k}-\oh_{k}) + (\ttt_{\bh}-b)(\rrr_{k+1}-\oh_{k+1}) - n = {} \\
&{} = n\oh_k^2 - \left(\tbinom{n}{2}+1\right)\oh_k\oh_{k+1} + n \oh_{k+1}^2 + \left(\rrr_{k-1}-n\rrr_{k}+\tbinom{n}{2}\rrr_{k+1}\right)\oh_k {} \\
&{} \hspace{1cm}+ (\rrr_{k}-n\rrr_{k+1}-\rrr_{k+2}+b)\oh_{k+1} + \rrr_{k+1}(\rrr_{k+2}-b) - n
\end{split}
\]
has Hessian matrix
\[
\Hess \,\Theta_{n,k,b}(\oh_k,\oh_{k+1}) =
 \left[\begin{array}{cc}2n & -\left(\binom{n}{2}+1\right) \\ -\left(\binom{n}{2}+1\right) & 2n  \end{array}\right],
\]
 whose eigenvalues are $\lambda_1 = 2n-\left(\binom{n}{2}+1\right)$ and $\lambda_2 = 2n+\left(\binom{n}{2}+1\right)$. Thus, the function $\Theta_{n,k,b}(\oh_k,\oh_{k+1})$ has a single critical point. For $n=2,3,4$, the critical point is a minimum and the level sets are ellipses. For $n\geqslant 5$, the critical point is a saddle point and the level sets are hyperbolas. 
 We will see that for $n=2,3$, there are no values of $b$ for which the minimum is non-positive and the potential TNT area turns out to be empty. For $n=4$, the minimum of $\Theta_{4,k,0}$ is negative and the potential TNT area contains at least the interior part of the ellipse $\Theta_{4,k,0} = 0$.
 For $n\geqslant 5$, the potential TNT area contains the region delimited by the two branches of a hyperbola satisfying the inequality $\Theta_{n,k,0} \leqslant 0$.

\subsection{Nesting of 2-step ideals}\label{subsec:nest2step}
Now, we adapt the two constructions introduced for homogeneous 2-step ideals in the range $\oh_{k+1}>(n-1)\oh_{k}$ to the case of nested configurations. Consider a nesting of homogeneous 2-step ideals $J \subset I \subset R$ of respective order $k+1$ and $k$. The inclusion $J \subset I$ imposes that $\bh_{J}(t) \leqslant \bh_I(t)$ for every $t\ge0$. In the case of our interest, this boils down to the unique relevant condition
\[
 \oh_{k+1}^{(J)}= \bh_{J}(k+1) \leqslant \bh_I(k+1) = \oh_{k+1}^{(I)}.
\]

 Assume that the Hilbert function $\bh_{J}$ satisfies $\sss_{\bh_J} = \oh_{k+2}^{(J)} - n \oh_{k+1}^{(J)} \geqslant 0$, i.e.~it is of type \textit{without linear syzygies}. Then, the homogeneous piece $J_{k+1}$ of the ideal $J$ can be any $\oh_{k+1}^{(J)}$-dimensional subspace of $I_{k+1}$. Moreover, the remaining $\sss_{\bh_J}$ minimal generators of $J$ of degree $k+2$ can be chosen freely in a complement of $R_1 \cdot J_{k+1}$ because $I_{k+2} = R_{k+2}$ by assumption.

\begin{theorem}\label{thm:homoloco nested}
Let $I \subset  R$ be a homogeneous 2-step ideal of order $k$. Consider a 2-step Hilbert function $\bh = (1,n, \rrr_{2}, \ldots,\rrr_{k}, \uh_{k+1}, \uh_{k+2})$   of order $k+1$ such that  $\sss_{\bh} \geqslant 0$. Denote by $\mathscr{H}_{\bh,I}^n$ the locus of homogeneous 2-step ideals with Hilbert function $\bh$ contained in $I$. Assume there exists an ideal $[J]\in \mathscr{H}_{\bh,I}^n$ with $\beta_{1,k+2}(J)=0$. 
Then, there exists a surjective morphism 
\[
\begin{tikzcd}[row sep=tiny]
 \OH_{\bh,I}^n\arrow[r,"\varphi"]&\Gr \left(\oh_{k+1},I_{k+1}\right)\\
 {[J]}\arrow[r,mapsto] &{[J_{k+1}]},
\end{tikzcd}
\]
whose generic fibre is isomorphic to $\Gr \left(\sss_{\bh},\rrr_{k+2}-n\oh_{k+1}\right)$. 
\end{theorem}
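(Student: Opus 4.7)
The plan is to follow the strategy of \Cref{thm:homoloco} with $R_k$ replaced by $I_{k+1}$ and all degrees shifted by one. First I would construct the morphism $\varphi$ via the universal property of the Grassmannian: letting $\pi\colon \OZ_{\bh,I}^n \to \OH_{\bh,I}^n$ denote the restriction to $\OH_{\bh,I}^n$ of the universal family, the degree-$(k+1)$ part of the tautological sequence
\[
0 \to (\pi_*\OI_{\OZ_{\bh,I}^n})_{k+1} \to \OO_{\OH_{\bh,I}^n}\otimes R_{k+1} \to (\pi_*\OO_{\OZ_{\bh,I}^n})_{k+1} \to 0
\]
provides a flat family of $\oh_{k+1}$-dimensional subspaces of $R_{k+1}$. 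The nesting condition $J\subset I$ forces the subsheaf $(\pi_*\OI_{\OZ_{\bh,I}^n})_{k+1}$ to factor through the constant sub-bundle $\OO_{\OH_{\bh,I}^n}\otimes I_{k+1}$, so this flat family actually lives in $I_{k+1}$ and hence defines the desired morphism $\varphi$ to $\Gr(\oh_{k+1},I_{k+1})$.

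For surjectivity, by semicontinuity of Betti numbers the hypothesis supplies a non-empty open subset $U\subset \OH_{\bh,I}^n$ on which $\beta_{1,k+2}=0$. Equivalently, for $[J]\in U$ the multiplication $R_1\otimes J_{k+1}\to R_{k+2}$ is injective, so $\dim_\BC R_1\cdot J_{k+1}=n\oh_{k+1}$. Let $U'\subset \Gr(\oh_{k+1},I_{k+1})$ be the open locus of subspaces $V$ with $\dim_\BC R_1\cdot V=n\oh_{k+1}$; then $\varphi(U)\subset U'$, so $U'$ is non-empty. For any $[V]\in U'$, the containment $\Fm^{k+2}\subset I$ yields $I_{k+2}=R_{k+2}$, while the assumption $0\leqslant \sss_{\bh} = \oh_{k+2}-n\oh_{k+1}$ together with $\oh_{k+2}\leqslant \rrr_{k+2}$ ensures the existence of an $\oh_{k+2}$-dimensional subspace $W\subset R_{k+2}$ containing $R_1\cdot V$. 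Then the ideal $J\subset I$ whose graded pieces are $J_{k+1}=V$, $J_{k+2}=W$, $J_t=R_t$ for $t\geqslant k+3$ and $J_t=0$ otherwise, is a homogeneous 2-step ideal of Hilbert function $\bh$ satisfying $\varphi([J])=[V]$, which proves surjectivity.

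The same description pins down the fibre: over a generic $[V]\in U'$ the fibre of $\varphi$ parametrises precisely the $\oh_{k+2}$-dimensional subspaces $W\subset R_{k+2}$ containing $R_1\cdot V$, or equivalently, the $\sss_{\bh}$-dimensional subspaces of the quotient $R_{k+2}/R_1\cdot V$, which has dimension $\rrr_{k+2}-n\oh_{k+1}$. This yields the required identification with $\Gr(\sss_{\bh},\rrr_{k+2}-n\oh_{k+1})$.

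The only subtle point I foresee is ensuring that the universal construction actually takes values in the smaller Grassmannian $\Gr(\oh_{k+1},I_{k+1})$ rather than in $\Gr(\oh_{k+1},R_{k+1})$, i.e.\ verifying that the inclusion $(\pi_*\OI_{\OZ_{\bh,I}^n})_{k+1}\subset \OO\otimes R_{k+1}$ factors through $\OO\otimes I_{k+1}$. This is the incarnation of the nesting condition that cuts $\OH_{\bh,I}^n$ out of $\OH_\bh^n$; once this is cleanly set up, everything else is a direct translation of the unnested argument in \Cref{thm:homoloco}.
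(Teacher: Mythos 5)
Your proposal is correct and is essentially the paper's intended argument: the paper's proof of this theorem is literally ``Analogous to the proof of \Cref{thm:homoloco}'', and you have carried out exactly that analogy, replacing $R_k$ by $I_{k+1}$, shifting degrees by one, and using $I_{k+2}=R_{k+2}$ (from $\Fm^{k+2}\subset I$) to choose the degree-$(k+2)$ generators freely, just as the discussion preceding the theorem indicates. The only cosmetic remark is that your explicit construction of a preimage is carried out for $[V]\in U'$, but it works verbatim for an arbitrary $[V]\in\Gr(\oh_{k+1},I_{k+1})$ since then $\dim_\BC R_1\cdot V\leqslant n\oh_{k+1}\leqslant\oh_{k+2}$ still holds, which is what full surjectivity (and not just density of the image) requires; this matches the level of detail in the paper's own proof of \Cref{thm:homoloco}.
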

\begin{proof}
    Analogous to the proof of \Cref{thm:homoloco}.
\end{proof}

We prove now the first part of \Cref{thmintro: D} in the introduction.

\begin{corollary}\label{cor:formula dim nested}
Let $\underline{\bh} = (\bh^{(i)})_{i=0}^{r-1}$ be a $r$-tuple of 2-step Hilbert functions of respective order $k,\ldots,k+r-1$ and  such that $\sss_{\bh^{(i)}}  \geqslant 0$, for all $i=0,\ldots,r-1$. Assume that there exists a point $[\underline{I}]\in \OH_{\underline{\bh}}^n$ such that
\[
\beta_{1,k+i+1}(I^{(i)})=0,
\] 
for all $i=0,\ldots,r-1$. Then, we have
\begin{equation}\label{eq:formula dim nested}
\begin{split}
\dim H_{\underline{\bh}}^n &{}\geqslant \oh_{k}^{(0)}\left(\rrr_{k}-\oh_{k}^{(0)}\right) + \sum_{i=1}^{r-1} \oh_{k+i}^{(i)}\left(\oh_{k+i}^{(i-1)}-\oh_{k+i}^{(i)}\right) + \sum_{i=0}^{r-1}\left(\sss_{\bh^{(i)}}+\oh_{k+i}^{(i)}\right)\left(\rrr_{k+i+1}-\oh_{k+i+1}^{(i)}\right) = {}\\
&{} = \oh_{k}^{(0)}\left(\rrr_{k}-\oh_{k}^{(0)}\right) + \sum_{i=1}^{r-1} \oh_{k+i}^{(i)}\left(\oh_{k+i}^{(i-1)}-\oh_{k+i}^{(i)}\right) + \sum_{i=0}^{r-1}\left(\oh_{k+i+1}^{(i)} - (n-1)\oh_{k+i}^{(i)}\right)\left(\rrr_{k+i+1}-\oh_{k+i+1}^{(i)}\right).
\end{split}
\end{equation}  
\end{corollary}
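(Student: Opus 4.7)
The plan is to combine the affine-bundle structure of the initial-ideal morphism with an iterative parametrisation of $\OH^n_{\underline{\bh}}$ as a tower of Grassmann bundles.

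First I would invoke \Cref{cor:1nonob-nested}, which identifies the initial-ideal morphism $H^n_{\underline{\bh}} \to \OH^n_{\underline{\bh}}$ as an affine bundle whose fibre at $[\underline{I}]$ has dimension $\sum_{i=0}^{r-1}\dim_{\BC} \Hom_R(I^{(i)}, R/I^{(i)})_1$. By \Cref{thm:1nonob}, each summand equals $\oh_{k+i}^{(i)}\uh_{k+i+1}^{(i)} = \oh_{k+i}^{(i)}\bigl(\rrr_{k+i+1}-\oh_{k+i+1}^{(i)}\bigr)$. It therefore suffices to prove
\[
\dim \OH^n_{\underline{\bh}} \;\geqslant\; \oh_k^{(0)}\bigl(\rrr_k - \oh_k^{(0)}\bigr) + \sum_{i=1}^{r-1}\oh_{k+i}^{(i)}\bigl(\oh_{k+i}^{(i-1)}-\oh_{k+i}^{(i)}\bigr) + \sum_{i=0}^{r-1}\sss_{\bh^{(i)}}\bigl(\rrr_{k+i+1}-\oh_{k+i+1}^{(i)}\bigr),
\]
since adding the affine-bundle fibre dimension recombines the last two sums into $\sum_{i=0}^{r-1}(\sss_{\bh^{(i)}}+\oh_{k+i}^{(i)})\bigl(\rrr_{k+i+1}-\oh_{k+i+1}^{(i)}\bigr)$, reproducing \eqref{eq:formula dim nested}.

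Next I would prove the bound on $\dim \OH^n_{\underline{\bh}}$ inductively by choosing the components $I^{(0)}, I^{(1)}, \ldots, I^{(r-1)}$ of the nesting one at a time. For $i=0$, the hypothesis $\beta_{1,k+1}(I^{(0)})=0$ at some point lets me apply \Cref{thm:homoloco} to obtain a surjective morphism $\OH^n_{\bh^{(0)}} \to \Gr(\oh_k^{(0)}, R_k)$ with generic fibre $\Gr(\sss_{\bh^{(0)}}, \rrr_{k+1}-n\oh_k^{(0)})$, contributing $\oh_k^{(0)}(\rrr_k-\oh_k^{(0)}) + \sss_{\bh^{(0)}}(\rrr_{k+1}-\oh_{k+1}^{(0)})$. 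For $i \geqslant 1$, having fixed a generic $I^{(i-1)}$, the crucial observation is that $I^{(i-1)}$ has order $k+i-1$, so $\Fm^{k+i+1}\subset I^{(i-1)}$ and hence $(I^{(i-1)})_{k+i+1} = R_{k+i+1}$. This is exactly the setting of \Cref{thm:homoloco nested} with $I = I^{(i-1)}$ and $J=I^{(i)}$, producing a surjection onto $\Gr\bigl(\oh_{k+i}^{(i)},\, (I^{(i-1)})_{k+i}\bigr)$ with generic fibre $\Gr(\sss_{\bh^{(i)}}, \rrr_{k+i+1}-n\oh_{k+i}^{(i)})$. The dimension contribution at step $i$ is $\oh_{k+i}^{(i)}(\oh_{k+i}^{(i-1)}-\oh_{k+i}^{(i)}) + \sss_{\bh^{(i)}}(\rrr_{k+i+1}-\oh_{k+i+1}^{(i)})$, and summing over $i$ gives the required lower bound.

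The delicate step will be verifying that the ``natural first anti-diagonal'' condition $\beta_{1,k+i+1}(I^{(i)})=0$ propagates as an open non-empty condition at every stage of the induction, and not merely at the specific nesting supplied by hypothesis. Here I would rely on upper-semicontinuity of graded Betti numbers: the existence of a single point $[\underline I] \in \OH^n_{\underline{\bh}}$ with all the required vanishings ensures that, after each inductive choice of $I^{(i-1)}$ in the corresponding open stratum, the subsequent locus where \Cref{thm:homoloco nested} applies remains non-empty and open, so the iterative construction genuinely parametrises a dense open subset of an irreducible component of $\OH^n_{\underline{\bh}}$ whose dimension achieves the asserted bound.
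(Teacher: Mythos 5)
Your proposal is correct and follows essentially the same route as the paper: the paper likewise combines \Cref{thm:homoloco} and \Cref{thm:homoloco nested} to exhibit a distinguished component of $\OH_{\underline{\bh}}^n$ of the stated dimension (a tower of Grassmannian fibrations, cf.~\Cref{rem:disticomp}), and then adds the fibre dimension $\sum_{i=0}^{r-1}\oh_{k+i}^{(i)}\bigl(\rrr_{k+i+1}-\oh_{k+i+1}^{(i)}\bigr)$ of the affine-bundle initial-ideal morphism from \Cref{thm:1nonob-nested}. Your explicit remark that $(I^{(i-1)})_{k+i+1}=R_{k+i+1}$ and the semicontinuity argument for propagating the Betti-number vanishing are points the paper leaves implicit, but they match its intended argument.
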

\begin{proof}
    By \Cref{thm:homoloco} and \Cref{thm:homoloco nested},  the homogeneous locus has a distinguished component $\mathscr{V}_{\underline{\bh}}^n\subset \OH_{\underline{\bh}}^n$ of dimension
    \[
    \dim \mathscr{V}_{\underline{\bh}}^n = \oh_{k}^{(0)}\left(\rrr_{k}-\oh_{k}^{(0)}\right) + \sss_{\bh^{(0)}} \left( \rrr_{k+1} - \oh_{k+1}^{(0)}\right) + \sum_{i=1}^{r-1} \left[ \oh_{k+i}^{(i)}\left(\oh_{k+i}^{(i-1)}-\oh_{k+i}^{(i)}\right) + \sss_{\bh^{(i)}} \left( \rrr_{k+i+1} - \oh_{k+i+1}^{(i)}\right) \right],
    \] 
see \Cref{rem:disticomp}.

    On the other hand, by \Cref{thm:1nonob-nested}, the morphism  $\pi_{\bh}: H_{\bh}^n \to \mathscr{H}_\bh^n$ is an affine bundle with fibres of dimension
    \[
    \sum_{i=0}^{r-1} \oh_{k+i}^{(i)}\uh_{k+i+1}^{(i)} = \sum_{i=0}^{r-1} \oh_{k+i}^{(i)}\left(\rrr_{k+i+1}-\oh_{k+i+1}^{(i)}\right),
    \]
    and the same clearly holds for its restriction to $\mathscr{V}_{\underline{\bh}}^n$.
\end{proof}

Let us come back to a nested configuration of two homogeneous 2-step ideals $J \subset I \subset R$. Now, assume $0< -\sss_{\bh_J} < \oh_{k+1}^{(J)}$, i.e.~we consider 2-step ideals with few linear syzygies. Any minimal generating set of $\oh_{k+1}^{(J)}$ polynomials of $J$ is contained in the kernel of the restriction to $I_{k+1}^{\oplus\oh_{k+1}^{(J)}}$ of some homomorphism $\varphi: R_{k+1}^{\oplus\oh_{k+1}^{(J)}} \to R_{k+2}^{\oplus-\sss_{\bh_J}}$.  Hence, we can adapt the construction for 2-step ideals with few syzygies as follows. Consider a homogeneous 2-step ideal $I$ of order $k$ and a Hilbert function $\bh$ of 2-step ideals of order $k+1$ such that $\oh_{k+1} \leqslant \oh_{k+1}^{(I)}$ and $0 < -\sss_{\bh} < \oh_{k+1}$. We  consider the incidence correspondence
\[
\mathscr{K}_{\bh,I} = \left\{ (\varphi,\boldsymbol{p}) \in \mathscr{L}_{\bh} \times I_{k+1}^{\oplus\oh_{k+1}} \ \middle\vert\ \boldsymbol{p} \in \ker \varphi \right\} \subset \mathscr{L}_{\bh} \times I_{k+1}^{\oplus\oh_{k+1}}.
\] 

\begin{theorem}\label{thm:homoloco syz nested}
Fix a homogeneous 2-step ideal $I \subset R$ of order $k$ and consider a 2-step Hilbert function $\bh$ of order $k+1$ such that $\oh_{k+1} \leqslant \oh_{k+1}^{(I)}$ and $0< - \sss_{\bh} < \oh_{k+1}$. Denote by $\mathscr{H}_{\bh,I}^n$ the locus of homogeneous 2-step ideals contained in $I$ and having Hilbert function $\bh$ and assume that the generic morphism $\varphi\in\mathscr{L}_\bh$ is not injective. Then, there is a rational map $\mathscr{K}_{\bh,I}  \dashrightarrow \mathscr{H}_{\bh,I}^n$ which, on closed points, associates the generic pair $(\varphi,\boldsymbol{p})$ to the ideal   $I_{\boldsymbol{p}} = \left(\boldsymbol{p}\right) + \Fm^{k+3}$.
\end{theorem}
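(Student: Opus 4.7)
The plan is to adapt the proof of \Cref{thm: homo loco syz} essentially verbatim, with the single genuine modification being the verification that the resulting ideal lies inside the prescribed $I$.

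First, I would argue that $\mathscr{K}_{\bh,I}$ is well-behaved: like $\mathscr{K}_{\bh}$, it is cut out inside $\mathscr{L}_{\bh}\times I_{k+1}^{\oplus \oh_{k+1}}$ by equations that are $\BC$-linear in the $I_{k+1}^{\oplus \oh_{k+1}}$-coordinates, so it is reduced. Irreducibility comes from looking at the projection onto $\mathscr{L}_{\bh}$: the fibre over a point $\varphi$ is the linear subspace $\ker\varphi\cap I_{k+1}^{\oplus \oh_{k+1}}$, which has constant dimension on a non-empty open subset of $\mathscr{L}_{\bh}$ (this is the upper-semicontinuity of the rank of $\varphi|_{I_{k+1}^{\oplus \oh_{k+1}}}$, combined with the fact that the generic $\varphi$ is non-injective by assumption, and with the non-emptiness of $\mathscr{H}_{\bh,I}^n$ underlying the statement of the theorem).

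Next, I would construct the rational map exactly as in \Cref{thm: homo loco syz}. On a non-empty open subset $U\subset \mathscr{K}_{\bh,I}$ the entries $p_1,\ldots,p_{\oh_{k+1}}$ of $\boldsymbol{p}$ are linearly independent elements of $I_{k+1}$ admitting precisely $-\sss_{\bh}$ independent linear syzygies recorded by $\varphi$. Hence
\[
\dim_{\BC}(I_{\boldsymbol{p}})_{k+1}=\oh_{k+1},\qquad \dim_{\BC}(I_{\boldsymbol{p}})_{k+2}=n\oh_{k+1}-(-\sss_{\bh})=\oh_{k+2},
\]
while higher degrees are filled by $\Fm^{k+3}$. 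This shows that $I_{\boldsymbol{p}}$ is a 2-step ideal of order $k+1$ with Hilbert function $\bh$. Since the Hilbert function is constant on $U$ and $\mathscr{K}_{\bh,I}$ is reduced, the family is flat, which produces the required rational map $\mathscr{K}_{\bh,I}\dashrightarrow \mathscr{H}_{\bh}^n$.

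Finally, I would verify that this map lands in $\mathscr{H}_{\bh,I}^n$, i.e.~that $I_{\boldsymbol{p}}\subset I$. By construction each $p_i\in I_{k+1}\subset I$, so $(\boldsymbol{p})\subset I$. On the other hand, the hypothesis that $I$ is 2-step of order $k$ gives $\Fm^{k+2}\subset I$, and \emph{a fortiori} $\Fm^{k+3}\subset I$. Combining these two containments yields $I_{\boldsymbol{p}}=(\boldsymbol{p})+\Fm^{k+3}\subset I$, as required.

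The only non-routine point I expect is the dimension-count guaranteeing that the fibre $\ker\varphi\cap I_{k+1}^{\oplus \oh_{k+1}}$ is non-zero generically: this is precisely where the hypothesis that the generic $\varphi\in\mathscr{L}_{\bh}$ is not injective intervenes, together with the implicit non-emptiness of $\mathscr{H}_{\bh,I}^n$. Everything else transplants mechanically from the proof of \Cref{thm: homo loco syz}, since restricting the codomain of the kernel condition from $R_{k+1}^{\oplus \oh_{k+1}}$ to its linear subspace $I_{k+1}^{\oplus \oh_{k+1}}$ preserves reducedness, irreducibility of fibres, and the $\BC$-linearity of the defining equations.
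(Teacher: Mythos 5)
Your proposal is correct and follows essentially the same route as the paper, which simply declares the proof ``analogous to the proof of Theorem~\ref{thm: homo loco syz}'': you reproduce that argument (reducedness of the incidence correspondence from linearity of the defining equations, constancy of the Hilbert function on an open subset giving flatness, hence the rational map) and in addition spell out the containment $I_{\boldsymbol{p}}=(\boldsymbol{p})+\Fm^{k+3}\subset I$ via $p_i\in I_{k+1}$ and $\Fm^{k+3}\subset\Fm^{k+2}\subset I$, which the paper leaves implicit. The only cosmetic slip is attributing the generic constancy of $\dim\bigl(\ker\varphi\cap I_{k+1}^{\oplus\oh_{k+1}}\bigr)$ to upper semicontinuity of the rank rather than of the kernel dimension; the conclusion is unaffected.
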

\begin{proof}
    Analogous to the proof of \Cref{thm: homo loco syz}.
\end{proof}

We give an estimate of the dimension of $\mathscr{K}_{\bh,I}$ in the case of ideals with \emph{very  few linear syzygies}, see \Cref{rem:veryfwok}. Assume $0 < -\sss_{\bh} = n\oh_{k+1}-\oh_{k+2} \leqslant \frac{1}{n}\oh_{k+1}$. Then, the kernel of a generic morphism $\varphi: R_{k+1}^{\oplus\oh_{k+1}} \to R_{k+2}^{\oplus-\sss_{\bh}}$ in $\mathscr{L}_{\bh}$ has dimension $\oh_{k+1}\rrr_{k+1} - (-\sss_{\bh})\rrr_{k+2}$. To produce a nested configuration, we need to consider $\boldsymbol{p} \in \ker \varphi  \cap I_{k+1}^{\oplus\oh_{k+1}}$ and\[
\dim_\BC\big(\ker \varphi  \cap I_{k+1}^{\oplus\oh_{k+1}}\big) = \dim_\BC \ker \varphi  + \dim_\BC I_{k+1}^{\oplus\oh_{k+1}} - \dim_\BC\big(\ker \varphi  + I_{k+1}^{\oplus\oh_{k+1}}\big).
\]
In order to ensure that the intersection $\ker\varphi\cap I_{k+1}^{\oplus\oh_{k+1}^{(J)}}$ is non-trivial, we impose the condition $\dim_\BC \ker \varphi  + \dim_\BC I_{k+1}^{\oplus\oh_{k+1}} \geqslant \dim_{\BC} R_{k+1}^{\oplus\oh_{k+1}}$, that is
\[
\oh_{k+1}\rrr_{k+1}  + \sss_{\bh}\rrr_{k+2} + \oh_{k+1}\oh_{k+1}^{(I)} \geqslant \oh_{k+1}\rrr_{k+1} \quad\Leftrightarrow\quad \rrr_{k+2}\oh_{k+2} \geqslant (n\rrr_{k+2} - \oh_{k+1}^{(I)})\oh_{k+1}.
\]
Hence, if $\oh_{k+2} \geqslant \left(\max\left\{n - \frac{1}{n},n - \frac{\oh_{k+1}^{(I)}}{\rrr_{k+2}}\right\}\right)\oh_{k+1}$, then
\[
\dim \mathscr{K}_{\bh,I} = n\oh_{k+1}(-\sss_{\bh}) + \sss_{\bh}\rrr_{k+2} + \oh_{k+1}^{(I)}\oh_{k+1},
\]
and, as a consequence of \Cref{thm:homoloco syz nested}, we get
\begin{equation}\label{eq:boundimnestvery}
    \begin{split}
\dim \mathscr{H}^n_{\bh,I} &{}\geqslant  n\oh_{k+1}(-\sss_{\bh}) + \sss_{\bh}\rrr_{k+2} + \oh_{k+1}^{(I)}\oh_{k+1} - \oh_{k+1}^2 - \sss_{\bh}^2 = {}\\
& {} = \sss_{\bh}(\rrr_{k+2}-\oh_{k+2}) + \oh_{k+1}\left(\oh_{k+1}^{(I)}-\oh_{k+1}\right).
\end{split}
\end{equation}

Note that the formula describing the dimension of a nested configuration with very few linear syzygies agrees with the formula for nested configurations without linear syzygies as expressed in the \Cref{cor:formula dim nested}. Therefore, we get the second and last part of \Cref{thmintro: D}.

\begin{corollary}\label{cor:nestveryfew}  
    Formula \eqref{eq:formula dim nested} holds for every $r$-tuple $\underline{\bh}=(\bh^{(i)})_{i=0}^{r-1}$ of 2-step Hilbert functions such that $\oh_{k+1}^{(0)} \geqslant (n-\frac{1}{n}) \oh_{k}^{(0)}$ and 
\[
\oh_{k+1+i}^{(i)} \geqslant \left( \max\left\{ n - \frac{1}{n}, n - \frac{\oh_{k+i}^{(i-1)}}{\rrr_{k+i+1}}\right\}\right) \oh_{k+i}^{(i)},\qquad\text{~for all~} i=1,\ldots,r-1.
\]
\end{corollary}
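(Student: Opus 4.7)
The plan is to iterate the single-step nested dimension estimate \eqref{eq:boundimnestvery} and to invoke \Cref{cor:1nonob-nested} to handle positive-degree deformations, mirroring the proof of \Cref{cor:formula dim nested}. The hypotheses are tailored so that at each level the Hilbert function $\bh^{(i)}$ lies in either the no-linear-syzygies regime (\Cref{thm:homoloco nested}) or the very-few-linear-syzygies regime (\Cref{thm:homoloco syz nested}): indeed, $\oh_{k+1+i}^{(i)} \geqslant (n - \tfrac{1}{n})\oh_{k+i}^{(i)}$ rearranges to $\sss_{\bh^{(i)}} \geqslant -\tfrac{1}{n}\oh_{k+i}^{(i)}$, covering both regimes simultaneously, while $\oh_{k+1+i}^{(i)} \geqslant (n - \oh_{k+i}^{(i-1)}/\rrr_{k+i+1})\oh_{k+i}^{(i)}$ is precisely the inequality $\rrr_{k+i+1}\oh_{k+i+1}^{(i)} \geqslant (n\rrr_{k+i+1} - \oh_{k+i}^{(i-1)})\oh_{k+i}^{(i)}$ derived just before \eqref{eq:boundimnestvery}, ensuring that for a generic $\varphi \in \mathscr{L}_{\bh^{(i)}}$ the intersection $\ker\varphi \cap (I^{(i-1)}_{k+i})^{\oplus \oh_{k+i}^{(i)}}$ has the expected non-trivial dimension.

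The proof proceeds by induction on the nesting length $r$. For the outermost ideal $I^{(0)}$, \Cref{cor:full no syz} or \Cref{cor:veryfew} gives $\dim \mathscr{H}^n_{\bh^{(0)}} \geqslant \oh_{k}^{(0)}(\rrr_{k} - \oh_{k}^{(0)}) + \sss_{\bh^{(0)}}(\rrr_{k+1} - \oh_{k+1}^{(0)})$. For $i \geqslant 1$, having fixed a generic $I^{(i-1)}$, the estimate
\[
\dim \mathscr{H}^n_{\bh^{(i)},I^{(i-1)}} \geqslant \oh_{k+i}^{(i)}\big(\oh_{k+i}^{(i-1)} - \oh_{k+i}^{(i)}\big) + \sss_{\bh^{(i)}}\big(\rrr_{k+i+1} - \oh_{k+i+1}^{(i)}\big)
\]
holds by \Cref{thm:homoloco nested} when $\sss_{\bh^{(i)}} \geqslant 0$, and by the very-few-linear-syzygies dimension count leading to \eqref{eq:boundimnestvery} when $0 < -\sss_{\bh^{(i)}} \leqslant \tfrac{1}{n}\oh_{k+i}^{(i)}$. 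Summing these bounds over $i$, adding the relative dimension $\sum_{i=0}^{r-1}\oh_{k+i}^{(i)}(\rrr_{k+i+1} - \oh_{k+i+1}^{(i)})$ of the affine bundle $H^n_{\underline{\bh}} \to \mathscr{H}^n_{\underline{\bh}}$ supplied by \Cref{cor:1nonob-nested}, and using the identity $\sss_{\bh^{(i)}} + \oh_{k+i}^{(i)} = \oh_{k+i+1}^{(i)} - (n-1)\oh_{k+i}^{(i)}$, recovers exactly the right-hand side of \eqref{eq:formula dim nested}.

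The technical crux lies in the very-few-linear-syzygies case: one must confirm that the generic fibre of the rational map $\mathscr{K}_{\bh^{(i)}, I^{(i-1)}} \dashrightarrow \mathscr{H}^n_{\bh^{(i)}, I^{(i-1)}}$ still has dimension $(\oh_{k+i}^{(i)})^2 + \sss_{\bh^{(i)}}^2$, as in the non-nested analogue \Cref{cor:veryfew}. The $\GL(\oh_{k+i}^{(i)}) \times \GL(-\sss_{\bh^{(i)}})$-action argument there uses the normal form from \Cref{lem:pseudo canonical form}\emph{(\ref{it:1i})} inside $R_{k+i}^{\oplus \oh_{k+i}^{(i)}}$, and must remain faithful after restricting the second factor to $(I^{(i-1)}_{k+i})^{\oplus \oh_{k+i}^{(i)}}$; this is exactly where the second hypothesis involving $\oh_{k+i}^{(i-1)}/\rrr_{k+i+1}$ enters, guaranteeing that the restricted locus is generic enough.
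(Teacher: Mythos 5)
Your proposal is correct and follows essentially the same route as the paper, which proves the corollary by combining \Cref{cor:veryfew} for the outermost ideal, the nested bound \eqref{eq:boundimnestvery} (derived from \Cref{thm:homoloco syz nested}) for each subsequent level, and \Cref{thm:1nonob-nested} for the affine-bundle contribution of the positive tangent directions. The extra detail you supply on the fibre dimension of $\mathscr{K}_{\bh^{(i)},I^{(i-1)}}\dashrightarrow\mathscr{H}^n_{\bh^{(i)},I^{(i-1)}}$ is implicit in the paper's derivation of \eqref{eq:boundimnestvery} and does not constitute a different argument.
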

\begin{proof}
    Direct consequence of \Cref{cor:veryfew} and the inequality \eqref{eq:boundimnestvery}, which is implied by \Cref{thm:homoloco syz nested}, together with \Cref{thm:1nonob-nested}.
\end{proof}

One of the goals of the following part is to produce Hilbert strata of dimension  large enough to not be contained in the smoothable component of $\Hilb^\bullet \mathbb{A}^n$.

Given integers $n \geqslant 2$, $r\geqslant 1$ and $k\geqslant 1$, consider the subset\footnote{We omit the dependence on $n,r,k$ and we take care of not making confusion.} $\mathcal{D} \subset \mathbb{R}^{2r}$ with coordinates $\left(\oh_{k}^{(0)},\oh_{k+1}^{(0)},\ldots,\right.$ $\left.\oh_{k+r-1}^{(r-1)},\oh_{k+r}^{(r-1)}\right)$ defined by the inequalities
\[
\begin{split}
 & 0 \leqslant\oh_{k}^{(0)} \leqslant \rrr_k,\qquad\qquad \left(n-\tfrac{1}{n}\right)\oh_{k}^{(0)} \leqslant \oh_{k+1}^{(0)} \leqslant \rrr_{k+1},  \\
 & 0 \leqslant\oh_{k+i}^{(i)} \leqslant \oh_{k+i}^{(i-1)},\qquad \left(\max\left\{n-\tfrac{1}{n}, n - \tfrac{\oh_{k+i}^{(i-1)}}{\rrr_{k+i+1}}\right\}\right)\oh_{k+i}^{(i)} \leqslant \oh_{k+i+1}^{(i)} \leqslant \rrr_{k+i+1},\quad i=1,\ldots,r-1.
\end{split}
\]
The  natural points $\mathcal{D}_{\BN}=\mathcal{D}\cap{\BN}^{2r}$ correspond to 2-step Hilbert functions of nested configurations with no or very few linear syzygies considered in \Cref{cor:formula dim nested,cor:nestveryfew} respectively. We denote by $\Delta_{n,r,k}: \mathbb{R}^{2r}\to\mathbb{R}$ the function
    \[
    \begin{split}
    \Delta_{n,r,k}\left(\oh_{k}^{(0)},\oh_{k+1}^{(0)},\ldots,\oh_{k+r-1}^{(r-1)},\oh_{k+r}^{(r-1)}\right) &{} = \oh_{k}^{(0)}\left(\rrr_{k}-\oh_{k}^{(0)}\right) + \sum_{i=1}^{r-1} \oh_{k+i}^{(i)}\left(\oh_{k+i}^{(i-1)}-\oh_{k+i}^{(i)}\right)\\&\quad {} + {} \sum_{i=0}^{r-1}\left(\oh_{k+i+1}^{(i)} - (n-1)\oh_{k+i}^{(i)}\right)\left(\rrr_{k+i+1}-\oh_{k+i+1}^{(i)}\right) \\ 
    &\quad {} + n - n\left(\tbinom{k+r+n}{n} - \oh_{k+r-1}^{(r-1)} - \oh_{k+r}^{(r-1)}\right).
    \end{split}
    \]
    It gives a lower bound, in the no or very few linear syzygies case, for the difference between the dimension  of the locus parametrising fat $\underline{\bh}$-nestings not necessarily supported at the origin  and the dimension of the smoothable component of   $\Hilb^{|\underline{\bh}|}\BA^n$. 
    
From this perspective, in order to prove that a Hilbert scheme is reducible, we can look for  points in $\mathcal{D}_{\BN}$ such that $\Delta_{n,r,k}$ is non-negative. The function $\Delta_{n,r,k}$ is quadratic with tri-diagonal Hessian matrix 
\begin{equation}\label{eq:hessian}
\Hess\Delta_{n,r,k} =  
\begin{bmatrix} 
-2 & n-1 & 0 & \phantom{n-1}& \phantom{n-1}& \phantom{n-1} & \phantom{n-1} & \phantom{n-1}\\
n-1 & -2 & 1 & 0 \\
0 & 1 & -2 & n-1 & 0 \\
\phantom{n-1} & 0 & n-1 & -2 & 1 & 0 \\
& & & \ddots & \ddots & \ddots \\
 & & & 0  &  n-1 & -2 & 1 & 0 \\
 & & & & 0 & 1 & -2 & n-1 \\
 & & & & & 0 & n-1 & -2
\end{bmatrix}. 
\end{equation}
Its determinant can be computed via the continuant sequence of determinants of the matrices of increasing size starting from the top left corners:
\[
f_1 = -2,\quad f_2 = \det \left[\begin{array}{cc} -2 & n-1 \\ n-1 & -2 \ \end{array}\right] = 4 - (n-1)^2,\quad f_i = \begin{cases}
-2 f_{i-1} - f_{i-2} & \mbox{for $i$ odd,}\\
-2 f_{i-1} - (n-1)^2 f_{i-2},& \mbox{for $i$ even,}
\end{cases}
\]
see \cite{det-tridiagonal}. Then, we have $\det(\Hess\Delta_{n,r,k}) = f_{2r}$ which turns out to be always non-zero except for cases $n=3$ and $r=1$. Moreover, the Hessian matrix is also the matrix of the coefficients of the linear system we solve to determine the critical points of $\Delta_{n,r,k}$. Hence, the function $\Delta_{n,r,k}$ has a single critical point, except for the case $n=3$ and $r=1$, and according to $n\geqslant 2$ we will be able to determine its nature to obtain information about the non-negativity of $\Delta_{n,r,k}$.

\section{Reducibility of nested Hilbert schemes of points on surfaces}\label{sec:dim2}
In this section we provide new examples of reducible nested Hilbert schemes of points on a smooth surface by proving \Cref{thm: intro A}.

The Hilbert scheme $\Hilb^d \mathbb{A}^2$ is smooth and irreducible for every $d\geqslant 0$, and the only (reduced) elementary component is that of $\Hilb^1 \mathbb{A}^2$. We are interested in the nested case. But before we move on to that, we would like to highlight a feature of the TNT area. For $n=2$, the resolution of every ideal has length 2, so that $\beta_{2,k+2}$ always vanishes. The minimum of the function $\Theta_{2,k,0}$ is $\frac{1}{3}k^2 + k - 2$, so the potential TNT area is empty for every $k \geqslant 2$. As expected, this means that for $k\geqslant 2$, there is no 2-step Hilbert function $\bh$ such that $H^2_{\bh} \times\BA^2$ is a generically reduced elementary component. On the other hand, among $\Fm$-primary 2-step ideals of order $k=1$, there is only that one corresponding to a point of $\Hilb^1 \mathbb{A}^2$.

\subsection{Nested Hilbert schemes of points on surfaces}\label{subsec:dim2nest}

\paragraph{\it Known results.} We provide a brief overview of the known facts concerning the reducibility of nested Hilbert schemes of points on smooth surfaces. The basic case $r=2$ and $d_2-d_1=1$ has been treated  in \cite{CHEACELLULAR,Gottsche-motivic} where smoothness and many other properties are proven. In general, according to the results in \cite{Rasul-irr-nested},  the scheme $\Hilb^{\underline{d}}\BA^2$ is known to be irreducible when $r=2$, as well as in some other sporadic cases. Conversely, it was shown in \cite{ALESSIONESTED} that, for $r\geqslant 5$, there exist (non-trivial) elementary components of  $\prod_{\underline{d}\in\BZ^r}\Hilb^{\underline{d}}\BA^2$. Moreover, as a consequence of the results in \cite{UPDATES} it admits generically non-reduced elementary components for $r\ge6$. The geometry of the locus parametrising fat nesting has been investigated in \cite{BULOIS} and more recently in \cite{NESTSURF}, where the number of irreducible components of the punctual locus is provided for  $\underline{d}=(2,d_2)$ and is bounded for $\underline{d}=(3,d_2)$.
  
\bigskip

For $n=2$, the Hessian matrix \eqref{eq:hessian} turns out to be a Toepliz tri-diagonal matrix
\[
\Hess\Delta_{2,r,k} =
\begin{bmatrix} 
-2 & 1 & 0 & \phantom{1}& \phantom{1}& \phantom{1} \\
1 & -2 & 1 & 0 \\
0 & 1 & -2 & 1 & 0 \\
 & & \ddots & \ddots & \ddots \\
 & & 0 & 1 & -2 & 1 \\
 & & & 0 & 1 & -2
\end{bmatrix}
,
\]
with eigenvalues  
\[
\lambda_i = -2 - 2 \cos\left(\tfrac{i}{2r+1}\pi\right), 
\]
for $i=1,\ldots,2r$, see \cite[Theorem 2.2]{tridiagonal}. They are all negative, so the critical point of $\Delta_{2,r,k}$ is a maximum point. For $r=1,2,3$, the maximum values of $\Delta_{2,r,k}$ are
\[
\max \Delta_{2,1,k} = \frac{-2k^2-6k + 9}{3},\quad \max \Delta_{2,2,k} = \frac{-2k^2-15k + 5}{5}, \quad \max \Delta_{2,3,k} = \frac{-k^2-23k -15}{7}.
\]

For $r=1$, $\max \Delta_{2,1,k} \geqslant 0$ only for $k=1$. This is not surprising, as the Hilbert stratum of the function $\bh=(1)$ agrees with the smoothable component, see also the first paragraph of this section. 

For $r=2$, $\max \Delta_{2,2,k}$ is negative for all $k > 0$. Also this could be expected because the nested Hilbert scheme $\hilbert{(d_1,d_2)}{\mathbb{A}^2}$ is known to be irreducible \cite{Rasul-irr-nested}.

For $r=3$, $\max \Delta_{2,3,k}$ is negative for all $k > 0$. Thus, there are no Hilbert strata $H_{\underline{\bh}}^2$ with $\underline{\bh}$ vector of 2-step Hilbert functions with no linear syzygies or very few linear syzygies whose dimension is at least the dimension of the smoothable component. This property can be interpreted as a hint that the nested Hilbert scheme $\hilbert{(d_1,d_2,d_3)}{\mathbb{A}^2}$ might also be irreducible.

\medskip

For $r=4,5,6,7,8$, the maximum values of $\Delta_{2,r,k}$ are
\begin{equation}\label{eq:maxDelta}
    \begin{aligned}
&\max \Delta_{2,4,k} = \frac{k^2-27k - 45}{9},  &&\max \Delta_{2,5,k} = \frac{4k^2-24k -74}{11}, &&\max \Delta_{2,6,k} = \frac{8k^2-11k -86}{13}, \\
&\max \Delta_{2,7,k} = \frac{13k^2+15k -60}{15}, && \max \Delta_{2,8,k} = \frac{19k^2+57k + 30}{17}.
\end{aligned}
\end{equation}
Therefore, for  sufficiently large orders we expect many Hilbert strata of dimension larger than the dimension of the smoothable component. The following theorem and its corollary, corresponding to \Cref{thm: intro A} from the introduction, describe the first examples for different lengths of nesting. 

\begin{theorem}\label{thm:nested-srf}
    If $\underline{d}$ is one of the following increasing sequences of positive integers
    \begin{enumerate}[\rm (a)]
        \item $\underline{d} = (454,491,527,565) \in \BZ^4$,
        \item $\underline{d} = (51,64,76,87,102) \in \BZ^5$
        \item $\underline{d} = (21,30,38,45,51,61)\in \BZ^6$,
        \item $\underline{d} = (11,18,24,29,33,40,50)\in \BZ^7$,
        \item $\underline{d} = (3,8,12,18,24,29,34,43)\in \BZ^8$,
    \end{enumerate}
    then the nested Hilbert scheme $\hilbert{\underline{d}}{\BA^2}$ is reducible.
\end{theorem}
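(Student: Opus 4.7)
The plan is to exhibit, for each of the sequences $\underline{d}$ in the statement, an explicit nesting $\underline{\bh}=(\bh^{(i)})_{i=0}^{r-1}$ of 2-step Hilbert functions of consecutive orders $k,k+1,\ldots,k+r-1$ with $|\bh^{(i)}|=d_{i+1}$, whose numerical invariants lie in the feasible region $\mathcal{D}_{\BN}$ (no or very few linear syzygies) and satisfy $\Delta_{2,r,k}(\underline{\bh})>0$. By \Cref{cor:formula dim nested,cor:nestveryfew}, this positivity translates into
\[
\dim H^2_{\underline{\bh}}+2 \ >\ 2 d_r,
\]
where the left-hand side accounts for translation of the support in $\BA^2$ and the right-hand side is the dimension of the smoothable component $\Hilb^{\underline{d}}_{\sm}\BA^2$; this forces reducibility of $\Hilb^{\underline{d}}\BA^2$.

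First I would pick $k$ minimal such that $\max\Delta_{2,r,k}$, as computed in \eqref{eq:maxDelta}, is strictly positive; for $r=4,5,6,7,8$ these formulas give explicit thresholds for $k$. I would then discretise the constrained optimisation of $\Delta_{2,r,k}$ over the integer points of the polytope $\mathcal{D}$, reading off the entries of $\underline{d}$ from the integer maximisers at the smallest admissible $k$. The sequences in cases (a)--(e) should emerge in this way, with $r=4$ forcing $k$ large (hence the size of the lengths in (a)) while the larger nestings (d), (e) permit very small $k$.

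Second, I must verify the hypothesis of \Cref{cor:formula dim nested} (or \Cref{cor:nestveryfew}) that at least one nesting of homogeneous ideals with natural first anti-diagonal of the Betti table exists. For each $i$ this amounts to asking that $I^{(i)}$ either has no linear syzygies among its degree-$(k+i)$ generators (when $\sss_{\bh^{(i)}}\geqslant 0$) or is generated only in degrees $k+i$ and $k+i+2$ with the expected number of first syzygies (otherwise). Since these are open conditions on $\OH^2_{\bh^{(i)}}$, it suffices to exhibit a single such nesting: a natural candidate is produced inductively by taking a generic subspace of $I^{(i-1)}_{k+i}$ of dimension $\oh_{k+i}^{(i)}$ together with a generic complement in degree $k+i+1$ of $R_1\cdot I^{(i)}_{k+i}$, and invoking semicontinuity of Betti numbers.

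Substituting the chosen $\underline{\bh}$ into the right-hand side of \eqref{eq:formula dim nested} and comparing with $2d_r$ yields a quantity that equals $\Delta_{2,r,k}(\underline{\bh})$ up to the translation term, positive by construction; this produces an irreducible component of $\Hilb^{\underline{d}}\BA^2$ not contained in $\Hilb^{\underline{d}}_{\sm}\BA^2$. The hard part will be verifying the natural first anti-diagonal for the specific $\underline{\bh}$ realising each of (a)--(e): the abstract genericity argument handles the dimension count, but for the smallest orders, as in (d) and (e), the numerical invariants are tight and one may need an explicit computational certificate, as provided by the companion \textsc{Macaulay2} package \texttt{TwoStepIdeals.m2}.
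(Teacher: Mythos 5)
Your proposal follows essentially the same route as the paper's proof: minimise $k$ so that $\max\Delta_{2,r,k}\geqslant 0$, search the integer points of $\mathcal{D}_{\BN}$ near the critical point for admissible nestings of 2-step Hilbert functions, and invoke \Cref{cor:formula dim nested,cor:nestveryfew} together with an explicit (computer-verified) nesting with natural first anti-diagonal to beat the dimension $2d_r$ of the smoothable component. The only minor deviations are that the paper accepts $\Delta_{2,r,k}=0$ as well (ruling out containment in the smoothable component since its generic point is reduced) and explicitly treats boundary points of $\mathcal{D}$ as degenerate 1-step ideals, neither of which changes the argument.
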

\begin{proof} 
Our strategy is the following. First, we compute the maxima  in \eqref{eq:maxDelta} and select the smallest $k$ for which the maximum is non-negative. These, in general, will not be realized by  natural numbers. Therefore, we focus on  the vertices of the hypercube of volume 1 containing the critical point considered. By doing this, we find many points with natural coordinates on which $\Delta_{2,r,k}$ assumes non-negative values. If these points are not contained in $\mathcal{D}_{\mathbb{N}}$, we explore other natural points nearby moving gradually from the critical point. If there are no points in $\mathcal{D}_{\mathbb{N}}$ with non-negative value of $\Delta_{2,r,k}$, we increase the value of $k$. Notice that some of the natural points lie on the boundary of $\mathcal{D}$. It happens that $\oh_{k+i}^{(i)} = 0$ or $\oh_{k+i+1}^{(i)} = \rrr_{k+i+1}$. In both cases, the ideal in the configuration is in fact a 1-step ideal of order $k+i$ or $k+i+1$ that we interpret as a degenerate case of 2-step ideals.

The sequences displayed in the statement correspond then to the smallest vector found with respect to the lexicographic order (from the last entry of the sequence)
\[
(d_1,\ldots,d_r) \preceq (e_1,\ldots,e_r)\quad\Leftrightarrow\quad  d_{r}=e_{r},\ \ldots,\ d_{i+1} = e_{i+1} \textnormal{~and~} d_i \leqslant e_i \text{~for some~}i.
\]

\begin{itemize}
    \item[(a)] The first degree $k$ such that $\max \Delta_{2,4,k}$ is positive is $k=29$. The maximum value of $\Delta_{2,4,29}$ is $\frac{13}{9}$ and the maximum point is
\[
\underline{\mathsf{h}}_{\max} =\tfrac{1}{9}\left(116,\:241,\:87,\:221,\:67,\:210,\:56,\:190\right).
\]
It is contained in $\mathcal{D}\subset\mathbb{R}^8$ and exploring the natural points in $\mathcal{D}_{\BN}$ starting from the vertices of the hypercube containing $\underline{\mathsf{h}}_{\max}$, we find 261 points with $\Delta_{2,4,29}\in\{0,1\}$.

The smallest sequence $\underline{d} = (d_1,d_2,d_3,d_4) \in \mathbb{Z}^4$ we find  is $(454,491,527,565)$.

\item[(b)] For 4 nestings, the first degree $k$ such that $\max \Delta_{2,5,k}$ is positive is $k=9$. The maximum value of $\Delta_{2,5,9}$ is $\frac{34}{11}$ and the maximum point is
\[
\underline{\mathsf{h}}_{\max} =\tfrac{1}{11}\left(41,\:93,\:24,\:87,\:18,\:92,\:23,\:108,\:39,\:113\right).
\]
It is again contained in $\mathcal{D}\subset\mathbb{R}^{10}$ and exploring the intersection of $\mathcal{D}_{\BN}$ with the  hypercube containing $\underline{\mathsf{h}}_{\max}$, we find 12884 integer points with non-negative value of $\Delta_{2,5,9}$. The smallest sequence $\underline{d} = (d_1,d_2,d_3,d_4,d_5) \in \mathbb{Z}^5$ we find is $(51,64,76,87,102)$.

\item[(c)] For 5 nesting, the first degree $k$ such that $\max \Delta_{2,6,k}$ is positive is $k=5$. In this case, the maximum point
\[
\underline{\mathsf{h}}_{\max} =\tfrac{1}{13}\left(21,\:55,\:-2,\:45,\:-12,\:48,\:-9,\:64,\:7,\:93,\:36,\:109\right)
\]
is not contained in $\mathcal{D}\subset\mathbb{R}^{12}$. However, there are vertices of the hypercube containing the critical point in $\mathcal{D}_{\mathbb{N}}$ on which $\Delta_{2,6,5}$ is positive. Starting from these points and moving around $\mathcal{D}_{\BN}$, the smallest sequence $\underline{d} \in \mathbb{Z}^6$ on which $\Delta_{2,6,5}$ is positive that we find is $(21,30,38,45,51,61)$.

\item[(d)] For 6 nestings, the first degree $k$ such that $\max \Delta_{2,7,k}$ is positive is $k=2$, but for $k=2$ the critical point is not contained in $\mathcal{D}\subset\mathbb{R}^{14}$ and there are no natural points in $\mathcal{D}_\mathbb{N}$ with non-negative value of $\Delta_{2,7,2}$. For $k=3$, moving around $\mathcal{D}_{\BN}$, we find the sequence $\underline{d} = (11,18,24,29,33,40,50) \in \mathbb{Z}^7$.

\item[(e)] For 7 nestings, $\max \Delta_{2,8,k}$ is always positive. For $k=1$, the critical point is quite far from $\mathcal{D}\subset\mathbb{R}^{16}$. However, there are 330 natural points in $\mathcal{D}_\mathbb{N}$ with non-negative value of $\Delta_{2,8,1}$ and lowest sequence is $(3,8,12,18,24,29,34,43)\in \BZ^8$.
\end{itemize} 

\smallskip

See \Cref{fig:nested surface} for a detailed description of the generic homogeneous ideals in the configuration of Hilbert strata $H_{\underline{\bh}}^2$ certifying the reducibility of the nested Hilbert scheme. The ancillary \textit{Macaulay2} file \href{www.paololella.it/software/reducibility-nested-Hilbert-schemes.m2}{\tt reducibility-nested-Hilbert-schemes.m2} contains the code to explicitly produce a configuration for each case.
\end{proof}

\begin{figure}[!ht]
    \centering
    
\subfigure[2-step Hilbert functions certifying the reducibility of $\hilbert{(454,491,527,565)}{\mathbb{A}^2}$]{\label{subfig: nested 2 r 4}
\begin{tikzpicture}[yscale=0.35]
    
    \draw [] (-1.5,1) -- (11.5,1);
    \draw [] (-1.5,-0.5) -- (11.5,-0.5);
    \draw [] (-1.5,1) -- (-1.5,-.5);
    \draw [] (2.5,1) -- (2.5,-.5);
    \draw [] (3.5,1) -- (3.5,-.5);
    \draw [] (4.5,1) -- (4.5,-.5);
    \draw [] (7,1) -- (7,-.5);
    \draw [] (8.5,1) -- (8.5,-.5);
    \draw [] (10,1) -- (10,-.5);
    \draw [] (11.5,1) -- (11.5,-.5);

    \node at (0.5,0.25) [] {\footnotesize $\bh^{(i)}$};
    \node at (3,0.25) [] {\footnotesize $\left\vert \bh^{(i)}\right\vert$};
    \node at (4,0.25) [] {\footnotesize $k+i$};
    \node at (5.75,0.25) [] {\footnotesize $\big(\oh_{k+i}^{(i)},\oh_{k+i+1}^{(i)}\big)$};
    \node at (7.75,0.25) [] {\footnotesize $\dim \mathsf{T}^{<0}$};
    \node at (7.75,0.25) [xshift=1.5cm] {\footnotesize $\dim \mathsf{T}^{=0}$};
    \node at (9.25,0.25) [xshift=1.5cm] {\footnotesize $\dim \mathsf{T}^{=1}$};
    
    \begin{scope}[yshift=-8pt,yscale=0.85]
    \draw [thin] (-1.5,-0.5) -- (11.5,-0.5);
    \draw [thin] (-1.5,-1.5) -- (11.5,-1.5);
    \draw [thin] (-1.5,-2.5) -- (11.5,-2.5);
    \draw [thin] (-1.5,-3.5) -- (11.5,-3.5);
    \draw [thin] (-1.5,-4.5) -- (11.5,-4.5);

    \draw [thin] (-1.5,-0.5) -- (-1.5,-4.5);
    \draw [thin] (2.5,-0.5) -- (2.5,-4.5);
    \draw [thin] (3.5,-0.5) -- (3.5,-4.5);
    \draw [thin] (4.5,-0.5) -- (4.5,-4.5);
    \draw [thin] (7,-0.5) -- (7,-4.5);
    \draw [thin] (8.5,-0.5) -- (8.5,-4.5);
    \draw [thin] (10,-0.5) -- (10,-4.5);
    \draw [thin] (11.5,-0.5) -- (11.5,-4.5);
    
    \node at (0.5,-1) [] {\parbox{3.5cm}{\scriptsize $\bh^{(0)}=(1,2,\ldots,29,16,3)$}};
    \node at (0.5,-2) [] {\parbox{3.5cm}{\scriptsize $\bh^{(1)}=(1,2,\ldots,29,30,20,6)$}};
    \node at (0.5,-3) [] {\parbox{3.5cm}{\scriptsize $\bh^{(2)}=(1,2,\ldots,29,30,31,23,8)$}};
    \node at (0.5,-4) [] {\parbox{3.5cm}{\scriptsize $\bh^{(3)}=(1,2,\ldots,29,30,31,32,25,12)$}};

    \node at (3,-1) [] {\scriptsize $454$};
    \node at (3,-2) [] {\scriptsize $491$};
    \node at (3,-3) [] {\scriptsize $527$};
    \node at (3,-4) [] {\scriptsize $565$};
    
    \node at (4,-1) [] {\scriptsize $29$};
    \node at (4,-2) [] {\scriptsize $30$};
    \node at (4,-3) [] {\scriptsize $31$};
    \node at (4,-4) [] {\scriptsize $32$};
 
    \node at (5.75,-1) [xshift=-0.6cm] {\scriptsize $(14,28)$};
    \node at (5.75,-2) [xshift=-0.2cm] {\scriptsize $(11,26)$};
    \node at (5.75,-3) [xshift=0.2cm] {\scriptsize $(9,25)$};
    \node at (5.75,-4) [xshift=0.6cm] {\scriptsize $(8,22)$};
  
    \node at (7.75,-1) [] {\scriptsize $642$}; \node at (7.75,-1) [xshift=1.5cm] {\scriptsize $224$}; \node at (9.25,-1) [xshift=1.5cm] {\scriptsize $42$};
    \node at (7.75,-2) [] {\scriptsize $672$}; \node at (7.75,-2) [xshift=1.5cm] {\scriptsize $244$}; \node at (9.25,-2) [xshift=1.5cm] {\scriptsize $66$};
    \node at (7.75,-3) [] {\scriptsize $719$}; \node at (7.75,-3) [xshift=1.5cm] {\scriptsize $263$}; \node at (9.25,-3) [xshift=1.5cm] {\scriptsize $72$};
     \node at (7.75,-4) [] {\scriptsize $762$}; \node at (7.75,-4) [xshift=1.5cm] {\scriptsize $272$}; \node at (9.25,-4) [xshift=1.5cm] {\scriptsize $96$};
    \end{scope}

    \begin{scope}[yshift=8pt]

        \node at (4.75,-5.05) [] {\footnotesize $\underline{\bh}=(\bh^{(0)},\bh^{(1)},\bh^{(2)},\bh^{(3)})$};
        \draw[thick] (2.5,-4.5) rectangle (11.5,-5.5);
        \draw (7,-4.5) -- (7,-5.5);
        \draw (8.5,-4.5) -- (8.5,-5.5);
        \draw (10,-4.5) -- (10,-5.5);
        
         \node at (7.75,-5) [] {\footnotesize $874$}; \node at (7.75,-5) [xshift=1.5cm] {\footnotesize $864$}; \node at (9.25,-5) [xshift=1.5cm] {\footnotesize $276$};

    \end{scope}
\end{tikzpicture}
}

\subfigure[2-step Hilbert functions certifying the reducibility of $\hilbert{(51,64,76,87,102)}{\mathbb{A}^2}$]{\label{subfig: nested 3 r 4}
\begin{tikzpicture}[yscale=0.35]
    \draw [] (-1.5,1) -- (11.5,1);
    \draw [] (-1.5,-0.5) -- (11.5,-0.5);
    \draw [] (-1.5,1) -- (-1.5,-.5);
    \draw [] (2.5,1) -- (2.5,-.5);
    \draw [] (3.5,1) -- (3.5,-.5);
    \draw [] (4.5,1) -- (4.5,-.5);
    \draw [] (7,1) -- (7,-.5);
    \draw [] (8.5,1) -- (8.5,-.5);
    \draw [] (10,1) -- (10,-.5);
    \draw [] (11.5,1) -- (11.5,-.5);

    \node at (0.5,0.25) [] {\footnotesize $\bh^{(i)}$};
    \node at (3,0.25) [] {\footnotesize $\left\vert \bh^{(i)}\right\vert$};
    \node at (4,0.25) [] {\footnotesize $k+i$};
    \node at (5.75,0.25) [] {\footnotesize $\big(\oh_{k+i}^{(i)},\oh_{k+i+1}^{(i)}\big)$};
    \node at (7.75,0.25) [] {\footnotesize $\dim \mathsf{T}^{<0}$};
    \node at (7.75,0.25) [xshift=1.5cm] {\footnotesize $\dim \mathsf{T}^{=0}$};
    \node at (9.25,0.25) [xshift=1.5cm] {\footnotesize $\dim \mathsf{T}^{=1}$};
    
    \begin{scope}[yshift=-8pt,yscale=0.85]
    \draw [thin] (-1.5,-0.5) -- (11.5,-0.5);
    \draw [thin] (-1.5,-1.5) -- (11.5,-1.5);
    \draw [thin] (-1.5,-2.5) -- (11.5,-2.5);
    \draw [thin] (-1.5,-3.5) -- (11.5,-3.5);
    \draw [thin] (-1.5,-4.5) -- (11.5,-4.5);
\draw [thin] (-1.5,-5.5) -- (11.5,-5.5);

    \draw [thin] (-1.5,-0.5) -- (-1.5,-5.5);
    \draw [thin] (2.5,-0.5) -- (2.5,-5.5);
    \draw [thin] (3.5,-0.5) -- (3.5,-5.5);
    \draw [thin] (4.5,-0.5) -- (4.5,-5.5);
    \draw [thin] (7,-0.5) -- (7,-5.5);
    \draw [thin] (8.5,-0.5) -- (8.5,-5.5);
    \draw [thin] (10,-0.5) -- (10,-5.5);
    \draw [thin] (11.5,-0.5) -- (11.5,-5.5);
    
    \node at (0.5,-1) [] {\parbox{3.5cm}{\scriptsize $\bh^{(0)}=(1,2,\ldots,9,5,1)$}};
    \node at (0.5,-2) [] {\parbox{3.5cm}{\scriptsize $\bh^{(1)}=(1,2,\ldots,9,10,7,2)$}};
    \node at (0.5,-3) [] {\parbox{3.5cm}{\scriptsize $\bh^{(2)}=(1,2,\ldots,9,10,11,8,2)$}};
    \node at (0.5,-4) [] {\parbox{3.5cm}{\scriptsize $\bh^{(3)}=(1,2,\ldots,9,10,11,12,8,1)$}};
    \node at (0.5,-5) [] {\parbox{3.5cm}{\scriptsize $\bh^{(4)}=(1,2,\ldots,9,10,11,12,13,8,3)$}};

    \node at (3,-1) [] {\scriptsize $51$};
    \node at (3,-2) [] {\scriptsize $64$};
    \node at (3,-3) [] {\scriptsize $76$};
    \node at (3,-4) [] {\scriptsize $87$};
    \node at (3,-5) [] {\scriptsize $102$};
    
    \node at (4,-1) [] {\scriptsize $9$};
    \node at (4,-2) [] {\scriptsize $10$};
    \node at (4,-3) [] {\scriptsize $11$};
    \node at (4,-4) [] {\scriptsize $12$};
    \node at (4,-5) [] {\scriptsize $13$};

    \node at (5.75,-1) [xshift=-0.5cm] {\scriptsize $(5,10)$};
    \node at (5.75,-2) [xshift=-0.25cm] {\scriptsize $(4,10)$};
    \node at (5.75,-3) [] {\scriptsize $(4,11)$};
    \node at (5.75,-4) [xshift=0.25cm] {\scriptsize $(5,13)$};
    \node at (5.75,-5) [xshift=0.5cm] {\scriptsize $(6,12)$};

    \node at (7.75,-1) [] {\scriptsize $72$}; \node at (7.75,-1) [xshift=1.5cm] {\scriptsize $25$}; \node at (9.25,-1) [xshift=1.5cm] {\scriptsize $5$};
    \node at (7.75,-2) [] {\scriptsize $88$}; \node at (7.75,-2) [xshift=1.5cm] {\scriptsize $32$}; \node at (9.25,-2) [xshift=1.5cm] {\scriptsize $8$};
    \node at (7.75,-3) [] {\scriptsize $106$}; \node at (7.75,-3) [xshift=1.5cm] {\scriptsize $38$}; \node at (9.25,-3) [xshift=1.5cm] {\scriptsize $8$};
     \node at (7.75,-4) [] {\scriptsize $126$}; \node at (7.75,-4) [xshift=1.5cm] {\scriptsize $43$}; \node at (9.25,-4) [xshift=1.5cm] {\scriptsize $5$};
  \node at (7.75,-5) [] {\scriptsize $138$}; \node at (7.75,-5) [xshift=1.5cm] {\scriptsize $48$}; \node at (9.25,-5) [xshift=1.5cm] {\scriptsize $18$};

    \end{scope}
    \begin{scope}[yshift=12pt]

        \node at (4.75,-6.05) [] {\footnotesize $\underline{\bh}=(\bh^{(0)},\bh^{(1)},\bh^{(2)},\bh^{(3)},\bh^{(4)})$};
        \draw[thick] (2.5,-5.5) rectangle (11.5,-6.5);
        \draw (7,-5.5) -- (7,-6.5);
        \draw (8.5,-5.5) -- (8.5,-6.5);
        \draw (10,-5.5) -- (10,-6.5);

        \node at (7.75,-6) [] {\footnotesize $150$};
         \node at (7.75,-6) [xshift=1.5cm] {\footnotesize $158$}; \node at (9.25,-6) [xshift=1.5cm] {\footnotesize $44$};
    \end{scope}
\end{tikzpicture}
}

\subfigure[2-step Hilbert functions certifying the reducibility of $\hilbert{(21,30,38,45,51,61)}{\mathbb{A}^2}$]{\label{subfig: nested 2 r 6}
\begin{tikzpicture}[yscale=0.35]
    \draw [] (-1.5,1) -- (11.5,1);
    \draw [] (-1.5,-0.5) -- (11.5,-0.5);
    \draw [] (-1.5,1) -- (-1.5,-.5);
    \draw [] (2.5,1) -- (2.5,-.5);
    \draw [] (3.5,1) -- (3.5,-.5);
    \draw [] (4.5,1) -- (4.5,-.5);
    \draw [] (7,1) -- (7,-.5);
    \draw [] (8.5,1) -- (8.5,-.5);
    \draw [] (10,1) -- (10,-.5);
    \draw [] (11.5,1) -- (11.5,-.5);

    \node at (0.5,0.25) [] {\footnotesize $\bh^{(i)}$};
    \node at (3,0.25) [] {\footnotesize $\left\vert \bh^{(i)}\right\vert$};
    \node at (4,0.25) [] {\footnotesize $k+i$};
    \node at (5.75,0.25) [] {\footnotesize $\big(\oh_{k+i}^{(i)},\oh_{k+i+1}^{(i)}\big)$};
    \node at (7.75,0.25) [] {\footnotesize $\dim \mathsf{T}^{<0}$};
    \node at (7.75,0.25) [xshift=1.5cm] {\footnotesize $\dim \mathsf{T}^{=0}$};
    \node at (9.25,0.25) [xshift=1.5cm] {\footnotesize $\dim \mathsf{T}^{=1}$};
    
    \begin{scope}[yshift=-8pt,yscale=0.85]
    \draw [thin] (-1.5,-0.5) -- (11.5,-0.5);
    \draw [thin] (-1.5,-1.5) -- (11.5,-1.5);
    \draw [thin] (-1.5,-2.5) -- (11.5,-2.5);
    \draw [thin] (-1.5,-3.5) -- (11.5,-3.5);
    \draw [thin] (-1.5,-4.5) -- (11.5,-4.5);
    \draw [thin] (-1.5,-5.5) -- (11.5,-5.5);
\draw [thin] (-1.5,-6.5) -- (11.5,-6.5);

    \draw [thin] (-1.5,-0.5) -- (-1.5,-6.5);
    \draw [thin] (2.5,-0.5) -- (2.5,-6.5);
    \draw [thin] (3.5,-0.5) -- (3.5,-6.5);
    \draw [thin] (4.5,-0.5) -- (4.5,-6.5);
    \draw [thin] (7,-0.5) -- (7,-6.5);
    \draw [thin] (8.5,-0.5) -- (8.5,-6.5);
    \draw [thin] (10,-0.5) -- (10,-6.5);
    \draw [thin] (11.5,-0.5) -- (11.5,-6.5);
    
    \node at (0.5,-1) [] {\parbox{3.5cm}{\scriptsize $\bh^{(0)}=(1,2,3,4,5,4,2)$}};
    \node at (0.5,-2) [] {\parbox{3.5cm}{\scriptsize $\bh^{(1)}=(1,2,3,4,5,6,6,3)$}};
    \node at (0.5,-3) [] {\parbox{3.5cm}{\scriptsize $\bh^{(2)}=(1,2,3,4,5,6,7,7,3)$}};
    \node at (0.5,-4) [] {\parbox{3.5cm}{\scriptsize $\bh^{(3)}=(1,2,3,4,5,6,7,8,7,2)$}};
    \node at (0.5,-5) [] {\parbox{3.5cm}{\scriptsize $\bh^{(4)}=(1,2,3,4,5,6,7,8,9,6,0)$}};
    \node at (0.5,-6) [] {\parbox{3.5cm}{\scriptsize $\bh^{(5)}=(1,2,3,4,5,6,7,8,9,10,5,1)$}};

    \node at (3,-1) [] {\scriptsize $21$};
    \node at (3,-2) [] {\scriptsize $30$};
    \node at (3,-3) [] {\scriptsize $38$};
    \node at (3,-4) [] {\scriptsize $45$};
    \node at (3,-5) [] {\scriptsize $51$};
    \node at (3,-6) [] {\scriptsize $61$};
    
    \node at (4,-1) [] {\scriptsize $5$};
    \node at (4,-2) [] {\scriptsize $6$};
    \node at (4,-3) [] {\scriptsize $7$};
    \node at (4,-4) [] {\scriptsize $8$};
    \node at (4,-5) [] {\scriptsize $9$};
    \node at (4,-6) [] {\scriptsize $10$};

    \node at (5.75,-1) [xshift=-0.5cm] {\scriptsize $(2,5)$};
    \node at (5.75,-2) [xshift=-0.3cm] {\scriptsize $(1,5)$};
    \node at (5.75,-3) [xshift=-0.1cm] {\scriptsize $(1,6)$};
    \node at (5.75,-4) [xshift=0.1cm] {\scriptsize $(2,8)$};
    \node at (5.75,-5) [xshift=0.3cm] {\scriptsize $(4,11)$};
    \node at (5.75,-6) [xshift=0.5cm] {\scriptsize $(6,11)$};
    
    \node at (7.75,-1) [] {\scriptsize $28$}; \node at (9.25,-1) [] {\scriptsize $10$}; \node at (10.75,-1) [] {\scriptsize $4$};
    \node at (7.75,-2) [] {\scriptsize $42$}; \node at (7.75,-2) [xshift=1.5cm] {\scriptsize $15$}; \node at (9.25,-2) [xshift=1.5cm] {\scriptsize $3$};
    \node at (7.75,-3) [] {\scriptsize $52$}; \node at (7.75,-3) [xshift=1.5cm] {\scriptsize $21$}; \node at (9.25,-3) [xshift=1.5cm] {\scriptsize $3$};
     \node at (7.75,-4) [] {\scriptsize $64$}; \node at (7.75,-4) [xshift=1.5cm] {\scriptsize $22$}; \node at (9.25,-4) [xshift=1.5cm] {\scriptsize $4$};
  \node at (7.75,-5) [] {\scriptsize $78$}; \node at (7.75,-5) [xshift=1.5cm] {\scriptsize $24$}; \node at (9.25,-5) [xshift=1.5cm] {\scriptsize $0$};
 \node at (7.75,-6) [] {\scriptsize $87$}; \node at (7.75,-6) [xshift=1.5cm] {\scriptsize $29$}; \node at (9.25,-6) [xshift=1.5cm] {\scriptsize $6$};

    \end{scope}
    \begin{scope}[yshift=16pt]

        \node at (4.75,-7.05) [] {\footnotesize $\underline{\bh}=(\bh^{(0)},\bh^{(1)},\bh^{(2)},\bh^{(3)},\bh^{(4)},\bh^{(5)})$};
        \draw[thick] (2.5,-6.5) rectangle (11.5,-7.5);
        \draw (7,-6.5) -- (7,-7.5);
        \draw (8.5,-6.5) -- (8.5,-7.5);
        \draw (10,-6.5) -- (10,-7.5);

        \node at (7.75,-7) [] {\footnotesize $90$};
         \node at (7.75,-7) [xshift=1.5cm] {\footnotesize $100$}; \node at (9.25,-7) [xshift=1.5cm] {\footnotesize $20$};

    \end{scope}
\end{tikzpicture}
}

\subfigure[2-step Hilbert functions certifying the reducibility of $\hilbert{(11,18,24,29,33,40,50)}{\mathbb{A}^2}$]{\label{subfig: nested 2 r 7}
\begin{tikzpicture}[yscale=0.35]
    \draw [] (-1.5,1) -- (11.5,1);
    \draw [] (-1.5,-0.5) -- (11.5,-0.5);
    \draw [] (-1.5,1) -- (-1.5,-.5);
    \draw [] (2.5,1) -- (2.5,-.5);
    \draw [] (3.5,1) -- (3.5,-.5);
    \draw [] (4.5,1) -- (4.5,-.5);
    \draw [] (7,1) -- (7,-.5);
    \draw [] (8.5,1) -- (8.5,-.5);
    \draw [] (10,1) -- (10,-.5);
    \draw [] (11.5,1) -- (11.5,-.5);

    \node at (0.5,0.25) [] {\footnotesize $\bh^{(i)}$};
    \node at (3,0.25) [] {\footnotesize $\left\vert \bh^{(i)}\right\vert$};
    \node at (4,0.25) [] {\footnotesize $k+i$};
    \node at (5.75,0.25) [] {\footnotesize $\big(\oh_{k+i}^{(i)},\oh_{k+i+1}^{(i)}\big)$};
    \node at (7.75,0.25) [] {\footnotesize $\dim \mathsf{T}^{<0}$};
    \node at (7.75,0.25) [xshift=1.5cm] {\footnotesize $\dim \mathsf{T}^{=0}$};
    \node at (9.25,0.25) [xshift=1.5cm] {\footnotesize $\dim \mathsf{T}^{=1}$};
    
    \begin{scope}[yshift=-8pt,yscale=0.85]
    \draw [thin] (-1.5,-0.5) -- (11.5,-0.5);
    \draw [thin] (-1.5,-1.5) -- (11.5,-1.5);
    \draw [thin] (-1.5,-2.5) -- (11.5,-2.5);
    \draw [thin] (-1.5,-3.5) -- (11.5,-3.5);
    \draw [thin] (-1.5,-4.5) -- (11.5,-4.5);
    \draw [thin] (-1.5,-5.5) -- (11.5,-5.5);
    \draw [thin] (-1.5,-6.5) -- (11.5,-6.5);
    \draw [thin] (-1.5,-7.5) -- (11.5,-7.5);

    \draw [thin] (-1.5,-0.5) -- (-1.5,-7.5);
    \draw [thin] (2.5,-0.5) -- (2.5,-7.5);
    \draw [thin] (3.5,-0.5) -- (3.5,-7.5);
    \draw [thin] (4.5,-0.5) -- (4.5,-7.5);
    \draw [thin] (7,-0.5) -- (7,-7.5);
    \draw [thin] (8.5,-0.5) -- (8.5,-7.5);
    \draw [thin] (10,-0.5) -- (10,-7.5);
    \draw [thin] (11.5,-0.5) -- (11.5,-7.5);
        
    \node at (0.5,-1) [] {\parbox{3.5cm}{\scriptsize $\bh^{(0)}=(1,2,3,3,2)$}};
    \node at (0.5,-2) [] {\parbox{3.5cm}{\scriptsize $\bh^{(1)}=(1,2,3,4,5,3)$}};
    \node at (0.5,-3) [] {\parbox{3.5cm}{\scriptsize $\bh^{(2)}=(1,2,3,4,5,6,3)$}};
    \node at (0.5,-4) [] {\parbox{3.5cm}{\scriptsize $\bh^{(3)}=(1,2,3,4,5,6,6,2)$}};
    \node at (0.5,-5) [] {\parbox{3.5cm}{\scriptsize $\bh^{(4)}=(1,2,3,4,5,6,7,5,0)$}};
    \node at (0.5,-6) [] {\parbox{3.5cm}{\scriptsize $\bh^{(5)}=(1,2,3,4,5,6,7,8,5,0)$}};
    \node at (0.5,-7) [] {\parbox{3.5cm}{\scriptsize $\bh^{(6)}=(1,2,3,4,5,6,7,8,9,4,1)$}};

    \node at (3,-1) [] {\scriptsize $11$};
    \node at (3,-2) [] {\scriptsize $18$};
    \node at (3,-3) [] {\scriptsize $24$};
    \node at (3,-4) [] {\scriptsize $29$};
    \node at (3,-5) [] {\scriptsize $33$};
    \node at (3,-6) [] {\scriptsize $40$};
    \node at (3,-7) [] {\scriptsize $50$};
    
    \node at (4,-1) [] {\scriptsize $3$};
    \node at (4,-2) [] {\scriptsize $4$};
    \node at (4,-3) [] {\scriptsize $5$};
    \node at (4,-4) [] {\scriptsize $6$};
    \node at (4,-5) [] {\scriptsize $7$};
    \node at (4,-6) [] {\scriptsize $8$};
    \node at (4,-7) [] {\scriptsize $9$};

    \node at (5.75,-1) [xshift=-0.6cm] {\scriptsize $(1,3)$};
    \node at (5.75,-2) [xshift=-0.4cm] {\scriptsize $(0,3)$};
    \node at (5.75,-3) [xshift=-0.2cm] {\scriptsize $(0,4)$};
    \node at (5.75,-4) [] {\scriptsize $(1,6)$};
    \node at (5.75,-5) [xshift=0.2cm] {\scriptsize $(3,9)$};
    \node at (5.75,-6) [xshift=0.4cm] {\scriptsize $(5,10)$};
    \node at (5.75,-7) [xshift=0.6cm] {\scriptsize $(6,10)$};
    
    \node at (7.75,-1) [] {\scriptsize $15$}; \node at (7.75,-1) [xshift=1.5cm] {\scriptsize $5$};  \node at (9.25,-1) [xshift=1.5cm] {\scriptsize $2$};
    \node at (7.75,-2) [] {\scriptsize $37$}; \node at (7.75,-2) [xshift=1.5cm] {\scriptsize $9$}; \node at (9.25,-2) [xshift=1.5cm] {\scriptsize $0$};
    \node at (7.75,-3) [] {\scriptsize $36$}; \node at (7.75,-3) [xshift=1.5cm] {\scriptsize $12$}; \node at (9.25,-3) [xshift=1.5cm] {\scriptsize $0$};
     \node at (7.75,-4) [] {\scriptsize $42$}; \node at (7.75,-4) [xshift=1.5cm] {\scriptsize $14$}; \node at (9.25,-4) [xshift=1.5cm] {\scriptsize $2$};
  \node at (7.75,-5) [] {\scriptsize $51$}; \node at (7.75,-5) [xshift=1.5cm] {\scriptsize $15$}; \node at (9.25,-5) [xshift=1.5cm] {\scriptsize $0$};
 \node at (7.75,-6) [] {\scriptsize $60$}; \node at (7.75,-6) [xshift=1.5cm] {\scriptsize $20$}; \node at (9.25,-6) [xshift=1.5cm] {\scriptsize $0$};
 \node at (7.75,-7) [] {\scriptsize $72$}; \node at (7.75,-7) [xshift=1.5cm] {\scriptsize $22$}; \node at (9.25,-7) [xshift=1.5cm] {\scriptsize $6$};
    \end{scope}

    \begin{scope}[yshift=20pt]

        \node at (4.75,-8.05) [] {\footnotesize $\underline{\bh}=(\bh^{(0)},\bh^{(1)},\bh^{(2)},\bh^{(3)},\bh^{(4)},\bh^{(5)},\bh^{(6)})$};
        \draw[thick] (2.5,-7.5) rectangle (11.5,-8.5);
        \draw[] (7,-7.5) -- (7,-8.5);
        \draw[] (8.5,-7.5) -- (8.5,-8.5);
        \draw[] (10,-7.5) -- (10,-8.5);
        
         \node at (7.75,-8) [] {\footnotesize $87$}; \node at (7.75,-8) [xshift=1.5cm] {\footnotesize $88$}; \node at (9.25,-8) [xshift=1.5cm] {\footnotesize $10$};

    \end{scope}
\end{tikzpicture}
}

\subfigure[2-step Hilbert functions certifying the reducibility of $\hilbert{(3,8,12,18,24,29,34,43)}{\mathbb{A}^2}$]{\label{subfig: nested 2 r 8}
\begin{tikzpicture}[yscale=0.35]
    \draw [] (-1.5,1) -- (11.5,1);
    \draw [] (-1.5,-0.5) -- (11.5,-0.5);
    \draw [] (-1.5,1) -- (-1.5,-.5);
    \draw [] (2.5,1) -- (2.5,-.5);
    \draw [] (3.5,1) -- (3.5,-.5);
    \draw [] (4.5,1) -- (4.5,-.5);
    \draw [] (7,1) -- (7,-.5);
    \draw [] (8.5,1) -- (8.5,-.5);
    \draw [] (10,1) -- (10,-.5);
    \draw [] (11.5,1) -- (11.5,-.5);

    \node at (0.5,0.25) [] {\footnotesize $\bh^{(i)}$};
    \node at (3,0.25) [] {\footnotesize $\left\vert \bh^{(i)}\right\vert$};
    \node at (4,0.25) [] {\footnotesize $k+i$};
    \node at (5.75,0.25) [] {\footnotesize $\big(\oh_{k+i}^{(i)},\oh_{k+i+1}^{(i)}\big)$};
    \node at (7.75,0.25) [] {\footnotesize $\dim \mathsf{T}^{<0}$};
    \node at (7.75,0.25) [xshift=1.5cm] {\footnotesize $\dim \mathsf{T}^{=0}$};
    \node at (9.25,0.25) [xshift=1.5cm] {\footnotesize $\dim \mathsf{T}^{=1}$};
    
    \begin{scope}[yshift=-8pt,yscale=0.85]
    \draw [thin] (-1.5,-0.5) -- (11.5,-0.5);
    \draw [thin] (-1.5,-1.5) -- (11.5,-1.5);
    \draw [thin] (-1.5,-2.5) -- (11.5,-2.5);
    \draw [thin] (-1.5,-3.5) -- (11.5,-3.5);
    \draw [thin] (-1.5,-4.5) -- (11.5,-4.5);
    \draw [thin] (-1.5,-5.5) -- (11.5,-5.5);
    \draw [thin] (-1.5,-6.5) -- (11.5,-6.5);
    \draw [thin] (-1.5,-7.5) -- (11.5,-7.5);
    \draw [thin] (-1.5,-8.5) -- (11.5,-8.5);

    \draw [thin] (-1.5,-0.5) -- (-1.5,-8.5);
    \draw [thin] (2.5,-0.5) -- (2.5,-8.5);
    \draw [thin] (3.5,-0.5) -- (3.5,-8.5);
    \draw [thin] (4.5,-0.5) -- (4.5,-8.5);
    \draw [thin] (7,-0.5) -- (7,-8.5);
    \draw [thin] (8.5,-0.5) -- (8.5,-8.5);
    \draw [thin] (10,-0.5) -- (10,-8.5);
    \draw [thin] (11.5,-0.5) -- (11.5,-8.5);
    
    \node at (0.5,-1) [] {\parbox{3.5cm}{\scriptsize $\bh^{(0)}=(1,1,1)$}};
    \node at (0.5,-2) [] {\parbox{3.5cm}{\scriptsize $\bh^{(1)}=(1,2,3,2)$}};
    \node at (0.5,-3) [] {\parbox{3.5cm}{\scriptsize $\bh^{(2)}=(1,2,3,4,2)$}};
    \node at (0.5,-4) [] {\parbox{3.5cm}{\scriptsize $\bh^{(3)}=(1,2,3,4,5,3)$}};
    \node at (0.5,-5) [] {\parbox{3.5cm}{\scriptsize $\bh^{(4)}=(1,2,3,4,5,6,3)$}};
    \node at (0.5,-6) [] {\parbox{3.5cm}{\scriptsize $\bh^{(5)}=(1,2,3,4,5,6,6,2)$}};
    \node at (0.5,-7) [] {\parbox{3.5cm}{\scriptsize $\bh^{(6)}=(1,2,3,4,5,6,7,5,1)$}};
    \node at (0.5,-8) [] {\parbox{3.5cm}{\scriptsize $\bh^{(7)}=(1,2,3,4,5,6,7,8,5,2)$}};

    \node at (3,-1) [] {\scriptsize $3$};
    \node at (3,-2) [] {\scriptsize $8$};
    \node at (3,-3) [] {\scriptsize $12$};
    \node at (3,-4) [] {\scriptsize $18$};
    \node at (3,-5) [] {\scriptsize $24$};
    \node at (3,-6) [] {\scriptsize $29$};
    \node at (3,-7) [] {\scriptsize $34$};
    \node at (3,-8) [] {\scriptsize $43$};
    
    \node at (4,-1) [] {\scriptsize $1$};
    \node at (4,-2) [] {\scriptsize $2$};
    \node at (4,-3) [] {\scriptsize $3$};
    \node at (4,-4) [] {\scriptsize $4$};
    \node at (4,-5) [] {\scriptsize $5$};
    \node at (4,-6) [] {\scriptsize $6$};
    \node at (4,-7) [] {\scriptsize $7$};
\node at (4,-8) [] {\scriptsize $8$};

    \node at (5.75,-1) [xshift=-0.7cm] {\scriptsize $(1,2)$};
    \node at (5.75,-2) [xshift=-0.5cm] {\scriptsize $(0,2)$};
    \node at (5.75,-3) [xshift=-0.3cm] {\scriptsize $(0,3)$};
    \node at (5.75,-4) [xshift=-0.1cm] {\scriptsize $(0,3)$};
    \node at (5.75,-5) [xshift=0.1cm] {\scriptsize $(0,4)$};
    \node at (5.75,-6) [xshift=0.3cm] {\scriptsize $(1,6)$};
    \node at (5.75,-7) [xshift=0.5cm] {\scriptsize $(3,8)$};
    \node at (5.75,-8) [xshift=0.7cm] {\scriptsize $(4,8)$};

    \node at (7.75,-1) [] {\scriptsize $4$}; \node at (7.75,-1) [xshift=1.5cm] {\scriptsize $1$}; \node at (9.25,-1) [xshift=1.5cm] {\scriptsize $1$};
    \node at (7.75,-2) [] {\scriptsize $12$}; \node at (7.75,-2) [xshift=1.5cm] {\scriptsize $4$}; \node at (9.25,-2) [xshift=1.5cm] {\scriptsize $0$};
    \node at (7.75,-3) [] {\scriptsize $18$};\node at (7.75,-3) [xshift=1.5cm] {\scriptsize $6$}; \node at (9.25,-3) [xshift=1.5cm] {\scriptsize $0$};
     \node at (7.75,-4) [] {\scriptsize $27$}; \node at (7.75,-4) [xshift=1.5cm] {\scriptsize $9$}; \node at (9.25,-4) [xshift=1.5cm] {\scriptsize $0$};
  \node at (7.75,-5) [] {\scriptsize $36$}; \node at (7.75,-5) [xshift=1.5cm] {\scriptsize $12$}; \node at (9.25,-5) [xshift=1.5cm] {\scriptsize $0$};
 \node at (7.75,-6) [] {\scriptsize $42$}; \node at (7.75,-6) [xshift=1.5cm] {\scriptsize $14$}; \node at (9.25,-6) [xshift=1.5cm] {\scriptsize $2$};
 \node at (7.75,-7) [] {\scriptsize $48$}; \node at (7.75,-7) [xshift=1.5cm] {\scriptsize $17$}; \node at (9.25,-7) [xshift=1.5cm] {\scriptsize $3$};
 \node at (7.75,-8) [] {\scriptsize $58$}; \node at (7.75,-8) [xshift=1.5cm] {\scriptsize $20$}; \node at (9.25,-8) [xshift=1.5cm] {\scriptsize $8$};

\end{scope}
    \begin{scope}[yshift=24pt]

        \node at (4.75,-9.05) [] {\footnotesize $\underline{\bh}=(\bh^{(0)}\hspace{-2pt},\bh^{(1)}\hspace{-2pt},\bh^{(2)}\hspace{-2pt},\bh^{(3)}\hspace{-2pt},\bh^{(4)}\hspace{-2pt},\bh^{(5)}\hspace{-2pt},\bh^{(6)}\hspace{-2pt},\bh^{(7)})$};
        \draw[thick] (2.5,-8.5) rectangle (11.5,-9.5);
        \draw[] (7,-8.5) -- (7,-9.5);
        \draw[] (8.5,-8.5) -- (8.5,-9.5);
        \draw[] (10,-8.5) -- (10,-9.5);
        
         \node at (7.75,-9) [] {\footnotesize $62$}; \node at (7.75,-9) [xshift=1.5cm] {\footnotesize $70$}; \node at (9.25,-9) [xshift=1.5cm] {\footnotesize $14$};

    \end{scope}
\end{tikzpicture}
}
    \caption{Hilbert functions  certifying the reducibility of nested Hilbert schemes on surfaces.}
    \label{fig:nested surface}
\end{figure}

\begin{corollary}\label{cor:5nestnonred}
For every $\underline{d}$ in \Cref{thm:nested-srf},  the nested Hilbert scheme $\hilbert{1,\underline{d}}{\BA^2}$ has at least one generically non-reduced component.
\end{corollary}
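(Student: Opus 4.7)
The plan is to use the natural forgetful morphism
\[
\pi\colon \Hilb^{(1,\underline{d})}\BA^2 \longrightarrow \Hilb^{\underline{d}}\BA^2,\qquad [Z^{(0)} \subset \cdots \subset Z^{(r)}] \mapsto [Z^{(1)} \subset \cdots \subset Z^{(r)}],
\]
and to exploit the fact that, by construction, $\Hilb^{(1,\underline{d})}\BA^2$ coincides with the universal length-$d_1$ family $\OZ^{(1)} \to \Hilb^{\underline{d}}\BA^2$, so that $\pi$ is finite and flat of degree $d_1$. The strategy will be to pull back the non-smoothable component of $\Hilb^{\underline{d}}\BA^2$ produced in \Cref{thm:nested-srf} along $\pi$, and to show that the resulting component of $\Hilb^{(1,\underline{d})}\BA^2$ is generically non-reduced.

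From the proof of \Cref{thm:nested-srf}, the closure $W := \overline{H_{\underline{\bh}}^2 \times \BA^2} \subset \Hilb^{\underline{d}}\BA^2$ is irreducible of dimension strictly greater than $2 d_r$ and is entirely contained in the (closed) fat-nesting locus. Let $V$ denote the irreducible component of $\Hilb^{\underline{d}}\BA^2$ containing $W$. The main obstacle in the plan will be the case-by-case verification that $V$ is elementary, i.e.~that every nesting parametrised by $V$ is supported at a single point of $\BA^2$; concretely, one has to rule out that $\dim V$ is reached by some stratum of multi-support nestings of the same total length, which amounts to a dimension comparison using the Hilbert data collected in \Cref{fig:nested surface}.

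Granted that $V$ is elementary, $\pi|_{\pi^{-1}(V)}\colon \pi^{-1}(V) \to V$ is a set-theoretic bijection, since the fibre over a fat nesting $[\underline{Z}]$ is the unique support point of $Z^{(1)}$. Hence $\tilde V := \pi^{-1}(V)_{\mathrm{red}}$ is irreducible of dimension $\dim V > 2 d_r = \dim \Hilb_{\mathrm{sm}}^{(1,\underline{d})}\BA^2$, so $\tilde V$ is not contained in the smoothable component. Combined with the finiteness of $\pi$ and the maximality of $V$ in $\Hilb^{\underline{d}}\BA^2$, this forces $\tilde V$ to be itself an irreducible component of $\Hilb^{(1,\underline{d})}\BA^2$.

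Finally, to see that $\tilde V$ is generically non-reduced, let $\eta$ be its generic point and $\xi = \pi(\eta)$, the generic point of $V$. Since $V$ contains the dense open translation orbit $H_{\underline{\bh}}^2 \times \BA^2$, the generic nesting in $V$ has $\kappa(V)$-rational support, so the scheme-theoretic fibre $\OZ^{(1)}_\xi$ is a local Artinian $\kappa(\xi)$-algebra of length $d_1 \geq 3$ with residue field $\kappa(\xi)$, and in particular it is not reduced. Setting $A := \mathcal{O}_{\Hilb^{(1,\underline{d})}\BA^2,\,\eta}$, the ring $A$ is Artinian local (as $\eta$ is a minimal point of $\Hilb^{(1,\underline{d})}\BA^2$) and has $\OZ^{(1)}_\xi$ as its quotient $A/\mathfrak{m}_\xi A$; were $A$ a field, then $\mathfrak{m}_\xi A$ would vanish (being contained in the zero maximal ideal of a field), and $\OZ^{(1)}_\xi = A$ would also be a field, contradicting the non-reducedness already established. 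Therefore $A$ has non-trivial nilpotents, and $\tilde V$ is a generically non-reduced component of $\Hilb^{(1,\underline{d})}\BA^2$, as required.
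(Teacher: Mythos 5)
The crux of your argument — the finite flat forgetful map $\pi\colon\Hilb^{(1,\underline{d})}\BA^2\to\Hilb^{\underline{d}}\BA^2$, the identification of $\Hilb^{(1,\underline{d})}\BA^2$ with the universal family $\OZ^{(1)}$, and the observation that an Artinian local ring $A$ with $A/\mathfrak{m}_\xi A$ non-reduced cannot be a field — is sound, and in the case where $V$ is elementary it is essentially a direct proof of the mechanism the paper imports as a black box. But the proof as written has a genuine gap exactly where you flag it: you never establish that the component $V$ containing $W=\overline{H^2_{\underline{\bh}}\times\BA^2}$ is elementary, and the proposed remedy (a ``dimension comparison using the Hilbert data'') is not feasible, since ruling out a composite component of dimension $\geqslant\dim W$ containing $W$ would require controlling all products of elementary components of all smaller nested Hilbert schemes, none of which is computed. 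The elementarity hypothesis is not a technicality: if $V$ is composite, its generic nesting is supported at several points, $\pi^{-1}(V)$ need not be irreducible, and — more seriously — the generic $Z^{(1)}$ may well be \emph{reduced} (the excess dimension forcing $V$ off the smoothable component could be concentrated entirely in the later terms $Z^{(i)}$, $i\geqslant 2$, with each support point contributing only a single reduced point to $Z^{(1)}$). In that situation your fibre $A/\mathfrak{m}_\xi A$ at each branch is a field and the argument produces nothing. A related smaller gap: you assert that $H^2_{\underline{\bh}}\times\BA^2$ is dense in $V$ to get rational support of the generic nesting, but it is only dense in $W\subseteq V$; if $V\supsetneq W$ the generic point of $V$ is outside this orbit. (Granting elementarity this can be repaired — punctuality is closed, so the geometric generic fibre is a local algebra of length $d_1>1$, hence non-reduced without any rationality assumption — but as written the step is unjustified.)

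The paper avoids the elementarity question entirely. It takes any non-smoothable component $V$ of $\Hilb^{\underline{d}}\BA^2$ and invokes Iarrobino's structure theorem: $V$ is generically \'etale-locally a product $E_1\times\cdots\times E_s$ of elementary components $E_i\subset\Hilb^{\underline{d}_i}\BA^2$, and since $V$ is not smoothable at least one factor, say $E_1$, parametrises nestings with $(d_1)_r>1$. It then applies \cite[Theorem 5]{UPDATES} to the single elementary factor $E_1$ to produce a generically non-reduced elementary component $\overline{E}_1\subset\Hilb^{(1,\underline{d}_1)}\BA^2$, and reassembles the product $\overline{E}_1\times E_2\times\cdots\times E_s$ into a generically non-reduced component of $\Hilb^{(1,\underline{d})}\BA^2$. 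To salvage your approach you would need to insert this reduction to the elementary case before running your fibre computation, and note that the cited theorem requires only $(d_1)_r>1$ for the chosen factor rather than non-reducedness of its $Z^{(1)}$-term, which is precisely the point where a naive pullback along $\pi$ can fail.
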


\begin{proof} Fix some $\underline{d}$ from \Cref{thm:nested-srf}. Let $V\subset \Hilb^{\underline{d}}$ be an irreducible component other than the smoothable one, which exists by \Cref{thm:nested-srf}. Now, $V$ is generically locally \'etale product of elementary components, say $E_1,\ldots,E_s$, where $E_i\subset \Hilb^{\underline{d}_i}\BA^2$, for some $\underline{d}_1,\ldots,\underline{d}_s\in\BZ^r$ with $\sum_{i=1}^s\underline{d}_i=\underline{d}$. Moreover, again by \Cref{thm:nested-srf} there is at least one index $i$, say $i=1$, such that $E_1$ parametrises  nestings with $(d_1)_r>1$. In this setting, we can apply \cite[Theorem 5]{UPDATES} and we get the existence of an elementary generically non-reduced component $\overline{E}_1\subset \Hilb^{1,\underline{d}_1}\BA^2$. To conclude, notice that there is an irreducible  non-reduced component  generically \'etale locally isomorphic to $\overline{E}_1\times E_2\times\cdots\times E_s$, and thus non reduced.
\end{proof}

\section{Reducibility of Hilbert schemes of points on three-folds} \label{sec:red3fold}

In this section we focus on smooth 3-folds. First, we provide many new examples of reducible Hilbert schemes of points, then we revisit Iarrobino's compressed algebras in terms of 2-step ideals. In the second part, we prove \Cref{thm: intro 3nest} about the nested case.

\paragraph{\it Known results.} The irreducibility of the Hilbert scheme of points on a smooth threefold is nowadays considered as one of the most challenging problems in the field. It is known that $\Hilb^d\BA^3$ is irreducible for $d\leqslant 11$, see \cite{Klemen,10points,JOACHIM}. The first reducible example was given by Iarrobino in \cite{Iarroredvery}, where he showed that $\Hilb^{103} \BA^3$ is reducible. Then, in \cite{IARRO} the same author considered compressed algebras and provided two more examples for $d=78,112$. The example in \cite{Iarroredvery} concerns very compressed algebras, and was refined in \cite{JOACHIM}, where it is shown that a very compressed algebra is smoothable if and only if its length is at most 95. On the other hand, 78 is nowadays the smallest length for which the Hilbert scheme of points is known to be reducible. It is worth mentioning that all these examples are achieved via a dimension-counting argument and do not provide any explicit examples of non-smoothable algebras of \textit{embedding dimension} 3, i.e.~with $\bh_A(1)=3$. As a consequence, it is not clear whether the loci considered by Iarrobino agree with irreducible components of the Hilbert scheme.   In conclusion, we note that the punctual Hilbert scheme, i.e.~the closed subset of $\Hilb^d\BA^3$ parametrising fat points, is known to be reducible for $d\geqslant 18$, \cite{JJ-Hilb-open-problems}. However, as explained above, it is unclear whether the Hilbert scheme of 18 points itself is reducible.

\begin{definition}\label{def:compressed}
The \textit{socle type} $\boldsymbol{e}_ {A}$ of a local Artinian $\BC$-algebra $(A,\Fm_A)$  is the Hilbert function of the graded $A$-module $(0_{\mathsf{gr}_{\Fm_A}(A)}:\Fm_{\mathsf{gr}_{\Fm_A}(A)})$.

    A local Artinian $R$-algebra   $A=R/I$ is compressed   if it has the maximum length among the local Artinian $R$-algebras having socle type $\be_A$. A compressed $R$-algebra $A=R/I$  is very compressed if there exists $k\geqslant 0 $ such that $\Fm^{k+1}\subset I\subset \Fm^{k}$. In this setting we say that the ideal $I$ (or that the algebra $R/I$) is compressed (resp.~very compressed) as well. In particular, very compressed implies compressed.
\end{definition}

\begin{theorem}[Iarrobino, \cite{IARRO}]\label{thm:78iarro}
For every point $[A]\in \Hilb^\bullet\BA^3$ corresponding to a compressed local Artinian algebra having socle type
\[
\be_{A}\equiv(0,0,0,0,0,0,2,5),\]
we have an equality 
\[
\bh_{A}\equiv (1,3,6,10,15,21,17,5),
\]
and hence $\length A= 78$. Moreover, the locus $V_{\mathrm{I}}$ parametrising these algebras has dimension $235=78\cdot 3+1$. As a consequence, the generic ideal of this form is non-smoothable.  
\end{theorem}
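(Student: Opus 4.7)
The plan is to recover this result directly from the 2-step machinery developed in Section~\ref{sec:2-step}. First I would verify that a general compressed local Artinian algebra of socle type $\be = (0,\ldots,0,2,5)$ in $n=3$ variables has Hilbert function $\bh = (1,3,6,10,15,21,17,5)$. By Macaulay's inverse systems, the dual of such an algebra is generated by $2$ generic polynomials of degree $6$ and $5$ generic polynomials of degree $7$ in the divided power algebra. The Hilbert function is then computed degree by degree as the dimension of the space of partial derivatives of these generators, capped by $\mathsf{r}_t = \binom{t+2}{2}$. In degree $7$ one gets $5$, in degree $6$ one gets $2 + 3\cdot 5 = 17 < 28$, and in degrees $\leqslant 5$ the bound $\mathsf{r}_t$ is attained. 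This yields the stated Hilbert function and $\length A = 78$.

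Next, I would identify these ideals as 2-step: since $\bh(t) = \rrr_t$ for $t \leqslant 5$ and $\bh(t) = 0$ for $t \geqslant 8$, the defining ideal $I$ satisfies $\Fm^8 \subset I \subset \Fm^6$ with $I \not\subset \Fm^7$, so $I$ is 2-step of order $k = 6$ with $\oh_6 = 28 - 17 = 11$ and $\oh_7 = 36 - 31 = 31$. The key observation is that
\[
\sss_{\bh} \,=\, \oh_7 - 3\oh_6 \,=\, 31 - 33 \,=\, -2, \qquad n(-\sss_{\bh}) \,=\, 6 \,\leqslant\, 11 \,=\, \oh_6,
\]
so $\bh$ falls in the \emph{very few linear syzygies} regime of \Cref{lem:pseudo canonical form}\textit{(\ref{it:1i})}.

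Now I would apply the dimension estimate. By \Cref{cor:veryfew}, the homogeneous Hilbert stratum satisfies
\[
\dim \OH_{\bh}^3 \,\geqslant\, \oh_6(\rrr_6 - \oh_6) + \sss_{\bh}(\rrr_7 - \oh_7) \,=\, 11 \cdot 17 + (-2) \cdot 5 \,=\, 177.
\]
By \Cref{rem:1nonobstruct}, the initial ideal morphism $\pi_{\bh}: H_\bh^3 \to \OH_\bh^3$ is an affine bundle whose fibre is $\BA^{\oh_6 \uh_7} = \BA^{11 \cdot 5} = \BA^{55}$ (since positive tangent vectors are all in degree $1$ and unobstructed, by \Cref{thm:1nonob}). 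Allowing the support of the fat point to translate in $\BA^3$ via the map $\widetilde{\theta}$ of \Cref{rem:deftheta} then yields
\[
\dim V_{\mathrm I} \,\geqslant\, 177 + 55 + 3 \,=\, 235 \,>\, 234 \,=\, 3 \cdot 78 \,=\, \dim \Hilb_{\sm}^{78}\BA^3,
\]
so $V_{\mathrm I}$ cannot be contained in the smoothable component, proving non-smoothability of the generic such ideal.

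The hard part is pinning down the \emph{equality} $\dim V_{\mathrm I} = 235$, rather than merely the inequality. I would address this by checking sharpness of the bound in \Cref{cor:veryfew}: with $\oh_6 = 11$ and $-\sss_\bh = 2$, the kernel of a generic $\varphi \in \mathscr{L}_\bh$ has the explicit normal form of \Cref{lem:pseudo canonical form}\textit{(\ref{it:1i})}, so the $\GL(-\sss_\bh)\times \GL(\oh_k)$-action on the generic fibre of $\psi_\bh$ is faithful and the dimension count for $\dim\mathscr{K}_\bh$ is exact. Together with the fact that positive tangents are unobstructed and concentrated in degree $1$ (so the affine bundle structure of $\pi_\bh$ is honest), this forces $\dim H_\bh^3 = 232$, and the translation action by $\BA^3$ is free on the locus of fat points supported at $0$, giving $\dim V_{\mathrm I} = 235$. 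Alternatively, one can compare with the parameter space of degree-$6$ and $7$ generators of the Macaulay dual module, which is $2\cdot \mathsf{r}_6 + 5 \cdot \mathsf{r}_7 = 236$ minus the $1$-dimensional scaling action, again yielding $235$.
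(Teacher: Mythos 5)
Your argument is correct and follows essentially the same route as the paper's Example~\ref{exa:iarrobino}: identify the compressed algebras as 2-step ideals of order $6$ with $(\oh_6,\oh_7)=(11,31)$ lying in the very-few-linear-syzygies range, obtain $\dim\mathscr{H}^3_{\bh}=177$ from the kernel/fibre count underlying \Cref{cor:veryfew} (the paper does the same count via the explicit determinantal normal form, getting $92+85=177$), add the $55$ unobstructed degree-one tangent directions from \Cref{thm:1nonob} and \Cref{rem:1nonobstruct}, and the $3$ translations, for a total of $235>234=3\cdot 78$. The one weak spot is the closing ``alternative'' count $2\rrr_6+5\rrr_7-1=235$, which is not a valid derivation of the dimension (the correct inverse-system count of the homogeneous locus is $\dim\Gr(5,\rrr_7)+\dim\Gr(2,\rrr_6-15)=5\cdot 31+2\cdot 11=177$, matching your main computation); since the main argument does not rely on it, this does not affect the proof.
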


We obtain the locus in $V_{\mathrm{I}}\subset \hilbert{78}{\BA^3}$  as the locus parametrising   2-step ideals of order $k=6$ with $\oh_{6}=11$ and $\oh_{7}=31$. An example of ideal of this form is
\begin{equation}\label{eq:Iarroideal}
\begin{aligned} 
    I=  (&z^6,x^3 z^3,x y^3 z^2,xy^4z+x^3yz^2,x^3y^2z+xyz^4,x^4yz+x^2z^4,y^6+xy^2z^3+xz^5,\\ &xy^5+x^2y^3z+y^5z-y^3z^3, x^3y^3,x^4y^2-x^2y^3z+y^5z-x^4z^2-y^2z^4,y^2z^5+x^6).
\end{aligned}
\end{equation}

\begin{remark}\label{rem:remonsigns}
 We remark that, even though we present an example of compressed algebra which is 2-step, they are not all of this form. Similarly, not all 2-step ideals are compressed, see \Cref{fig:newA3k6} and \Cref{fig:newA3k7-8}. Therefore, many of our examples of non-smoothable points of embedding dimension three are new.
\end{remark}

\bigskip

We begin by observing that for $n=3$ the absolute minimum of the function $\Theta_{3,k,b}(\oh_k,\oh_{k+1})$ is
\[
\dfrac{k^{4}+8\,k^{3}+\left(21-6b\right)k^{2}+\left(20-14b\right)k-6b^{2}-120}{40}.
\]
For $k \geqslant 2$, we need $b$ to be positive in order to obtain a non-positive minimum. However, there are no pairs $(\oh_k,\oh_{k+1})$ for which the maximum of $b$ given by the Betti number $\beta_{2,k+2}(L_{\bh})$ of the lexicographic ideal $L_{\bh}$ is sufficient to make the value of $\Theta_{3,k,b}(\oh_k,\oh_{k+1})$ non-positive. Hence, the potential TNT area is empty and no generically reduced elementary component can be discovered using 2-step ideals. Note that this observation agrees with the general thought that finding a generically reduced elementary component in $\Hilb^\bullet \mathbb{A}^3$ would be very surprising.

In order to  exhibit loci parametrising non-smoothable algebras, we look for Hilbert strata with dimension greater than or equal to the dimension $3d$ of the smoothable component of $\hilbert{d}{\mathbb{A}^3}$. The Hessian matrix
\[
\Hess \Delta_{3,1,k} = \left[\begin{array}{cc} -2 & 2 \\ 2 & -2\end{array}\right]
\]
is singular with one negative eigenvalue. Thus, the graph of $\Delta_{3,1,k}$ is a non-rotational paraboloid with concavity facing downwards and symmetric with respect to a line parallel to the eigenvector $[1,1]$.

For $k\leqslant 5$, there are no pairs $(\oh_k,\oh_{k+1})$ for which the function $\Delta_{3,1,k}$ is non-negative. The first examples we find are for $k=6$, see \Cref{fig:newA3k6} and the table in it. As desired, these pairs correspond to 2-step ideals with no linear syzygies or very few linear syzygies. By \Cref{cor:full no syz} and \Cref{cor:veryfew}, the Hilbert stratum cannot be contained in the smoothable component of the corresponding Hilbert scheme so certifying its reducibility. 

\begin{figure}[!ht]
\centering
    \begin{tikzpicture}

    \begin{scope}[scale=0.2]
        \fill [fill=black!10,] (0,0) -- (0,36) -- (12,36) -- cycle;
\fill [fill=black!50,] (0,0) -- (12,36) -- (27/2,36) -- cycle;
\fill [fill=black!30,] (0,0) -- (18,36) -- (27/2,36) -- cycle;

\fill[pattern={Lines[angle=135,distance=2pt,line width=0.05pt]},opacity=0.3,pattern color=ForestGreen] (0,80/3) -- (28,36) -- (0,36) -- cycle;
\draw[ForestGreen,opacity=0.5,] (0,80/3) -- (28,36) -- (0,36) -- cycle;

\fill [pattern={Lines[angle=45,distance=3pt,line width=0.05pt]},opacity=0.3,pattern color=red] plot[rotate around={45:(0,0)},domain=10.38:17.2,smooth,variable=\t] ({-sqrt(2)*\t*\t + 40*\t - 357/sqrt(2)},{\t}) -- (0,36) -- (28,36) -- (0,0) -- cycle;

\fill [pattern={Lines[angle=45,distance=3pt,line width=0.05pt]},opacity=0.3,pattern color=blue] plot[rotate around={45:(0,0)},domain=10.38:17.2,smooth,variable=\t] ({-sqrt(2)*\t*\t + 40*\t - 357/sqrt(2)},{\t}) --cycle;

\draw[NavyBlue,opacity=0.5,thick]
  plot[rotate around={45:(0,0)},domain=10.38:17.2,smooth,variable=\t] ({-sqrt(2)*\t*\t + 40*\t - 357/sqrt(2)},{\t}); 
\draw [dotted,thin] (0,0.05) grid (27.95,35.95);
\fill [fill=white] (0,0) -- (1,3) -- (2,5) -- (3,6) -- (4,8) -- (5,9) -- (6,10) -- (7,12) -- (8,13) -- (9,14) -- (10,15) -- (11,17) -- (12,18) -- (13,19) -- (14,20) -- (15,21) -- (16,23) -- (17,24) -- (18,25) -- (19,26) -- (20,27) -- (21,28) -- (22,30) -- (23,31) -- (24,32) -- (25,33) -- (26,34) -- (27,35) -- (28,36) -- (28,0) -- cycle;
\draw [] (0,0) -- (1,3) -- (2,5) -- (3,6) -- (4,8) -- (5,9) -- (6,10) -- (7,12) -- (8,13) -- (9,14) -- (10,15) -- (11,17) -- (12,18) -- (13,19) -- (14,20) -- (15,21) -- (16,23) -- (17,24) -- (18,25) -- (19,26) -- (20,27) -- (21,28) -- (22,30) -- (23,31) -- (24,32) -- (25,33) -- (26,34) -- (27,35) -- (28,36) --node[above]{\footnotesize $\oh_6$} (0,36) --node[left]{\footnotesize $\oh_7$} cycle;

    \draw [dashed,ultra thin] (6,36) -- (18,24) node[right] {\tiny $d=78$};
    \draw [dashed,ultra thin] (1,36) -- (16,21) node[right] {\tiny $d=83$};
\draw [dashed,ultra thin] (0,32) -- (13.5,18.5) node[right] {\tiny $d=88$};
\draw [dashed,ultra thin] (0,27) -- (11,16) node[right] {\tiny $d=93$};
\draw [dashed,ultra thin] (0,22) -- (9,13) node[right] {\tiny $d=98$};
\draw [dashed,ultra thin] (0,17) -- (7,10) node[right] {\tiny $d=103$};

\node at (11,30) [draw=NavyBlue,very thick,fill=NavyBlue,diamond,inner sep=0.66pt] {};
\node at (11,31) [draw=NavyBlue,thick,fill=white,diamond,inner sep=0.8pt] {};
\node at (10,28) [draw=NavyBlue,very thick,fill=NavyBlue,diamond,inner sep=0.66pt] {};
\node at (10,29) [draw=NavyBlue,very thick,fill=NavyBlue,diamond,inner sep=0.66pt] {};
\node at (10,30) [draw=NavyBlue,very thick,fill=NavyBlue,diamond,inner sep=0.66pt] {};
\node at (10,31) [draw=NavyBlue,very thick,fill=NavyBlue,diamond,inner sep=0.66pt] {};
\node at (9,27) [draw=NavyBlue,very thick,fill=NavyBlue,diamond,inner sep=0.66pt] {};
\node at (9,28) [draw=NavyBlue,very thick,fill=NavyBlue,diamond,inner sep=0.66pt] {};
\node at (9,29) [draw=NavyBlue,very thick,fill=NavyBlue,diamond,inner sep=0.66pt] {};
\node at (9,30) [draw=NavyBlue,very thick,fill=NavyBlue,diamond,inner sep=0.66pt] {};
\node at (8,25) [draw=NavyBlue,very thick,fill=NavyBlue,diamond,inner sep=0.66pt] {};
\node at (8,26) [draw=NavyBlue,very thick,fill=NavyBlue,diamond,inner sep=0.66pt] {};
\node at (8,27) [draw=NavyBlue,very thick,fill=NavyBlue,diamond,inner sep=0.66pt] {};
\node at (8,28) [draw=NavyBlue,very thick,fill=NavyBlue,diamond,inner sep=0.66pt] {};
\node at (8,29) [draw=NavyBlue,very thick,fill=NavyBlue,diamond,inner sep=0.66pt] {};
\node at (8,30) [draw=NavyBlue,very thick,fill=NavyBlue,diamond,inner sep=0.66pt] {};
\node at (7,24) [draw=NavyBlue,very thick,fill=NavyBlue,diamond,inner sep=0.66pt] {};
\node at (7,25) [draw=NavyBlue,very thick,fill=NavyBlue,diamond,inner sep=0.66pt] {};
\node at (7,26) [draw=NavyBlue,very thick,fill=NavyBlue,diamond,inner sep=0.66pt] {};
\node at (7,27) [draw=NavyBlue,very thick,fill=NavyBlue,diamond,inner sep=0.66pt] {};
\node at (7,28) [draw=NavyBlue,very thick,fill=NavyBlue,diamond,inner sep=0.66pt] {};
\node at (7,29) [draw=NavyBlue,very thick,fill=NavyBlue,diamond,inner sep=0.66pt] {};
\node at (6,23) [draw=NavyBlue,very thick,fill=NavyBlue,diamond,inner sep=0.66pt] {};
\node at (6,24) [draw=NavyBlue,very thick,fill=NavyBlue,diamond,inner sep=0.66pt] {};
\node at (6,25) [draw=NavyBlue,very thick,fill=NavyBlue,diamond,inner sep=0.66pt] {};
\node at (6,26) [draw=NavyBlue,very thick,fill=NavyBlue,diamond,inner sep=0.66pt] {};
\node at (6,27) [draw=NavyBlue,very thick,fill=NavyBlue,diamond,inner sep=0.66pt] {};
\node at (6,28) [draw=NavyBlue,very thick,fill=NavyBlue,diamond,inner sep=0.66pt] {};
\node at (5,21) [draw=NavyBlue,very thick,fill=NavyBlue,diamond,inner sep=0.66pt] {};
\node at (5,22) [draw=NavyBlue,very thick,fill=NavyBlue,diamond,inner sep=0.66pt] {};
\node at (5,23) [draw=NavyBlue,very thick,fill=NavyBlue,diamond,inner sep=0.66pt] {};
\node at (5,24) [draw=NavyBlue,very thick,fill=NavyBlue,diamond,inner sep=0.66pt] {};
\node at (5,25) [draw=NavyBlue,very thick,fill=NavyBlue,diamond,inner sep=0.66pt] {};
\node at (5,26) [draw=NavyBlue,very thick,fill=NavyBlue,diamond,inner sep=0.66pt] {};
\node at (5,27) [draw=NavyBlue,very thick,fill=NavyBlue,diamond,inner sep=0.66pt] {};
\node at (5,28) [draw=NavyBlue,very thick,fill=NavyBlue,diamond,inner sep=0.66pt] {};
\node at (4,20) [draw=NavyBlue,very thick,fill=NavyBlue,diamond,inner sep=0.66pt] {};
\node at (4,21) [draw=NavyBlue,very thick,fill=NavyBlue,diamond,inner sep=0.66pt] {};
\node at (4,22) [draw=NavyBlue,very thick,fill=NavyBlue,diamond,inner sep=0.66pt] {};
\node at (4,23) [draw=NavyBlue,very thick,fill=NavyBlue,diamond,inner sep=0.66pt] {};
\node at (4,24) [draw=NavyBlue,very thick,fill=NavyBlue,diamond,inner sep=0.66pt] {};
\node at (4,25) [draw=NavyBlue,very thick,fill=NavyBlue,diamond,inner sep=0.66pt] {};
\node at (4,26) [draw=NavyBlue,very thick,fill=NavyBlue,diamond,inner sep=0.66pt] {};
\node at (4,27) [draw=NavyBlue,very thick,fill=NavyBlue,diamond,inner sep=0.66pt] {};
\node at (3,19) [draw=NavyBlue,very thick,fill=NavyBlue,diamond,inner sep=0.66pt] {};
\node at (3,20) [draw=NavyBlue,very thick,fill=NavyBlue,diamond,inner sep=0.66pt] {};
\node at (3,21) [draw=NavyBlue,very thick,fill=NavyBlue,diamond,inner sep=0.66pt] {};
\node at (3,22) [draw=NavyBlue,very thick,fill=NavyBlue,diamond,inner sep=0.66pt] {};
\node at (3,23) [draw=NavyBlue,very thick,fill=NavyBlue,diamond,inner sep=0.66pt] {};
\node at (3,24) [draw=NavyBlue,very thick,fill=NavyBlue,diamond,inner sep=0.66pt] {};
\node at (3,25) [draw=NavyBlue,very thick,fill=NavyBlue,diamond,inner sep=0.66pt] {};
\node at (3,26) [draw=NavyBlue,very thick,fill=NavyBlue,diamond,inner sep=0.66pt] {};
\node at (2,18) [draw=NavyBlue,very thick,fill=NavyBlue,diamond,inner sep=0.66pt] {};
\node at (2,19) [draw=NavyBlue,very thick,fill=NavyBlue,diamond,inner sep=0.66pt] {};
\node at (2,20) [draw=NavyBlue,very thick,fill=NavyBlue,diamond,inner sep=0.66pt] {};
\node at (2,21) [draw=NavyBlue,very thick,fill=NavyBlue,diamond,inner sep=0.66pt] {};
\node at (2,22) [draw=NavyBlue,very thick,fill=NavyBlue,diamond,inner sep=0.66pt] {};
\node at (2,23) [draw=NavyBlue,very thick,fill=NavyBlue,diamond,inner sep=0.66pt] {};
\node at (2,24) [draw=NavyBlue,very thick,fill=NavyBlue,diamond,inner sep=0.66pt] {};
\node at (2,25) [draw=NavyBlue,very thick,fill=NavyBlue,diamond,inner sep=0.66pt] {};
\node at (1,16) [draw=NavyBlue,very thick,fill=NavyBlue,diamond,inner sep=0.66pt] {};
\node at (1,17) [draw=NavyBlue,very thick,fill=NavyBlue,diamond,inner sep=0.66pt] {};
\node at (1,18) [draw=NavyBlue,very thick,fill=NavyBlue,diamond,inner sep=0.66pt] {};
\node at (1,19) [draw=NavyBlue,very thick,fill=NavyBlue,diamond,inner sep=0.66pt] {};
\node at (1,20) [draw=NavyBlue,very thick,fill=NavyBlue,diamond,inner sep=0.66pt] {};
\node at (1,21) [draw=NavyBlue,very thick,fill=NavyBlue,diamond,inner sep=0.66pt] {};
\node at (1,22) [draw=NavyBlue,very thick,fill=NavyBlue,diamond,inner sep=0.66pt] {};
\node at (1,23) [draw=NavyBlue,very thick,fill=NavyBlue,diamond,inner sep=0.66pt] {};
\node at (1,24) [draw=NavyBlue,very thick,fill=NavyBlue,diamond,inner sep=0.66pt] {};
\node at (1,25) [draw=NavyBlue,very thick,fill=NavyBlue,diamond,inner sep=0.66pt] {};

\end{scope}

\begin{scope}[xscale=0.55,yscale=0.35,shift={(7,11.5)}]

\draw[] (2,1.5) -- (2,-11.5);
\draw[] (4,1.5) -- (4,-11.5);
\draw[] (9,1.5) -- (9,-11.5);
\draw[thin] (10+1/3,0.4) -- (10+1/3,-11.5);
\draw[thin] (11+2/3,0.4) -- (11+2/3,-11.5);
\draw[] (13,1.5) -- (13,-11.5);
\draw[] (15,1.5) -- (15,-11.5);
\draw[] (18,1.5) -- (18,-11.5);

\draw[] (2,1.5)  -- (18,1.5);
\draw[] (2,-.5)  -- (18,-0.5);

\draw[thin] (2,-1.5)  -- (18,-1.5);
\draw[very thin] (4,-2.5)  -- (18,-2.5);
\draw[thin] (2,-3.5)  -- (18,-3.5);
\draw[thin] (2,-4.5)  -- (18,-4.5);
\draw[very thin] (4,-5.5)  -- (18,-5.5);
\draw[thin] (2,-6.5)  -- (18,-6.5);
\draw[very thin] (4,-7.5)  -- (18,-7.5);
\draw[very thin] (4,-8.5)  -- (18,-8.5);
\draw[thin] (2,-9.5)  -- (18,-9.5);
\draw[very thin] (4,-10.5)  -- (18,-10.5);
\draw[] (2,-11.5)  -- (18,-11.5);

\node at (3,0.45) [] {\small $\vert\bh\vert$};
\node at (6.5,0.5) [] {\small $\bh$};
\node at (11,0.95) [] {\small $\dim \mathsf{T}^{=\,\bullet}$};
\node at (9+2/3,-0.) [] {\footnotesize $-1$};
\node at (11,-0.) [] {\footnotesize $0$};
\node at (12+1/3,-0.) [] {\footnotesize $1$};
\node at (14,0.5) [] {\small $\Delta_{3,1,6}$};
\node at (16.5,0.5) [] {\small Type};

\node at (3,-1)[] {\footnotesize $78$};
\node at (3,-2.5)[] {\footnotesize $79$};
\node at (3,-4)[] {\footnotesize $80$};
\node at (3,-5.5)[] {\footnotesize $81$};
\node at (3,-8)[] {\footnotesize $82$};
\node at (3,-10.5)[] {\footnotesize $83$};

\node at (6.5,-1)[] {\scriptsize $(1,3,6,10,15,21,17,5)$};
\node at (9+2/3,-1) [] {\scriptsize $122$};
\node at (11,-1) [] {\scriptsize $177$};
\node at (12+1/3,-1) [] {\scriptsize $55$};
\node at (14,-1) [] {\scriptsize $1$};
\node at (16.5,-1) [] {\scriptsize very few syz};

\node at (6.5,-2)[] {\scriptsize $(1,3,6,10,15,21,17,6)$};
\node at (9+2/3,-2) [] {\scriptsize $108$};
\node at (11,-2) [] {\scriptsize $169$};
\node at (12+1/3,-2) [] {\scriptsize $66$};
\node at (14,-2) [] {\scriptsize $1$};
\node at (16.5,-2) [] {\scriptsize very few syz};

\node at (6.5,-3)[] {\scriptsize $(1,3,6,10,15,21,18,5)$};
\node at (9+2/3,-3) [] {\scriptsize $138$};
\node at (11,-3) [] {\scriptsize $185$};
\node at (12+1/3,-3) [] {\scriptsize $50$};
\node at (14,-3) [] {\scriptsize $1$};
\node at (16.5,-3) [yshift=-1pt] {\scriptsize no syz};

\node at (6.5,-4)[] {\scriptsize $(1,3,6,10,15,21,18,6)$};
\node at (9+2/3,-4) [] {\scriptsize $120$};
\node at (11,-4) [] {\scriptsize $180$};
\node at (12+1/3,-4) [] {\scriptsize $60$};
\node at (14,-4) [] {\scriptsize $3$};
\node at (16.5,-4) [yshift=-1pt] {\scriptsize no syz};

\node at (6.5,-5)[] {\scriptsize $(1,3,6,10,15,21,18,7)$};
\node at (9+2/3,-5) [] {\scriptsize $108$};
\node at (11,-5) [] {\scriptsize $173$};
\node at (12+1/3,-5) [] {\scriptsize $70$};
\node at (14,-5) [] {\scriptsize $3$};
\node at (16.5,-5) [] {\scriptsize very few syz};

\node at (6.5,-6)[] {\scriptsize $(1,3,6,10,15,21,19,6)$};
\node at (9+2/3,-6) [] {\scriptsize $138$};
\node at (11,-6) [] {\scriptsize $189$};
\node at (12+1/3,-6) [] {\scriptsize $54$};
\node at (14,-6) [] {\scriptsize $3$};
\node at (16.5,-6) [yshift=-1pt] {\scriptsize no syz};

\node at (6.5,-7)[] {\scriptsize $(1,3,6,10,15,21,18,8)$};
\node at (9+2/3,-7) [] {\scriptsize $102$};
\node at (11,-7) [] {\scriptsize $164$};
\node at (12+1/3,-7) [] {\scriptsize $80$};
\node at (14,-7) [] {\scriptsize $1$};
\node at (16.5,-7) [] {\scriptsize very few syz};

\node at (6.5,-8)[] {\scriptsize $(1,3,6,10,15,21,19,7)$};
\node at (9+2/3,-8) [] {\scriptsize $122$};
\node at (11,-8) [] {\scriptsize $185$};
\node at (12+1/3,-8) [] {\scriptsize $63$};
\node at (14,-8) [] {\scriptsize $5$};
\node at (16.5,-8) [yshift=-1pt] {\scriptsize no syz};

\node at (6.5,-9)[] {\scriptsize $(1,3,6,10,15,21,20,6)$};
\node at (9+2/3,-9) [] {\scriptsize $162$};
\node at (11,-9) [] {\scriptsize $196$};
\node at (12+1/3,-9) [] {\scriptsize $48$};
\node at (14,-9) [] {\scriptsize $1$};
\node at (16.5,-9) [yshift=-1pt] {\scriptsize no syz};

\node at (6.5,-10)[] {\scriptsize $(1,3,6,10,15,21,19,8)$};
\node at (9+2/3,-10) [] {\scriptsize $112$};
\node at (11,-10) [] {\scriptsize $179$};
\node at (12+1/3,-10) [] {\scriptsize $72$};
\node at (14,-10) [] {\scriptsize $5$};
\node at (16.5,-10) [yshift=-1pt] {\scriptsize no syz};

\node at (6.5,-11)[] {\scriptsize $(1,3,6,10,15,21,20,7)$};
\node at (9+2/3,-11) [] {\scriptsize $142$};
\node at (11,-11) [] {\scriptsize $195$};
\node at (12+1/3,-11) [] {\scriptsize $56$};
\node at (14,-11) [] {\scriptsize $5$};
\node at (16.5,-11) [yshift=-1pt] {\scriptsize no syz};

\end{scope}
    \end{tikzpicture}

    \caption[Hilbert strata of 2-step ideals of order 6 certifying the reducibility of $\Hilb^d \mathbb{A}^3$ (see \Cref{legenda} for the complete picture legend).]{ Hilbert strata of 2-step ideals of order 6 certifying the reducibility of $\Hilb^d \mathbb{A}^3$. The green area  \raisebox{-2pt}{\begin{tikzpicture} \fill[ pattern={Lines[angle=135,distance=2pt,line width=0.05pt]},opacity=0.3,pattern color=ForestGreen] (0,0) rectangle (0.35,0.35); \draw[ForestGreen,opacity=0.5,thick] (0,0) rectangle (0.35,0.35); \end{tikzpicture}} contains Hilbert functions of compressed algebras (see \Cref{legenda} for the complete picture legend).}
    \label{fig:newA3k6}
\end{figure}

For $k\geqslant 7$, there are a lot of Hilbert strata not contained in the smoothable component corresponding to 2-step ideals with few syzygies (see \Cref{fig:newA3k7-8}). In those cases, we apply \Cref{thm: homo loco syz} and compute explicitly the dimension of the generic fibre of $\psi_{\bh}$. See the ancillary \textit{Macaulay2} file \href{www.paololella.it/software/reducibility-Hilbert-schemes.m2}{\tt reducibility-Hilbert-schemes.m2} to produce and check the list of Hilbert strata not contained in the smoothable component.

\begin{figure}[!ht]
    \centering

} contains Hilbert functions of compressed algebras (see \Cref{legenda} for the complete picture legend).}
    \label{fig:newA3k7-8}
\end{figure}

The smallest example that we find has length 78 and it agrees with the smallest example of non-smoothable point yet known in the literature and given firstly in \cite{IARRO}. We give an example of such a point in \eqref{eq:Iarroideal}. It is worth mentioning that, for $k=7$ and $d=96$ we recover the {smallest very compressed non-smoothable algebras, see \cite{Iarroredvery,JOACHIM}.

\begin{example}\label{exa:iarrobino}
We describe in detail the family of 2-step ideals proving the reducibility of $\hilbert{78}{\mathbb{A}^3}$. This is a different point of view on the original example of Iarrobino \cite{IARRO}.

For $k=6$, the pair $(\oh_6,\oh_7) = (11,31)$ corresponds to a 2-step Hilbert function with very few linear syzygies, i.e.~$-\sss_\bh = 3\oh_6 - \oh_7 = 2$ and $3(-\sss_\bh) \leqslant 11$. By \Cref{lem:pseudo canonical form}, up to a change of basis, the generic morphism $\varphi : R_6^{\oplus 11} \to R_7^{\oplus 2}$ in $\mathscr{L}_\bh$ is induced by the matrix
\[
\left[\begin{array}{ccccccccccc} x & y & z & 0 & 0 & 0 & 0 & 0 & 0 & 0 & 0\\  0 & 0 & 0 & x & y & z & 0 & 0 & 0 & 0 & 0 \end{array}\right]
\]
Thus, the generic homogeneous 2-step ideal with Hilbert function $\bh = (1,3,6,10,15,21,17,5)$ has a set of generators 
\[
\{p_1,p_2,p_3\} \cup \{p_4,p_5,p_6\}\cup \{p_7,p_8,p_9,p_{10},p_{11}\}
\]
where the triples $\{p_1,p_2,p_3\}$ and $\{p_4,p_5,p_6\}$ are in the kernel of the morphism $R_6^{\oplus 3} \xrightarrow{[ x\ y\ z]} R_7$ and $\{p_7,p_8,p_9,$ $p_{10},p_{11}\}$ are linearly independent of the other six generators, see \Cref{subsec:2-stepfew}.

 We can also give a determinantal description of homogeneous 2-step ideals with very few linear syzygies. Via the Koszul complex, the triples $\{p_1,p_2,p_3\}$ and $\{p_4,p_5,p_6\}$ are in the image of the morphism $R^{\oplus 3}_5 \to R_6^{\oplus 3}$ described by the matrix
\[
\left[\begin{array}{ccc} y & z & 0 \\ -x & 0 & z \\ 0 & -x & -y\end{array}\right]
\]
that is
\[
p_{i} = yq_ {i}+ zq_{i+1},\quad p_{i+1} = - xq_i + zq_{i+2},\quad p_{i+2} = - xq_{i+1} - yq_{i+2},\quad i\in\{1,4\},\quad q_j \in R_5.
\]
The ideal generated by each triple is a determinantal ideal
\[
(p_i,p_{i+1},p_{i+2}) = (yq_i + zq_{i+1}, -xq_i + zq_{i+2}, -xq_{i+1} -yq_{i+2}) = \left(\left\{\textnormal{rk} \left( \begin{array}{ccc} x & y & z \\ q_{i+2} & -q_{i+1} & q_i\end{array}\right) \leqslant 1\right\}\right),\ i\in\{1,4\}
\]
and the same holds also for the sum of ideals so that
\begin{equation}\label{eq:IarrobinoMatrix}
(p_1,p_2,p_3,p_4,p_5,p_6) + (p_7,p_8,p_9,p_{10},p_{11}) = \left(\left\{\textnormal{rk} \left( \begin{array}{ccc} x & y & z \\ q_3 & -q_2 & q_1 \\ q_6 & -q_5 & q_4\end{array}\right) \leqslant 1\right\}\right) + (p_7,p_8,p_9,p_{10},p_{11}).
\end{equation}
Thanks to this description, we obtain a further confirmation of the dimension of the Hilbert stratum. The ideal generated by the $2 \times 2$ minors of the matrix in \Cref{eq:IarrobinoMatrix} does not change if we act with row and column operations. Hence, generically we may assume
\[q_3 = y^5 +a_1 y^3z^2 + \cdots + a_4 z^5,\quad q_6 = y^4z +a_5 y^3z^2 + \cdots + a_8 z^5,\quad q_i \in R_5 \text{~for~}i = 2,3,5,6,\]
and the dimension of this family of polynomials is $8 + 4 \rrr_5  = 92$. The last 5 generators have to be taken in a complement of $\Span(p_1,\ldots,p_6) \subset R_6$, i.e.~they correspond to a point in a Grassmannian $\Gr(5,\rrr_6 - 6)$. The dimension of this second family of polynomials is $5(\rrr_6-11) = 85$. Overall, we get 
\[
92 + 85 = 177 = \dim \mathscr{H}_{\bh}^3,
\]
which, together with \Cref{thm:1nonob}, gives $\dim {H}_{\bh}^3=235-3$ as expected.
\end{example}
 
\subsection{Nested Hilbert schemes of points on three-folds}\label{subsec:3dimnest}

For $n=3$, the Hessian matrix \eqref{eq:hessian} is
\[
\Hess\Delta_{3,r,k} =
\begin{bmatrix} 
-2 & 2 & 0 & \phantom{2}& \phantom{2}& \phantom{2} \\
2 & -2 & 2 & 0 \\
0 & 2 & -2 & 2 & 0 \\
 & & \ddots & \ddots & \ddots \\
 & & 0 & 2 & -2 & 2 \\
 & & & 0 & 2 & -2
\end{bmatrix}
\]
and for $r \geqslant 2$ it is non-singular with positive and negative eigenvalues. Thus, $\Delta_{3,r,k}$ has a unique critical point that is a saddle point and certainly assumes positive values. 

Recall that given $\underline{d}= (d_1,\ldots,d_r)$ such that $\hilbert{\underline{d}}(\mathbb{A}^n)$ is reducible, then $\hilbert{\underline{d}'}(\mathbb{A}^n)$ with $\underline{d}'=(d_1,\ldots,$ $d_i+1,\ldots,d_r+1)$ and $\underline{d}'=(d_1,\ldots,d_i,d_i+1,d_{i+1},\ldots,d_r)$ is also reducible. This translates into a partial order on the set of integer sequences of arbitrary length. We look for natural points in $\mathcal{D}_{\mathbb{N}}$ such that $\Delta_{3,r,k}$ is non-negative and the corresponding sequence $\vert\underline{\bh}\vert = (\vert\bh^{(1)}\vert,\ldots,\vert\bh^{(r)}\vert)$ is a minimal element with respect to this partial order.

As for the surface case, we list some examples of reducible nested Hilbert schemes over $\BA^3$ in the following statement corresponding to  \Cref{thm: intro 3nest} in the introduction.

\begin{theorem}\label{thm:nested-threefold}
    If $\underline{d}$ is one of the following increasing sequences of positive integers
    \begin{enumerate}[\rm (a)]
        \item $\underline{d} \in \{ (14,24), (15,24), (13,26) \} \subset \BZ^2$,
        \item $\underline{d} \in \left\{\begin{array}{c}(7,13,17), (7,12,18), (6,13,18),  (8,13,18), (6,12,20),(8,12,20), (5,13,20), \\ (5,14,20), (4,13,21),(3,14,21),(4,14,21), (6,11,22),(7,11,22),(3,13,22),\\ (4,12,23),(5,12,23),(2,14,23), (2,15,23), (3,12,24), (2,13,24), (2,12,25) \end{array}\right\} \subset \BZ^3$
    \end{enumerate}
    then the nested Hilbert scheme $\hilbert{\underline{d}}{\BA^3}$ is reducible.
\end{theorem}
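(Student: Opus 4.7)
The plan is to adapt the argument of the surface case (\Cref{thm:nested-srf}) to $n=3$. For each sequence $\underline{d}$ in the statement I will exhibit a vector $\underline{\bh}=(\bh^{(0)},\ldots,\bh^{(r-1)})$ of 2-step Hilbert functions of consecutive orders $k,k+1,\ldots,k+r-1$ with $|\bh^{(i)}|=d_{i+1}$ such that $\Delta_{3,r,k}(\underline{\bh})>0$. By the very definition of $\Delta_{3,r,k}$ and \Cref{cor:formula dim nested}/\Cref{cor:nestveryfew}, this strict inequality ensures that the locally closed subset of $\hilbert{\underline{d}}{\BA^3}$ obtained by translating the Hilbert stratum $H^3_{\underline{\bh}}$ has dimension strictly greater than $3d_r=\dim\Hilb^{\underline{d}}_{\mathrm{sm}}\BA^3$. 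Consequently it cannot fit into the smoothable component, so $\hilbert{\underline{d}}{\BA^3}$ is reducible.

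For $r\geqslant 2$, the Hessian \eqref{eq:hessian} of $\Delta_{3,r,k}$ is non-singular and indefinite, so the function has a unique saddle critical point and is unbounded above on $\mathcal{D}$. For each $k\geqslant 1$, I first determine this critical point and then scan the integer lattice $\mathcal{D}_\BN$ in a suitable neighbourhood, retaining those points where $\Delta_{3,r,k}>0$. Among the corresponding length vectors $|\underline{\bh}|$, I extract those that are minimal with respect to the lexicographic order from the last entry, exactly as in the proof of \Cref{thm:nested-srf}; these are precisely the sequences displayed in the statement. As in the surface case, some of the admissible integer points will fall on the boundary of $\mathcal{D}$, where some $\oh_{k+i}^{(i)}=0$ or $\oh_{k+i+1}^{(i)}=\rrr_{k+i+1}$, so the corresponding ideal degenerates to a 1-step ideal, which is treated as a degenerate 2-step case.

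For each selected $\underline{\bh}$, the hypotheses of \Cref{cor:nestveryfew} (or \Cref{cor:formula dim nested} when every $\sss_{\bh^{(i)}}\geqslant 0$) must be verified: the componentwise inequalities defining $\mathcal{D}$ are read directly off $\underline{\bh}$, whereas the existence of a nesting of homogeneous 2-step ideals with natural first anti-diagonal of the Betti table is ensured by an explicit computer-algebra construction producing a representative nesting and inspecting its Betti numbers. The main obstacle is this last step: for several of the triples in case $(b)$, consecutive Hilbert functions sit on the boundary of $\mathcal{D}$ or have $\sss_{\bh^{(i)}}<0$, and the compatibility of consecutive inclusions with the prescribed Betti behaviour has to be checked case by case. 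Once a representative nesting is produced, the cited corollaries combined with the three translational directions (\Cref{rem:deftheta}) yield the required dimensional inequality. As in \Cref{sec:dim2}, the complete procedure — search of candidate $\underline{\bh}$, construction of representative nestings, and verification of their Betti data — is automated and collected in the ancillary \textit{Macaulay2} file \href{www.paololella.it/software/reducibility-Hilbert-schemes.m2}{\tt reducibility-Hilbert-schemes.m2}.
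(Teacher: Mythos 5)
Your overall strategy — scan $\mathcal{D}_{\BN}$ for vectors $\underline{\bh}$ of 2-step Hilbert functions where $\Delta_{3,r,k}$ certifies a Hilbert stratum at least as large as the smoothable component, and invoke \Cref{cor:formula dim nested}/\Cref{cor:nestveryfew} — is the same as the paper's. But there is a genuine gap in the acceptance criterion: you insist on $\Delta_{3,r,k}(\underline{\bh})>0$, whereas the witnessing configurations for most of the sequences in the statement satisfy $\Delta_{3,r,k}=0$ exactly. For instance, for $\underline{d}=(14,24)$ the configuration in \Cref{fig:nested threefold} has $\dim\mathsf{T}^{=0}+\dim\mathsf{T}^{=1}+3=64+5+3=72=3\cdot 24$, and the same equality holds for $(13,26)$, $(7,13,17)$, $(7,12,18)$, and almost all the other listed triples; this is precisely why these sequences are the minimal ones. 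With your strict inequality these cases are not certified and your proof covers at most a couple of the 24 sequences. The missing ingredient is the standard observation underlying the definition of $\Delta_{n,r,k}$: a locally closed family of \emph{fat} (hence non-reduced) nestings whose dimension equals that of $\Hilb^{\underline{d}}_{\mathrm{sm}}\BA^3$ still cannot be contained in the smoothable component, since it would then be dense in it, contradicting the density of the open locus of reduced nestings. So the correct condition is $\Delta_{3,r,k}\geqslant 0$.

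A second, related discrepancy is your selection rule. You carry over the lexicographic-from-the-last-entry order of \Cref{thm:nested-srf}, which singles out one smallest sequence per length of nesting; that cannot produce, let alone justify, the list of three pairs and twenty-one triples in the statement. The paper instead uses the partial order generated by the operations $\underline{d}\mapsto(d_1,\ldots,d_i+1,\ldots,d_r+1)$ and $\underline{d}\mapsto(d_1,\ldots,d_i,d_i+1,d_{i+1},\ldots,d_r)$, under which reducibility is inherited, and lists the minimal elements among the certified sequences; each listed $\underline{d}$ then still needs its own explicit configuration (with first order $k=1$ or $k=2$), which is what \Cref{fig:nested threefold} and the ancillary file supply. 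As written, your proposal neither produces the stated list nor provides a witness for each of its entries.
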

\begin{proof}
We consider configurations of nested 2-step ideals with first order $k=1$ or $k=2$ and we explore all natural points in $\mathcal{D}_{\mathbb{N}}$ to find the minimal sequences $\underline{d}$. The list of sequences in the statement contains values of $\underline d$ for which the reducibility cannot be deduced from the reducibility of another Hilbert scheme.

In \Cref{fig:nested threefold}, there is a detailed description of some of the 2-step Hilbert functions leading to these sequences. The full list is available in the ancillary \textit{Macaulay2} file \href{www.paololella.it/software/reducibility-nested-Hilbert-schemes.m2}{\tt reducibility-nested-Hilbert- schemes.m2}.
As in the case $n=2$, some of the natural points lie on the boundary of $\mathcal{D}$ and some ideals in the configurations are in fact 1-step ideals (of order $k+i$ or $k+i+1$).
\end{proof}

\begin{figure}[!ht]
    \centering
\subfigure[Examples of 2-step Hilbert functions certifying the reducibility of $\hilbert{(\vert\bh^{(0)}\vert,\vert\bh^{(1)}\vert)}{\mathbb{A}^3}$]{\label{subfig: nested 3 r 5}    
\begin{tikzpicture}[yscale=0.4]
    \draw [] (0,1) -- (11.5,1);
    \draw [] (0,-0.5) -- (11.5,-0.5);
    
    \draw [] (0,1) -- (0,-0.5);
    \draw [] (2.5,1) -- (2.5,-0.5);
    \draw [] (3.5,1) -- (3.5,-0.5);
    \draw [] (4.5,1) -- (4.5,-0.5);
    \draw [] (7,1) -- (7,-0.5);
    \draw [] (8.5,1) -- (8.5,-0.5);
    \draw [] (10,1) -- (10,-0.5);
    \draw [] (11.5,1) -- (11.5,-0.5);
    
    \node at (1.25,0.25) [] {\footnotesize $\bh^{(i)}$};
    \node at (3,0.25) [] {\footnotesize $\left\vert \bh^{(i)}\right\vert$};
    \node at (4,0.25) [] {\footnotesize $k+i$};
    \node at (5.75,0.25) [] {\footnotesize $\big(\oh_{k+i}^{(i)},\oh_{k+i+1}^{(i)}\big)$};
    \node at (7.75,0.25) [] {\footnotesize $\dim \mathsf{T}^{<0}$};
    \node at (7.75,0.25) [xshift=1.5cm] {\footnotesize $\dim \mathsf{T}^{=0}$};
    \node at (9.25,0.25) [xshift=1.5cm] {\footnotesize $\dim \mathsf{T}^{=1}$};

    \begin{scope}[yshift= -8pt,yscale=0.85]
    \draw [thin] (0,-0.5) -- (11.5,-0.5);
    \draw [thin] (0,-1.5) -- (11.5,-1.5);
    \draw [thin] (0,-2.5) -- (11.5,-2.5);

    \draw [thin] (0,-0.5) -- (0,-2.5);
    \draw [thin] (2.5,-0.5) -- (2.5,-2.5);
    \draw [thin] (3.5,-0.5) -- (3.5,-2.5);
    \draw [thin] (4.5,-0.5) -- (4.5,-2.5);
    \draw [thin] (7,-0.5) -- (7,-2.5);
    \draw [thin] (8.5,-0.5) -- (8.5,-2.5);
    \draw [thin] (10,-0.5) -- (10,-2.5);
    \draw [thin] (11.5,-0.5) -- (11.5,-2.5);
    
    \node at (1.25,-1) [] {\parbox{1.75cm}{\scriptsize $\bh^{(0)}=(1,3,6,4)$}};
    \node at (1.25,-2) [] {\parbox{1.75cm}{\scriptsize $\bh^{(1)}=(1,3,6,9,5)$}};
    
    \node at (3,-1) [] {\scriptsize $14$};
    \node at (3,-2) [] {\scriptsize $24$};
    
    \node at (4,-1) [] {\scriptsize $2$};
    \node at (4,-2) [] {\scriptsize $3$};

    \node at (5.75,-1)[xshift=-4pt] {\scriptsize $(0,6)$};
    \node at (5.75,-2)[xshift=4pt] {\scriptsize $(1,10)$};

    \node at (7.75,-1) [] {\scriptsize $24$}; \node at (7.75,-1) [xshift=1.5cm] {\scriptsize $24$}; \node at (9.25,-1) [xshift=1.5cm] {\scriptsize $0$};
    \node at (7.75,-2) [] {\scriptsize $39$}; \node at (7.75,-2) [xshift=1.5cm] {\scriptsize $44$}; \node at (9.25,-2) [xshift=1.5cm] {\scriptsize $5$};
\end{scope}
\begin{scope}[yshift=0pt]
    \node at (4.75,-3.05) [] {\footnotesize $\underline{\bh}=(\bh^{(0)},\bh^{(1)})$};
        \draw[thick,] (2.5,-2.5) rectangle (11.5,-3.5);
        \draw[] (7,-2.5) -- (7,-3.5);
        \draw[] (8.5,-2.5) -- (8.5,-3.5);
        \draw[] (10,-2.5) -- (10,-3.5);
        
    \node at (7.75,-3) [] {\footnotesize $29$}; \node at (7.75,-3) [xshift=1.5cm] {\footnotesize $64$}; \node at (9.25,-3) [xshift=1.5cm] {\footnotesize $5$};
    \end{scope}

\begin{scope}[yshift=-3.4cm ,yscale=0.85]
    \draw [thin] (0,-0.5) -- (11.5,-0.5);
    \draw [thin] (0,-1.5) -- (11.5,-1.5);
    \draw [thin] (0,-2.5) -- (11.5,-2.5);

    \draw [thin] (0,-0.5) -- (0,-2.5);
    \draw [thin] (2.5,-0.5) -- (2.5,-2.5);
    \draw [thin] (3.5,-0.5) -- (3.5,-2.5);
    \draw [thin] (4.5,-0.5) -- (4.5,-2.5);
    \draw [thin] (7,-0.5) -- (7,-2.5);
    \draw [thin] (8.5,-0.5) -- (8.5,-2.5);
    \draw [thin] (10,-0.5) -- (10,-2.5);
    \draw [thin] (11.5,-0.5) -- (11.5,-2.5);
    
    \node at (1.35,-1) [] {\parbox{1.95cm}{\scriptsize $\bh^{(0)}=(1,3,5,4)$}};
    \node at (1.35,-2) [] {\parbox{1.95cm}{\scriptsize $\bh^{(1)}=(1,3,6,10,6)$}};
    
    \node at (3,-1) [] {\scriptsize $13$};
    \node at (3,-2) [] {\scriptsize $26$};
    
    \node at (4,-1) [] {\scriptsize $2$};
    \node at (4,-2) [] {\scriptsize $3$};

    \node at (5.75,-1)[xshift=-4pt] {\scriptsize $(1,6)$};
    \node at (5.75,-2)[xshift=4pt] {\scriptsize $(0,9)$};

    \node at (7.75,-1) [] {\scriptsize $18$}; \node at (7.75,-1) [xshift=1.5cm] {\scriptsize $17$}; \node at (9.25,-1) [xshift=1.5cm] {\scriptsize $4$};
    \node at (7.75,-2) [] {\scriptsize $54$}; \node at (7.75,-2) [xshift=1.5cm] {\scriptsize $54$}; \node at (9.25,-2) [xshift=1.5cm] {\scriptsize $0$};
\end{scope}
\begin{scope}[yshift=-3.1cm-0.75pt ]
    \node at (4.75,-3.05) [] {\footnotesize $\underline{\bh}=(\bh^{(0)},\bh^{(1)})$};
        \draw[thick,] (2.5,-2.5) rectangle (11.5,-3.5);
        \draw[] (7,-2.5) -- (7,-3.5);
        \draw[] (8.5,-2.5) -- (8.5,-3.5);
        \draw[] (10,-2.5) -- (10,-3.5);
        
    \node at (7.75,-3) [] {\footnotesize $36$}; \node at (7.75,-3) [xshift=1.5cm] {\footnotesize $71$}; \node at (9.25,-3) [xshift=1.5cm] {\footnotesize $4$};
    \end{scope}
    
\end{tikzpicture}
}

\medskip

\subfigure[Examples of 2-step Hilbert functions certifying the reducibility of $\hilbert{(\vert\bh^{(0)}\vert,\vert\bh^{(1)}\vert,\vert\bh^{(2)}\vert)}{\mathbb{A}^3}$]{\label{subfig: nested 3 r 3}
\begin{tikzpicture}[yscale=0.4]
    \draw [] (0,1) -- (11.5,1);
    \draw [] (0,-0.5) -- (11.5,-0.5);
    
    \draw [] (0,1) -- (0,-0.5);
    \draw [] (2.5,1) -- (2.5,-0.5);
    \draw [] (3.5,1) -- (3.5,-0.5);
    \draw [] (4.5,1) -- (4.5,-0.5);
    \draw [] (7,1) -- (7,-0.5);
    \draw [] (8.5,1) -- (8.5,-0.5);
    \draw [] (10,1) -- (10,-0.5);
    \draw [] (11.5,1) -- (11.5,-0.5);
    
    \node at (1.25,0.25) [] {\footnotesize $\bh^{(i)}$};
    \node at (3,0.25) [] {\footnotesize $\left\vert \bh^{(i)}\right\vert$};
    \node at (4,0.25) [] {\footnotesize $k+i$};
    \node at (5.75,0.25) [] {\footnotesize $\big(\oh_{k+i}^{(i)},\oh_{k+i+1}^{(i)}\big)$};
    \node at (7.75,0.25) [] {\footnotesize $\dim \mathsf{T}^{<0}$};
    \node at (7.75,0.25) [xshift=1.5cm] {\footnotesize $\dim \mathsf{T}^{=0}$};
    \node at (9.25,0.25) [xshift=1.5cm] {\footnotesize $\dim \mathsf{T}^{=1}$};

    \begin{scope}[yshift=-10pt,yscale=0.85]
    \draw [thin] (0,-0.5) -- (11.5,-0.5);
    \draw [thin] (0,-1.5) -- (11.5,-1.5);
    \draw [thin] (0,-2.5) -- (11.5,-2.5);
    \draw [thin] (0,-3.5) -- (11.5,-3.5);

    \draw [thin] (0,-0.5) -- (0,-3.5);
    \draw [thin] (2.5,-0.5) -- (2.5,-3.5);
    \draw [thin] (3.5,-0.5) -- (3.5,-3.5);
    \draw [thin] (4.5,-0.5) -- (4.5,-3.5);
    \draw [thin] (7,-0.5) -- (7,-3.5);
    \draw [thin] (8.5,-0.5) -- (8.5,-3.5);
    \draw [thin] (10,-0.5) -- (10,-3.5);
    \draw [thin] (11.5,-0.5) -- (11.5,-3.5);
    
    \node at (1.25,-1) [] {\parbox{1.75cm}{\scriptsize $\bh^{(0)}=(1,3,3)$}};
    \node at (1.25,-2) [] {\parbox{1.75cm}{\scriptsize $\bh^{(1)}=(1,3,6,3)$}};
    \node at (1.25,-3) [] {\parbox{1.75cm}{\scriptsize $\bh^{(2)}=(1,3,6,6,1)$}};
    
    \node at (3,-1) [] {\scriptsize $7$};
    \node at (3,-2) [] {\scriptsize $13$};
    \node at (3,-3) [] {\scriptsize $17$};
    
    \node at (4,-1) [] {\scriptsize $1$};
    \node at (4,-2) [] {\scriptsize $2$};
    \node at (4,-3) [] {\scriptsize $3$};

    \node at (5.75,-1)[xshift=-8pt] {\scriptsize $(0,3)$};
    \node at (5.75,-2)[xshift=0pt] {\scriptsize $(0,7)$};
    \node at (5.75,-3)[xshift=8pt] {\scriptsize $(4,14)$};

    \node at (7.75,-1) [] {\scriptsize $12$}; \node at (7.75,-1) [xshift=1.5cm] {\scriptsize $9$}; \node at (9.25,-1) [xshift=1.5cm] {\scriptsize $0$};
    \node at (7.75,-2) [] {\scriptsize $24$}; \node at (7.75,-2) [xshift=1.5cm] {\scriptsize $21$}; \node at (9.25,-2) [xshift=1.5cm] {\scriptsize $0$};
    \node at (7.75,-3) [] {\scriptsize $27$}; \node at (7.75,-3) [xshift=1.5cm] {\scriptsize $26$}; \node at (9.25,-3) [xshift=1.5cm] {\scriptsize $4$};
\end{scope}
\begin{scope}[yshift=2pt]
    \node at (4.75,-4.05) [] {\footnotesize $\underline{\bh}=(\bh^{(0)},\bh^{(1)},\bh^{(2)})$};
        \draw[thick,] (2.5,-3.5) rectangle (11.5,-4.5);
        \draw[] (7,-3.5) -- (7,-4.5);
        \draw[] (8.5,-3.5) -- (8.5,-4.5);
        \draw[] (10,-3.5) -- (10,-4.5);
        
    \node at (7.75,-4) [] {\footnotesize $12$}; \node at (7.75,-4) [xshift=1.5cm] {\footnotesize $44$}; \node at (9.25,-4) [xshift=1.5cm] {\footnotesize $4$};
    \end{scope}

    \begin{scope}[yshift=-4.15cm-4pt,yscale=0.85]
    \draw [thin] (0,-0.5) -- (11.5,-0.5);
    \draw [thin] (0,-1.5) -- (11.5,-1.5);
    \draw [thin] (0,-2.5) -- (11.5,-2.5);
    \draw [thin] (0,-3.5) -- (11.5,-3.5);

    \draw [thin] (0,-0.5) -- (0,-3.5);
    \draw [thin] (2.5,-0.5) -- (2.5,-3.5);
    \draw [thin] (3.5,-0.5) -- (3.5,-3.5);
    \draw [thin] (4.5,-0.5) -- (4.5,-3.5);
    \draw [thin] (7,-0.5) -- (7,-3.5);
    \draw [thin] (8.5,-0.5) -- (8.5,-3.5);
    \draw [thin] (10,-0.5) -- (10,-3.5);
    \draw [thin] (11.5,-0.5) -- (11.5,-3.5);
    
    \node at (1.25,-1) [] {\parbox{1.75cm}{\scriptsize $\bh^{(0)}=(1,3,3)$}};
    \node at (1.25,-2) [] {\parbox{1.75cm}{\scriptsize $\bh^{(1)}=(1,3,6,2)$}};
    \node at (1.25,-3) [] {\parbox{1.75cm}{\scriptsize $\bh^{(2)}=(1,3,6,6,2)$}};
    
    \node at (3,-1) [] {\scriptsize $7$};
    \node at (3,-2) [] {\scriptsize $12$};
    \node at (3,-3) [] {\scriptsize $18$};
    
    \node at (4,-1) [] {\scriptsize $1$};
    \node at (4,-2) [] {\scriptsize $2$};
    \node at (4,-3) [] {\scriptsize $3$};

    \node at (5.75,-1)[xshift=-8pt] {\scriptsize $(0,3)$};
    \node at (5.75,-2)[xshift=0pt] {\scriptsize $(0,8)$};
    \node at (5.75,-3)[xshift=8pt] {\scriptsize $(4,13)$};

    \node at (7.75,-1) [] {\scriptsize $12$}; \node at (7.75,-1) [xshift=1.5cm] {\scriptsize $9$}; \node at (9.25,-1) [xshift=1.5cm] {\scriptsize $0$};
    \node at (7.75,-2) [] {\scriptsize $30$}; \node at (7.75,-2) [xshift=1.5cm] {\scriptsize $16$}; \node at (9.25,-2) [xshift=1.5cm] {\scriptsize $0$};
    \node at (7.75,-3) [] {\scriptsize $20$}; \node at (7.75,-3) [xshift=1.5cm] {\scriptsize $26$}; \node at (9.25,-3) [xshift=1.5cm] {\scriptsize $8$};
    \end{scope}

\begin{scope}[yshift=-3.725cm-4pt]
    \node at (4.75,-4.05) [] {\footnotesize $\underline{\bh}=(\bh^{(0)},\bh^{(1)},\bh^{(2)})$};
        \draw[thick,] (2.5,-3.5) rectangle (11.5,-4.5);
        \draw[] (7,-3.5) -- (7,-4.5);
        \draw[] (8.5,-3.5) -- (8.5,-4.5);
        \draw[] (10,-3.5) -- (10,-4.5);
        
    \node at (7.75,-4) [] {\footnotesize $11$}; \node at (7.75,-4) [xshift=1.5cm] {\footnotesize $43$}; \node at (9.25,-4) [xshift=1.5cm] {\footnotesize $8$};
    \end{scope}

    \begin{scope}[yshift=-4.15cm - 3.875cm-6pt,yscale=0.85]
      \draw [thin] (0,-0.5) -- (11.5,-0.5);
    \draw [thin] (0,-1.5) -- (11.5,-1.5);
    \draw [thin] (0,-2.5) -- (11.5,-2.5);
    \draw [thin] (0,-3.5) -- (11.5,-3.5);

    \draw [thin] (0,-0.5) -- (0,-3.5);
    \draw [thin] (2.5,-0.5) -- (2.5,-3.5);
    \draw [thin] (3.5,-0.5) -- (3.5,-3.5);
    \draw [thin] (4.5,-0.5) -- (4.5,-3.5);
    \draw [thin] (7,-0.5) -- (7,-3.5);
    \draw [thin] (8.5,-0.5) -- (8.5,-3.5);
    \draw [thin] (10,-0.5) -- (10,-3.5);
    \draw [thin] (11.5,-0.5) -- (11.5,-3.5);
    
    \node at (1.25,-1) [] {\parbox{1.75cm}{\scriptsize $\bh^{(0)}=(1,3,2)$}};
    \node at (1.25,-2) [] {\parbox{1.75cm}{\scriptsize $\bh^{(1)}=(1,3,6,2)$}};
    \node at (1.25,-3) [] {\parbox{1.75cm}{\scriptsize $\bh^{(2)}=(1,3,6,7,3)$}};
    
    \node at (3,-1) [] {\scriptsize $6$};
    \node at (3,-2) [] {\scriptsize $12$};
    \node at (3,-3) [] {\scriptsize $20$};
    
    \node at (4,-1) [] {\scriptsize $1$};
    \node at (4,-2) [] {\scriptsize $2$};
    \node at (4,-3) [] {\scriptsize $3$};

    \node at (5.75,-1)[xshift=-8pt] {\scriptsize $(0,4)$};
    \node at (5.75,-2)[xshift=0pt] {\scriptsize $(0,8)$};
    \node at (5.75,-3)[xshift=8pt] {\scriptsize $(3,12)$};

    \node at (7.75,-1) [] {\scriptsize $10$}; \node at (7.75,-1) [xshift=1.5cm] {\scriptsize $8$}; \node at (9.25,-1) [xshift=1.5cm] {\scriptsize $0$};
    \node at (7.75,-2) [] {\scriptsize $30$}; \node at (7.75,-2) [xshift=1.5cm] {\scriptsize $16$}; \node at (9.25,-2) [xshift=1.5cm] {\scriptsize $0$};
    \node at (7.75,-3) [] {\scriptsize $21$}; \node at (7.75,-3) [xshift=1.5cm] {\scriptsize $30$}; \node at (9.25,-3) [xshift=1.5cm] {\scriptsize $9$};
    \end{scope}

\begin{scope}[yshift=-3.725cm - 3.875cm-6pt]
    \node at (4.75,-4.05) [] {\footnotesize $\underline{\bh}=(\bh^{(0)},\bh^{(1)},\bh^{(2)})$};
        \draw[thick,] (2.5,-3.5) rectangle (11.5,-4.5);
        \draw[] (7,-3.5) -- (7,-4.5);
        \draw[] (8.5,-3.5) -- (8.5,-4.5);
        \draw[] (10,-3.5) -- (10,-4.5);
        
    \node at (7.75,-4) [] {\footnotesize $20$}; \node at (7.75,-4) [xshift=1.5cm] {\footnotesize $48$}; \node at (9.25,-4) [xshift=1.5cm] {\footnotesize $9$};
    \end{scope}

    \begin{scope}[yshift=-4.15cm - 2*3.875cm-8pt,yscale=0.85]
      \draw [thin] (0,-0.5) -- (11.5,-0.5);
    \draw [thin] (0,-1.5) -- (11.5,-1.5);
    \draw [thin] (0,-2.5) -- (11.5,-2.5);
    \draw [thin] (0,-3.5) -- (11.5,-3.5);

    \draw [thin] (0,-0.5) -- (0,-3.5);
    \draw [thin] (2.5,-0.5) -- (2.5,-3.5);
    \draw [thin] (3.5,-0.5) -- (3.5,-3.5);
    \draw [thin] (4.5,-0.5) -- (4.5,-3.5);
    \draw [thin] (7,-0.5) -- (7,-3.5);
    \draw [thin] (8.5,-0.5) -- (8.5,-3.5);
    \draw [thin] (10,-0.5) -- (10,-3.5);
    \draw [thin] (11.5,-0.5) -- (11.5,-3.5);
    
    \node at (1.25,-1) [] {\parbox{1.75cm}{\scriptsize $\bh^{(0)}=(1,2,1)$}};
    \node at (1.25,-2) [] {\parbox{1.75cm}{\scriptsize $\bh^{(1)}=(1,3,6,3)$}};
    \node at (1.25,-3) [] {\parbox{1.75cm}{\scriptsize $\bh^{(2)}=(1,3,6,8,3)$}};
    
    \node at (3,-1) [] {\scriptsize $4$};
    \node at (3,-2) [] {\scriptsize $13$};
    \node at (3,-3) [] {\scriptsize $21$};
    
    \node at (4,-1) [] {\scriptsize $1$};
    \node at (4,-2) [] {\scriptsize $2$};
    \node at (4,-3) [] {\scriptsize $3$};

    \node at (5.75,-1)[xshift=-8pt] {\scriptsize $(1,5)$};
    \node at (5.75,-2)[xshift=0pt] {\scriptsize $(0,7)$};
    \node at (5.75,-3)[xshift=8pt] {\scriptsize $(2,12)$};

    \node at (7.75,-1) [] {\scriptsize $7$}; \node at (7.75,-1) [xshift=1.5cm] {\scriptsize $4$}; \node at (9.25,-1) [xshift=1.5cm] {\scriptsize $1$};
    \node at (7.75,-2) [] {\scriptsize $24$}; \node at (7.75,-2) [xshift=1.5cm] {\scriptsize $21$}; \node at (9.25,-2) [xshift=1.5cm] {\scriptsize $0$};
    \node at (7.75,-3) [] {\scriptsize $33$}; \node at (7.75,-3) [xshift=1.5cm] {\scriptsize $34$}; \node at (9.25,-3) [xshift=1.5cm] {\scriptsize $6$};
    \end{scope}

\begin{scope}[yshift=-3.725cm - 2*3.875cm-8pt]
    \node at (4.75,-4.05) [] {\footnotesize $\underline{\bh}=(\bh^{(0)},\bh^{(1)},\bh^{(2)})$};
        \draw[thick,] (2.5,-3.5) rectangle (11.5,-4.5);
        \draw[] (7,-3.5) -- (7,-4.5);
        \draw[] (8.5,-3.5) -- (8.5,-4.5);
        \draw[] (10,-3.5) -- (10,-4.5);
        
    \node at (7.75,-4) [] {\footnotesize $25$}; \node at (7.75,-4) [xshift=1.5cm] {\footnotesize $53$}; \node at (9.25,-4) [xshift=1.5cm] {\footnotesize $7$};
    \end{scope}

    \begin{scope}[yshift=-4.15cm - 3*3.875cm-10pt,yscale=0.85]
     \draw [thin] (0,-0.5) -- (11.5,-0.5);
    \draw [thin] (0,-1.5) -- (11.5,-1.5);
    \draw [thin] (0,-2.5) -- (11.5,-2.5);
    \draw [thin] (0,-3.5) -- (11.5,-3.5);

    \draw [thin] (0,-0.5) -- (0,-3.5);
    \draw [thin] (2.5,-0.5) -- (2.5,-3.5);
    \draw [thin] (3.5,-0.5) -- (3.5,-3.5);
    \draw [thin] (4.5,-0.5) -- (4.5,-3.5);
    \draw [thin] (7,-0.5) -- (7,-3.5);
    \draw [thin] (8.5,-0.5) -- (8.5,-3.5);
    \draw [thin] (10,-0.5) -- (10,-3.5);
    \draw [thin] (11.5,-0.5) -- (11.5,-3.5);
    
    \node at (1.25,-1) [] {\parbox{1.75cm}{\scriptsize $\bh^{(0)}=(1,3,2)$}};
    \node at (1.25,-2) [] {\parbox{1.75cm}{\scriptsize $\bh^{(1)}=(1,3,5,2)$}};
    \node at (1.25,-3) [] {\parbox{1.75cm}{\scriptsize $\bh^{(2)}=(1,3,6,8,4)$}};
    
    \node at (3,-1) [] {\scriptsize $6$};
    \node at (3,-2) [] {\scriptsize $11$};
    \node at (3,-3) [] {\scriptsize $22$};
    
    \node at (4,-1) [] {\scriptsize $1$};
    \node at (4,-2) [] {\scriptsize $2$};
    \node at (4,-3) [] {\scriptsize $3$};

    \node at (5.75,-1)[xshift=-8pt] {\scriptsize $(0,4)$};
    \node at (5.75,-2)[xshift=0pt] {\scriptsize $(1,8)$};
    \node at (5.75,-3)[xshift=8pt] {\scriptsize $(2,11)$};

    \node at (7.75,-1) [] {\scriptsize $10$}; \node at (7.75,-1) [xshift=1.5cm] {\scriptsize $8$}; \node at (9.25,-1) [xshift=1.5cm] {\scriptsize $0$};
    \node at (7.75,-2) [] {\scriptsize $16$}; \node at (7.75,-2) [xshift=1.5cm] {\scriptsize $15$}; \node at (9.25,-2) [xshift=1.5cm] {\scriptsize $2$};
    \node at (7.75,-3) [] {\scriptsize $28$}; \node at (7.75,-3) [xshift=1.5cm] {\scriptsize $36$}; \node at (9.25,-3) [xshift=1.5cm] {\scriptsize $8$};
    \end{scope}

\begin{scope}[yshift=-3.725cm - 3*3.875cm-10pt]
    \node at (4.75,-4.05) [] {\footnotesize $\underline{\bh}=(\bh^{(0)},\bh^{(1)},\bh^{(2)})$};
        \draw[thick,] (2.5,-3.5) rectangle (11.5,-4.5);
        \draw[] (7,-3.5) -- (7,-4.5);
        \draw[] (8.5,-3.5) -- (8.5,-4.5);
        \draw[] (10,-3.5) -- (10,-4.5);
        
    \node at (7.75,-4) [] {\footnotesize $20$}; \node at (7.75,-4) [xshift=1.5cm] {\footnotesize $55$}; \node at (9.25,-4) [xshift=1.5cm] {\footnotesize $10$};
    \end{scope}

    \begin{scope}[yshift=-4.15cm - 4*3.875cm-12pt,yscale=0.85]
     \draw [thin] (0,-0.5) -- (11.5,-0.5);
    \draw [thin] (0,-1.5) -- (11.5,-1.5);
    \draw [thin] (0,-2.5) -- (11.5,-2.5);
    \draw [thin] (0,-3.5) -- (11.5,-3.5);

    \draw [thin] (0,-0.5) -- (0,-3.5);
    \draw [thin] (2.5,-0.5) -- (2.5,-3.5);
    \draw [thin] (3.5,-0.5) -- (3.5,-3.5);
    \draw [thin] (4.5,-0.5) -- (4.5,-3.5);
    \draw [thin] (7,-0.5) -- (7,-3.5);
    \draw [thin] (8.5,-0.5) -- (8.5,-3.5);
    \draw [thin] (10,-0.5) -- (10,-3.5);
    \draw [thin] (11.5,-0.5) -- (11.5,-3.5);
    
    \node at (1.25,-1) [] {\parbox{1.75cm}{\scriptsize $\bh^{(0)}=(1,2,1)$}};
    \node at (1.25,-2) [] {\parbox{1.75cm}{\scriptsize $\bh^{(1)}=(1,3,5,3)$}};
    \node at (1.25,-3) [] {\parbox{1.75cm}{\scriptsize $\bh^{(2)}=(1,3,6,9,4)$}};
    
    \node at (3,-1) [] {\scriptsize $4$};
    \node at (3,-2) [] {\scriptsize $12$};
    \node at (3,-3) [] {\scriptsize $23$};
    
    \node at (4,-1) [] {\scriptsize $1$};
    \node at (4,-2) [] {\scriptsize $2$};
    \node at (4,-3) [] {\scriptsize $3$};

    \node at (5.75,-1)[xshift=-8pt] {\scriptsize $(1,5)$};
    \node at (5.75,-2)[xshift=0pt] {\scriptsize $(1,7)$};
    \node at (5.75,-3)[xshift=8pt] {\scriptsize $(1,11)$};

    \node at (7.75,-1) [] {\scriptsize $7$}; \node at (7.75,-1) [xshift=1.5cm] {\scriptsize $4$}; \node at (9.25,-1) [xshift=1.5cm] {\scriptsize $1$};
    \node at (7.75,-2) [] {\scriptsize $16$}; \node at (7.75,-2) [xshift=1.5cm] {\scriptsize $17$}; \node at (9.25,-2) [xshift=1.5cm] {\scriptsize $3$};
    \node at (7.75,-3) [] {\scriptsize $42$}; \node at (7.75,-3) [xshift=1.5cm] {\scriptsize $41$}; \node at (9.25,-3) [xshift=1.5cm] {\scriptsize $4$};
    \end{scope}

\begin{scope}[yshift=-3.725cm - 4*3.875cm-12pt]
    \node at (4.75,-4.05) [] {\footnotesize $\underline{\bh}=(\bh^{(0)},\bh^{(1)},\bh^{(2)})$};
        \draw[thick,] (2.5,-3.5) rectangle (11.5,-4.5);
        \draw[] (7,-3.5) -- (7,-4.5);
        \draw[] (8.5,-3.5) -- (8.5,-4.5);
        \draw[] (10,-3.5) -- (10,-4.5);
        
    \node at (7.75,-4) [] {\footnotesize $27$}; \node at (7.75,-4) [xshift=1.5cm] {\footnotesize $58$}; \node at (9.25,-4) [xshift=1.5cm] {\footnotesize $8$};
    \end{scope}

    \begin{scope}[yshift=-4.15cm - 5*3.875cm-14pt,yscale=0.85]
     \draw [thin] (0,-0.5) -- (11.5,-0.5);
    \draw [thin] (0,-1.5) -- (11.5,-1.5);
    \draw [thin] (0,-2.5) -- (11.5,-2.5);
    \draw [thin] (0,-3.5) -- (11.5,-3.5);

    \draw [thin] (0,-0.5) -- (0,-3.5);
    \draw [thin] (2.5,-0.5) -- (2.5,-3.5);
    \draw [thin] (3.5,-0.5) -- (3.5,-3.5);
    \draw [thin] (4.5,-0.5) -- (4.5,-3.5);
    \draw [thin] (7,-0.5) -- (7,-3.5);
    \draw [thin] (8.5,-0.5) -- (8.5,-3.5);
    \draw [thin] (10,-0.5) -- (10,-3.5);
    \draw [thin] (11.5,-0.5) -- (11.5,-3.5);
    
    \node at (1.25,-1) [] {\parbox{1.75cm}{\scriptsize $\bh^{(0)}=(1,1,1)$}};
    \node at (1.25,-2) [] {\parbox{1.75cm}{\scriptsize $\bh^{(1)}=(1,3,5,3)$}};
    \node at (1.25,-3) [] {\parbox{1.75cm}{\scriptsize $\bh^{(2)}=(1,3,6,9,5)$}};
    
    \node at (3,-1) [] {\scriptsize $3$};
    \node at (3,-2) [] {\scriptsize $12$};
    \node at (3,-3) [] {\scriptsize $24$};
    
    \node at (4,-1) [] {\scriptsize $1$};
    \node at (4,-2) [] {\scriptsize $2$};
    \node at (4,-3) [] {\scriptsize $3$};

    \node at (5.75,-1)[xshift=-8pt] {\scriptsize $(2,5)$};
    \node at (5.75,-2)[xshift=0pt] {\scriptsize $(1,7)$};
    \node at (5.75,-3)[xshift=8pt] {\scriptsize $(1,10)$};

    \node at (7.75,-1) [] {\scriptsize $5$}; \node at (7.75,-1) [xshift=1.5cm] {\scriptsize $2$}; \node at (9.25,-1) [xshift=1.5cm] {\scriptsize $2$};
    \node at (7.75,-2) [] {\scriptsize $16$}; \node at (7.75,-2) [xshift=1.5cm] {\scriptsize $17$}; \node at (9.25,-2) [xshift=1.5cm] {\scriptsize $3$};
    \node at (7.75,-3) [] {\scriptsize $39$}; \node at (7.75,-3) [xshift=1.5cm] {\scriptsize $44$}; \node at (9.25,-3) [xshift=1.5cm] {\scriptsize $5$};
    \end{scope}

\begin{scope}[yshift=-3.725cm - 5*3.875cm-14pt]
    \node at (4.75,-4.05) [] {\footnotesize $\underline{\bh}=(\bh^{(0)},\bh^{(1)},\bh^{(2)})$};
        \draw[thick,] (2.5,-3.5) rectangle (11.5,-4.5);
        \draw[] (7,-3.5) -- (7,-4.5);
        \draw[] (8.5,-3.5) -- (8.5,-4.5);
        \draw[] (10,-3.5) -- (10,-4.5);
        
    \node at (7.75,-4) [] {\footnotesize $26$}; \node at (7.75,-4) [xshift=1.5cm] {\footnotesize $59$}; \node at (9.25,-4) [xshift=1.5cm] {\footnotesize $10$};
    \end{scope}

     \begin{scope}[yshift=-4.15cm - 6*3.875cm-16pt,yscale=0.85]
    \draw [thin] (0,-0.5) -- (11.5,-0.5);
    \draw [thin] (0,-1.5) -- (11.5,-1.5);
    \draw [thin] (0,-2.5) -- (11.5,-2.5);
    \draw [thin] (0,-3.5) -- (11.5,-3.5);

    \draw [thin] (0,-0.5) -- (0,-3.5);
    \draw [thin] (2.5,-0.5) -- (2.5,-3.5);
    \draw [thin] (3.5,-0.5) -- (3.5,-3.5);
    \draw [thin] (4.5,-0.5) -- (4.5,-3.5);
    \draw [thin] (7,-0.5) -- (7,-3.5);
    \draw [thin] (8.5,-0.5) -- (8.5,-3.5);
    \draw [thin] (10,-0.5) -- (10,-3.5);
    \draw [thin] (11.5,-0.5) -- (11.5,-3.5);
    
    \node at (1.35,-1) [] {\parbox{1.95cm}{\scriptsize $\bh^{(0)}=(1,1)$}};
    \node at (1.35,-2) [] {\parbox{1.95cm}{\scriptsize $\bh^{(1)}=(1,3,5,3)$}};
    \node at (1.35,-3) [] {\parbox{1.95cm}{\scriptsize $\bh^{(2)}=(1,3,6,10,5)$}};
    
    \node at (3,-1) [] {\scriptsize $2$};
    \node at (3,-2) [] {\scriptsize $12$};
    \node at (3,-3) [] {\scriptsize $25$};
    
    \node at (4,-1) [] {\scriptsize $1$};
    \node at (4,-2) [] {\scriptsize $2$};
    \node at (4,-3) [] {\scriptsize $3$};

    \node at (5.75,-1)[xshift=-8pt] {\scriptsize $(2,6)$};
    \node at (5.75,-2)[xshift=0pt] {\scriptsize $(1,7)$};
    \node at (5.75,-3)[xshift=8pt] {\scriptsize $(0,10)$};

    \node at (7.75,-1) [] {\scriptsize $4$}; \node at (7.75,-1) [xshift=1.5cm] {\scriptsize $2$}; \node at (9.25,-1) [xshift=1.5cm] {\scriptsize $0$};
    \node at (7.75,-2) [] {\scriptsize $16$}; \node at (7.75,-2) [xshift=1.5cm] {\scriptsize $17$}; \node at (9.25,-2) [xshift=1.5cm] {\scriptsize $3$};
    \node at (7.75,-3) [] {\scriptsize $55$}; \node at (7.75,-3) [xshift=1.5cm] {\scriptsize $50$}; \node at (9.25,-3) [xshift=1.5cm] {\scriptsize $0$};
    \end{scope}

\begin{scope}[yshift=-3.725cm - 6*3.875cm-16pt]
    \node at (4.75,-4.05) [] {\footnotesize $\underline{\bh}=(\bh^{(0)},\bh^{(1)},\bh^{(2)})$};
        \draw[thick,] (2.5,-3.5) rectangle (11.5,-4.5);
        \draw[] (7,-3.5) -- (7,-4.5);
        \draw[] (8.5,-3.5) -- (8.5,-4.5);
        \draw[] (10,-3.5) -- (10,-4.5);
        
    \node at (7.75,-4) [] {\footnotesize $42$}; \node at (7.75,-4) [xshift=1.5cm] {\footnotesize $69$}; \node at (9.25,-4) [xshift=1.5cm] {\footnotesize $3$};
    \end{scope}
\end{tikzpicture}
}
    \caption{Hilbert functions certifying the reducibility of nested Hilbert schemes on three-folds.}
       \label{fig:nested threefold}
\end{figure}

\begin{remark}
We remark that the ideals of colength 18 and 24 in \Cref{fig:nested threefold} already appeared in the literature in other contexts. The generic compressed algebra with Hilbert function $(1,6,6,2)$ does not lie in the curvilinear component of the punctual Hilbert scheme, i.e.~the closure of the locus parametrising curvilinear ideals, see \cite{JJ-Hilb-open-problems,GGGL}. On the other hand, the counterexample to the constancy of the Behrend function given in \cite{BEHNONCONST} has length 24 and Hilbert function $(1,3,6,9,5)$, see also \cite{UPDATES} for a proof of its smoothability. 
\end{remark}

\section{Reducibility of Hilbert schemes of points on four-folds}\label{subsec:smallcomp}

As an application of the theory of 2-step ideals, we provide a complete list of the generically reduced elementary components corresponding to 2-step ideals with no or very few linear syzygies of order $k=2,3$ plus some other sporadic example. We stress that this theory produces elementary components for each order $k\geqslant 2 $. Then, in the final part we prove \Cref{thm:intro B}.

\paragraph{\it Known results.} Let $n\geqslant 4$ be a positive integer. The values of $d\geqslant0$ for which the Hilbert scheme $\Hilb^d\BA^n$ is irreducible have been classified in \cite{MAZZOLA,Iarrob}, see also \cite{8POINTS}. Then, many examples of elementary components were found, see \cite{ELEMENTARY,SomeElementary,MoreElementary,Sha90,UPDATES}. In contrast to the three-dimensional setting, where the available techniques only allow the detection of irreducible components of dimension greater than the smoothable one, the situation is much different for $n\geqslant 4$. For instance, \cite{Satrianostaal,GALOIS} presents many small elementary components, i.e.~components of dimension smaller than that of the curvilinear locus. It is also worth noting that \cite{ERMANVELASCO} provides a characterisation of elementary components that parametrise non-smoothable algebras of a given embedding dimension and minimum possible length.

\smallskip

Among the elementary components described in the literature, few of them correspond to 2-step ideals.
\begin{itemize}
\item Five elementary components correspond in fact to families of 1-step ideals, one generically reduced component in $\hilbert{8}{\mathbb{A}^4}$ \cite{Iarrob} and four generically non-reduced components in $\hilbert{d}{\mathbb{A}^4}$ for $d=21,22,23,24$ \cite{jelisiejewnonred}.
\item Two generically reduced elementary components are Hilbert strata of 2-step ideals, one corresponding to 2-step ideals of order 2 \cite{Satrianostaal} and one corresponding to 2-step ideals of order 3 \cite{ELEMENTARY}.
\end{itemize}
These three generically reduced elementary components are small components, in the sense explained above.

\bigskip

For $n=4$ the absolute minimum of the function $\Theta_{4,k,b}(\oh_k,\oh_{k+1})$
\[
\dfrac{-k^{6}-15\,k^{5}-79\,k^{4}+\left(-24\,b-165\right)k^{3}+\left(-180\,b-64\right)k^{2}+\left(-408\,b+180\right)k-144\,b^{2}-2160}{540}
\]
is negative for every $k \geqslant 1$ and every $b \geqslant 0$. The potential TNT area contains at least the interior part of the ellipse corresponding to the condition $\Theta_{4,0,k} \leqslant 0$ (see \Cref{fig:newA4k2} and \Cref{fig:newA4k3}).

The Hessian matrix of $\Delta_{4,1,k}$ 
\[
\Hess \Delta_{4,1,k} = \left[\begin{array}{cc} -2 & 3 \\ 3 & -2\end{array}\right]
\]
is non-singular with a positive and a negative eigenvalue. Hence, $\Delta_{4,1,k}$ has a saddle point and admits positive and negative values. In this situation, we look for 
\begin{itemize}
    \item 2-step Hilbert functions in the potential TNT area that are realized by a homogeneous ideals with the TNT property;
    \item 2-step Hilbert functions with $\Delta_{4,1,k} \geqslant 0$.
\end{itemize}
In both cases, we certify that the Hilbert scheme $\hilbert{\vert\bh\vert}{\mathbb{A}^4}$ is reducible, but in the first case, we detect generically reduced elementary components. 

We systematically examine all 2-step ideals with no or few linear syzygies of order $k=2$ and $k=3$. The results are summarized in \Cref{fig:newA4k2} and \Cref{fig:newA4k3}, see \Cref{legenda} for the picture legend.

Among the families of 2-step ideals of order 2 with no linear syzygies, we find two new generically reduced elementary components (see \Cref{fig:newA4k2}) in the Hilbert schemes $\hilbert{18}{\mathbb{A}^4}$ and $\hilbert{20}{\mathbb{A}^4}$. The dimension of these components is smaller than the dimension of the smoothable component, but the components are not small because the dimension of the Hilbert stratum is greater than the dimension of the curvilinear locus. In the following, we refer to these elementary components as \textit{$\Delta$-negative components}. Among 2-step ideals of order 3, we find two $\Delta$-negative generically reduced elementary components and 27 generically reduced elementary components whose dimension is greater than the dimension of the smoothable component, see \Cref{fig:newA4k3}.

With the help of \textit{Macaulay2}, it is possible to determine plenty of other generically reduced elementary components. For instance, among 2-step ideals of order 4, there are 95 Hilbert strata whose generic ideal has trivial negative tangents (see the ancillary \textit{Macaulay2} file \href{www.paololella.it/software/reducibility-Hilbert-schemes.m2}{\tt reducibility-Hilbert- schemes.m2}). We notice that none of them is $\Delta$-negative. In fact, the intersection between the areas $\Delta_{4,1,4} < 0$ and $\Theta_{4,4,0} \leqslant 0$ is empty.
 
\begin{figure}
\centering


    \caption{Components in $\Hilb^\bullet \mathbb{A}^4$ coming from families of 2-step ideals of order 3. In the table, we describe those covered by our construction.\label{fig:newA4k3}}
     
   
\end{figure}  

\begin{remark} 
The generically reduced elementary component of $\hilbert{35}{\mathbb{A}^4}$ detected by Jelisiejew in \cite{ELEMENTARY} corresponds to the 2-step Hilbert function $\bh=(1,4,10,12,8)$. The pair $(\oh_3,\oh_4) = (8,27)$ lies in the few syzygies area, since $\frac{1}{4}\oh_3 < - \sss_\bh = 5 <  \oh_3$, but the generic homomorphism in $\mathscr{L}_{\bh}$ is injective and \Cref{thm: homo loco syz} does not apply. In fact, the resolution of the generic homogeneous element in the component does not have a natural first anti-diagonal as shown below.
\[ \begin{array}{c|cccc}
       & 0 & 1 & 2 & 3\\
       \hline
      3 & 8 & 6 & \centerdot & \centerdot\\[-4pt]
      4 & 1 & 4 & 2 & \centerdot\\[-4pt]
      5 & \centerdot & 12 & 20 & 8\end{array}
 \]
       
The generically reduced elementary component of $\hilbert{15}{\mathbb{A}^4}$ detected by Satriano and Staal in \cite{Satrianostaal} corresponds to the 2-step Hilbert function $\bh=(1,4,6,4)$. The pair $(\oh_2,\oh_3) = (4,16)$ lies in the no syzygies area, since $ \sss_\bh = 0$. Applying, \Cref{thm:homoloco}, we describe homogeneous 2-step ideals with Betti table
        \[
      \begin{array}{c|cccc}
       & 0 & 1 & 2 & 3\\
       \hline
      2 & 4 & \centerdot & \centerdot & \centerdot\\[-4pt]
      3 & \centerdot & 6 & \centerdot & \centerdot \\[-4pt]
      4 & 1 & 4 & 10 & 4\end{array}
        \]
while the generic homogeneous element in the correspondent elementary component has the following Betti table. 
        \[
      \begin{array}{c|cccc}
       & 0 & 1 & 2 & 3\\
       \hline
      2 & 4 & 4 & 1 & \centerdot \\[-4pt]
      3 & 4 & 6 & 2 & \centerdot \\[-4pt]
      4 & \centerdot  & 6 & 10 & 4\end{array}
        \]
We notice that the homogeneous locus $\mathscr{H}^4_{\bh}$ has at least 2 irreducible components. In fact, by the semicontinuity of the Betti numbers, we cannot obtain a 2-step homogeneous ideal with the first Betti table as (homogeneous) specialization of 2-step homogeneous ideals lying on the elementary component with the second Betti table (and viceversa). Moreover, this implies, together with the existence of the initial ideal morphism $\pi_{\bh}:H^4_{\bh}\to\OH_{\bh}^4$, that the Hilbert stratum $H_{\bh}^4$ is not irreducible.
\end{remark}

\subsection{Nested Hilbert schemes of points on four-folds}\label{subsec:dim4nest}

 In this subsection we prove \Cref{thm:intro B} which shows that, paying the price of considering nestings, the $(3,7)$-nested Hilbert scheme on $\BA^4$ has an elementary component. From this, we also deduce that the $(1,3,7)$-nested Hilbert scheme on $\BA^4$ has a generically non-reduced elementary component.

 \begin{theorem}\label{thm:redim4nest} 
        The nested Hilbert scheme $\hilbert{(3,7)}{\BA^4}$ has a generically reduced elementary  component $V$ whose closed points correspond to nestings having Hilbert function vector $((1,2),(1,4,2))$. Moreover, we have an isomorphism
    \[
    \begin{tikzcd}
        (V)_{\red}\cong   \Gr(2,4) \times \Gr(2,10)\times \BA^4\cong   H_{(1,2)}^4 \times H_{(1,4,2)}^4 \times \BA^4.
    \end{tikzcd}
    \]
     As a consequence, the nested Hilbert scheme $\hilbert{(1,3,7)}{\BA^4}$ has a generically non-reduced elementary component $V_1$ such that $(V_1)_{\red}=(V)_{\red}$.
\end{theorem}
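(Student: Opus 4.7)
The plan is to realise $V$ as the closure of the translates of nested homogeneous configurations and to identify its reduced structure with $\Gr(2,4)\times\Gr(2,10)\times\BA^4$, then certify it as a generically reduced elementary component via the tangent analysis of Section~\ref{sec:2-step}. Every $2$-step ideal with $\bh^{(0)}=(1,2)$ is uniquely of the form $I^{(0)}=W_1+\Fm^2$ for a $2$-dimensional $W_1=I^{(0)}_1\subset R_1$, and every $2$-step ideal with $\bh^{(1)}=(1,4,2)$ is uniquely of the form $I^{(1)}=W_2+\Fm^3$ for an $8$-dimensional $W_2=I^{(1)}_2\subset R_2$; this yields identifications $H^4_{(1,2)}\cong\Gr(2,4)$ and $H^4_{(1,4,2)}\cong\Gr(2,10)$. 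Since $\Fm^2\subset I^{(0)}$ and $I^{(1)}\subset\Fm^2$, the nesting $I^{(1)}\subset I^{(0)}$ is automatic, so the origin-supported stratum is $\Gr(2,4)\times\Gr(2,10)$; adjoining translations produces a locally closed subset $U\subset\hilbert{(3,7)}{\BA^4}$ of dimension $24$, whose closure is $V$.

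I next compute $\mathsf T_{[\underline I]}\hilbert{(3,7)}{\BA^4}$ at a general $[\underline I]\in V$. Since the homogeneous $2$-step orders $1<2$ are strictly increasing, \Cref{cor:1nonob-nested} gives $\mathsf T^{>0}_{[\underline I]}=\mathsf T^{=1}_{[\underline I]}\cong\Hom_R(I^{(0)},R/I^{(0)})_1\oplus\Hom_R(I^{(1)},R/I^{(1)})_1$, and by \Cref{thm:1nonob} each summand is $\Hom_\BC(I^{(i)}_{k_i},(R/I^{(i)})_{k_i+1})$, which vanishes because $(R/I^{(0)})_2=0$ and $(R/I^{(1)})_3=0$. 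In degree $0$, the nesting compatibility in degree $2$ is automatic (the target $(R/I^{(0)})_2$ vanishes), so $\mathsf T^{=0}_{[\underline I]}\cong\Hom_\BC(W_1,R_1/W_1)\oplus\Hom_\BC(W_2,R_2/W_2)$, of dimension $4+16=20$, matching the tangent to $\Gr(2,4)\times\Gr(2,10)$.

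The crucial step is to show $[\underline I]$ has the TNT property (\Cref{def:TNT}), i.e.~$\dim\mathsf T^{<0}_{[\underline I]}=4$: both $[I^{(0)}]\in\hilbert{3}{\BA^4}$ and $[I^{(1)}]\in\hilbert{7}{\BA^4}$ have substantial negative tangent spaces individually, and it is only the degree-$(-1)$ nesting compatibility---forcing $\varphi_1(f)\in R_1=(R/I^{(1)})_1$ to reduce to $\varphi_0(f)\in R_1/W_1=(R/I^{(0)})_1$ for every $f\in I^{(1)}_2\subset I^{(0)}_2$---that trims the nested negative tangent down to the $4$-dimensional image of $\theta$. I would verify this by an explicit computation (e.g.~in \textit{Macaulay2}) on a test pair such as $I^{(0)}=(x_1,x_2)+\Fm^2$ with a generic $W_2\subset R_2$, then extend to a general $[\underline I]\in V$ by upper semi-continuity of $\dim\mathsf T^{<0}$. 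Granting TNT, $\dim\mathsf T_{[\underline I]}=20+4=24=\dim V$, so $V$ is an irreducible component of $\hilbert{(3,7)}{\BA^4}$, generically reduced, and elementary (every parametrised nesting is supported at a single point).

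For the corollary, the forgetful morphism $\hilbert{(1,3,7)}{\BA^4}\to\hilbert{(3,7)}{\BA^4}$ has fibre over each $[\underline I]\in V$ consisting of the unique length-$1$ subscheme supported at the support point of $Z^{(1)}$, giving a set-theoretic bijection $V\to V_1$ with $(V_1)_{\red}=(V)_{\red}$; $V_1$ is itself a component because any component of $\hilbert{(1,3,7)}{\BA^4}$ strictly containing $V_1$ would project onto an irreducible closed subset properly containing $V$, contradicting that $V$ is a component of $\hilbert{(3,7)}{\BA^4}$. Non-reducedness then follows from a tangent-space excess: the degree-$(-1)$ compatibility $\psi|_{W_1}=\varphi_0^{(-1)}|_{W_1}$ between the deformation $\psi\colon\Fm\to\BC$ of the length-$1$ subscheme and $\varphi_0$ imposes only $2$ scalar conditions on the $4$-dimensional $\psi$, producing a $2$-dimensional excess, so $\dim\mathsf T_{[\mathrm{pt}\subset\underline I]}=26>24=\dim V_1$. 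The main obstacle throughout is the TNT verification: while $\mathsf T^{>0}=0$ and $\dim\mathsf T^{=0}=20$ follow formally from the machinery of Section~\ref{sec:2-step}, ruling out the ``extra'' negative tangents that persist for each ideal individually demands the explicit nesting compatibility analysis sketched above.
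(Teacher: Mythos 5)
Your proposal is correct and follows the same route as the paper: both identify the punctual stratum with $\Gr(2,4)\times\Gr(2,10)$ (the nesting being automatic because the very compressed colength-$7$ ideal lies in $\Fm^2$ while the colength-$3$ ideal contains it), both obtain $\mathsf{T}^{>0}=0$ and $\dim\mathsf{T}^{=0}=20$ from \Cref{thm:1nonob} and \Cref{cor:1nonob-nested}, and both reduce the whole statement to certifying the TNT property of \Cref{def:TNT} at a single point by explicit computation. Two deviations are worth recording. First, where you propose a ``generic $W_2$'' checked by machine and then spread out by upper semicontinuity of $\dim\mathsf{T}^{<0}$, the paper simply exhibits one concrete witness, namely $I^{(2)}=(zw,xw,z^2-w^2,yz,xz+yw)+(x,y)^2$ nested inside $I^{(1)}=I^{(2)}+(z,w)$; the two are logically equivalent since TNT is an open condition on the stratum, but an explicit ideal is a verifiable certificate whereas ``generic'' is not, so you should record whichever random $W_2$ your computation uses. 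Second, for the passage to $\hilbert{(1,3,7)}{\BA^4}$ the paper only cites \cite[Theorem 5]{UPDATES}, whereas you give a self-contained argument via the forgetful morphism and the tangent-space excess $\dim\mathsf{T}=24+2=26>24$ coming from the two unconstrained coordinates of $\psi$ on a complement of $W_1$ in $\Fm/\Fm^2$; your count is correct (the extra compatibility $\varphi_0^{(-2)}\vert_{R_2}=0$ is automatically satisfied on the $24$-dimensional nested tangent space, whose negative part is spanned by translations), and this buys a proof that does not depend on the external reference, at the cost of redoing an argument the literature already packages.
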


\begin{proof}
The last part of the statement is a consequence of \cite[Theorem 5]{UPDATES}. Moreover, the second isomorphism is a consequence of the well known description of the Hilbert stratum associated to very compressed algebras. We focus on the first isomorphism. We start by the observation that 
\[
H_{(\bh^{(1)},\bh^{(2)})}^4\cong H_{\bh^{(1)}}^4 \times H_{\bh^{(2)}}^4,
\]
where  $\bh^{(1)}=(1,2), \mbox{ and }\bh^{(2)}=(1,4,2)$. This is true because $H_{(\bh^{(1)},\bh^{(2)})}^4$ is the closed subset of the product $ H_{\bh^{(1)}}^4 \times H_{\bh^{(2)}}^4$ cut out by the nested conditions, but in this setting the nesting is guaranteed by construction and no condition arises.

 In order to conclude the proof we exhibit a nesting $\underline{I}$ in $H^4_{(\bh^{(1)},\bh^{(2)})}$ having TNT.  
The nesting we consider is $\underline{I}=(I^{(1)}\supset I^{(2)})$, where
\[
I^{(2)}=\left(zw,xw,z^2-w^2,yz,xz+yw \right) +\left( x,y\right )^2,\qquad 
\text{~and~}\qquad
I^{(1)}=I^{(2)}+(z,w)
\]
in the polynomial ring $\BC[x,y,z,w]$. 
\end{proof}

\section{Further new  elementary components}\label{sec:small} 

We list now all the generically reduced elementary components of $\Hilb^\bullet \BA^n$, for $n=5,6$, corresponding to 2-step ideals with no or very few linear syzygies of order $k=2$ plus some other sporadic example. Then, in the final part we prove \Cref{thm: intro tantecomp}.

\paragraph{\it Known results.}  
Among the elementary components described in the literature for $n \geqslant 5$, most of them correspond to families of 2-step ideals.
\begin{itemize}
\item The elementary components arising from families of 1-step ideals with Hilbert function $\bh=(1,n,s)$ are treated in \cite{Sha90} where it is stated that for
\[
3 \leqslant s \leqslant \dfrac{(n-1)(n-2)}{6} + 2
\]
the Hilbert stratum $H_{\bh}^n$ gives a generically reduced elementary component.
\item Three generically reduced elementary components in $\Hilb^\bullet{\mathbb{A}^5}$ are Hilbert strata of 2-step ideals of order 2 with Hilbert function $\bh=(1,5,3,4)$ (see \cite{SomeElementary}), $\bh=(1,5,9,7)$ (see \cite{MoreElementary}) and $\bh=(1,5,6,1)$ (see \cite{KleimanKleppe2025}).
\item Four generically reduced elementary components in $\Hilb^\bullet{\mathbb{A}^5}$ are Hilbert strata of 2-step ideals of order 3 with Hilbert function $\bh=(1,5,15,7,9)$, $\bh=(1,5,15,9,12)$, $\bh=(1,5,15,10,12)$ and $\bh=(1,5,15,10,15)$  (see \cite{MoreElementary}).
\item Six generically reduced elementary components in $\Hilb^\bullet{\mathbb{A}^6}$ are Hilbert strata of 2-step ideals of order 2 are obtained via apolarity studying the socle type of the associated algebra (see \Cref{def:compressed}).
There is the inspiring example with Hilbert function $\bh=(1,6,6,1)$ by Iarrobino \cite{Iarrob} generalized in \cite{KleimanKleppe2025} to Hilbert functions $\bh=(1,6,12,2)$ and $\bh=(1,6,6+s,1)$ with $s=2,3,4,5$. Notice that for $s=1$ the 2-step ideals of the Hilbert stratum are smoothable.
\item There are three other generically reduced elementary components in $\Hilb^\bullet{\mathbb{A}^6}$ with Hilbert functions $\bh=(1,6,6,10)$ (see \cite{SomeElementary}), $\bh=(1,6,5,7)$ (see \cite{MoreElementary}) and $\bh=(1,6,12,7)$ (see \cite{UPDATES}).
\item There is one generically reduced elementary components in $\Hilb^\bullet{\mathbb{A}^6}$ with 2-step Hilbert function $\bh=(1,6,21,10,15)$ of order 3 \cite{SomeElementary}.
\item In $\Hilb^\bullet{\mathbb{A}^n}$ with $n \geqslant 7$, there are generically reduced elementary components arising from Hilbert strata with Hilbert function $\bh=(1,7,7,1)$ (see \cite{BertoneCioffiRoggero}), $\bh=(1,7,10,16)$ (see \cite{MoreElementary}), $\bh=(1,7,7+s,1),\ s=1,\ldots,8$, $\bh=(1,8,8+s,1),\ s=1,\ldots,10$ and $\bh=(1,n,2n,2),(1,n,2n+1,2)$ with $n=7,8$ (see \cite{KleimanKleppe2025}).
\end{itemize}

    \begin{figure}
    \centering



    \caption{Components in $\Hilb^\bullet \mathbb{A}^5$ coming from families of 2-step ideals of order 2. In the table, we describe those covered by our construction.\label{fig:newA5}}
\end{figure}

\bigskip

For $n\geqslant 5$ the level curves of the function $\Theta_{n,k,b}(\oh_k,\oh_{k+1})$ are hyperbolas and the potential TNT area contains the connected area in $\mathcal{D}$ between the two branches of the hyperbola of equations $\Theta_{n,k,0}(\oh_k,\oh_{k+1}) = 0$ (see Figures \ref{fig:newA5}-\ref{fig:newA6-3}).

The determinant of the Hessian matrix of $\Delta_{n,1,k}$ is negative for $n \geqslant 4$
\[
\det \Hess \Delta_{n,1,k} = \det \left[\begin{array}{cc} -2 & n-1 \\ n-1 & -2\end{array}\right] = 4 - (n-1)^2
\]
so that $\Delta_{n,1,k}$ has a saddle point and it admits positive and negative values. We notice that according to the parity of $n$, the area $\Delta_{n,1,k} \geqslant 0$ can be the connected area between the two branches of a hyperbola ($n$ even) or its complement ($n$ odd).

\medskip

We systematically examine all 2-step ideals with no or few linear syzygies of order $k=2$ for $n=5$ and $n=6$. In particular, we
look for 2-step Hilbert functions in the potential TNT area that are realized by homogeneous ideals with the TNT property that identify a generically reduced elementary component of the Hilbert scheme. The results are summarized in Figures \ref{fig:newA5}-\ref{fig:newA6-3} (see \Cref{legenda} for the picture legend). In particular, we find 
\begin{itemize}
    \item 43 new elementary components in $\Hilb^\bullet{\mathbb{A}^5}$, two of which are $\Delta$-negative;
    \item 140 new elementary components in $\Hilb^\bullet{\mathbb{A}^6}$.
\end{itemize}
Increasing the order of the 2-step ideals allows to find thousands of new generically reduced elementary components. For instance, the potential TNT area of 2-step Hilbert functions of order 3 contains 304 natural points for $n=5$ and 973 natural points for $n=6$, while with order 4, there are 1351 natural points for $n=5$ and $4104$ natural points for $n=6$ (see the ancillary \textit{Macaulay2} file \href{www.paololella.it/software/reducibility-Hilbert-schemes.m2}{\tt reducibility-Hilbert-schemes.m2}).

Our examples suggest that the number of elementary components in a given Hilbert scheme $\hilbert{d}{\mathbb{A}^n}$ might be arbitrarily large. To give an idea, this proves \Cref{thm: intro tantecomp} from the introduction.

\begin{theorem}\label{thm:many components}
The Hilbert scheme $\hilbert{34}{\mathbb{A}^6}$ has at least 12 generically reduced elementary components.
\end{theorem}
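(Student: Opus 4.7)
The plan is to invoke \Cref{thm:tnt per nested} twelve times. I will exhibit twelve $\Fm$-primary 2-step ideals $I^{(1)},\ldots,I^{(12)} \subset R=\BC[x_1,\ldots,x_6]$ of colength $34$, with pairwise distinct Hilbert functions, each satisfying the TNT property of \Cref{def:TNT}. For every $i$ I will also verify that $[I^{(i)}]$ is a smooth point of $\Hilb^{\bullet}\BA^6$, so that the unique irreducible component $V^{(i)}$ through $[I^{(i)}]$ is generically reduced. Since TNT is an open condition (the cokernel of the morphism $\theta$ in \Cref{rem:deftheta} varies upper semicontinuously), a general point of $V^{(i)}$ still has TNT, and \Cref{thm:tnt per nested} certifies that $V^{(i)}$ is a generically reduced elementary component. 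Because the Hilbert function is locally constant on $\Hilb^{\bullet,+}\BA^6$, the twelve components are pairwise distinct, and translating along the diagonal $\BA^6$-action produces twelve distinct generically reduced elementary components of $\Hilb^{34}\BA^6$.

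First I would restrict to 2-step Hilbert functions of order $k=2$, namely $\bh=(1,6,\uh_2,\uh_3)$ with $\uh_2+\uh_3=27$, equivalently pairs $(\oh_2,\oh_3)$ with $\oh_2+\oh_3=50$. To qualify as a candidate, a pair must lie in the potential TNT area $\mathscr{T}_2^6$ of \Cref{def:TNTarea}: for $n=6\geqslant 5$ this is the region enclosed between the two branches of the hyperbola $\Theta_{6,2,b}(\oh_2,\oh_3)=0$, where $b$ ranges over $0,\ldots,\beta_{2,4}(L_\bh)$, with $\beta_{2,4}(L_\bh)$ controlled by \Cref{thm:betti bounds}. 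Intersecting with the affine line $\oh_2+\oh_3=50$ leaves a short list of candidates.

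For each candidate $(\oh_2,\oh_3)$ I would produce an explicit 2-step ideal $I^{(i)}$ using the structural results of \Cref{sec:2-step}: when $\sss_\bh \geqslant 0$, the no-linear-syzygies description of \Cref{thm:homoloco} and \Cref{cor:full no syz}; when $0 < -\sss_\bh \leqslant \tfrac{1}{6}\oh_2$, the determinantal very-few-linear-syzygies description of \Cref{thm: homo loco syz} built on the normal form of \Cref{lem:pseudo canonical form}(\ref{it:1i}). I would then compute $\Hom_R(I^{(i)},R/I^{(i)})$, decompose it into $\BG_m$-weight spaces via \Cref{rem:homobb}, and check the two numerical identities
\[
\dim \mathsf{T}^{<0}_{[I^{(i)}]}\Hilb^{\bullet}\BA^6 = 6, \qquad \dim \mathsf{T}_{[I^{(i)}]}\Hilb^{\bullet}\BA^6 = \dim H^6_{\bh^{(i)}} + 6.
\]
The first identity encodes TNT (surjectivity of $\theta$), and the second encodes smoothness of $[I^{(i)}]$ on the closure of $H^6_{\bh^{(i)}}\times\BA^6$.

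The main obstacle is combinatorial-computational rather than conceptual: membership in $\mathscr{T}_2^6$ is only necessary, so each surviving Hilbert function must be tested individually, and a concrete TNT ideal must be exhibited in each case. These finite linear-algebra checks are carried out with the companion \emph{Macaulay2} package \href{www.paololella.it/software/TwoStepIdeals.m2}{\tt TwoStepIdeals.m2}. The twelve successful Hilbert functions are those recorded in the figure of \Cref{sec:small} corresponding to 2-step ideals of order $2$ in dimension $n=6$, and collecting them concludes the proof.
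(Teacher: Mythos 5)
Your overall mechanism — exhibit explicit 2-step ideals with TNT, verify smoothness by matching the tangent space dimension against the stratum dimension plus translations, and invoke \Cref{thm:tnt per nested} to get generically reduced elementary components distinguished by their Hilbert functions — is exactly the paper's mechanism. But your search space is too small to produce twelve components. By restricting to 2-step Hilbert functions of order $k=2$ of the form $\bh=(1,6,\uh_2,\uh_3)$ with $\uh_2+\uh_3=27$, you are only considering algebras of embedding dimension $6$, and the paper's tables for $n=6$, $k=2$ at colength $34$ contain exactly \emph{seven} TNT strata, namely $(1,6,14,13),(1,6,15,12),\ldots,(1,6,20,7)$. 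Your argument therefore proves "at least 7", not "at least 12".

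The remaining five components in the paper's proof come from 2-step ideals of \emph{lower embedding dimension} sitting inside $\BA^6$: three of embedding dimension $4$ and order $3$, with Hilbert functions $(1,4,10,12,7)$, $(1,4,10,13,6)$, $(1,4,10,14,5)$, and two of embedding dimension $5$ and order $2$, with Hilbert functions $(1,5,13,15)$ and $(1,5,14,14)$ — all of colength $34$. To include these you must enlarge the search to 2-step ideals in $\BC[x_1,\ldots,x_m]$ for $m=4,5$ (note the order is no longer forced to be $2$ once the embedding dimension drops) and then justify that a TNT point of $\Hilb^{34}\BA^m$ still determines a generically reduced elementary component of $\Hilb^{34}\BA^6$ after the closed embedding $\BA^m\hookrightarrow\BA^6$; this compatibility of the TNT property with the addition of ambient variables is used implicitly by the paper and is not free from your setup. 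As written, the proposal has a concrete numerical shortfall of five components.
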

\begin{proof}
In $\hilbert{34}{\mathbb{A}^6}$, we find 12 Hilbert strata whose generic 2-step ideal has trivial negative tangents:
\begin{itemize}
    \item three generically reduced elementary components describing algebras of embedding dimension 4 with Hilbert functions
    \[
    \bh=(1,4,10,12,7),\qquad \bh=(1,4,10,13,6),\qquad \bh=(1,4,10,14,5);
    \]
     \item two generically reduced elementary components describing algebras of embedding dimension 5 with Hilbert functions
    \[
    \bh=(1,5,13,15),\qquad \bh=(1,5,14,14);
    \] 
    \item seven generically reduced elementary components describing algebras of embedding dimension 6 with Hilbert functions
    \begin{align*}
    &\bh=(1,6,14,13), &&\bh=(1,6,15,12), &&\bh=(1,6,16,11), &&\bh=(1,6,17,10),\\[-4pt]
    & \bh=(1,6,18,9), &&\bh=(1,6,19,8), &&\bh=(1,6,20,7). && \hfill\qedhere
    \end{align*}
\end{itemize}
\end{proof}

\begin{remark}
Among the elementary components given by 1-step ideals in $\Hilb^\bullet{\mathbb{A}^5}$, Shafarevich's formula \cite{Sha90} provides for two cases: $\bh=(1,5,3)$ and $\bh=(1,5,4)$. The first Hilbert function gives in fact an elementary component, see also \cite{ERMANVELASCO}, while the  Hilbert stratum corresponding to the function $\bh=(1,5,4)$ is contained in a composite component. Thus, Shafarevich's formula is incorrect, but we point out that all the preliminary lemmas in \cite{Sha90} require the embedding dimension to be different from 5 and treat the 5-dimensional case separately.

A new example of elementary component is given by the Hilbert function $\bh=(1,5,5)$. In \Cref{fig:newA5}, the corresponding natural point is marked with a blue star because $\Delta_{5,1,2} = 0$. In fact, the second branch (not drawn in the picture) of the hyperbola $\Delta_{5,1,2} = 0$ is tangent to line $\oh_3 = 35$ at the point $(10,35)$.

For $n=6$, Shafarevich's formula provides three cases. We find other 5 families of 1-step ideals giving a generically reduced elementary component in $\Hilb^\bullet{\mathbb{A}^6}$.
\end{remark}

\begin{figure}[!ht]
    \centering



    \caption{Components in $\Hilb^\bullet \mathbb{A}^6$ coming from families 2-step ideals of order 2 (first part). In the table, we describe those covered by our construction.\label{fig:newA6-1}}
    
\end{figure}

\begin{figure}[!ht]
    \centering


    \caption{Components in $\Hilb^\bullet \mathbb{A}^6$ coming from families of 2-step ideals of order 2 (second part). In the table, we describe those covered by our construction.\label{fig:newA6-2}}
    \label{fig:newA62}
\end{figure}

\begin{figure}[!ht]
    \centering


    \caption{Components in $\Hilb^\bullet \mathbb{A}^6$ coming from families of 2-step ideals of order 2 (last part). In the table, we describe those covered by our construction.\label{fig:newA6-3}}
\end{figure}

\subsection{Further developments}
We conclude with three questions that naturally emerge from this paper and may represent future research directions.
\begin{itemize}
    \item[(Q1)] What about 2-step Hilbert functions with {\em lots} of syzygies, i.e.~in the range $\oh_{k+1}\leqslant (n-1)\oh_k$? Is it possible to find some structure theorem also in this range?
    \item[(Q2)] Is there a generically reduced elementary component for every 2-step Hilbert function in the potential TNT area?
    \item[(Q3)] What about 3-step Hilbert functions and ideals?
\end{itemize}

Related to question (Q3), we recall that the potential TNT area for 2-step ideals in 3 variables is empty. Thus, the understanding of more complicated ideals seems to be inevitable to tackle the problem of the irreducibility of $\hilbert{d}{\mathbb{A}^3}$.

Related to question (Q1) and (Q2), we point out that some results in the unexplored area can be obtained via slight modifications of known results (as shown in the next example) but it is hard to expect to fill the potential TNT area in this way.

\begin{example}\label{ex:final}
In a previous paper \cite{UPDATES}, we proved that the point defined by the ideal
\[
(x_1x_2x_3-x_4x_5x_6)+(x_1,x_6)^2 +(x_2,x_5)^2+(x_3,x_4)^2 \subset\BC[x_1,\ldots,x_6]
\]
lies on a generically reduced elementary component of $\Hilb^{26}\BA^6$. Its  Hilbert function is $(1,6,12,7)$ and the associated natural point $(9,49) \in \mathcal{D}_{\mathbb{N}}$ lies in the few syzygies area but it is not covered by the main results described in \Cref{subsec:2-stepnosyz,subsec:2-stepfew} (the syzygy matrix is not generic among the matrices of the same shape). 

Adding a sufficiently general cubic as in the following ideal
\[
(x_1x_2x_3-x_4x_5x_6,x_4x_2x_3+x_1x_5x_3+x_1x_2x_6)+(x_1,x_6)^2 +(x_2,x_5)^2+(x_3,x_4)^2\subset\BC[x_1,\ldots,x_6],
\]
we obtain a 2-step with Hilbert function $(1,6,12,6)$, few syzygies and trivial negative tangents. Hence, it identifies a generically reduced elementary component of $\Hilb^{25}\BA^6$. This case is denoted by a red pentagon in \Cref{fig:newA6-1}.
\end{example}

\appendix

\section{Figure legend}\label{legenda} 

In Figures \ref{fig:newA3k6}, \ref{fig:newA3k7-8}, \ref{fig:newA4k2}, \ref{fig:newA4k3},\ref{fig:newA5}, \ref{fig:newA6-1}, \ref{fig:newA6-2} and \ref{fig:newA6-3} we draw the subset in $\mathbb{R}^2$ containing the pairs $(\oh_k,\oh_{k+1})$ defining 2-step Hilbert functions of order $k$ with the usual convention (increasing values of $\oh_k$ moving to the right and increasing values of $\oh_{k+1}$ moving upwards).

 The grey areas correspond to 2-step Hilbert functions considered in \Cref{thm:homoloco} and \Cref{thm: homo loco syz}:
 \begin{itemize}
     \item[\raisebox{-3pt}{\tikz{\draw [black!10,fill=black!10] (0,0) rectangle (0.45,0.45);}}] 2-step Hilbert functions with \emph{no} linear syzygies 
     \[\sss_\bh \geqslant 0\quad \Leftrightarrow\quad \oh_{k+1} \geqslant n\oh_k;\]
     \item[\raisebox{-3pt}{\tikz{\draw [black!50,fill=black!50] (0,0) rectangle (0.45,0.45);}}] 2-step Hilbert functions with \emph{very few} linear syzygies 
     \[0 < -\sss_\bh \leqslant \tfrac{1}{n}\oh_k\quad \Leftrightarrow\quad \left(n-\tfrac{1}{n}\right)\oh_k \leqslant \oh_{k+1} < n\oh_k;\]
     \item[\raisebox{-3pt}{\tikz{\draw [black!30,fill=black!30] (0,0) rectangle (0.45,0.45);}}] 2-step Hilbert functions with \emph{few} linear syzygies 
     \[ \tfrac{1}{n}\oh_k < -\sss_\bh < \oh_k\quad \Leftrightarrow\quad (n-1)\oh_k < \oh_{k+1} < \left(n-\tfrac{1}{n}\right)\oh_k.\]
 \end{itemize}
 
The coloured areas describe the sign of functions $\Delta_{n,1,k}$ and $\Theta_{n,k,0}$:
\begin{center}
    \raisebox{-4pt}{\tikz{\fill [pattern={Lines[angle=45,distance=2pt,line width=0.075pt]},opacity=0.3,pattern color=blue] (0,0) rectangle (0.45,0.45);\draw[NavyBlue,opacity=0.5,thick] (0,0) rectangle (0.45,0.45);}}\quad $\Delta_{n,1,k} \geqslant 0$\qquad\qquad
     \raisebox{-4pt}{\tikz{\fill [pattern={Lines[angle=45,distance=2pt,line width=0.075pt]},opacity=0.3,pattern color=red] (0,0) rectangle (0.45,0.45);}}\quad $\Delta_{n,1,k} < 0$\qquad\qquad
     \raisebox{-4pt}{\tikz{\fill [fill=yellow,opacity=0.4] (0,0) rectangle (0.45,0.45);}}\quad $\Theta_{n,k,0} \leqslant 0$.
 \end{center}
Notice that the function $\Delta_{n,1,k}$ has been defined on the smaller subset $\mathcal{D}\subset \mathbb{R}^2$ but we show its sign on the whole drawn domain. In fact, by direct computation, we find lots of natural points $(\oh_k,\oh_{k+1})$ outside $\mathcal{D}_\BN$ where the dimension of the Hilbert stratum $H_{\bh}^n$ agrees with the expected dimension in $\Delta_{n,1,k}$. The yellow area (if not empty) contains natural points that certainly belong to the potential TNT area $\mathscr{T}^n_k$. There might also be other natural points in the potential TNT area, but we do not display them as they require $\beta_{2,k+2} > 0$ and are not covered by our main results.

The meaning of the symbols denoting natural points is the following.
\begin{itemize}
    \item[\raisebox{-1pt}{\tikz{\node at (0,0)[draw=NavyBlue,very thick,fill=NavyBlue,diamond,inner sep=2pt] {};}}] The navy blue diamond denotes a pair $(\oh_k,\oh_{k+1})$ corresponding to a Hilbert stratum $H_\bh^n$ with $\Delta_{n,1,k}\geqslant 0$, thus certifying the reducibility of $\hilbert{\vert\bh\vert}{\mathbb{A}^n}$. However, the generic element in $\mathscr{H}_\bh^n$ has not trivial negative tangents so $H_\bh^n$ might not describe a full irreducible component of $\hilbert{\vert\bh\vert}{\mathbb{A}^n}$.
    \item[\raisebox{-1pt}{\tikz{\node at (0,0) [draw=blue,very thick,fill=blue,star,star point ratio=2.2,inner sep=1.2pt] {};}}] The blue star denotes a pair $(\oh_k,\oh_{k+1})$ corresponding to a Hilbert stratum $H_\bh^n$ with $\Delta_{n,1,k}\geqslant 0$ such that the generic element in $\mathscr{H}_\bh^n$ has trivial negative tangents, thus identifying a generically reduced elementary component of $\hilbert{\vert\bh\vert}{\mathbb{A}^n}$.
    \item[\raisebox{-1pt}{\tikz{\node at (0,0) [draw=red,very thick,fill=red,star,star point ratio=2.2,inner sep=1.2pt] {};}}] The red star denotes a pair $(\oh_k,\oh_{k+1})$ corresponding to a Hilbert stratum $H_\bh^n$ with $\Delta_{n,1,k}< 0$  such that the generic element in $\mathscr{H}_\bh^n$ has trivial negative tangents, thus identifying a generically reduced elementary $\Delta$-negative component of $\hilbert{\vert\bh\vert}{\mathbb{A}^n}$.
    \item[\raisebox{-1pt}{\tikz{\node at (0,0)[draw=NavyBlue,very thick,fill=white,diamond,inner sep=2pt] {}; \node at (0.4,0) [draw=blue,line width=1,fill=white,star,star point ratio=2.2,inner sep=1.2pt] {}; \node at (0.8,0) [draw=red,line width=1,fill=white,star,star point ratio=2.2,inner sep=1.2pt] {};}}] Empty symbols denote pairs $(\oh_k,\oh_{k+1})$ corresponding to a Hilbert strata covered by the main results of the paper but already known in literature.
    \item[\raisebox{-1pt}{\tikz{\node at (0,0) [draw=red,very thick,fill=white,circle,inner sep=2pt] {}; \node at (0.35,0)[draw=blue,very thick,fill=white,circle,inner sep=2pt] {};}}] Empty circles denote pairs $(\oh_k,\oh_{k+1})$ corresponding to a Hilbert stratum $H_\bh^n$ identifying a generically reduced elementary component known in literature but not covered by the main results of the paper. The color has the same meaning as above.
\end{itemize}


\ifx\undefined\bysame
\newcommand{\bysame}{\leavevmode\hbox to3em{\hrulefill}\,}
\fi

\end{document}